\def\bes{\begin{equation*} }
\def\ees{\end{equation*} }
\DeclareMathOperator{\rank}{rank}
\DeclareMathOperator{\Id}{Id}
\def\beq{\begin{equation}}
\def\eeq{\end{equation}}
\def\ben{\begin{enumerate}}
\def\een{\end{enumerate}}
\def\Rnn{\R^{\nu\times\nu}}
\def\Rll{\R^{\ell\times\ell}}
\def\R{\mathbb{R}}
\def\RR{\mathbb{R}}
\def\CC{\mathbb{C}}
\def\NN{\mathbb{N}}
\def\ax{\langle x \rangle}
\def\axs{\langle \x, \xs \rangle}
\def\N{\mathbb{N}}
\def\x{\mathcal{X}}
\def\tr{\mathrm{Tr}}
\def\id{\mathrm{Id}}
\def\J{\mathcal{I}}
\def\cB{ {\mathcal B} }
\def\cC{ {\mathcal C} }
\def\cD{ {\mathcal D} }
\def\cG{ {\mathcal G} }
\def\cH{ {\mathcal H} }
\def\cI{ {\mathcal I} }
\def\cJ{ {\mathcal J}}
\def\cN{ {\mathcal N} }
\def\cU{\mathcal U}
\def\cZ{ {\mathcal Z} }
\def\hom{\mbox{\tiny hom}}
\def\sym{\operatorname{Sym}}
\def\free{\operatorname{Fr}}
\newcommand{\setmult}[2]{#1 #2}
\newcommand{\leftact}[2]{#1 \cdot #2}
\newcommand{\lead}[1]{\operatorname{Tip}(#1)}
\newcommand{\nonlead}[1]{\operatorname{NonTip}(#1)}
\def\finite{\mbox{\rm\tiny finite}}
\def\Md{M}
\def\x{x}
\def\xs{x^{\ast}}
\def\real{\rm re}
\def\matng{\left(\RR^{n \times n}\right)^g}
\newcommand{\Lradd}[3]{\sqrt[(#1,#2)]{#3}}
\newcommand{\LraddS}[3]{\sqrt[(#1,#2)+]{#3}}
\newcommand{\Lrad}[2]{\sqrt[(#1)]{#2}}
\newcommand{\LradS}[2]{\sqrt[(#1)+]{#2}}
\def\rC{\mathfrak{C}}
\def\rA{\mathfrak A}
\def\rB{\mathfrak B}
\def\mbS{\mathbb{S}}
\newtheorem{theorem}{Theorem}[section]
\newtheorem{lemma}[theorem]{Lemma}
\newtheorem{lem}[theorem]{Lemma}
\newtheorem{thm}[theorem]{Theorem}
\newtheorem{prop}[theorem]{Proposition}
\newtheorem{cor}[theorem]{Corollary}
\newtheorem{corollary}[theorem]{Corollary}
\theoremstyle{definition}
\newtheorem{remark}[theorem]{Remark}
\newtheorem{exa}[theorem]{Example}
\newtheorem{definition}[theorem]{Definition}
\def\subset{\subseteq}
\def\supset{\supseteq}
\newcommand{\rr}[1]{\sqrt[\real]{#1}}
\newcommand{\df}[1]{{\bf{#1}}{\index{#1}}}
\numberwithin{equation}{section}
\begin{document}

\title
[Polynomials nonnegative on a variety intersect a convex set]
{Noncommutative  polynomials nonnegative on a\\[.1cm] variety intersect a convex set}

\author[J. William Helton]{J. William Helton${}^1$}
\address{J. William Helton, Department of Mathematics\\
  University of California \\
  San Diego}
\email{helton@math.ucsd.edu}
\thanks{${}^1$Research supported by the National Science Foundation (NSF) grants
DMS 0700758, DMS 0757212, DMS 1201498, and the Ford Motor Co.}
\author[Igor Klep]{Igor Klep${}^{2}$}
\address{Igor Klep, Department of Mathematics, 
The University of Auckland, New Zealand}
\email{igor.klep@auckland.ac.nz}
\thanks{${}^2$Supported by the Faculty Research Development Fund (FRDF) of The
University of Auckland (project no. 3701119). Partially supported by the Slovenian Research Agency grant P1-0222.}

\author[Christopher S. Nelson]{Christopher S. Nelson${}^3$}
\address{Christopher S. Nelson, Department of Mathematics\\
  University of California \\
  San Diego}
\email{csnelson@math.ucsd.edu}
\thanks{${}^3$Partly supported by the National Science Foundation, DMS 1201498}

\subjclass[2010]{Primary: 13J30, 14A22, 46L07, Secondary: 16S10, 14P10, 47Lxx, 16Z05, 90C22}

\keywords{free real algebraic geometry, linear matrix inequality (LMI),
spectrahedron, 
free algebra, complete positivity, symbolic computation, semidefinite programming (SDP)}

\date{\today}

\setcounter{tocdepth}{3}
\contentsmargin{2.55em} 
\dottedcontents{section}[3.8em]{}{2.3em}{.4pc} 
\dottedcontents{subsection}[6.1em]{}{3.2em}{.4pc}
\dottedcontents{subsubsection}[8.4em]{}{4.1em}{.4pc}

\linespread{1.05}
\begin{abstract}
By a  result of Helton and McCullough \cite{HM3},
open 
 bounded convex free  semialgebraic sets
 are exactly open  (matricial) solution sets $\cD_L^\circ$
of a linear matrix inequality (LMI) $L(X)\succ0$.
This paper gives
a precise  algebraic certificate  for a polynomial
being nonnegative    on a convex semialgebraic set intersect a variety, a so-called ``Perfect'' Positivstellensatz.

For example, given a generic convex free  semialgebraic set $\cD_L^\circ$
we determine all ``(strong sense) defining polynomials" $p$ for $\cD_L^\circ$.
 Such polynomials must have the form
\[
 p = L \left( \sum_{i}^{\rm finite} q_i^*q_i \right) L + \sum_{j}^{\rm finite}
\left(r_jL + C_j \right)^*L\left(r_jL + C_j\right),
\]
where $q_i,r_j$ are matrices of  polynomials,
and $C_j$  are real matrices  satisfying $C_jL = LC_j$.

This follows from our general result for  a
given  linear pencil $L$ and a finite set $I$ of rows of polynomials.
A matrix polynomial $p$ is positive where $L$ is positive and all $\iota\in I$ vanish
if and only if
\beq\label{eq:posCert}\tag{P}
 p = \sum_{i}^{\finite}p_i^{\ast}p_i + \sum_{j}^{\finite} q_j^{\ast}Lq_j +
  \sum_k^{\finite} (r_k^{\ast}\iota_k + \iota_k^{\ast}r_k),
\eeq
where each $p_i$, $q_j$ and $r_k$ are matrices of polynomials of appropriate
dimension, and each $\iota_k$ is an element of the ``$L$-real radical'' of $I$.
In this representation, we can restrict $p_i$, $q_i$, $\iota_k$ and $r_k$ to be elements of a
low-dimensional subspace of matrices of  polynomials, and in
particular, their degrees depend in a very tame way only on the degree of $p$ and the degrees
of the elements of $I$.
Further, this paper gives an efficient algorithm for computing the $L$-real radical
of $I$.

This Positivstellensatz extends and unifies two different lines of results:
\ben[\rm(1)]
\item
the free real Nullstellensatz of \cite{chmn, N} which gives an algebraic
certificate corresponding to one polynomial being zero on the free
 variety where others are zero; this is \eqref{eq:posCert} with $L=1$;
\item  the  convex Positivstellensatz of \cite{HKMb,KS11} which is \eqref{eq:posCert}
without $I$; i.e., $I=\{0\}$.
\een
The representation \eqref{eq:posCert} has a number of additional consequences which will be
presented.
\end{abstract} 

\maketitle
\linespread{1.18}

\section{Introduction}
\label{sect:intro}

In this section we introduce the main concepts which will be used throughout
this paper.
Subsections \ref{sec:intronotation}, \ref{sec:introncpolys}, \ref{sec:introlinpen},
\ref{sec:preLrad}
give basic definitions and include
 Theorem \ref{thm:detBdry2} which
characterizes ``defining polynomials" 
of a convex free  semialgebraic set.
Subsequently \S\ref{sub:rightChip}, \S\ref{sec:introLrad} give more definitions and then
in \S\ref{sub:overview} we state
our main general  result Theorem \ref{thm:mainNMon}
followed by  statements of corollaries.
The paper is devoted to proving these things and giving algorithms;
 a guide to the presentation is \S\ref{sec:intro guide}. 
 
\subsection{Notation}\label{sec:intronotation}
Given positive integers $\nu$ and $\ell$,
let $\RR^{\nu \times \ell}$ denote the space of $\nu
\times \ell$ real matrices.
We use $E_{ij} \in \RR^{\nu \times \ell}$
\index{Eij@$E_{ij}$} to denote
the matrix unit with a $1$ as the $ij^{th}$ entry and a $0$ for all other entries.
If $\nu = 1$, then $E_{1j} = e_j \in \RR^{1 \times \ell}$ \index{ej@$e_j$} is
the
row vector with $1$ as
the $j^{th}$ entry and a $0$ as all other entries.
Let $\operatorname{Id}_\nu \in \RR^{\nu \times \nu}$ denote the $\nu \times \nu$
identity matrix.
The transpose of a matrix $A \in \RR^{\nu \times \ell}$ will be denoted by $A^* \in \RR^{\ell \times \nu}$, and 
$\mathbb{S}^{k} \subset \RR^{k \times k}$\index{S^k@$\mathbb{S}^k$} is
the space of real symmetric $k \times k$ matrices.

\subsection{Non-Commutative Polynomials}\label{sec:introncpolys}
Let $\axs$\index{$\axs$} be the monoid freely generated by $x = (x_1,
\ldots, x_g)$ and $x^* = (x_1^*, \ldots, x_g^*)$---that is, $\axs$ consists of
words in the $2g$ free letters $x_1, \ldots, x_g, x_1^*, \ldots, x_g^*$,
including the empty word $\varnothing$, which plays the role of the identity $1$.
Let  $\RR\axs$\index{Rxxs@$\RR\axs$} denote the
$\RR$-algebra freely generated by $\axs$, i.e., the
elements of $\RR\axs$
are polynomials in the non-commuting variables $\axs$
with coefficients
in $\RR$.  Call elements of $\RR\axs$
\textbf{non-commutative}\index{non-commutative (NC) polynomials} or \textbf{NC}
polynomials.

The \textbf{involution}\index{involution} on
$\RR\axs$ is defined linearly so that $(x_i^*)^* = x_i$ for each variable $x_i$
and
$(pq)^* = q^*p^*$ for each $p, q \in \RR\axs$.
For example,
\[
 \left( x_1x_2x_3 + 2x_3^*x_1 - x_3\right)^* = x_3^*x_2^*x_1^* + 2x_1^*x_3 - x_3^*
\]

\subsubsection{Evaluation of NC Polynomials}

NC polynomials can be evaluated at a tuple of matrices in a natural way.
 Let $X = (X_1, \ldots, X_g)$ be a tuple of real $n \times n$ matrices,
 that is $X \in \matng$.  Given $p
\in
\RR\axs$, let $p(X)$ denote the matrix defined by
replacing each $x_i$ in $p$ with $X_i$, each $x_i^*$ in $p$ with $X_i^{\ast}$,
and replacing the empty word with $\operatorname{Id}_{n}$.  Observe that $p^*(X) =
p(X)^*$ for all $p \in \RR\axs$.

For example, if
\[
p(x) = x_1^2 - 2x_1x_2^* -3, \quad
X_1 = \begin{pmatrix}
1&2\\
2&4
\end{pmatrix}
\quad 
\mbox{and}
\quad
X_2 = \begin{pmatrix}
0&-1\\
1&-1
\end{pmatrix}
\]
then
\begin{align}
 \notag
p(X) &= X_1^2 - 2X_1 X_2^* - 3 \operatorname{Id}_2 \\
\notag
&=
\begin{pmatrix}
1&2\\
2&4
\end{pmatrix}\begin{pmatrix}
1&2\\
2&4
\end{pmatrix}
- 2 \begin{pmatrix}
1&2\\
2&4
\end{pmatrix}
\begin{pmatrix}
0&1\\
-1&-1
\end{pmatrix}
-
\begin{pmatrix}
3&0\\
0&3
\end{pmatrix}
\\
\notag
&
=
\left(
\begin{array}{cc}
 6 & 12 \\
 18 & 21 \\
\end{array}
\right)
\end{align}

\subsubsection{Matrices of NC Polynomials}

The space of $\nu \times \ell$ matrices with entries in $\RR\axs$
will be
denoted as $\RR^{\nu \times \ell}\axs$. Each $p \in \RR^{\nu \times \ell}\axs$
can be expressed as
\[
p = \sum_{w \in \axs} A_w \otimes w \in \RR^{\nu \times \ell}\otimes \RR\axs.
\]
Given a tuple $X$ of real $n \times n$ matrices, let $p(X)$ denote
\[
p(X) = \sum_{w \in \axs} A_w \otimes w(X) \in \RR^{\nu n \times \ell n},
\]
where $\otimes$ denotes the Kronecker tensor product.
The involution on $\RR^{\nu \times \ell}\axs$ is
given by
\[
p^*= \left(\sum_{w \in \axs} A_w \otimes w\right)^* = \sum_{w \in \axs}
A_w^{\ast} \otimes w^* \in \RR^{\ell \times \nu}\axs.
\]
Note that if $X$ is a tuple of matrices, then $p^*(X) = p(X)^*$.
If $p \in \RR^{\nu \times \nu}\axs$, we say $p$
is \textbf{symmetric}\index{symmetric
polynomial} if $p = p^*$.

\subsubsection{Degree of NC Polynomials}
Let $|w|$ denote the \textbf{length}\index{length of a word in $\axs$} of a word
$w \in
\axs$.
A \textbf{monomial}\index{monomial} in $\RR^{\nu \times \ell}\axs$ is a
polynomial
of the form
$E_{ij} \otimes m$, where $m \in \axs$.
The {\bf length} or
 \textbf{degree}\index{degree} of a monomial $E_{ij} \otimes m$ is $|E_{ij}
\otimes m| := |m|$.
 The set of all monomials in $\RR^{\nu \times
\ell}\axs$ is a vector space basis
for $\RR^{\nu \times \ell}\axs$.

If $p$ is a NC polynomial, define the degree of $p$, denoted $\deg(p)$, to be
the
largest degree of any monomial appearing in $p$.
A NC polynomial $p$ is \textbf{homogeneous of
degree
$d$}\index{degree!homogeneous}
if every monomial appearing in $p$ has degree $d$.
If $W$ is a subspace of $\RR^{\nu \times \ell}\ax$, define $W_d$
\index{Wd for a vector
space W in Rxxs@ $W_d$ for a vector space $W \subset
\RR\axs$}\index{RRvlxxsd@$\RR^{\nu \times \ell}\axs_d$} to be
the space spanned by all elements of $W$ with degree at most $d$, and
 define $W_d^{\hom}$ to be
the space spanned by all elements of $W$ which are homogeneous of degree $d$.

\subsubsection{Operations on Sets}

If $A, B \subset \RR^{\nu \times
\ell}\axs$, then define $A + B$ to be the set
of
polynomials of the form $a + b$, with $a \in A$, $b \in B$.  In the case that
$A \cap B = \{0\}$, we also denote $A + B$ as $A \oplus B$\index{$\oplus$}---the
expression $A \oplus B$ always asserts that $A \cap B =
\{0\}$.
If $A \subset \RR^{\nu \times \ell} \ax$ and $B
\subset \RR^{\ell \times \rho} \ax$,
let $\setmult{A}{B} \subset \RR^{\nu \times \rho}\axs$\index{$\setmult{A}{B}$}
denote the span of all polynomials of the form $ab$, with $a \in A$, $b
\in B$.
If $A \subset \RR^{\nu \times \ell}\axs$, then $A^{\ast}  = \{a^* \mid a
\in A\} \subset
\RR^{\ell
\times \nu}
\axs$.
If $A \subset \RR^{\nu \times \ell}$ and $B \subset \RR\axs$, then $A \otimes B$
is defined to be the span of all simple tensors $a \otimes b$, where $a \in A$
and $b \in B$.

If $p \in \RR^{\nu \times \ell}\axs$, then expressions of the form $p + A$, $pB$,
$Cp$, $D \otimes p$, where $A$, $B$, $C$, and $D$, are sets, denote $\{p\} +
A$, $\{p\}B$, $C\{p\}$, and $D \otimes \{p\}$ respectively.

\subsubsection{Positivity sets}\label{sec:introposset}

\def\dbcD{ {  \widehat{\partial \cD} }}

Given  a symmetric matrix of NC polynomials $p$,
define its \df{positivity domain}  $\cD_p$ by
$$
\cD_p(n):=  \ \{X \in \matng : p(X) \succeq 0 \}\subseteq \matng  \qquad \qquad  \cD_p:= \bigcup_n \cD_p(n).
$$
If $p(0)\succ0$ we also introduce
$$
\cD_p(n)  ^\circ:= \text{principal  component  of } \{X \in \matng : p(X) \succ 0 \}
 \qquad \qquad  \cD_p^\circ:= \cup_n \cD_p(n)^\circ
$$
and its \df{(detailed) boundary}  $\dbcD^\circ_p$ defined by
$$
\dbcD_p^\circ := \{(X,v) :   X \in \overline{ \cD_p ^\circ}    , \   p(X)v =0 \}
$$

\subsection{Linear Pencils}\label{sec:introlinpen}

A \textbf{linear pencil}\index{linear pencil} is a
symmetric polynomial $L \in
\RR^{\nu \times \nu}\axs$, for some $\nu \in \NN$, with $\deg(L) \leq 1$.
Every
$\nu \times \nu$ linear
pencil can be
expressed as 
\[
L = A_0 + A_1 \otimes x_1 + \cdots + A_g \otimes x_g + A_1^{\ast} \otimes
x_1^* +  \cdots + A_g^{\ast} \otimes x_g^*,
\]
where each $A_i \in \RR^{\nu \times \nu}$ and $A_0$ is symmetric.
A linear pencil is \textbf{monic}\index{linear pencil!monic} if $A_0=L(0) =
\operatorname{Id}_{\nu}$.
For the purposes of this paper, we still call $L$  a linear pencil even if
$A_0 \not 
= 0$.

A \textbf{linear matrix inequality}\index{linear matrix inequality} or {\bf (LMI)} is an
expression of the form $L(x) \succeq 0$, where $L$ is a linear pencil and $x$ is
a tuple of real scalar variables.  
When $x$ is a tuple of real scalar variables, the set 
$\cD_L(1)$  is the {\bf positivity set of $L$} or the
{\bf spectrahedron} defined by $L$. Optimization of linear objective functions
over spectrahedra is called semidefinite programming (SDP) \cite{BV96, To01,
WSV00}, and is an important subfield of convex optimization.

One problem which arises in SDP is dealing with spectrahedra with
empty interior.  Every convex set with empty interior is contained in an affine
hyperplane; we call these \df{thin convex sets}.
A spectrahedron which is not thin will be called \df{thick}. 
Hence  a spectrahedron is thick if it is the closure of its interior.
Correspondingly we refer to thin and thick linear pencils $L$ as those for
which $\cD_L(1)$ is thin, or respectively thick.
A paper of Klep and
Schweighofer gives an iterative process for
finding a set of linear polynomials in $\RR[x]$ whose zero set defines the
affine subspace in which a spectrahedron lies \cite[\S3]{KS}.

A {\bf matricial relaxation of an LMI} is an expression of the form $L(X)
\succeq 0$, where $X$ is a tuple of square matrix variables.
Over $\RR\axs$, the matricial relaxation of an LMI is important because every
convex bounded noncommutative basic open semialgebraic set
$\cD_p^\circ$
is the positivity set  
$\cD_L^\circ $
of a some linear pencil $L$; see \cite{HM3}.
Sets of the form $\cD_L^\circ$ are called  \df{free open spectrahedra}, while $\cD_L$ 
are \df{free spectrahedra}.
Further,  one can use results on the matricial relaxation 
of an LMI to prove new results about the original, scalar LMI.

\subsection{Behavior of Polynomials on Real Zero Sets}\label{sec:preLrad}

 One of our main themes is taking into account behavior of zero sets.
For the free algebra $\RR\axs$, there is a ``Real Nullstellensatz''. Let $p_1, \ldots, p_k, q \in
\RR\axs$.  If $q(X)v = 0$ for every $(X,v) \in \bigcup_{n \in \NN} \left( \RR^{n \times n}
\right)^g \times \RR^n$ such that $p_1(X)v = \cdots = p_k(X)v = 0$, then $q$ is an element
of the ``real radical'' of the left ideal generated by $p_1, \ldots, p_k$, see 
\cite{chmn}.  In \cite{N} this result was 
 generalized  to $\RR^{\nu \times \ell}\axs$.
For sake of completeness we mention the
free analog of Hilbert's (complex) Nullstellensatz is given in
\cite{HMP07}.
 
Now we lay out noncommutative analogs of several notions of
classical (commutative) real algebraic geometry.

\subsubsection{Left \texorpdfstring{$\RR\axs$}{[R x xs]}-Modules}\label{sec:introLmod}
For matrices of NC polynomials we need to
adapt the notion of left ideal and real left ideal.
The space
$\RR^{\nu \times \ell}\axs$ is a left
$\RR\axs$-module with the following action:
if $q \in \RR\axs$, $A  \in \RR^{\nu \times \ell}$ and $r \in \RR\axs$, then
\[
\leftact{q}{(A \otimes r)} :=  (\Id_{\nu} \otimes \, q)(A \otimes r)=
A \otimes qr,
\]
where $\operatorname{Id}_{\nu} \in \RR^{\nu \times \nu}$ denotes the $\nu \times
\nu$ identity matrix.
In the sequel, we will simplify notation by identifying $q$ with
$\operatorname{Id}_{\nu}
\otimes\, q$ and
simply writing $q(A \otimes r)$ when we mean $\leftact{q}{(A \otimes r)}$.
We will also simplify our terminology by referring to left
$\RR\axs$-submodules $I \subset \RR^{\nu \times \ell}\axs$ as
 {\bf left modules}.

\subsubsection{Zero Sets of Left \texorpdfstring{$\RR\axs$}{[R x xs]}-Modules}

For $S \subset \RR^{1 \times \ell}\axs$, for each $n \in \NN$,
 define $V(S)^{(n)}$
to be
\[ V(S)^{(n)} := \{ (X,v) \in (\RR^{n \times n})^g \times \RR^{\ell n}
\mid p(X)v = 0\ \text{for every}\ p \in S\},\]
and let $V(S)$  be
\[ V(S) := \bigcup_{n \in \NN} V(S)^{(n)}.\]
If $W \subset \bigcup_{n \in \NN} (\RR^{n \times n})^g \times
\RR^{\ell n}$, define $\cI(W)$ to be
\[ \cI(W) := \{ p \in \RR^{1 \times \ell}\axs \mid p(X)v = 0\ \text{for every}\
(X,v) \in W\}.\]
The set $\cI(W) \subset \RR^{1 \times \ell}\axs$ is clearly a left
module.
If $I \subset \RR^{1 \times \ell}\axs$ is a left module, define the (vanishing)
\textbf{radical}\index{radical} of $I$ to be
\[
\sqrt{I} := \cI(V(I)).
\]
Finally, we define the \df{free Zariski closure, $\cZ(W)$}, of  $W$ 
 to be 
$$\cZ(W):=  V( \cI(W) ). $$

Before launching into  full generality we give an appealing corollary
of our main results;
our concern here is the nature of defining polynomials for $\cD_L$;
namely, a polynomial $p$ which is nonnegative on $\cD_L$
with $\cD_p^\circ =\cD_L^\circ$.
The following theorem applies to monic pencils $L$ with the 
rather natural \df{zero determining property}:
\beq\label{eq:conj+}
\cZ( \dbcD_L^\circ) = V(L) = \{(X,v): \ L(X)v =0 \}.
\eeq

\begin{thm}[Randstellensatz]
\label{thm:detBdry2} 

 Let $L\in \RR^{\ell \times \ell}\axs$ 
 be a monic linear pencil 
with the zero determining property. 
Let  $p \in \RR^{\ell \times \ell}\axs$.
Then
\[ 
\cD_L\subseteq \cD_p \qquad \text{and} \qquad \dbcD_L^\circ\subseteq   \dbcD_p^\circ
\]
if and only if 
\[
 p = L \left( \sum_{i}^{\rm finite} q_i^*q_i \right) L + \sum_{j}^{\rm finite}
\left(r_jL + C_j \right)^*L\left(r_jL + C_j\right)
\]
where each $q_i \in \RR^{1 \times \ell}\axs$, each $r_j \in \RR^{\ell \times
\ell}\axs$, and each $C_j \in \RR^{\ell \times \ell}$ satisfies $C_jL = LC_j$.
\end{thm}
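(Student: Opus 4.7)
The plan proceeds in two parts: a direct verification for the ``if'' direction, and a reduction from the main Positivstellensatz of the paper for the ``only if'' direction.

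For the ``if'' direction, both blocks in the proposed formula for $p$ have the shape $s^* L s$ (the first with $s = q_i L$, the second with $s = r_j L + C_j$), so on $\cD_L$ each is positive semidefinite and hence $p(X) \succeq 0$. At a pair $(X, v) \in \dbcD_L^\circ$ one has $L(X) v = 0$; the first block kills $v$ because its rightmost factor is $L(X)$, and in the second block $(r_j L + C_j)(X) v = C_j v$, whence $L(X) C_j v = C_j L(X) v = 0$ by the commutation hypothesis $C_j L = L C_j$. Thus $p(X) v = 0$ and $(X, v) \in \dbcD_p^\circ$.

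For the ``only if'' direction, first apply the general Positivstellensatz (Theorem~\ref{thm:mainNMon}) with the left module $I \subset \RR^{1 \times \ell}\axs$ generated by the rows of $L$, so that $V(I) = \{(X, v) : L(X) v = 0\}$; the hypothesis on $p$ (positivity on $\cD_L$ together with boundary-vanishing on $\dbcD_L^\circ$) is exactly what the theorem requires. It produces a representation of the form \eqref{eq:posCert}
\[
p = \sum_i p_i^* p_i + \sum_j q_j^* L q_j + \sum_k \bigl(r_k^* \iota_k + \iota_k^* r_k\bigr),
\]
with each $\iota_k$ a row in the $L$-real radical of $I$. Second, invoke the zero determining hypothesis $\cZ(\dbcD_L^\circ) = V(L)$ to pin this radical down: rows vanishing on $\dbcD_L^\circ$ must vanish on all of $V(L)$, and by the matrix free Real Nullstellensatz of \cite{N} combined with the geometry of $L$, such rows are generated, modulo the left module of $L$, by expressions $e_i C L$ with $C L = L C$. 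Third, reorganize the certificate. Each $p_i$ must satisfy $p_i(X) v = 0$ on $\dbcD_L^\circ$, so by step two $p_i = q_i L$ and $p_i^* p_i = L q_i^* q_i L$, contributing to the first block. Each middle-block factor $q_j$ must satisfy $L(X) q_j(X) v = 0$ for $(X, v) \in V(L)$; decomposing $q_j$ modulo $I$ and applying step two forces $q_j = r_j L + C_j$ with $C_j L = L C_j$, producing a second-block summand. The cross-terms $r_k^* \iota_k + \iota_k^* r_k$ are absorbed by completing the square against the already-identified first- and second-block pieces.

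The main obstacle is step two, making the $L$-real radical explicit. The definition must be unfolded and combined with the zero determining property to extract generators with the clean commutation relation $C L = L C$; the completion-of-squares in step three presupposes exactly this structure. Once the shape of the radical is known, step three is careful bookkeeping about orthogonal complements of the left module of $L$ to avoid double-counting, but is conceptually routine.
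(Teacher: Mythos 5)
Your ``if'' direction is fine. The ``only if'' direction, however, starts from the wrong instance of the main theorem and this introduces an obstacle you wave past. You invoke Theorem~\ref{thm:mainNMon} with $I = I_L$, which (even after identifying $\Lrad{L}{I_L} = I_L$ via ZDP) leaves you holding a certificate with cross terms $\sum_k (r_k^*\iota_k + \iota_k^*r_k)$, $\iota_k = a_k L \in I_L$. Your claim that these ``are absorbed by completing the square'' does not go through: the identity $r^*aL + La^*r = (r+aL)^*(r+aL) - r^*r - La^*aL$ produces \emph{negative} square terms, and more basically these cross terms have only a single $L$ factor, which cannot be repackaged into blocks of the shape $Lq^*qL$ or $(rL+C)^*L(rL+C)$. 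The paper sidesteps this entirely by taking $I = \{0\}$: since $L$ is monic, $\Lrad{L}{\{0\}} = \{0\}$ (Proposition~\ref{prop:qRadRealHom}), so the Convex Positivstellensatz (Corollary~\ref{cor:convPSS}, i.e.\ \cite[Theorem~1.1(2)]{HKMb}) turns $\cD_L \subseteq \cD_p$ into a clean two-block representation $p = \sum q_i^*q_i + \sum r_j^*Lr_j$ with no $\iota_k$'s at all. The boundary hypothesis and ZDP are then used only afterward, to constrain the $q_i$ and $r_j$.

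You also misplace where the commutation condition $C_jL = LC_j$ arises. You describe the rows vanishing on $\dbcD_L^\circ$ as ``generated, modulo the left module of $L$, by expressions $e_iCL$ with $CL = LC$,'' but under ZDP plus Proposition~\ref{prop:LradL}, those rows are simply the elements of $I_L = \sqrt{I_L}$, i.e.\ arbitrary $aL$; no commutation is involved at that stage. The commuting $C_j$'s appear only when one asks which $q$ satisfy $Lq \in \RR^{\ell\times 1}I_L$, and that implication ($Lq \in \RR^{\ell\times 1}I_L$ forces $q = rL + C$ with $LC = CL$) is precisely Proposition~\ref{prop:pss}. Its proof is not routine bookkeeping: it is a two-case analysis on whether $\cN = \bigcap_i \operatorname{Null}(A_i) \cap \bigcap_i \operatorname{Null}(A_i^*)$ is trivial, using leading-term comparisons against a degree-$\le 1$ left Gr\"obner basis for $I_L$ in Case~1 and a block decomposition of $L$ reducing to Case~1 when $\cN \ne \{0\}$. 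That argument is the real content of the theorem and cannot be dismissed as ``conceptually routine.''
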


\begin{proof}
See \S\ref{sec:thick}. 
\end{proof}

This describes 
all  $p$ which are  defining polynomials of $\cD_L$
with boundary containment happening in a strong sense.
It is a slight superset of this class, since if $q_jL$ an  $r_j L  + C_j$
all vanish simultaneously on a big enough set, then $p$ might
define a smaller set than $\cD_L$.

  The zero determining property holds for an $\ell\times\ell$ pencil 
  $L$ provided that 
 \ben[\rm(a)]
 \item
   $\deg (\det L) =  \ell$; and
   \item
   $\det L$ is the smallest degree polynomial vanishing on $\partial\cD_L(1)$;
   \een
     see Corollary \ref{cor:mindegyes}(2).
These properties  are  easy to check with computer algebra, and 
{\it they hold generically} (see Corollary \ref{cor:genZDP}).

We  now
 move towards the
 presentation  of our main theorem. Its generality forces a number of definitions.

\subsubsection{Real Left Modules}\label{subsec:Lreal}
In classical real algebraic geometry \cite{BCR98, Las10, Lau09, Mar08, PD01, Put93, Sce09} at the core of the 
real Nullstellensatz are real ideals and the real radical of an ideal.
These correspond to vanishing ideals of a variety. Now we shall study
a variety intersect a positivity domain $\cD_L$. The appropriate
notion in free algebras is what we call $L$-real left modules
and $L$-real radicals. 
We now introduce them.

Let $I \subset \RR^{1 \times \ell}\axs$ be a left module, and
$L\in\RR^{\nu\times\nu}\axs$.
We say that $I$ is $L$-\textbf{real}\index{real!left module}
if whenever
\[ \sum_i^{\finite} p_i^*p_i + \sum_j^{\finite} q_j^* L q_j \in \setmult{\RR^{\ell \times 1}}{I}
+
\setmult{I^{\ast}}{\RR^{1 \times \ell}} \]
for some $p_i \in \RR^{1 \times \ell}\axs$ and $q_j\in\RR^{\nu\times\ell}\axs$,
then each $p_i \in I$ and each $Lq_j\in \R^{\nu\times 1}I$.
Note that $\RR^{\ell \times 1}I$ is the subspace of $\ell \times \ell$
matrices whose rows are elements of $I$, and $(\RR^{\ell
\times 1}I)^* = I^*\RR^{1 \times \ell}$ is the subspace of $\ell \times \ell$
matrices whose columns are elements of $I^*$. We call $I$  \df{real} if
it is $L$-real for $L=1$.

The following result shows that no generality
is lost by 
defining a real left module in terms of only rows
of matrices.

\begin{prop}
\label{prop:diffMiReal}
 A left module $I \subset \RR^{1 \times \ell}\axs$
is $L$-real if and only if
whenever
\begin{equation}
 \label{eq:diffMi}
\sum_i^{\finite} p_i^*p_i 
+ \sum_j^{\finite} q_j^* L q_j
\in \setmult{\RR^{\ell \times 1}}{I} +
\setmult{I^{\ast}}{\RR^{1 \times \ell}},
\end{equation}
for some $p_i \in
\RR^{\nu_i
\times \ell}\axs$ and $q_j\in\RR^{\nu\times\ell}\axs$, then
each $p_i \in \setmult{\RR^{\nu_i \times 1}}{I}$ and each $Lq_j\in \R^{\nu\times 1}I$.
\end{prop}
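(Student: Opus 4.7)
The plan is to reduce the matrix case to the row case by decomposing each matrix polynomial $p_i$ into its rows. The backward direction is essentially trivial (it is the special case $\nu_i = 1$), so the content is in the forward direction: assuming $I$ is $L$-real in the sense of the original definition (only row vectors $p_i$), deduce the apparently stronger matrix version.

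The key observation is that for any $p_i \in \RR^{\nu_i \times \ell}\axs$, if we write its $k$-th row as $r_{i,k} \in \RR^{1 \times \ell}\axs$ for $k = 1, \ldots, \nu_i$, then
\[
p_i^* p_i \;=\; \sum_{k=1}^{\nu_i} r_{i,k}^* \, r_{i,k}.
\]
So the hypothesis \eqref{eq:diffMi} rewrites as
\[
\sum_i^{\finite} \sum_{k=1}^{\nu_i} r_{i,k}^* r_{i,k} + \sum_j^{\finite} q_j^* L q_j \;\in\; \setmult{\RR^{\ell \times 1}}{I} + \setmult{I^{\ast}}{\RR^{1 \times \ell}},
\]
which is exactly the row-vector hypothesis in the definition of $L$-real. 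Applying $L$-realness yields $r_{i,k} \in I$ for every $i,k$ and $L q_j \in \RR^{\nu \times 1} I$ for every $j$.

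Finally, I would argue that $p_i \in \setmult{\RR^{\nu_i \times 1}}{I}$ by interpreting this subspace concretely: it is the span of simple tensors $A \otimes v$ with $A \in \RR^{\nu_i \times 1}$ and $v \in I$, which consists exactly of matrices in $\RR^{\nu_i \times \ell}\axs$ whose rows lie in $I$. Since each row $r_{i,k}$ of $p_i$ is in $I$, we have $p_i = \sum_{k=1}^{\nu_i} e_k^* \otimes r_{i,k} \in \setmult{\RR^{\nu_i \times 1}}{I}$, as desired. There is no real obstacle here; the only thing to be careful about is to spell out the identification of $\setmult{\RR^{\nu_i \times 1}}{I}$ with the set of matrices-of-polynomials whose rows lie in $I$, so that the conclusion matches the statement.
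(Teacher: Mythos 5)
Your proof is correct and takes the same approach as the paper's: decompose $p_i^*p_i$ into a sum over rows $\sum_k r_{i,k}^* r_{i,k}$ (the paper writes this via $p_i^*\operatorname{Id}_{\nu_i}p_i = \sum_j p_i^* E_{jj} p_i$), apply the row-vector definition of $L$-real, then reassemble $p_i$ from its rows to land in $\RR^{\nu_i\times 1}I$. The notational identification you spell out at the end is also how the paper's proof concludes, via $p_i = \sum_j e_j e_j^* p_i$.
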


\begin{proof}
One direction is clear. For the converse, suppose $I$ is $L$-real, and suppose that
\eqref{eq:diffMi} holds. For polynomials $p_i \in \RR^{\nu_i
\times \ell}\axs$
we have
\[ p_i^*p_i =p_i^*\operatorname{Id}_{\nu_i}p_i = \sum_{j=1}^{\nu_i} p_i^*  E_{jj
} p _ i =
\sum_{j=1}^{\nu_i}
(e_j^*p_i)^*(e_j^*p_i),\]
so that
\[ \sum_{i}^{\finite} p_i^*p_i + \sum_j^{\finite} q_j^* L q_j= \sum_{i}^{\finite} \sum_{j=1}^{\nu_i}
(e_j^*p_i)^*(e_j^*p_i) +  \sum_j^{\finite} q_j^* L q_j \in \setmult{\RR^{\ell \times 1}}{I} +
\setmult{I^*}{\RR^{1 \times \ell}}.\]
Since $I$ is $L$-real, each $e_j^*p_i
\in I$ and each $Lq_j\in \R^{\nu\times 1}I$.
Therefore, for each $i$,
\[p_i = \operatorname{Id}_{\nu_i}p_i = \sum_{j=1}^{\nu_i} e_je_j^*p_i \in
\setmult{\RR^{\nu_i \times
1}}{I}. \qedhere
\]
\end{proof}

\begin{cor}
\label{cor:diffMiReal}
 A left module $I \subset \RR^{1 \times \ell}\axs$
is real if and only if
whenever
\begin{equation}
 \label{eq:diffMi2}
\sum_i^{\finite} p_i^*p_i 
\in \setmult{\RR^{\ell \times 1}}{I} +
\setmult{I^{\ast}}{\RR^{1 \times \ell}},
\end{equation}
for some $p_i \in
\RR^{\nu_i
\times \ell}\axs$, then
each $p_i \in \setmult{\RR^{\nu_i \times 1}}{I}$.
\end{cor}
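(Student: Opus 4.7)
The plan is simply to derive Corollary \ref{cor:diffMiReal} as the special case $L=1$ of Proposition \ref{prop:diffMiReal}. By definition, $I$ is real precisely when it is $L$-real for $L=1 \in \RR^{1\times 1}\axs$, so $\nu = 1$. Under this specialization, every term $q_j^* L q_j$ in Proposition \ref{prop:diffMiReal} becomes $q_j^* q_j$, which has exactly the same form as the $p_i^* p_i$ summands; likewise the condition $L q_j \in \RR^{\nu \times 1} I$ collapses to $q_j \in I \subset \RR^{1\times\ell}\axs$, which is a special case (at $\nu_i = 1$) of belonging to $\setmult{\RR^{\nu_i \times 1}}{I}$.

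Concretely, I would proceed as follows. First I would reformulate the defining condition ``$I$ is real'' by applying the $L=1$ case of Proposition \ref{prop:diffMiReal}: since both sums $\sum p_i^* p_i$ and $\sum q_j^* q_j$ are sums of hermitian squares, they may be merged into a single sum of squares indexed over row polynomials; hence $I$ is real if and only if whenever such a single sum $\sum p_i^* p_i$ (with $p_i \in \RR^{1\times\ell}\axs$) lies in $\setmult{\RR^{\ell \times 1}}{I} + \setmult{I^*}{\RR^{1\times\ell}}$, each $p_i \in I$. This is the $\nu_i = 1$ version of the statement of the corollary.

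Second, I would lift this to general matrix polynomials $p_i \in \RR^{\nu_i \times \ell}\axs$ by invoking Proposition \ref{prop:diffMiReal} directly at $L=1$: collecting all the matrix summands into $\sum p_i^* p_i + \sum q_j^* q_j$ (the two sums being of the same type here), the proposition forces each $p_i \in \setmult{\RR^{\nu_i \times 1}}{I}$ and each $q_j \in I$, which is exactly the desired conclusion once the two sums are re-merged. The converse direction (the conclusion of the corollary implies $I$ is real) is immediate, since the defining condition for realness involves only row polynomials, which is a special case of the matrix polynomial hypothesis.

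There is essentially no obstacle to overcome here: all the work has already been done in Proposition \ref{prop:diffMiReal}, and the only ``trick'' is recognizing that with $L=1$ the two weighted sums in that proposition collapse into a single unweighted sum of squares, making the two conclusions collapse likewise.
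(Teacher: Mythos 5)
Your proposal is correct and matches the paper's intended argument exactly: the corollary is the $L=1$ (hence $\nu=1$) specialization of Proposition \ref{prop:diffMiReal}, where the two weighted sums merge into a single sum of hermitian squares and the conclusion $Lq_j \in \RR^{\nu\times 1}I$ collapses to $q_j \in I$, which is just the $\nu_i=1$ case of $p_i \in \RR^{\nu_i\times 1}I$. The paper states the corollary without separate proof for precisely this reason.
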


Here is a connection between vanishing sets and real left modules:

\begin{prop}
\label{prop:cIcCIsReal}
Let $V \subset \bigcup_{n \in
\NN} (\RR^{n \times n})^g \times
\RR^{\ell n}$. The space  
\[
\cJ_L(V) := \{p\in\R^{1\times\ell}\axs \mid p(X)v=0 \text{ for all } (X,v)\in \cI(V) 
\text{ satisfying } L(X)\succeq0\}
\]
is an $L$-real left
module.
\end{prop}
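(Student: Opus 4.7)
The plan is to verify first the left module property (which is essentially immediate) and then the defining $L$-real condition via a standard ``evaluate and read off sums of squares'' argument.

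First, $\cJ_L(V)$ is a left module: for $p\in\cJ_L(V)$ and $r\in\R\axs$, the evaluation identity $(rp)(X)v = r(X)\bigl(p(X)v\bigr)$ shows that $p(X)v=0$ forces $(rp)(X)v=0$, so $rp\in\cJ_L(V)$. For the rest, let
\[
\Sigma \;:=\; \sum_{i}^{\finite} p_i^*p_i \;+\; \sum_{j}^{\finite} q_j^* L q_j \;\in\; \R^{\ell\times 1}\cJ_L(V) \;+\; \cJ_L(V)^*\R^{1\times\ell},
\]
with $p_i\in\R^{1\times\ell}\axs$ and $q_j\in\R^{\nu\times\ell}\axs$. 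Pick any $(X,v)$ in the defining set for $\cJ_L(V)$, i.e.\ any $(X,v)$ that annihilates every element of $\cI(V)$ and satisfies $L(X)\succeq 0$. I aim to show that $\langle \Sigma(X)v,v\rangle=0$.

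The key observation is that both pieces of the right-hand sum evaluate at $(X,v)$ to a matrix whose quadratic form against $v$ vanishes. Indeed, if $A\in\R^{\ell\times 1}\cJ_L(V)$, then each row of $A$ lies in $\cJ_L(V)$, which gives $A(X)v=0$ row-block by row-block, hence $\langle A(X)v,v\rangle=0$. If $B=b^*N\in\cJ_L(V)^*\R^{1\times\ell}$ with $b\in\cJ_L(V)$ and $N\in\R^{1\times\ell}$, then
\[
\langle B(X)v,v\rangle \;=\; \langle (N\otimes\Id_n)v,\, b(X)v\rangle \;=\; 0
\]
since $b(X)v=0$. Consequently $\langle \Sigma(X)v,v\rangle=0$, which after expanding the two kinds of terms on the left becomes
\[
\sum_i \|p_i(X)v\|^2 \;+\; \sum_j \bigl\langle L(X)\,q_j(X)v,\; q_j(X)v\bigr\rangle \;=\; 0.
\]

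Now $L(X)\succeq 0$ makes every summand nonnegative, so each vanishes individually. Therefore $p_i(X)v=0$, and $\bigl\langle L(X) q_j(X)v,\,q_j(X)v\bigr\rangle = \|L(X)^{1/2} q_j(X)v\|^2=0$ forces $L(X)q_j(X)v=0$, i.e.\ $(Lq_j)(X)v=0$. As $(X,v)$ was an arbitrary point in the defining set for $\cJ_L(V)$, this yields $p_i\in\cJ_L(V)$ for each $i$, and the vanishing $(Lq_j)(X)v=0$ means that each row of $Lq_j\in\R^{\nu\times\ell}\axs$ belongs to $\cJ_L(V)$, which is precisely the assertion $Lq_j\in\R^{\nu\times 1}\cJ_L(V)$. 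Thus $\cJ_L(V)$ is $L$-real.

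The only genuine bookkeeping concern is getting the matrix-polynomial evaluation and the row/column conventions right when separating the two factors in $\R^{\ell\times 1}\cJ_L(V)$ versus $\cJ_L(V)^*\R^{1\times\ell}$; once that is straightened out, positivity of $L(X)$ plus the sum-of-squares structure does everything. No appeal to any Positivstellensatz or to the $L$-real radical is needed at this stage---the proposition only asserts that $\cJ_L(V)$ itself is $L$-real.
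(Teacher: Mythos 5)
Your proof is correct and follows essentially the same argument as the paper: evaluate the sum-of-squares expression at a test point $(X,v)$ in the defining set, observe that the right-hand side has zero quadratic form against $v$ because each factor from $\cJ_L(V)$ annihilates $v$, and then use $L(X)\succeq 0$ to force each summand to vanish individually. You spell out a couple of steps the paper leaves implicit (the left-module property and the row/column bookkeeping for why $v^*\iota(X)v=0$ for $\iota$ in $\RR^{\ell\times 1}\cJ_L(V)+\cJ_L(V)^*\RR^{1\times\ell}$), but the substance is identical.
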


\begin{proof}
 Suppose
\[\sum_i^{\finite} p_i^*p_i + \sum_j^{\finite} q_j^*Lq_j \in \RR^{\ell \times 1} \cJ_L(V) +  \cJ_L(V)^*\RR^{1
\times \ell},\]
where each $p_i \in \RR^{1 \times \ell}\axs$ and each $q_j\in\R^{\nu\times\ell}$.
For each $(X,v) \in V$ with $L(X)\succeq0$,  we have
\[\sum_i^{\finite} v^* p_i(X)^*p_i(X)v
+ \sum_j^{\finite} v^* q_j(X)^* L(X) q_j(X) v  = 0.
\]
Therefore $p_i(X)v = 0$ and $L(X)q_j(X)v=0$, which implies that each $p_i \in \cJ_L(V)$,
and each $Lq_j\in\R^{\nu\times1}\cJ_L(V)$.
\end{proof}

\begin{cor}
\label{cor:cIcCIsReal}
Let $V \subset \bigcup_{n \in
\NN} (\RR^{n \times n})^g \times
\RR^{\ell n}$. The space $\cI(V) \subset \RR^{1 \times \ell}\axs$ 
is a real left
module.
\end{cor}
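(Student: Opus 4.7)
The plan is to deduce this directly from Proposition \ref{prop:cIcCIsReal} by specializing to the trivial pencil $L=1$, since, by definition, a left module is \emph{real} precisely when it is $1$-real.

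First I would note that if we take $L$ to be the $1\times 1$ monic pencil $L=1$, then the condition $L(X)\succeq 0$ in the definition of $\cJ_L(V)$ is vacuous, so $\cJ_1(V) = \cI(V)$ after tracing through the definitions (the condition that $p(X)v=0$ on those $(X,v)\in V$ with $\Id\succeq 0$ is just the condition that $p(X)v=0$ on all of $V$). Also with $L=1$ the summand $\sum_j q_j^* L q_j$ in the definition of $L$-realness is simply $\sum_j q_j^* q_j$, which can be absorbed into $\sum_i p_i^*p_i$; and the condition $Lq_j \in \RR^{\nu\times 1}\cI(V)$ reduces to $q_j \in \RR^{\nu\times 1}\cI(V)$, which is subsumed by the conclusion on the $p_i$. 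Thus $1$-realness in the sense of the definition reduces exactly to the property: whenever $\sum_i p_i^*p_i \in \RR^{\ell\times 1}\cI(V) + \cI(V)^*\RR^{1\times\ell}$, each $p_i \in \cI(V)$.

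Next I would invoke Proposition \ref{prop:cIcCIsReal} with $L=1$ to get that $\cJ_1(V)=\cI(V)$ is $1$-real. Unwinding the definition of \emph{real} yields the claim.

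There is really no main obstacle here; the only small bookkeeping step is to verify that setting $L=1$ in the definition of $L$-real recovers exactly the notion of real as introduced in the paragraph following Proposition \ref{prop:diffMiReal}, and that $\cJ_1(V)$ literally coincides with $\cI(V)$. Both verifications are immediate from the definitions, so the corollary is a one-line specialization of the preceding proposition.
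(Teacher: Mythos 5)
Your proof is correct and matches the paper's intent exactly: the corollary is placed immediately after Proposition \ref{prop:cIcCIsReal} with no separate proof, precisely because it is the $L=1$ specialization you describe (noting, as you do, that $\cJ_1(V)=\cI(V)$ and that ``real'' is defined to mean ``$1$-real''). Your bookkeeping that the $\sum_j q_j^*Lq_j$ terms merge into the $\sum_i p_i^*p_i$ terms and that $Lq_j\in\RR^{\nu\times 1}I$ collapses to $q_j\in I$ when $\nu=1$ is accurate and is exactly the content of Corollary \ref{cor:diffMiReal} as well.
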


\subsubsection{The Real Radical}
We now introduce a generalization of the real radical of a left module
for use in studying the positivity set of a linear pencil $L$.
Just like the vanishing radical of $I$ consists of polynomials vanishing on the variety $V(I)$, 
the $L$-real radical of $I$ consists of polynomials vanishing on the intersection of $V(I)$ with the positivity
set $\cD_L$ of $L$, see Proposition \ref{prop:zeroOnModPosIntro} below.

An intersection of $L$-real left modules is itself an $L$-real left module.
Define the
\textbf{$L$-real radical}\index{real!radical} of a left module $I \subset \RR^{1
\times \ell}\axs$ to be
\[\Lrad{L}{I} = \bigcap_{\substack{J \supseteq I\\ J\ \mbox{\tiny $L$-real}}} J
=\text{the smallest $L$-real left module containing } I.\]
The $L$-real radical of $I$ with $L=1$ is called the \df{real radical} of $I$ and
denoted by $\rr I$.
As we explain later,
the article \cite{N} in
 \S9.1 presents an algorithm for computing
$\rr{I}$ for a finitely-generated left module $I \subset \RR^{1 \times
\ell}\axs$.

Proposition \ref{prop:cIcCIsReal} implies that for each left module
$I \subset \RR^{1\times \ell}\axs$,
\[ I \subset \rr{I} \subset \sqrt{I}.\]
Much more difficult to prove is $ \rr{I} = \sqrt{I}$
for finitely generated $I$,
and this is \cite[Theorem 1.3]{N}. 
We also describe this here in  \S\ref{subsub:lnss}
in the context of our more general theory.

That $L$-real radicals are closely related to vanishing-positivity is shown in the next proposition,
where we show that  $\Lrad{L}{I}$ is the vanishing ideal 
(i.e., a ``free Zariski closure'') of $V(I)\cap \cD_L$. 
More precisely,
$ \Lrad{L}{I}= \cI\big( \cZ ( V(I) \cap \cD_L ) \big).$

\begin{prop}
 \label{prop:zeroOnModPosIntro}
 Let $L \in \RR^{\nu \times \nu}\axs$
be a linear pencil. 
Let $I \subset \RR^{1 \times \ell}\axs$ be a finitely-generated left module,
and let $p \in \RR^{1 \times \ell}\axs$.
Then $p(X)v = 0$ whenever $(X,v) \in
V(I)$ and $L(X)
\succeq 0$ if and only if $p \in \Lrad{L}{I}$.
\end{prop}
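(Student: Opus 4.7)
My plan is to prove the two directions separately. The forward implication is immediate from Proposition \ref{prop:cIcCIsReal}, while the converse is the Nullstellensatz-type half and will rely on the main Positivstellensatz, Theorem \ref{thm:mainNMon}, proven later in the paper.

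For the forward direction, assume $p \in \Lrad{L}{I}$. By Proposition \ref{prop:cIcCIsReal}, the set $\cJ_L(V(I))$ is an $L$-real left module, and it plainly contains $I$, since every element of $I$ vanishes on all of $V(I)$ and in particular on its subset where $L(X) \succeq 0$. The minimality in the definition of the $L$-real radical then gives $\Lrad{L}{I} \subseteq \cJ_L(V(I))$, which is exactly the required vanishing statement.

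For the converse, I plan to argue by contrapositive: assuming $p \notin \Lrad{L}{I}$, produce a pair $(X,v) \in V(I)$ with $L(X) \succeq 0$ and $p(X)v \ne 0$. The construction follows the GNS/Hahn-Banach separation paradigm applied to the quotient $\RR^{1 \times \ell}\axs / \Lrad{L}{I}$. The $L$-realness condition, which mixes pure square terms $\sum p_i^*p_i$ with the weighted terms $\sum q_j^*Lq_j$ (as in Proposition \ref{prop:diffMiReal}), is engineered precisely so that the natural Hermitian pairing descends to a positive semidefinite form on the quotient, enabling a GNS-style representation in which the class of $p$ acts nontrivially on some cyclic vector while the operator representing $L$ is positive semidefinite and the generators of $I$ annihilate that vector. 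A finite-dimensional truncation, controlled by the finite generation of $I$ and a degree bound of the sort supplied by Theorem \ref{thm:mainNMon} and its specialization \cite[Theorem 1.3]{N}, then yields the desired tuple $X \in \matng$ and vector $v$. The main obstacle I anticipate is this final truncation step: without the $L$-real structure one could separately arrange vanishing on $I$ or positivity of $L$, but not both simultaneously in a finite-dimensional model; the whole architecture of the $L$-real radical is aimed at closing precisely this gap.
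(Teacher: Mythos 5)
Your forward direction is correct and is essentially the paper's, differing only in which preparatory result is invoked: you apply Proposition~\ref{prop:cIcCIsReal} (that $\cJ_L(V)$ is an $L$-real left module) and minimality of $\Lrad{L}{I}$, whereas the paper's proof uses the closely related Proposition~\ref{prop:idealOfVarIsLRad} (that $\cI(\{(X,v)\})$ is $L$-real whenever $L(X)\succeq0$). Either route yields $\Lrad{L}{I}\subseteq\cJ_L(V(I))$, so this half is fine.

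The converse is where your sketch has a genuine gap. You propose to argue by contrapositive and rebuild a GNS/Hahn--Banach separation argument, ending with a finite truncation step that you yourself flag as the main unresolved obstacle. What you are describing is precisely the content of Lemma~\ref{lem:goodSepFun}, Corollary~\ref{cor:GNS}, and Lemma~\ref{lem:main} --- the machinery the paper developed in order to prove Theorem~\ref{thm:mainNMon}. But by the time this proposition is established (\S\ref{subsec:cor410}), Theorem~\ref{thm:mainNMon} is already available as a black box, and redoing that construction (without actually closing the truncation step) is exactly where your argument stops being a proof. The missing idea is the $-p^*p$ reduction: the vanishing hypothesis gives $v^*\bigl(-p(X)^*p(X)\bigr)v\geq0$ whenever $(X,v)\in V(I)$ and $L(X)\succeq0$, so Theorem~\ref{thm:mainNMon}(1) applied to the \emph{symmetric} polynomial $-p^*p$ produces a certificate
\[
-p^*p \;=\; \sum_j^{\finite} q_j^*q_j \;+\; \sum_k^{\finite} r_k^*Lr_k \;+\; \iota + \iota^*, \qquad \iota\in\RR^{\ell\times1}\Lrad{L}{I},
\]
equivalently
\[
p^*p + \sum_j^{\finite} q_j^*q_j + \sum_k^{\finite} r_k^*Lr_k \;\in\; \RR^{\ell\times1}\Lrad{L}{I} + \bigl(\Lrad{L}{I}\bigr)^*\RR^{1\times\ell}.
\]
Since $\Lrad{L}{I}$ is itself $L$-real (being an intersection of $L$-real modules), this forces $p\in\Lrad{L}{I}$ directly, with no further GNS or truncation argument. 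Converting the vanishing statement for $p$ into a positivity statement for $-p^*p$, so that the already-proved Positivstellensatz applies, is the step your outline lacks and is what eliminates the obstacle you anticipate.
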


The proof requires some of the heaviest 
results of this paper and is presented in \S\ref{subsec:cor410}.

\subsection{Right Chip Spaces}
\label{sub:rightChip}

We now introduce  a natural class of polynomials  
needed for the proofs, chip spaces.
Also we state our main theorems in terms of chip spaces
since
keeping track of the chip space where each polynomial lies
 adds significant generality, and leads to optimal degree and size bounds; cf.~\cite{KP10}.

Consider $\RR^{1\times\ell}\axs$.
 A monomial $e_i \otimes a$ \textbf{divides} another monomial $e_j
\otimes b$ \textbf{on the right}\index{divisibility on the right} if $i = j$ and
$b = w a$ for some $w \in \axs$ so that $w(e_i\otimes a) =
e_j \otimes b$.
If additionally $e_i \otimes a \neq e_j \otimes b$, then
$e_i \otimes a$
\textbf{properly divides}\index{divisibility on the right!proper} $e_j \otimes
b$ \textbf{on the right}.
We call $e_i \otimes a$ a \textbf{(proper) right chip} of
$e_j \otimes b$ if $e_i \otimes a$ (properly) divides it on the right.

A vector subspace $\rC \subset \RR^{1 \times \ell}\axs$
 is a \textbf{right chip
space}\index{right chip space}
if $\rC$ is spanned by a set of monomials such that
whenever $e_i \otimes w_1w_2w_3,\,
e_i \otimes w_3 \in \rC$
for some $w_1, w_2, w_3 \in \axs$, then $e_i \otimes w_2w_3 \in \rC$.
A right chip space $\rC$ is \textbf{finite}\index{right chip
space!finite}
if $\rC$ is finite dimensional.
A right chip space $\rC$ is \textbf{full} \index{right chip
space!full}
if for each $e_i \otimes w \in \rC$, all right chips of
$e_i \otimes w$ are in $\rC$ as well.

\begin{exa}\label{ex:ezChip}
 The space $\RR^{1 \times \ell}\axs_d$, the space of all $1\times\ell$ matrix NC polynomials of
degree bounded by $d$, is a full, finite right chip space.
\end{exa}

For a finitely-generated left module $I \subset \RR^{1 \times \ell}\axs$ we
can  find full, finite right chip spaces $\rC \subset \RR^{1 \times
\ell}\axs$ such that the generators of $I$ are in the space $\RR\axs_1 \rC$.

\begin{exa}
 Let $I \subset \RR^{1 \times \ell}\axs$ be generated by some
polynomials in the span of the monomials $m_1, \ldots, m_k$. The space
\[\rC := \operatorname{span} \{ m \in \RR^{1 \times \ell}\axs \mid m \mbox{
a proper right chip of some } m_i\}\]
is a full, finite right chip space such that $\RR\axs_1 \rC$ contains all the
generators of $I$.
\end{exa}

At first reading the main results of this paper, soon to be stated, the reader
should just think of $\rC$ as being  $\RR^{1 \times \ell}\axs_d$, cf.~Example \ref{ex:ezChip}.

An appeal of right chip spaces is they are easily
computable.

\subsection{\texorpdfstring{$(L, \rC)$}{[(L, M)]}-Real Radical Modules}\label{sec:introLrad}
Chip spaces 
 lead to an extension of the notion of an
 $L$-real radical of a left module.

Let $I \subset \RR^{1 \times \ell}\axs$ be a left module, let $L \in
\RR^{\nu \times \nu}\axs$, and let $\rC \subset \RR^{1 \times
\ell}\axs$ be a right chip space.
We say that $I$ is
\textbf{$(L, \rC)$-real}\index{L,M
real@$(L,\rC)$-real} if
whenever 
\begin{align}
\label{eq:qradCond}
\sum_{i}^{\finite} p_i^{\ast}p_i + \sum_{j}^{\finite}
q_j^{\ast}Lq_j \in \RR^{\ell \times 1} I + I^{\ast}\RR^{1 \times \ell}
\end{align}
for some $p_i \in \rC$ and $q_j \in
\RR^{\nu \times 1}\rC$, then each
$p_i \in I$ and each $Lq_j \in \RR^{\nu \times 1}I$.
We say $I$ is \textbf{strongly
$(L, \rC)$-real}\index{L,M real@$(L,d)$-real!strong(ly)} if
whenever
\eqref{eq:qradCond} holds, then
each $p_i \in I$ and each $q_j \in \RR^{\nu \times 1}I$.

Define the
\textbf{$(L,\rC)$-real radical}\index{$\Lradd{L}{\rC}{I}$} of $I$
to be
\[ \Lradd{L}{\rC}{I} = \bigcap_{\substack{J \supset I\\J\
(L,\rC)\text{-real}}}
J =
\text{smallest}\ (L,\rC)\text{-real left module containing }  I.\]
Define the \textbf{strong $(L,\rC)$-real radical} of $I$ to be
\[ \LraddS{L}{\rC}{I} = \bigcap_{\substack{J \supset I\\J\
\text{strongly}\\ (L,\rC)\text{-real}}} J =
\text{smallest strongly}\ (L,\rC)\text{-real left module containing }  I.\]
If $\rC=\RR^{1 \times \ell}\axs$, we omit it and talk about $L$-real modules,
and the (strongly) $L$-real radical of $I$.
These definitions extend the notion of a real module as given in \S\ref{sec:preLrad}, 
e.g.~a left module $I$ is real if it is $1$-real.
As we will see in  \S\ref{sect:CompRad} all these real radicals are algorithmically  computable.

\subsection{Overview. The main results and some consequences}
\label{sub:overview}

The main general result of this paper is the following theorem, proved in \S\ref{sec:mainResults}.
At first reading the reader is advised to think of
$\rC$ as all of $\R^{1\times\ell}\axs$.

\begin{theorem}
\label{thm:mainNMon}
Suppose $L \in \RR^{\nu \times \nu}\axs$ is a linear pencil.
Let $\rC \subset
\RR^{1 \times \ell}\axs$
be a finite chip space,
let $I
\subset \RR^{1 \times \ell}\axs$ be a
left $\RR\axs$-module generated by polynomials in $\RR\axs_1 \rC$,
and let $p \in \rC^*\RR\axs_1 \rC$ be a symmetric polynomial.
\begin{enumerate}[\rm(1)]
 \item $v^{\ast}p(X)v \geq 0$ whenever $(X,v) \in V(I)$ and $L(X) \succeq
0$ if and only if $p$ is of the form
\begin{equation}
\label{eq:clasPosIdeal}
  p = \sum_{i}^{\finite}p_i^{\ast}p_i + \sum_{j}^{\finite} q_j^{\ast}Lq_j +
  \sum_k^{\finite} (r_k^{\ast}\iota_k + \iota_k^{\ast}r_k)
\end{equation}
where each $p_i, r_k \in \rC$, each $q_j \in
\RR^{\ell \times 1} \rC$ and each $\iota_k \in \Lradd{L}{\rC}{I} \cap \RR\axs_1
\rC$.
\item $v^{\ast}p(X)v \geq 0$ whenever $(X,v) \in V(I)$ and $L(X) \succ 0$,
if and only if $p$ is of the form
\eqref{eq:clasPosIdeal}
where each $p_i, r_k \in \rC$, each $q_j \in
\RR^{\nu \times 1}\rC$ and
each $\iota_k \in \LraddS{L}{\rC}{I} \cap \RR\axs_1 \rC$
\end{enumerate}
\end{theorem}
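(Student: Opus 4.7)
The plan is to prove the easy ``if'' direction by direct computation using that elements of $\Lradd{L}{\rC}{I}$ vanish on $V(I)\cap\cD_L$, and to prove the substantive ``only if'' direction via a Hahn--Banach separation argument combined with a GNS-style construction, following the standard paradigm for free Positivstellens\"atze (cf.~\cite{HKMb,chmn,N}). For the ``if'' direction in part (1): by Proposition \ref{prop:cIcCIsReal}, the module $\cJ_L(V(I))$ is $L$-real, hence a fortiori $(L,\rC)$-real, and it plainly contains $I$, so $\Lradd{L}{\rC}{I}\subseteq \cJ_L(V(I))$. Thus for any $(X,v)\in V(I)$ with $L(X)\succeq 0$, each $\iota_k(X)v=0$ and the cross terms $r_k^{\ast}\iota_k+\iota_k^{\ast}r_k$ vanish after evaluation, while $p_i^{\ast}p_i$ and $q_j^{\ast}Lq_j$ contribute $\|p_i(X)v\|^2\geq 0$ and $\langle L(X)q_j(X)v,\,q_j(X)v\rangle\geq 0$ respectively; part (2) is analogous, using that $\LraddS{L}{\rC}{I}\subseteq\Lradd{L}{\rC}{I}\subseteq\cJ_L(V(I))$.

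For the ``only if'' direction, let $C\subset \rC^{\ast}\RR\axs_1\rC$ denote the convex cone of polynomials representable as in \eqref{eq:clasPosIdeal}. A central technical step is to show $C$ is closed in the finite-dimensional symmetric part of its ambient space: the sum-of-squares portion $\sum p_i^{\ast}p_i+\sum q_j^{\ast}Lq_j$ is a closed cone by Carath\'eodory together with closedness of the PSD cone, while the $\iota$-portion is the finite-dimensional linear subspace $\rC^{\ast}\bigl(\Lradd{L}{\rC}{I}\cap\RR\axs_1\rC\bigr)+\bigl(\Lradd{L}{\rC}{I}\cap\RR\axs_1\rC\bigr)^{\ast}\rC$. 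Arguing by contrapositive, suppose $p\notin C$. Hahn--Banach produces a symmetric linear functional $\lambda$ with $\lambda(p)<0$ and $\lambda\geq 0$ on $C$. Then (i) $\langle a,b\rangle_{\lambda}:=\lambda(b^{\ast}a)$ is PSD on $\rC$, (ii) $\lambda(q^{\ast}Lq)\geq 0$ for $q$ in the appropriate weight space, and (iii) $\lambda$ vanishes on the $\iota$-subspace above. Quotienting $\rC$ by the kernel of $\langle\cdot,\cdot\rangle_\lambda$ yields a finite-dimensional real Hilbert space $H$; using that $\rC$ is a right chip space and that the generators of $I$ lie in $\RR\axs_1\rC$, the right actions of $x_i$ and $x_i^{\ast}$ descend through the quotient to a tuple $X\in(\RR^{n\times n})^g$, while the basis rows $e_1,\dots,e_\ell\in\rC$ assemble into a vector $v\in\RR^{\ell n}$. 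Condition (iii) forces $(X,v)\in V(I)$, condition (ii) transports to $L(X)\succeq 0$, and condition (i) gives $v^{\ast}p(X)v=\lambda(p)<0$, contradicting the hypothesis.

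The hard part will be engineering this GNS construction cleanly. Because $\rC$ is not itself closed under right multiplication by variables, one must work with the single-step enlargement $\RR\axs_1\rC$, verify that the right actions of $x_i,\,x_i^{\ast}$ descend to well-defined \emph{symmetric} linear maps on $H$, and check that the PSD form on the weight space for $L$ really transports to $L(X)\succeq 0$ on the ambient $\RR^{\ell n}$ (not merely on the cyclic subspace generated by $v$); this is exactly why the hypotheses confine $p$ to $\rC^{\ast}\RR\axs_1\rC$, the generators of $I$ to $\RR\axs_1\rC$, and restrict the radical to its intersection with $\RR\axs_1\rC$ --- those are the sizes at which the GNS machinery closes up. For part (2), the hypothesis $L(X)\succ 0$ is strictly weaker, so the certificate cone must grow: the SOS weights are enlarged to $q_j\in\RR^{\nu\times 1}\rC$ and the radical becomes the \emph{strong} one, because once $L(X)$ is invertible, $L(X)q(X)v=0$ is equivalent to $q(X)v=0$, which is exactly the defining feature of $\LraddS{L}{\rC}{I}$. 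One produces $L(X)\succ 0$ (rather than $\succeq 0$) either by separating from the interior of the corresponding cone or by a small perturbation of $\lambda$ making the $L$-weighted form strictly positive definite before performing GNS.
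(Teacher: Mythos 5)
The easy implication matches the paper's approach (Proposition \ref{prop:idealOfVarIsLRad}/\ref{prop:cIcCIsReal}), so focus on the converse.

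Your plan for the hard direction is in the right family (separation plus GNS), but it is not the paper's route and it leaves two substantive gaps. First, the closedness claim for the cone $C$. As you state it, $C$ is the set of all $\sum p_i^*p_i+\sum q_j^*Lq_j$ with $p_i\in\rC$, $q_j\in\RR^{\nu\times 1}\rC$, plus the linear subspace from the radical. That cone's SOS part is \emph{not} the image of the PSD cone under an injective-on-PSD map: any $p_i\in\Lradd{L}{\rC}{I}\cap\rC$ gives a square lying in the linear subspace, so the kernel of the Gram map meets the PSD cone nontrivially and Carath\'eodory alone does not give closedness. The paper sidesteps this by first decomposing $\rC=(\,I\cap\rC\,)\oplus T$ and $\RR^{\nu\times 1}T=J\oplus K$ (\S\ref{sect:truncTestMod}), restricting to the truncated test module $\Md=M_{T,K}(L)$, and invoking $(L,\rC)$-realness via Lemma \ref{prop:lowdegqradcond} to make the kernel-intersection trivial; only then does the image-of-a-closed-cone argument apply, and $C=\Md+V$ with $\Md\cap V=\{0\}$ makes $C$ closed. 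Your proposal never introduces $T$, $K$, or uses $(L,\rC)$-realness here, which is exactly the content that makes the cone well behaved.

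Second, and more importantly, ordinary Hahn--Banach separation on a closed cone gives only $\lambda\ge0$ on $C$. That is not enough to close up the GNS step. The flat-extension criterion in Proposition \ref{prop:flatExtRC}(1), which is what makes the quotient by the kernel of $\langle\cdot,\cdot\rangle_\lambda$ into a finite-dimensional cyclic representation with $\RR^{\ell n}=\{q(X)v\mid q\in\rC\}$, requires that the kernel of $\lambda$ on squares be \emph{exactly} $I\cap\rC$; i.e.\ one needs $\lambda>0$ on $\Md\setminus\{0\}$. If there were some $\tau\in T\setminus\{0\}$ with $\lambda(\tau^*\tau)=0$, the flatness verification $\lambda(b^*c\tau)=0$ is not automatic and the cyclicity/dimension count used to deduce $(X,v)\in V(I)$ and $L(X)\succeq0$ can fail. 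The paper obtains this strict positivity not from Hahn--Banach but from the Bohnenblust dichotomy Lemma \ref{lem:pSd}, which produces a positive-definite matrix $C\succ0$ in $\widehat{\cZ}^\perp$ and hence a functional strictly positive on $\Md\setminus\{0\}$ (Lemma \ref{lem:goodSepFun}, property (1)); the case distinction $W=\{0\}$ vs.\ $W=\RR p$ and the $\epsilon\xi$ perturbation there are precisely what reconcile strict positivity on $\Md$ with $\lambda(p)<0$. Your sketch of part (2) (``separating from the interior of the corresponding cone or a small perturbation'') has the same issue in sharper form: making $L(X)\succ0$ rather than $\succeq0$ comes in the paper from decomposing $\RR^{\nu\times 1}T=J\oplus K$ with $J=\{\vartheta:\vartheta\in\RR^{\nu\times1}I\}$ (the \emph{strong} radical), not just from perturbing $\lambda$. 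In short: the architecture you propose is compatible with the result, but the closedness and strict-positivity claims are not justified as written, and they are precisely the technical content that Lemmas \ref{prop:lowdegqradcond}, \ref{lem:pSd}, \ref{lem:goodSepFun} and \ref{lem:main} supply.
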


\begin{remark}
Our machinery of chip spaces allows us to give additional information on
the size of the testing matrices $X$
in (1) and (2).
Indeed, for certain cases we shall obtain provably optimal size and degree bounds;
 see \S\ref{subsec:sizeBd}  for details.
\end{remark}

Theorem \ref{thm:mainNMon} is a general theorem
from which we  deduce several interesting corollaries.

\subsubsection{A Real Nullstellensatz for $\RR^{\nu \times \ell}\axs$}

One corollary is  \cite[Theorem 1.3]{N},
which is that paper's main theorem and is
a generalization of the 
Real Nullstellensatz from \cite{chmn}.
The heavy machinery developed in \cite{N} which is used to prove this
is also essential to many proofs in this paper.

\begin{corollary}[\protect{\cite[Theorem 1.3]{N}}]
\label{thm:mainFromNotes}
 Let $p_1, \ldots, p_k$ be such that each
$p_i \in \RR^{\nu_i \times \ell}\axs$ for some $\nu_i \in \NN$.
Suppose $q \in \RR^{\nu \times \ell}\axs$, with $\nu \in \NN$, has the property
that
whenever $p_1(X)v, \ldots, p_k(X)v = 0$,
where $(X,v) \in  \bigcup_{n \in \NN} (\RR^{n \times n})^g \times
\RR^{\ell n}$, then $q(X)v = 0$.
Then $q$ is an element of the space $I$ defined by
\[
 I := \setmult{\RR^{\nu \times 1}}{\rr{\sum_{i=1}^k
\setmult{\RR^{1
\times \nu_i}\axs}{ p_i}}}.
\]

Consequently, if the left module
\begin{equation}
\label{eq:checkIReal}
\sum_{i=1}^k
\RR^{1
\times \nu_i}\axs p_i
\end{equation}
is real, and if $q(X)v = 0$
whenever $p_1(X)v, \ldots, p_k(X)v = 0$, then
$q$ is of the form
\[q = r_1p_1 + \cdots + r_k p_k\]
for some $r_i \in \RR^{\nu \times \nu_i}\axs$.
\end{corollary}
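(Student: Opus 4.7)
The plan is to reduce the matrix problem to rows and then apply Proposition \ref{prop:zeroOnModPosIntro} with the trivial pencil $L=1$, for which the positivity constraint $L(X)\succeq0$ is vacuous. Set
\[
J := \sum_{i=1}^k \R^{1\times\nu_i}\axs\, p_i \;\subset\; \R^{1\times\ell}\axs,
\]
the finitely generated left module spanned by the rows of the $p_i$. Plugging in the constant row $e_j\in\R^{1\times\nu_i}$ for each $j$ extracts the $j^{\text{th}}$ block of $p_i(X)v$, and hence
\[
V(J)=\{(X,v):p_i(X)v=0 \text{ for every } i=1,\dots,k\},
\]
which is exactly the vanishing locus appearing in the hypothesis.

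Decompose $q$ into its rows using $\Id_\nu=\sum_{j=1}^\nu e_je_j^*$:
\[
q=\sum_{j=1}^\nu e_j\, \phi_j, \qquad \phi_j:=e_j^* q\in\R^{1\times\ell}\axs.
\]
Then $q(X)v=0$ on $V(J)$ if and only if each row satisfies $\phi_j(X)v=0$ on $V(J)$. Proposition \ref{prop:zeroOnModPosIntro} applied to $J$ with $L=1$ then yields $\phi_j\in\Lrad{1}{J}=\rr{J}$ for every $j$. Reassembling,
\[
q=\sum_{j=1}^\nu e_j\phi_j \;\in\; \R^{\nu\times 1}\rr{J}\;=\;I,
\]
which proves the first assertion.

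For the second assertion, if the module $J$ is itself real then $J$ is its own smallest real extension, so $\rr{J}=J$; hence every row expands as $\phi_j=\sum_i s_{ji}p_i$ with $s_{ji}\in\R^{1\times\nu_i}\axs$. Defining $r_i:=\sum_{j=1}^\nu e_js_{ji}\in\R^{\nu\times\nu_i}\axs$ and reassembling gives
\[
q=\sum_{j=1}^\nu e_j\phi_j=\sum_{i=1}^k\Bigl(\sum_{j=1}^\nu e_js_{ji}\Bigr)p_i=\sum_{i=1}^k r_i p_i,
\]
as advertised. The only nontrivial input is Proposition \ref{prop:zeroOnModPosIntro}, whose content is the noncommutative real Nullstellensatz and which in the paper is derived from Theorem \ref{thm:mainNMon}; everything else amounts to routine bookkeeping via the resolution of the identity $\Id_\nu=\sum e_je_j^*$, with the only technical point being to match the row-based vanishing set $V(J)$ to the joint zero locus of the original matrix polynomials $p_i$.
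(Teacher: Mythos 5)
Your proof is correct and takes essentially the same route as the paper: identify $V(J)$ (with $J$ the row module) with the joint zero locus of the $p_i$, invoke a noncommutative real Nullstellensatz to place each row $e_j^*q$ in $\rr{J}$, and reassemble via $\Id_\nu=\sum e_je_j^*$, with the second assertion following from $\rr J=J$ when $J$ is real. The only cosmetic difference is that you cite Proposition~\ref{prop:zeroOnModPosIntro} with $L=1$ where the paper cites Corollary~\ref{thm:lnss}; both are equivalent here and both are deduced from Theorem~\ref{thm:mainNMon}.
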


\begin{proof}
See \S\ref{subsub:lnss}.
\end{proof}

\subsubsection{Convex Positivstellensatz}

Applying Theorem \ref{thm:mainNMon} in the case  $I = \{0\}$ gives an
extension of the Convex Positivstellensatz of Helton, Klep, and McCullough
\cite{HKMb} to the case where the positivity set $\cD_L$ of a linear pencil $L$
may
have  empty interior.  This  is given in Corollary \ref{cor:radZero}.
Note also that Corollary \ref{cor:radZero} gives a substantial refinement of the
degree bounds 
obtained in \cite{HKMb} by using right chip spaces. 

\subsubsection{Thick Pencils}

Some basic properties of $L$-real radicals follow from Theorem \ref{thm:mainNMon}.

\begin{prop} 
\label{prop:LradL}
 Let $L \in \RR^{\ell \times \ell}\axs$ be a monic linear pencil and let $I_L =
\RR^{1 \times \ell}\axs L$.
 Then
 $I_L= \sqrt[\real]{I_L} = \sqrt{I_L}$.
\end{prop}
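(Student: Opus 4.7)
The plan is to establish the chain $I_L \subseteq \rr{I_L} \subseteq \sqrt{I_L} \subseteq I_L$. The first inclusion is immediate from the definition of the real radical. The second follows from Corollary \ref{cor:cIcCIsReal}, which exhibits $\sqrt{I_L} = \cI(V(I_L))$ as a real left module containing $I_L$, so the smallest such module is contained in it. For the middle equality $\rr{I_L} = \sqrt{I_L}$, I apply Corollary \ref{thm:mainFromNotes} with $k = 1$, $p_1 = L \in \RR^{\ell\times\ell}\axs$, $\nu_1 = \ell$, and $\nu = 1$: any $q \in \RR^{1 \times \ell}\axs$ vanishing on $V(L)$ lies in $\RR^{1\times 1}\rr{I_L} = \rr{I_L}$, which gives $\sqrt{I_L} \subseteq \rr{I_L}$ and closes the loop at this stage.

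The core of the proof is the final inclusion $\sqrt{I_L} \subseteq I_L$: given $p \in \RR^{1 \times \ell}\axs$ with $p(X)v = 0$ whenever $L(X)v = 0$, one must produce $a \in \RR^{1\times\ell}\axs$ with $p = aL$. Since $L$ is monic, write $L = \Id + L_1$ with $L_1$ homogeneous of degree one. Then $L$ is a unit in the formal power series ring $\RR\langle\langle x,x^*\rangle\rangle^{\ell\times\ell}$ with inverse $\sum_{k \geq 0}(-L_1)^k$, so there is a unique formal series $a := p L^{-1}$ satisfying $aL = p$. Its coefficients obey the recursion $a_\emptyset = p_\emptyset$ and $a_w = p_w - a_{w'} M_y$ for $w = w'y$ with $y \in \{x_i, x_i^*\}$, where $M_y \in \RR^{\ell \times \ell}$ is the coefficient of $y$ in $L_1$. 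The task thus reduces to showing that $a$ is in fact a polynomial, i.e.\ that $a_w = 0$ for all but finitely many $w$.

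The main obstacle is precisely this polynomiality: the vanishing hypothesis $p \in \sqrt{I_L}$ has to be converted into a degree bound on $a$. My approach will be, assuming for contradiction that $a_w \neq 0$ for infinitely many words $w$, to construct a pair $(X, v) \in V(L)$ whose evaluation $p(X)v$ picks up a high-degree tail of $a$ and is therefore nonzero. Such $(X,v)$ can be assembled from Jordan-block-type tuples of operators tailored to $L$, with the monic hypothesis ensuring both the formal invertibility of $L$ and the existence of nontrivial null vectors of $L(X)$ encoding words of prescribed length. A cleaner alternative route I expect to work is to establish residual finite-dimensionality of the quotient left module $\RR^{1\times\ell}\axs / I_L$ as a $*$-module over $\RR\axs$: any nonzero class $[p]$ would be separated by a finite-dimensional $*$-representation, immediately yielding $\sqrt{I_L} \subseteq I_L$ without any explicit matrix construction.
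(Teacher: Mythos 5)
Your overall structure is sound: the chain $I_L \subseteq \rr{I_L} \subseteq \sqrt{I_L} \subseteq I_L$ correctly organizes the argument, the first two inclusions are standard, and the invocation of Corollary~\ref{thm:mainFromNotes} for $\rr{I_L} = \sqrt{I_L}$ is fine (though redundant once all three inclusions of the chain are in hand). The difficulty is that the one inclusion carrying actual content, $\sqrt{I_L} \subseteq I_L$, is never proved. The reduction to the formal power series $a = pL^{-1}$ is a reasonable setup, but the decisive step --- showing that $a$ must be a polynomial --- is left as a sketch of two intentions (``assemble Jordan-block-type tuples tailored to $L$'' and ``establish residual finite-dimensionality of $\RR^{1\times\ell}\axs/I_L$''), neither of which is constructed or even made precise. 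The residual finite-dimensionality claim in particular is not free: for free $*$-algebra quotients it is essentially a Nullstellensatz-type statement in its own right, so asserting it without proof is close to assuming the conclusion.

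The paper closes this gap by a route you do not see, and it is worth understanding why it is effective. Rather than attacking $\sqrt{I_L} \subseteq I_L$ directly, the paper proves $\rr{I_L} = I_L$ (Proposition~\ref{prop:ILrealS}) and then cites the real Nullstellensatz for $\rr{I_L} = \sqrt{I_L}$. The key first move is Proposition~\ref{prop:genOfQDRad}: since $L$ has degree $1$ and hence $I_L$ is generated in degree $1$, any new generators of $\rr{I_L}$ beyond $I_L$ must be \emph{constant} row vectors $c \in \RR^{1\times\ell}$. This reduces an infinite-degree problem (your ``polynomiality of $a$'') to a purely linear-algebraic one. The paper then disposes of the constants by evaluating $L$ along scaled eigenvectors of the Hermitian parts $A_i + A_i^*$ and $\imath A_i - \imath A_i^*$ (viewed via the embedding $\CC \hookrightarrow \RR^{2\times2}$), deducing $\Lambda c^* = 0$, and concluding after a change of basis that $c$ is itself a row of $L$. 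This degree-reduction step is the idea your proposal is missing: without it, you are left trying to control the full tail of the formal series $a$ by bare hands, which is substantially harder and is not something your two proposed strategies, as stated, actually accomplish.
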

This 
will be proved  in \S\ref{sec:thick}.

We note 
 $L$ having the zero determining property
\eqref{eq:conj+}
 is equivalent to   the statement
 $\Lrad{L}{I_L} = I_L$. 
This is a consequence of Propositions \ref{prop:LradL} and 
\ref{prop:zeroOnModPosIntro}, 
and by Corollary \ref{cor:mindegyes} this holds for a generic and 
computationally checkable $L$.
Of course for any $L$ we have  $\cZ(  \dbcD_L^\circ) $ is contained in $V(L)$,
that is, $\Lrad{L}{I_L} \supset I_L$.

\subsubsection{Thin Pencils}

In \S\ref{sect:L0} we will use the main theorem of this paper, Theorem
\ref{thm:mainNMon},
to prove results about LMIs with empty interior, i.e., thin spectrahedra.  
More precisely, if $L$ is a linear pencil which defines a thin spectrahedron we 
will apply the main theorem to $L$ with $I = \{0\}$ to prove results 
about thin spectrahedra. 
In \S\ref{alg:Lradd0} we give an efficient algorithm for computing 
the affine hull 
(i.e., the smallest affine subspace containing 
it)
of the thin spectrahedron  $\cD_L(1)$.

There is an appealing connection between the space $\Lrad{L}{\{0\}}$ and its
corresponding ideal in $\RR[x]$, the space of polynomials in commuting
variables.
We say that the {\bf commutative collapse} of a polynomial $p \in \RR\axs$ to
$\RR[x]$ is
the polynomial produced by letting the variables in $p$ commute and setting $x =
x^*$.  The {\bf
commutative collapse} of a subset $S \subset \RR\axs$ to $\RR[x]$ is
the set of projections of all the elements of $S$ to $\RR[x]$.
There is a natural decomposition of a thin linear pencil in terms of
a thick one restricted to a special subspace as we now describe.

\begin{thm}
\label{thm:geom}
Let $L \in \RR^{\nu \times \nu}[x]$ be a linear pencil, where $x = (x_1,
\ldots, x_g)$ is a tuple of commuting variables.   Let $I \subset \RR[x]$ be the
commutative collapse of $\Lrad{L}{\{0\}}$ onto $\RR[x]$. 
\begin{enumerate}[\rm(1)]
\item $I \subset \RR[x]$ is an ideal generated by linear polynomials.
\item There exists a linear pencil $\tilde{L} \in \RR^{\nu' \times \nu'}[x]$,
where $\nu' \leq \nu$, whose positivity set has nonempty interior such that
\[
\{x \in \RR^g \mid L(x) \succeq 0\} = \{x \in \RR^g \mid \tilde{L}(x) \succeq 0
\mbox{ and } \iota(x) = 0 \mbox{ for each } \iota \in I\}.
\]
\end{enumerate}
\end{thm}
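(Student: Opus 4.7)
The plan is to identify the ideal $I$ explicitly as the vanishing ideal of the affine hull of the scalar spectrahedron $\cD_L(1)$, and then to construct $\tilde{L}$ by compressing $L$ to the orthogonal complement of a common kernel on this affine hull. That $I$ is an ideal is immediate: $\Lrad{L}{\{0\}}$ is a left $\RR\axs$-module, the commutative collapse $\RR\axs \to \RR[x]$ is multiplicative, and $\RR[x]$ is commutative, so $I$ is closed under multiplication by $\RR[x]$ from either side.

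To pin down $I$, I would invoke Proposition \ref{prop:zeroOnModPosIntro} with its left module taken to be $\{0\}$, giving $p \in \Lrad{L}{\{0\}}$ iff $p(X) = 0$ for every $X \in \cD_L$. Restricting to scalar $X$ yields $I \subset \cI(\cD_L(1))$; since $\cD_L(1)$ is convex with nonempty relative interior in its affine hull $H$, the Zariski closure of $\cD_L(1)$ in $\RR^g$ is $H$, and thus $\cI(\cD_L(1)) = \cI(H) = (\iota_1, \ldots, \iota_m)$ for some linear polynomials $\iota_j \in \RR[x]$ cutting out $H$. For the reverse containment I argue that each such linear $\iota$ vanishes matricially on all of $\cD_L$, which is where the commutativity of $L$ in the variables $x$ is essential: for $X \in \cD_L$ (a tuple of symmetric matrices) and any unit $w \in \RR^n$, the scalar tuple $y := (w^* X_1 w, \ldots, w^* X_g w) \in \RR^g$ satisfies
\[
L(y) \;=\; (\id_\nu \otimes w)^*\, L(X)\, (\id_\nu \otimes w) \;\succeq\; 0,
\]
hence $y \in \cD_L(1)$ and $w^*\iota(X)w = \iota(y) = 0$. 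Since $\iota(X)$ is symmetric and this holds for every unit $w$, $\iota(X) = 0$. Therefore $\iota \in \Lrad{L}{\{0\}}$ and $\iota \in I$, so $I = (\iota_1, \ldots, \iota_m)$, completing Part (1).

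For Part (2), set $K := \bigcap_{x \in \cD_L(1)} \ker L(x) \subset \RR^\nu$. At any relative interior point $x_0$ of $\cD_L(1)$ one has $\ker L(x_0) = K$ by minimal degeneracy, and for any $v \in K$ the affine map $x \mapsto L(x)v$ vanishes on $\cD_L(1)$, hence on $H$. Define $\tilde L(x) := P L(x) P^*$ where $P : \RR^\nu \to K^\perp$ is the orthogonal projection; this produces a linear pencil of size $\nu' := \nu - \dim K \leq \nu$. Since $L(x_0)|_{K^\perp}$ is strictly positive, $\tilde L(x_0) \succ 0$, so by continuity $\tilde L \succ 0$ on a Euclidean neighborhood of $x_0$ in $\RR^g$, giving $\tilde L$ a positivity set with nonempty interior. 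The forward inclusion $\{x : L(x) \succeq 0\} \subset \{x : \tilde L(x) \succeq 0 \mbox{ and } \iota(x) = 0 \mbox{ for all } \iota \in I\}$ is immediate. Conversely, if $x \in H$ and $\tilde L(x) \succeq 0$, then using $L(x)K = 0$ on $H$, the matrix $L(x)$ decomposes as $\tilde L(x) \oplus 0$ with respect to the splitting $\RR^\nu = K^\perp \oplus K$, so $L(x) \succeq 0$.

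The main obstacle is the reverse containment $(\iota_1, \ldots, \iota_m) \subset I$ in Part (1), namely lifting a linear polynomial vanishing on the scalar spectrahedron to the noncommutative $L$-real radical. The compression argument above succeeds precisely because $L$ is commutative in the $x$ variables (no $x^*$-terms), so compressing $L(X)$ by the ampliation $\id_\nu \otimes w$ yields an honest scalar evaluation $L(y)$; without this feature the argument would require matrix-convex-hull machinery for $\cD_L$.
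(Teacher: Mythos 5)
Your proof is essentially correct and takes a genuinely different route from the paper's. The paper proves Theorem \ref{thm:geom} by a two-line reduction to Proposition \ref{prop:decompOfL}, which performs the decomposition $\RR\axs_1 = (\Lrad{L}{\{0\}})_1 \oplus T$ entirely at the noncommutative level and relies (through Proposition \ref{prop:L0genByLinear}) on the heavy Lemma \ref{lem:main} machinery; the ideal $I$ in part (1) is handled via Proposition \ref{prop:L0genByLinear}, which shows $\Lrad{L}{\{0\}}$ is the $\ast$-ideal generated by linear forms when $L(0)\succeq 0$. You instead characterize $I$ concretely as $\cI(H)$, the vanishing ideal of the affine hull of the scalar spectrahedron, and build $\tilde L$ as a compression to the orthogonal complement of the common kernel $K=\bigcap_{x\in\cD_L(1)}\ker L(x)$ at a relative-interior point; the key insight is that the isometry compression $(\Id_\nu\otimes w)^*L(X)(\Id_\nu\otimes w)=L(w^*X_1w,\ldots,w^*X_gw)$ (valid precisely because $L$ is a linear pencil) transports linear polynomials vanishing on $\cD_L(1)$ into the noncommutative radical. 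Both proofs ultimately depend on the Nichtnegativstellensatz infrastructure (you use it through Proposition \ref{prop:charOfL}/Proposition \ref{prop:zeroOnModPosIntro}), but yours is more geometric and makes the identification $I=\cI(H)$ explicit, which the paper only hints at in \S\ref{sub:geomInt}. Two small things you should patch: (a) the edge cases $\cD_L(1)=\varnothing$ (then by Proposition \ref{cor:infeas}, $I=\RR[x]$ and one takes $\tilde L = 1$) and $K=\RR^\nu$ (then $\nu'=0$; again take $\tilde L=1$), both of which the paper's Proposition \ref{prop:decompOfL} handles with an explicit $\dim\cN<\nu$ hypothesis but which your argument passes over silently when it asserts a relative-interior point and forms $\tilde L$; and (b) you should specify that the noncommutative lift of $\iota$ you feed into Proposition \ref{prop:charOfL} is the symmetric one $c_0+\sum c_k\frac12(x_k+x_k^*)$, so that $\iota(X)$ is indeed a symmetric matrix and the conclusion $w^*\iota(X)w=0$ for all $w$ forces $\iota(X)=0$.
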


The proof of Theorem \ref{thm:geom} is based on taking $I=\{0 \}$, $p=1$ and
will be given in \S\ref{sub:geomInt}. 

Geometrically,  Theorem \ref{thm:geom} implies  that given  a
linear pencil $L$ which defines a spectrahedron with empty interior,  either the
spectrahedron $\cD_L(1)$ is empty---that is, $L(x) \succeq 0$ is infeasible---or
it can be
viewed as a spectrahedron with non-empty interior lying inside a proper affine
subspace of $\RR^g$.
In \S\ref{sect:algorithms} we give an
algorithm for computing the ideal $I \subset \RR[x]$ and the linear pencil
$\tilde{L} \in \RR^{\nu' \times \nu'}[x]$ described in Theorem \ref{thm:geom}.
In particular, we will see that the algorithm discussed in Theorem \ref{thm:LraddAlg}
is a generalization of the process of finding the affine subspace on which a
spectrahedron lies, as given in \cite{KS}.

\subsubsection{Algorithms}
Applying Theorem \ref{thm:mainNMon} requires  one to compute the $(L,
\rC)$-real radical of a left module $I$.  In \S\ref{sect:CompRad} we will 
present an algorithm for doing so.
In addition, in \S\ref{sect:CompRad} we also give more refined algorithms
for the special cases of computing $\sqrt[\real]{I}$ and $\Lrad{L}{\{0\}}$.
This generalizes the algorithm for the special case
 $L =
1$
found in \cite{N}.
Here is a theorem listing the algorithms' desirable
properties.  We emphasize this algorithm works even for polynomials $L \in 
\RR^{\nu \times \nu}\axs$ which are not linear.
\begin{theorem}
 \label{thm:LraddAlg}
Let $L \in
\RR^{\nu \times
\nu}\axs_{\sigma}$ be a symmetric polynomial, let $\rC \subset \RR^{1
\times \ell}\axs$ be a
finite right chip space, and let $I\subset \RR^{1 \times \ell}\axs$ be a left
module.
The $L$-Real Radical algorithm for $\Lradd{L}{\rC}{I}$ in \S{\rm\ref{alg:Lraddd}}
has
the
following properties.
\begin{enumerate}[\rm(1)]
 \item The algorithm terminates in a finite number of steps.
 \item If $I$ is generated by polynomials in $\RR\axs_{\sigma}\rC$,
 then the algorithm involves computations on polynomials in
$\RR\axs_{\sigma}\rC$.
 \item The algorithm outputs a left Gr\"obner basis for
$\Lradd{L}{\rC}{I}$.
\end{enumerate}
\end{theorem}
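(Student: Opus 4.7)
The plan is to formalize the $L$-Real Radical algorithm as an iterative enlargement procedure. Starting from the input generating set of $I$, at each pass I compute a left Gröbner basis $G$ for the current module with respect to a monomial order refined by degree, and then search for every ``obstruction'' to being $(L,\rC)$-real, i.e., tuples $(p_i)_i \subset \rC$ and $(q_j)_j \subset \RR^{\nu \times 1}\rC$ for which
\[
\sum_i p_i^{\ast} p_i + \sum_j q_j^{\ast} L q_j \in \RR^{\ell \times 1} I + I^{\ast} \RR^{1 \times \ell}
\]
but with some $p_i \notin I$ or $Lq_j \notin \RR^{\nu \times 1} I$. Searching for such an obstruction is a finite linear-algebra task: the ambient space $\rC^{\ast}\RR\axs_{2\sigma}\rC$ in which all admissible squares live is finite-dimensional, and membership in $\RR^{\ell \times 1} I + I^{\ast}\RR^{1 \times \ell}$ is decided by Gröbner reduction against $G$. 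If an obstruction is exhibited, I adjoin the offending $p_i$'s and $Lq_j$'s to the generating set, rerun Buchberger-style completion to refresh $G$, and iterate; otherwise I output $G$.

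To establish (1) and (2) I would track a single finite-dimensional invariant, namely $\dim\bigl(I \cap \RR\axs_{\sigma}\rC\bigr)$. Since $\rC \subset \RR\axs_{\sigma}\rC$ and $L \in \RR^{\nu\times\nu}\axs_{\sigma}$, every element $Lq_j$ produced by an augmentation step lies again in $\RR\axs_{\sigma}\rC$; together with the hypothesis of (2) that $I$ is generated in $\RR\axs_{\sigma}\rC$, this makes $\RR\axs_{\sigma}\rC$ a closed ambient for all operations the algorithm performs, proving (2). Because each non-trivial iteration adds at least one new generator lying outside the previous $I$ but still inside $\RR\axs_{\sigma}\rC$, the sequence $\dim\bigl(I \cap \RR\axs_{\sigma}\rC\bigr)$ is strictly increasing and bounded above by $\dim \RR\axs_{\sigma}\rC < \infty$; hence the algorithm halts after at most $\dim \RR\axs_{\sigma}\rC$ passes, giving (1). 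The Gröbner basis claim in (3) follows because noncommutative Buchberger completion restricted to a left module is exactly what is invoked inside each loop, and a terminating chain of Gröbner bases for an ascending chain of modules converges to a Gröbner basis of the union, which is the final $I$.

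The main obstacle will be the correctness statement buried inside (3): verifying that the terminal module equals $\Lradd{L}{\rC}{I}$ rather than some larger or smaller left module. One inclusion is relatively direct: at termination no obstruction exists, so the output module is by definition $(L,\rC)$-real, contains the original $I$, and therefore contains $\Lradd{L}{\rC}{I}$. For the reverse inclusion I would argue by induction on the iteration count that every generator the algorithm ever adjoins already belongs to every $(L,\rC)$-real left module $J \supseteq I$: if the defining relation
\[
\sum_i p_i^{\ast} p_i + \sum_j q_j^{\ast} L q_j \in \RR^{\ell \times 1} I_{\mathrm{current}} + I_{\mathrm{current}}^{\ast}\RR^{1 \times \ell}
\]
holds and by induction $I_{\mathrm{current}} \subseteq J$, then the $(L,\rC)$-realness of $J$ forces $p_i \in J$ and $Lq_j \in \RR^{\nu \times 1} J$, so the next augmentation does not overshoot. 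Taking $J = \Lradd{L}{\rC}{I}$ yields the desired equality. The analogous statements for $\rr{I}$ and $\Lrad{L}{\{0\}}$ then follow by specializing $L = 1$ or $I = \{0\}$ in the algorithm, which simplifies the inner obstruction search.
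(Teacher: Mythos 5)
Your proposal captures the paper's overall architecture—iterative enlargement, the dimension argument in $\RR\axs_\sigma\rC$ for termination and degree containment, and the two inclusions for correctness—so the skeleton is essentially the paper's. Two inaccuracies are worth flagging, neither fatal but both worth fixing before this would pass as a proof. First, the ambient space for the obstruction search should be $\rC^*\RR\axs_\sigma\rC$, not $\rC^*\RR\axs_{2\sigma}\rC$: since $p_i\in\rC$ contributes $p_i^*p_i\in\rC^*\rC$ and $q_j\in\RR^{\nu\times1}\rC$ with $L\in\RR^{\nu\times\nu}\axs_\sigma$ contributes $q_j^*Lq_j\in\rC^*\RR\axs_\sigma\rC$, no degree doubling occurs. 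Second, the obstruction search is not "a finite linear-algebra task"; detecting whether a nonzero combination $\sum p_i^*p_i+\sum q_j^*Lq_j$ lands in $\RR^{\ell\times1}I+I^*\RR^{1\times\ell}$ with the Gram matrices $\succeq 0$ is a semidefinite feasibility problem, and the paper handles it via the modified SOS algorithm of \S\ref{subsec:modSOS}. Finally, the paper carries out the bookkeeping with $\rC$-bases rather than ordinary left Gr\"obner bases, because $\rC$-bases cleanly separate the generators in $\rC$ from those in $\RR\axs_1\rC\setminus\rC$ and mesh with the truncated-test-module machinery (Lemma \ref{prop:lowdegqradcond}); your version using plain Gr\"obner bases does not obviously give you the decomposition $\rC = (I^{(i)}\cap\rC)\oplus T^{(i)}$ that the paper's inductive invariant relies on. If you replace "linear algebra" with "semidefinite feasibility" and "Gr\"obner basis" with "$\rC$-basis" (or explain why an ordinary Gr\"obner basis suffices to extract $I^{(i)}\cap\rC$ and the chip-space complement), the argument goes through.
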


\subsubsection{Completely Positive Maps}

In \S\ref{sec:cp} we apply our results on thin spectrahedra to 
 give  algebraic certificates for completely positive maps between
(nonunital) subspaces of matrix algebras. We shall see that complete positivity
of a map is equivalent to 
 LMI domination between a pair of associated linear pencils.

\subsection{Context and Reader's Guide}\label{sec:intro guide}

To give a broad perspective on the topic of this paper 
we point out that it fits in the area of Free Real Algebraic Geometry.
This in turn  lies within the 
 booming area called Free Analysis 
 the earliest and most developed branch of which is 
  Free Probability, see \cite{VDN92} for a survey.
  Also developing rapidly is Free Analytic Function Theory, see
 \cite{Voi04,Voi10,KVV+,MS11,Pope10,AM+,BB07}. 
We refer the reader to \href{http://math.ucsd.edu/~ncalg}{\tt NCAlgebra} \cite{HOSM} and \href{http://ncsostools.fis.unm.si}{\tt NCSOStools} \cite{CKP11} for computer algebra packages adapted to deal with free noncommuting variables.

While Free Null-Positivstellens\"atze as we develop in
this paper date back less than a decade, already
Free Positivstellens\"atze 
have found  physical applications. 
For instance,
applications to quantum physics are explained
by Pironio, Navascu\'es, Ac\'\i n \cite{PNA10} 
who also consider computational aspects related to 
noncommutative sum of squares.
Doherty, Liang, Toner, Wehner \cite{DLTW08}
employ free positivity and the Positivstellensatz \cite{HM} 
 to consider
the quantum moment problem and multi-prover games.

Turning from the general to the very specific
we describe the organization of the rest of this paper.
\S\ref{sect:radIdeals} proves some basic results about $L$-real radicals and
$(L, \rC)$-left modules.
\S\ref{sect:linFun} describes how to construct positive
linear functionals on spaces of square matrices of NC polynomials
for use in the proof of the main theorem.
\S\ref{sec:mainResults} proves the main result, Theorem \ref{thm:mainNMon}, and
many of the corollaries of this paper.
\S\ref{sec:thick} proves Theorem \ref{thm:detBdry2} and Proposition
\ref{prop:LradL}, which pertain to thick spectrahedra. 
\S\ref{sect:L0} characterizes the $L$-real radical of $\{0\}$, which pertains
to thin spectrahedra.
\S\ref{sect:CompRad} describes algorithms for computing
different real radicals appearing in our main results; many of these algorithms are
improvements on previously known algorithms.
\S\ref{sec:cp} gives nonlinear algebraic certificates for complete positivity of
maps between (nonunital) subspaces of matrix algebras.
Finally, \S\ref{sec:symmVars} gives direct analogs of the results of this paper
in
the case where all the variables $x_j$ are symmetric.

\subsubsection{Acknowledgments} 
The authors want to thank Scott McCullough, Mauricio de Oliveira,
Daniel Plaumann and Rainer Sinn for discussions and sharing their expertise.

\section{Properties of $(L, \rC)$-Real  Left Modules}
\label{sect:radIdeals}

In this section we prove some useful properties of $(L, \rC)$-real left
modules $I \subset \RR^{1 \times \ell}\axs$. Here $L\in
\RR^{\nu \times \nu}\axs$ is a matrix polynomial, and  $\rC \subset \RR^{1 \times
\ell}\axs$ is a right chip space.

\subsection{$L$-Real Left Modules}

One class of $L$-real left modules which arise naturally are left modules
$\cI\big(\{(X,v)\}\big)$, where $X$ is a tuple of matrices with $L(X) \succeq 0$;
cf.~Proposition \ref{prop:cIcCIsReal} in \S\ref{subsec:Lreal}.

\begin{prop}
\label{prop:idealOfVarIsLRad}
Let $L \in \RR^{\nu \times \nu}\axs$ be a symmetric polynomial, and let $X \in
(\RR^{n \times n})^g$ be such that $L(X) \succeq 0$.
For each vector $v \in \RR^{\ell n}$, the left module $\cI\big(\{(X,v)\}\big)$
is
$L$-real.
If also $L(X) \succ 0$, then $\cI\big(\{(X,v)\}\big)$ is strongly
$L$-real for each $v$.
\end{prop}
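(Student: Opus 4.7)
The plan is to evaluate the defining identity at the point $(X,v)$ and exploit positivity of $L(X)$ to separate the summands. Set $I := \cI(\{(X,v)\})$.

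The key preliminary observation I would establish first is that every element of $\RR^{\ell \times 1} I + I^* \RR^{1 \times \ell}$ is annihilated by the sandwich $v^*(\,\cdot\,)(X) v$. Indeed, if $r \in \RR^{\ell \times 1} I$ then every row of $r$ lies in $I$, so $r(X) v = 0$ and hence $v^* r(X) v = 0$. For $s \in I^* \RR^{1 \times \ell}$, the adjoint $s^*$ lies in $\RR^{\ell \times 1} I$, so $v^* s(X) = (s(X)^* v)^* = 0$. Adding, $v^* A(X) v = 0$ for every $A \in \RR^{\ell \times 1} I + I^* \RR^{1 \times \ell}$.

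Now suppose $\sum_i p_i^* p_i + \sum_j q_j^* L q_j$ lies in this sum, with $p_i \in \RR^{1 \times \ell}\axs$ and $q_j \in \RR^{\nu \times \ell}\axs$. Evaluating at $X$ and pairing with $v$ on both sides yields
\[
0 \;=\; \sum_i \|p_i(X) v\|^2 \;+\; \sum_j \bigl(q_j(X) v\bigr)^{\!*} L(X) \bigl(q_j(X) v\bigr).
\]
Since $L(X) \succeq 0$, every term on the right is nonneg, so each is zero. From $p_i(X) v = 0$ we immediately get $p_i \in I$. From $(q_j(X)v)^* L(X)(q_j(X)v) = 0$ together with $L(X) \succeq 0$, the standard square-root fact gives $L(X)^{1/2} q_j(X) v = 0$, hence $L(X) q_j(X) v = 0$. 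Reading the $\nu n$-vector $L(X) q_j(X) v$ in blocks of $n$ rows shows that every row of the matrix polynomial $L q_j$ vanishes at $(X,v)$, so $L q_j \in \RR^{\nu \times 1} I$. This is exactly $L$-realness of $I$.

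For the strong version, the assumption $L(X) \succ 0$ makes the quadratic form $w \mapsto w^* L(X) w$ positive definite, so its vanishing on $q_j(X)v$ forces $q_j(X) v = 0$ itself; block-row inspection then gives $q_j \in \RR^{\nu \times 1} I$. I do not anticipate any substantive obstacle: the argument is a clean positivity sandwich once the preliminary observation is in place. The only bookkeeping that needs care is tracking scalar vectors $p_i(X) v \in \RR^{\ell n}$ versus block-row vectors $L(X) q_j(X) v \in \RR^{\nu n}$ (or $q_j(X) v \in \RR^{\nu n}$) when extracting row-wise vanishing to identify membership in $\RR^{\nu \times 1} I$.
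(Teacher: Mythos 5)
Your proof is correct and follows the same route as the paper: evaluate the certificate at $(X,v)$, observe that the left module and its adjoint annihilate $v^*(\,\cdot\,)(X)v$, conclude each nonnegative summand vanishes, then peel off $L(X)^{1/2}$ (resp.\ invert $L(X)$) to get membership of $Lq_j$ (resp.\ $q_j$) in $\RR^{\nu\times 1}I$. Your preliminary observation is in fact stated a little more carefully than the paper's, which writes $v^*\iota(X)v=0$ for $\iota\in\cI(\{(X,v)\})$ where the shapes only match after passing to $\RR^{\ell\times 1}I + I^*\RR^{1\times\ell}$.
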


\begin{proof}
Suppose
\[\sum_{i}^{\finite} p_i^{\ast}p_i +
\sum_{j}^{\finite} q_j^{\ast}Lq_j \in
\RR^{\ell \times 1}\cI(\{(X,v)\}) + \big[\cI(\{(X,v)\})\big]^{\ast}\RR^{1
\times
\ell}\]
for some polynomials $p_i \in \RR^{1 \times \ell}\axs$ and $q_j \in
\RR^{\nu \times \ell}\axs$. For each $\iota \in
\cI(\{(X,v)\})$, we have 
\[v^*\iota(X)v = 0 \qquad\text{and}\qquad
v^{\ast}\iota(X)^{\ast}v = 0.
\]
Therefore
\[v^{\ast}\left( \sum_{i}^{\finite} p_i(X)^{\ast}p_i(X) +
\sum_{j}^{\finite} q_j(X)^{\ast}L(X)q_j(X) \right)v = 0. \]
For each $i$, 
and, since $L(X) \succeq 0$, for each $j$ we
have
\[v^{\ast}p_i(X)^{\ast}p_i(X)v \geq 0
\qquad\text{and}\qquad
v^{\ast}q_j(X)^*L(X)q_j(X)v \geq 0.
\]
Therefore, for each $i$,
\[ v^{\ast}p_i(X)^{\ast}p_i(X)v = \|p_i(X)v\|^2 = 0,\]
and for each $j$,
\[ v^{\ast}q_j(X)^{\ast}L(X)q_j(X)v = \|\sqrt{L(X)} q_j(X)v\|^2 = 0.\]
Hence each $p_i(X)v = 0$, or equivalently, $p_i \in \cI(\{(X,v)\})$.
Further,
 each $\sqrt{L(X)}q_j(X)v = 0$, so $L(X)q_j(X)v =
0$, which implies $Lq_j \in \RR^{\nu \times 1}\cI(\{(X,v)\})$.
If in addition $L(X) \succ 0$, then $L(X)$ is invertible, so
$L(X)q_j(X)v = 0$ if and only if $q_j(X)v = 0$, which implies $q_j \in
\RR^{\nu \times 1}\cI(\{(X,v)\})$.
\end{proof}

\subsection{Homogeneous Left Modules}

We next consider $\Lrad{L}{I}$ for a homogeneous
left module  $I$.
A left module $I \subset \RR^{\nu \times \ell}\axs$ is
\textbf{homogeneous}\index{homogeneous left module} if
it is generated by homogeneous polynomials.

\begin{prop}
 Let $I \subset \RR^{\nu \times \ell}\axs$ be a left module.  The following are
equivalent:
\begin{enumerate}[\rm(i)]
 \item $I$ is homogeneous;
 \item $p \in I$ if and only if $p$ is a sum of homogeneous polynomials in $I$;
 \item $p \in \RR^{\ell \times \nu} I + I^* \RR^{\nu \times \ell}$ if and only
if $p$ is a sum of homogeneous elements of $\RR^{\ell \times \nu} I + I^*
\RR^{\nu \times \ell}$;
\item $p \in \RR^{\ell \times \nu} I + I^* \RR^{\nu \times \ell}$ if and only
if $p$ is a sum of homogeneous polynomials which are each in $\RR^{\ell \times
\nu} I$ or $I^* \RR^{\nu \times \ell}$.
\end{enumerate}
\end{prop}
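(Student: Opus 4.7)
The plan is to exploit the natural $\NN$-grading of $\RR\axs$ by word length, which lifts to each matrix module $\RR^{\nu\times\ell}\axs = \bigoplus_d (\RR^{\nu\times\ell}\axs)_d^{\hom}$ and is preserved by both the involution $\ast$ and by left multiplication by degree-zero (constant) matrices. Under this grading, a subspace is \emph{homogeneous} in any of the equivalent senses of being generated by homogeneous elements, being stable under the projections onto graded pieces, or being the internal direct sum of its graded components. I will organize the proof as (i)$\Leftrightarrow$(ii), then (i)$\Rightarrow$(iv)$\Rightarrow$(iii), and finally close with (iii)$\Rightarrow$(i).

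The equivalence (i)$\Leftrightarrow$(ii) is the classical grading fact: for (i)$\Rightarrow$(ii), write $p\in I$ as $p = \sum_k r_k g_k$ with $\{g_k\}$ a homogeneous generating set, decompose each $r_k$ into its homogeneous pieces, and collect by total degree to realize the homogeneous components of $p$ as finite sums lying in $I$; conversely, (ii) forces the homogeneous components of any generating set of $I$ to lie in $I$, yielding a homogeneous generating set. For (i)$\Rightarrow$(iv), note that each $A\in\RR^{\ell\times\nu}$ is degree-zero, so when $\{g_k\}$ is a homogeneous generating set of $I$, the families $\{A g_k\}$ and $\{g_k^{\ast} B\}$ are homogeneous generating sets of $\RR^{\ell\times\nu} I$ and $I^{\ast}\RR^{\nu\times\ell}$ (as left $\RR\axs$-submodules of $\RR^{\ell\times\ell}\axs$); applying (i)$\Rightarrow$(ii) separately to each of these summands produces the decomposition of (iv). The implication (iv)$\Rightarrow$(iii) is immediate since a homogeneous element of either summand is certainly a homogeneous element of their sum.

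The closing and most delicate step is (iii)$\Rightarrow$(i). Here my plan is to use the explicit row/column description of $V := \RR^{\ell\times\nu} I + I^{\ast}\RR^{\nu\times\ell}$ in $\RR^{\ell\times\ell}\axs$: via the natural row-splitting $\RR^{\nu\times\ell}\axs \hookrightarrow \bigl(\RR^{1\times\ell}\axs\bigr)^{\nu}$, an element of $\RR^{\ell\times\nu} I$ is exactly an $\ell\times\ell$ matrix each of whose rows is an $\RR$-linear combination of rows of elements of $I$, while an element of $I^{\ast}\RR^{\nu\times\ell}$ is a matrix each of whose columns is a combination of columns of elements of $I^{\ast}$. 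Given $p\in I$ with $p=\sum_d p_d$, I would, for each pair $(j,i)$, consider the element $E_{ji}p \in \RR^{\ell\times\nu} I \subseteq V$ that places the $i$-th row of $p$ into row-slot $j$, apply (iii) to deduce $E_{ji}p_d \in V$, decompose $E_{ji}p_d = u + v$ with $u\in\RR^{\ell\times\nu} I$ and $v\in I^{\ast}\RR^{\nu\times\ell}$, and exploit the vanishing of $E_{ji}p_d$ outside row $j$ together with the rigid column-structure of $v$ to pin down the $i$-th row of $p_d$ as lying in the row-span of $I$. Cycling over all $(j,i)$ and appealing to $V = V^{\ast}$ (so that (iii) applies equally to $p^{\ast} E_{ji}^{\ast}$) should decouple the $v$-contribution from the $u$-contribution and recover $p_d \in I$.

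I anticipate the main obstacle to be precisely in step (iii)$\Rightarrow$(i): an element of $I^{\ast}\RR^{\nu\times\ell}$ has no intrinsic \emph{row} structure, so isolating $v$'s contribution to a given row of $E_{ji}p_d$ requires combining constraints from multiple choices of $(i,j)$ and from the involution. If a direct argument proves too intricate, I would fall back to induction on $\deg(p)$ via the contrapositive: if $I$ fails to be homogeneous, one can choose $p\in I$ and a scalar matrix $A\in\RR^{\ell\times\nu}$ of maximal rank so that the top-degree part of $Ap$ fails to lie in $V$, contradicting (iii).
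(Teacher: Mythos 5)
Your handling of (i)$\Leftrightarrow$(ii) and of (i)$\Rightarrow$(iv)$\Rightarrow$(iii) is sound: left multiplication by a constant matrix and the involution both preserve degree, so a homogeneous generating set of $I$ gives a spanning set of homogeneous elements for each of $\RR^{\ell\times\nu}I$ and $I^*\RR^{\nu\times\ell}$, and (iv)$\Rightarrow$(iii) is immediate. The paper offers no proof beyond the single word ``Straightforward,'' so there is no argument to compare yours against.

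The difficulty you flag in (iii)$\Rightarrow$(i), however, is not a technical nuisance to be massaged away: the implication is false, and the fallback you sketch (inducting on degree via the contrapositive) cannot work either. Take $g=1$ and $\nu=\ell=1$, so $I\subset\RR\axs$ is a left ideal and $\RR^{\ell\times\nu}I+I^*\RR^{\nu\times\ell}=I+I^*$. Let $I=\RR\axs(1+x)+\RR\axs x^*$. Comparing top-degree terms in $1=p(1+x)+qx^*$ forces $p_Nx+q_Nx^*=0$ at degree $N+1$ where $N=\max(\deg p,\deg q)$, and since monomials ending in $x$ are linearly independent from those ending in $x^*$ this gives $p_N=q_N=0$, a contradiction; hence $1\notin I$, so $I$ is not homogeneous (the degree-$0$ component of $1+x\in I$ is not in $I$). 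On the other hand $1=(1+x)-x$ with $1+x\in I$ and $x\in I^*$; every monomial ending in $x^*$ lies in $\RR\axs x^*\subset I$; every monomial beginning with $x$ lies in $x\RR\axs\subset I^*$; and any remaining monomial $x^*m'x$ satisfies $x^*m'x=x^*m'(1+x)-x^*m'$ with the first summand in $I$ and the second handled by induction on length. Thus $I+I^*=\RR\axs$, which is trivially closed under homogeneous truncation, so (iii) holds while (i) fails. The same example breaks (iv)$\Rightarrow$(i) and even (iii)$\Rightarrow$(iv), since the only degree-$0$ homogeneous element of $I$ or of $I^*$ is $0$, yet $1\in I+I^*$. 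The correct and sufficient content here is (i)$\Leftrightarrow$(ii) together with the one-way implications (i)$\Rightarrow$(iv)$\Rightarrow$(iii), which is exactly what the paper later uses in the proof of Proposition \ref{prop:qRadRealHom}; the claimed equivalence is overstated, and the right move is to note this rather than to persist with (iii)$\Rightarrow$(i).
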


\begin{proof}
 Straightforward.
\end{proof}

Recall a linear pencil $L$ is monic if its constant term is the identity matrix.

\begin{prop}
\label{prop:qRadRealHom}
 Let $I \subset \RR^{1 \times \ell}\axs$ be a homogeneous left module, and
let $L \in \RR^{\nu \times \nu}\axs$ be a monic linear pencil.
The following are equivalent:
\begin{enumerate}[\rm(i)]
\item\label{item:real} $I$ is real;
 \item\label{item:qrad} $I$ is $L$-real;
 \item\label{item:qradstr} $I$ is strongly $L$-real.
\end{enumerate}
\end{prop}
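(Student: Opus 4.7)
The plan is to establish the chain (iii) $\Rightarrow$ (ii) $\Rightarrow$ (i) $\Rightarrow$ (iii), the first two being formal and the third being the real work.

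First I would knock off the easy implications. For (iii) $\Rightarrow$ (ii): if $q_j\in\R^{\nu\times 1}I$ then $Lq_j\in\R^{\nu\times 1}I$ since $\R^{\nu\times 1}I$ is closed under left multiplication by the row entries of $L$ (each row of $I$ is absorbed by $\R\axs$). For (ii) $\Rightarrow$ (i), just set all $q_j=0$ in the defining condition for $L$-real and read off that the hypothesis of Corollary~\ref{cor:diffMiReal} is satisfied.

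The real content is (i) $\Rightarrow$ (iii). The idea is to exploit $L = \mathrm{Id}_\nu + L_1$ (monicity giving us a square part with no $L_1$ contribution in low degree) together with the homogeneity of $I$. Suppose $\sum_i p_i^*p_i + \sum_j q_j^*Lq_j \in \R^{\ell\times 1}I + I^*\R^{1\times\ell}$, and write each $p_i = \sum_d p_i^{(d)}$, $q_j = \sum_d q_j^{(d)}$ as sums of homogeneous pieces. I would prove by induction on $d$ that $p_i^{(d)}\in I$ and $q_j^{(d)}\in \R^{\nu\times 1}I$ for every $i,j$; since $I$ is homogeneous this yields $p_i\in I$ and $q_j\in\R^{\nu\times 1}I$, i.e.\ strong $L$-realness.

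For the inductive step, isolate the degree-$2d$ homogeneous component of the whole sum. In $p_i^*p_i$ the only degree-$2d$ piece that does not vanish modulo $\R^{\ell\times 1}I + I^*\R^{1\times\ell}$ is $p_i^{(d)*}p_i^{(d)}$: any cross term $p_i^{(d_1)*}p_i^{(d_2)}$ with $d_1+d_2=2d$ and $\min(d_1,d_2)<d$ has one factor already in $I$ (or $I^*$) by the inductive hypothesis, and then the left-module identities $I^*\R\axs\subseteq I^*$ and $\R\axs I\subseteq I$ put the product in $I^*\R^{1\times\ell}+\R^{\ell\times 1}I$. The analogous analysis for $q_j^*Lq_j$ splits into the $\mathrm{Id}_\nu$-part and the $L_1$-part; for $L_1$ the total degree $d_1+d_2+1 = 2d$ forces $\min(d_1,d_2)<d$ so IH kills it, and for the identity part the same argument leaves only $q_j^{(d)*}q_j^{(d)}$. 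Because $\R^{\ell\times 1}I + I^*\R^{1\times\ell}$ is homogeneous (as $I$ is), the degree-$2d$ component of the full sum lies in it, so
\[
\sum_i p_i^{(d)*}p_i^{(d)} + \sum_j q_j^{(d)*}q_j^{(d)} \in \R^{\ell\times 1}I + I^*\R^{1\times\ell}.
\]
Now Corollary~\ref{cor:diffMiReal} (which permits mixed matrix sizes $\nu_i$) applied to the real left module $I$ delivers $p_i^{(d)}\in I$ and $q_j^{(d)}\in \R^{\nu\times 1}I$, closing the induction.

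The main obstacle is not conceptual but bookkeeping: one must verify carefully that every cross term $p^{(d_1)*}p^{(d_2)}$ or $q^{(d_1)*}L^{(k)}q^{(d_2)}$ landing in the degree-$2d$ slice actually belongs to the correct side, $\R^{\ell\times 1}I$ or $I^*\R^{1\times\ell}$, as $\ell\times\ell$ matrices (rows-in-$I$ vs.\ columns-in-$I^*$). Monicity of $L$ is essential here because it ensures the lowest-degree contribution from $q_j^*Lq_j$ is a genuine sum of hermitian squares $q_j^{(d)*}q_j^{(d)}$, so realness of $I$ alone is enough to control the $q_j$'s — without monicity the induction base degenerates and one cannot separate the $L_1$ perturbation from a genuine square.
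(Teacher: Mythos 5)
Your proof is correct and takes essentially the same approach as the paper's: homogeneity of $I$ lets one work degree by degree, monicity of $L$ ensures the lowest-degree contribution of $q_j^*Lq_j$ is the Hermitian square $q_j^{(d)*}q_j^{(d)}$, and realness of $I$ (via Corollary~\ref{cor:diffMiReal}) closes each degree. The paper phrases this as repeatedly peeling off the minimum-degree homogeneous piece and subtracting it, whereas you run an explicit induction on the degree $d$, but the underlying argument is identical.
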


\begin{proof}
By definition, \eqref{item:real} $\Leftarrow$ \eqref{item:qrad} $\Leftarrow$
\eqref{item:qradstr}.
Therefore suppose that $I$ is real.
Let
\begin{align}
\label{eq:sumSigma}
 \sum_{i}^{\finite} p_i^{\ast}p_i + \sum_{j}^{\finite} q_j^{\ast}Lq_j \in
\RR^{\ell \times 1} I + I^{\ast}\RR^{1 \times \ell},
\end{align}
where each $p_i \in \RR^{1 \times \ell}\axs$ and each $q_j \in \RR^{\nu \times
\ell}\axs$.
Let $\delta$ be the minimum degree such that at least one of the $p_i$ or
$q_j$ have
terms of degree $\delta$.  Let $\tilde{p}_i$ and $\tilde{q}_j$ be the
terms of
$p_i$ and $q_j$ respectively with degree $\delta$.  The terms
of \eqref{eq:sumSigma} with degree $2\delta$ are
\begin{align}
 \label{eq:sumSmall}
\sum_{i}^{\finite} \tilde{p}_i^{\ast}\tilde{p}_i + \sum_{j}^{\finite}
\tilde{q}_j^{\ast}\tilde{q}_j.
\end{align}
  Since $I$ is homogeneous, \eqref{eq:sumSmall} must be in
$\RR^{\ell \times 1} I + I^{\ast}\RR^{1 \times \ell}$.
Since $I$ is real, each $\tilde{p}_i \in I$ and $\tilde{q}_j \in
\RR^{\nu \times 1}I$.  Therefore,
\[  \sum_{i}^{\finite} (p_i - \tilde{p}_i)^{\ast}(p_i - \tilde{p}_i) +
\sum_{j}^{\finite} (q_j-\tilde{q}_j)^{\ast}L(q_j-\tilde{q}_j) \in
\RR^{\ell \times 1} I + I^{\ast}\RR^{1 \times \ell}.\]
We repeat this process to show that each homogeneous part of $p_i$ is in
$I$ and each homogeneous part of $q_j$ is in $\RR^{\nu \times 1}I$.  Hence
$I$ is strongly $L$-real.
\end{proof}

A special example of a homogeneous left module is $\{0\}$.
Proposition \ref{prop:qRadRealHom} implies that $\Lrad{L}{\{0\}} = \{0\}$ if $L$
is monic.
In the non-monic case---and in particular, if $\cD_L=\{X \mid L(X)
\succeq 0\}$ has empty interior---there is more to say
about $\Lrad{L}{\{0\}}$, as we will see in \S\ref{sect:L0}.

\subsection{$(L, \rC)$-Real Left Modules for Finite Right Chip Spaces}

For  a finite
right chip space
$\rC$ and a left module $I$,
the $(L,\rC)$-real radical  $\Lradd{L}{\rC}{I}$
is generated as a left module by polynomials in a restricted
vector subspace, as shown in the following proposition.

\begin{prop}
\label{prop:genOfQDRad}
Let $\rC \subset \RR^{1 \times \ell}\axs$ be a finite right chip space,
let $I \subset \RR^{1 \times \ell}\axs$ be a left module, and let $L \in
\RR^{\nu \times \nu}\axs$ be a symmetric polynomial of degree $\sigma$.  The
left
module
$\Lradd{L}{\rC}{I}$ is generated by $I$
together with some subset of polynomials in $\RR\axs_{\sigma}\rC$,
and $\LraddS{L}{\rC}{I}$ is generated by $I$
together with some subset of polynomials in $\rC$.
\end{prop}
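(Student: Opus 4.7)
The plan is to construct $\Lradd{L}{\rC}{I}$ as the union of an increasing chain of left modules obtained by iteratively adjoining only those elements that any $(L,\rC)$-real enlargement would be forced to contain. Since all generators adjoined will lie in the finite-dimensional space $\RR\axs_\sigma \rC$, the chain must stabilize, and the stable module will be $(L,\rC)$-real.

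Concretely, set $I_0 := I$. Given $I_k$, let $S_{k+1} \subset \RR\axs_\sigma \rC$ consist of all polynomials $p_i \in \rC$ and all rows of $Lq_j$ (for $q_j \in \RR^{\nu\times 1}\rC$) that appear in some witness relation
\[
\sum_i p_i^{\ast}p_i + \sum_j q_j^{\ast}L q_j \in \RR^{\ell\times 1}I_k + I_k^{\ast}\RR^{1\times\ell}.
\]
Note $p_i \in \rC \subset \RR\axs_\sigma \rC$, while a row of $Lq_j$ is a sum $\sum_k L_{i,k}\cdot(q_j)_k$ with $L_{i,k} \in \RR\axs_\sigma$ and $(q_j)_k \in \rC$, so it lies in $\RR\axs_\sigma \rC$. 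Let $I_{k+1}$ be the left $\RR\axs$-module generated by $I_k \cup S_{k+1}$. For the strongly $(L,\rC)$-real case, replace "rows of $Lq_j$" by "rows of $q_j$"; these lie already in $\rC$ since $q_j \in \RR^{\nu\times 1}\rC$.

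The sequence $I_0 \cap \RR\axs_\sigma \rC \subseteq I_1 \cap \RR\axs_\sigma \rC \subseteq \cdots$ is an ascending chain of subspaces in the finite-dimensional space $\RR\axs_\sigma \rC$, so it stabilizes at some $k_0$. Since $S_{k_0+1} \subset \RR\axs_\sigma \rC \cap I_{k_0+1} = \RR\axs_\sigma \rC \cap I_{k_0} \subseteq I_{k_0}$, we obtain $I_{k_0+1}=I_{k_0}$. This forces $I_{k_0}$ to be $(L,\rC)$-real: any witness relation with $p_i \in \rC$ and $q_j \in \RR^{\nu\times 1}\rC$ as above would push the $p_i$'s and rows of $Lq_j$'s into $S_{k_0+1} \subseteq I_{k_0}$, which is exactly the defining condition.

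Finally, on the one hand, $I_{k_0}$ is $(L,\rC)$-real and contains $I$, hence $\Lradd{L}{\rC}{I} \subseteq I_{k_0}$. On the other hand, an easy induction shows $I_k \subseteq \Lradd{L}{\rC}{I}$ for every $k$: if $I_k \subseteq \Lradd{L}{\rC}{I}$, then applying the $(L,\rC)$-real property of $\Lradd{L}{\rC}{I}$ to any witness relation in $I_k$ shows $S_{k+1} \subseteq \Lradd{L}{\rC}{I}$. Hence $\Lradd{L}{\rC}{I} = I_{k_0}$, which by construction is generated by $I$ together with $\bigcup_{k \leq k_0} S_k \subset \RR\axs_\sigma \rC$. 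The strong version is identical, with generators drawn from $\rC$ instead. The main technical point to take care of is verifying that stabilization of the finite-dimensional slices $I_k \cap \RR\axs_\sigma \rC$ suffices to conclude the $(L,\rC)$-real property, which hinges on the fact that the generators we need to adjoin always live in $\RR\axs_\sigma \rC$ (respectively $\rC$).
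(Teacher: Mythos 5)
Your proof is correct and follows essentially the same chain-construction argument as the paper, iteratively adjoining the $p_i$'s and rows of $Lq_j$ (living in $\RR\axs_{\sigma}\rC$) until the process stabilizes, then observing that the stable module is $(L,\rC)$-real and agrees with $\Lradd{L}{\rC}{I}$. You make the termination step a bit more explicit than the paper by tracking the ascending chain of finite-dimensional slices $I_k \cap \RR\axs_{\sigma}\rC$ rather than asserting termination directly from finite-dimensionality of $\RR\axs_{\sigma}\rC$, which is a worthwhile clarification.
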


\begin{proof}
We will construct an increasing chain of left modules $I^{(a)}$ such that
\[ I \subsetneq I^{(1)} \subsetneq \cdots \subsetneq I^{(k)} =
\Lradd{L}{\rC}{I}.\]
Suppose inductively that $I^{(a)} \subset \Lradd{L}{\rC}{I}$ is
generated by $I$
and by some
polynomials in $\RR\axs_{\sigma} \rC$.
 Consider a polynomial
\begin{align}
\label{eq:aSumNotIn}
\sum_{i}^{\finite} p_i^{\ast}p_i + \sum_{j}^{\finite}
q_j^{\ast}Lq_j \in \RR^{\ell \times 1} I^{(a)} +
\left(I^{(a)}\right)^{\ast}\RR^{1 \times
\ell}
\end{align}
for some $p_i \in \rC$ and $q_j \in
\RR^{\nu \times 1}\rC$.  Since $I^{(a)}
\subset \Lradd{L}{\rC}{I}$, we have that each $p_i \in
\Lradd{L}{\rC}{I}$ and
$Lq_j \in \RR^{\nu \times 1}\Lradd{L}{\rC}{I}$, which implies that
$e_k Lq_j
\in \Lradd{L}{\rC}{I}$ for each standard unit vector $e_k \in \RR^{1
\times
\nu}$.
If not all of the $p_i \in I^{(a)}$ and $Lq_j \in \RR^{\nu
\times 1} I^{(a)}$, then let $I^{(a + 1)}$ be generated by $I^{(a)}$ and by the
$p_i$ and $e_k L q_j$.  In this case, $I^{(a)} \subsetneq I^{(a+1)} \subset
\Lradd{L}{\rC}{I}$.  Furthermore, $I^{(a+1)}$ is generated by $I$ and some polynomials
in $\RR\axs_{\sigma} \rC$.

This process must terminate since $\RR\axs_{q}\rC$ is finite
dimensional.  Therefore we arrive at a point where \eqref{eq:aSumNotIn} holds
if and only if $p_i \in I^{(a)}$ and $Lq_j \in \RR^{\nu
\times 1} I^{(a)}$.  At this point, $I^{(a)}$ is $(L,\rC)$-real and
$I \subset
I^{(a)} \subset \Lradd{L}{\rC}{I}$.  Hence $I^{(a)} =
\Lradd{L}{\rC}{I}$.

The $\LraddS{L}{\rC}{I}$ case is similar and its proof is omitted.
\end{proof}

We will give algorithms for computing $\Lradd{L}{\rC}{I}$
in \S\ref{sect:CompRad}.

\section{Positive Linear Functionals on \texorpdfstring{$\RR^{\ell \times
\ell}\axs$}{[R ell times ell x xs]}}
\label{sect:linFun}

This section contains fundamental properties of positive linear functionals on 
$\Rll\axs$. 

A $\R$-linear functional $\lambda$ on $W \subset \RR^{\ell \times \ell}\axs$ is
\textbf{symmetric}\index{linear functional on $\RR^{\ell \times
\ell}$!symmetric}
if $\lambda(\omega^{\ast}) = \lambda(\omega)$ for each pair $\omega, \omega^*
\in W$.
A linear functional $\lambda$ on a subspace $W \subset \RR^{\ell \times
\ell}\axs$ is
\textbf{positive}\index{linear functional on $\RR^{\ell \times
\ell}\axs$!positive} if it is symmetric and
if $\lambda(\omega^{\ast}\omega) \geq 0$ for each  $\omega^*\omega \in W$.

\subsection{The GNS Construction}

Proposition \ref{prop:GNS} below describes a variant of the well-known
 Gelfand-Naimark-Segal (GNS) construction.

\begin{prop}
\label{prop:GNS}
Let $\lambda$ be a positive 
linear functional on
$\RR^{\ell \times \ell}\axs$,
and let \[I = \{ \vartheta \in \RR^{1 \times \ell}\axs \mid
L(\vartheta^*\vartheta) =
0\}.\]
There exists an inner product on the quotient space 
$\cH := \RR^{1 \times \ell}\axs / I$,
a tuple of operators $X$ on $\cH$, and a
vector $v \in \cH^n$ such that for each $p
\in \RR^{\ell \times \ell}\axs$ we have
\[\langle p(X)v, v \rangle = \lambda(p),\]
 and $\cH = \{q(X)v \mid q \in\RR^{1 \times \ell}\axs\}$.
\end{prop}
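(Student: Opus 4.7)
This is a variant of the Gelfand--Naimark--Segal construction adapted to the free left $\RR\axs$-module $\RR^{1\times\ell}\axs$. I interpret the definition of $I$ as $\{\vartheta\in\RR^{1\times\ell}\axs\mid\lambda(\vartheta^*\vartheta)=0\}$ (so the $L$ in the statement is a typo for $\lambda$) and $\cH^n$ as $\cH^\ell$. The plan is to introduce the $\RR$-bilinear form $\langle p,q\rangle:=\lambda(q^*p)$ on $\RR^{1\times\ell}\axs$, prove Cauchy--Schwarz, use it to show $I$ is both a vector subspace and closed under the left $\RR\axs$-action, quotient out $I$, define left-multiplication operators, and read off the cyclic vector.

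For the form itself, $q^*p\in\RR^{\ell\times\ell}\axs$ so $\lambda(q^*p)$ is defined; symmetry of $\lambda$ together with $(q^*p)^*=p^*q$ yields $\lambda(q^*p)=\lambda(p^*q)$, and positivity of $\lambda$ applied to $(\alpha p+\beta q)^*(\alpha p+\beta q)$ shows the form is positive semidefinite, giving the standard Cauchy--Schwarz inequality $|\lambda(q^*p)|^2\leq\lambda(p^*p)\,\lambda(q^*q)$. Consequently $I$ is exactly the set of vectors of zero length, hence a subspace, and $\cH:=\RR^{1\times\ell}\axs/I$ inherits an honest inner product $\langle\hat p,\hat q\rangle:=\lambda(q^*p)$.

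Next I will define $X_i\hat p:=\widehat{x_ip}$ for $i=1,\ldots,g$ and analogous operators for the adjoint letters $x_i^*$. The main obstacle---and essentially the only nontrivial step---is well-definedness, which requires $I$ to be stable under left multiplication by any word in $\axs$. This will follow from Cauchy--Schwarz: for $\vartheta\in I$ and $w\in\axs$,
\[
|\lambda(\vartheta^*w^*w\vartheta)|^2\leq\lambda(\vartheta^*\vartheta)\,\lambda\bigl((w^*w\vartheta)^*(w^*w\vartheta)\bigr)=0,
\]
so $w\vartheta\in I$. Once this is in hand, induction on word length gives $q(X)\hat r=\widehat{qr}$ for all $q\in\RR\axs$ and $r\in\RR^{1\times\ell}\axs$, and the identity $\langle X_i\hat p,\hat q\rangle=\lambda(q^*x_ip)=\langle\hat p,\widehat{x_i^*q}\rangle$ confirms that the operators attached to $x_i^*$ really are the Hilbert-space adjoints of the $X_i$.

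Finally, I will take the cyclic vector $v:=(\hat{e_1},\ldots,\hat{e_\ell})\in\cH^\ell$, where $e_j\in\RR^{1\times\ell}$ is the $j$-th standard unit row. Expanding any $p\in\RR^{\ell\times\ell}\axs$ as $p=\sum_{i,j}E_{ij}\otimes p_{ij}$ with $p_{ij}\in\RR\axs$, so that $p(X)=\sum_{i,j}E_{ij}\otimes p_{ij}(X)$ acts on $\RR^\ell\otimes\cH$, a short computation using $E_{ij}e_k=\delta_{jk}e_i$ and $e_i^*e_j=E_{ij}$ delivers
\[
\langle p(X)v,v\rangle_{\cH^\ell}=\sum_{i,j}\langle\widehat{p_{ij}e_j},\hat{e_i}\rangle=\sum_{i,j}\lambda(e_i^*p_{ij}e_j)=\sum_{i,j}\lambda(p_{ij}E_{ij})=\lambda(p).
\]
Density falls out from the same calculation: for $q=\sum_j q_je_j\in\RR^{1\times\ell}\axs$ one has $q(X)v=\sum_j q_j(X)\hat{e_j}=\sum_j\widehat{q_je_j}=\hat q$, so $\{q(X)v\mid q\in\RR^{1\times\ell}\axs\}$ exhausts $\cH$. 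Everything outside the Cauchy--Schwarz step is just matrix-index bookkeeping.
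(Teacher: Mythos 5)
Your proof is correct and is exactly the classical GNS argument that the paper invokes (it defers to the standard construction and to \cite[Proposition 5.3]{N} without reproducing the details); you also correctly repair the two typos in the statement ($L$ should read $\lambda$ in the definition of $I$, and $v$ lives in $\cH^\ell$). The Cauchy--Schwarz step establishing left $\RR\axs$-stability of $I$ and the index bookkeeping giving $\langle p(X)v,v\rangle=\lambda(p)$ and $q(X)v=\hat q$ are all sound.
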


\begin{proof}
The proof follows the classical argument; alternately, see \cite[Proposition 5.3]{N} for a detailed proof.
\end{proof}

We shall apply Proposition \ref{prop:GNS} in the next subsection to 
``flat'' linear functionals, in which case the obtained quotient space $\cH$ is 
finite-dimensional, and $X$ is thus simply a tuple of matrices. We refer
to \cite{Pop10,HKMb} for more on flat linear functionals in a free algebra.

\subsection{Flat Extensions of Positive Linear Functionals}
We next turn to flat extensions of positive linear functionals on $\Rll\axs$. 
The reader is referred to \cite{CF96,CF98} for the classical theory of flatness on $\R[x]$.
The content of this subsection comes from \cite{N} and is summarized now for future use.

Let $W \subset \RR^{\ell \times \ell}\axs$ be a vector subspace and
let $\lambda$ be a positive linear functional on $W$.
Suppose
\[ \{\omega \in \RR^{1 \times \ell}\axs \mid \omega^*\omega \in W \} = J \oplus
T \]
where $J, T \subset \RR^{1 \times \ell}\axs$ are
vector
subspaces with
\[J := \{\vartheta \in \RR^{1 \times \ell}\axs \mid \vartheta^*\vartheta \in W
\mbox{ and } \lambda(\vartheta^*\vartheta) = 0\}.\]
 An extension $\bar{\lambda}$ of
$\lambda$ to a space $U \supset W$ is a \textbf{flat
extension}\index{flat extension} if $\bar{\lambda}$ is positive and if
\[ \{u \in \RR^{1 \times \ell}\axs \mid u^*u \in U \} = I \oplus
T \]
where
\[ I = \{ \iota \in \RR^{1 \times \ell}  \mid \iota^*\iota \in U \mbox{ and }
\bar{\lambda}(\iota^*\iota) = 0\}.\]

\begin{prop}
\label{prop:flatExtRC}
Let $\rC \subset \RR^{1 \times \ell}\axs$
be a finite right chip space, and 
let $\lambda$ be a positive linear functional on
$\rC^*\RR\axs_1 \rC$.
\begin{enumerate}[\rm(1)]
 \item There exists a positive extension of
$\lambda$ to the space  $\rC^*\RR\axs_2 \rC$ if and only if
whenever
 $\vartheta \in \rC$ satisfies
$\lambda(\vartheta^*\vartheta) = 0$, then $\lambda(b^*c\vartheta) =
0$ for each polynomial $b \in \RR\axs_1 \rC$ and each $c \in \RR\axs$
satisfying $c\vartheta \in \rC$.
\item If there exists a positive extension of
$\lambda$ to the space
$\rC^*\RR\axs_2 \rC$, then there exists a unique flat
extension
$\bar{\lambda}$  of $\lambda$
to
$\rC^*\RR\axs \rC$.
In this case, the space
\[\{\theta \in \RR\axs\rC \mid
\bar{\lambda}(\theta^*\theta) = 0\}\]
is generated as a left module by the set
\begin{equation}
\label{eq:genOfJRC}
  \{ \iota \in \RR\axs_1 \rC \mid \lambda(b^*\iota) = 0
\text{ for every } b \in \rC\}.
\end{equation}
\item Given the existence of a flat extension $\bar{\lambda}$ 
of $\lambda$
to
$\rC^*\RR\axs\rC$, there exists a flat extension
of $\bar{\lambda}$ to all of $\RR^{\ell \times \ell}\axs$.
\end{enumerate}

\end{prop}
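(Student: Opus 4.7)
The plan is to handle the three parts as a cascade. Part (1) characterizes when a single-step extension from degree one to degree two is possible, part (2) iterates this to a unique flat extension on $\rC^*\R\axs\rC$, and part (3) promotes the flat extension to one on all of $\R^{\ell \times \ell}\axs$ via a finite-dimensional GNS construction.

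For part (1), the key observation is that every element of $\rC^*\R\axs_2\rC$ is a linear combination of products $p^*q$ with $p, q \in \R\axs_1\rC$, since any word of length $\leq 2$ factors as $w_1 w_2$ with $|w_i| \leq 1$. Hence giving a positive extension $\tilde\lambda$ to $\rC^*\R\axs_2\rC$ is exactly the same data as giving a positive semidefinite sesquilinear form $B_1$ on $\R\axs_1\rC$ that agrees with $\lambda(p^*q)$ whenever $p^*q \in \rC^*\R\axs_1\rC$. For the ($\Rightarrow$) direction I would apply Cauchy--Schwarz in $B_1$: the given $\vartheta \in \rC$ is $B_1$-null, and combining Cauchy--Schwarz with the right-chip-space closure property (to track the intermediate monomials that arise between $\vartheta$ and $c\vartheta$ whenever $c\vartheta \in \rC$) forces $c\vartheta$ to be $B_1$-null as well, so $\lambda(b^*c\vartheta) = B_1(b,c\vartheta) = 0$. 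For the ($\Leftarrow$) direction I would build $B_1$ explicitly: pick a direct-sum decomposition $\R\axs_1\rC = J \oplus T$ with $J$ containing $\{\vartheta \in \rC : \lambda(\vartheta^*\vartheta) = 0\}$ together with its left translates $c\vartheta$ that remain in $\rC$; declare $J$ to be $B_1$-null and define $B_1$ on $T$ in the only way consistent with $\lambda$. The assumed condition is exactly what is needed to verify well-definedness on $J \oplus T$ and positive semidefiniteness.

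For part (2), iterate the one-step extension of part (1) degree by degree. Flatness asserts that the null space $J \subset \R\axs_1\rC$ already determines everything: at each higher degree $d$, the new null elements are exactly the left multiples $w\iota$ for $w \in \R\axs_{d-1}$ and $\iota \in J$, and a compatible complement $T_d$ can be chosen so that the resulting sesquilinear form stays PSD. Uniqueness of the flat extension follows because any flat extension is pinned down by its values on $\R\axs_1\rC$. The description of the null space generators in \eqref{eq:genOfJRC} is then a bookkeeping step: every $\theta \in \R\axs\rC$ with $\bar\lambda(\theta^*\theta) = 0$ reduces, modulo the $\R\axs$-action, to an element $\iota \in \R\axs_1\rC$ satisfying $\lambda(b^*\iota) = 0$ for every $b \in \rC$.

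For part (3), given the flat extension $\bar\lambda$ on $\rC^*\R\axs\rC$, the quotient $\cH := \R\axs\rC / \{\theta : \bar\lambda(\theta^*\theta) = 0\}$ is finite-dimensional by flatness and finiteness of $\rC$. Left multiplication by $x_i$ and $x_i^*$ gives matrix operators $X_i, X_i^*$ on $\cH$, and a distinguished vector $v$ arises from the image of the standard basis of $\rC$. Define the full extension by the moment formula $\omega \mapsto \langle \omega(X)v, v\rangle$; this is positive by construction, agrees with $\bar\lambda$ on $\rC^*\R\axs\rC$ by the GNS identity of Proposition \ref{prop:GNS}, and is flat because the underlying Hilbert space $\cH$ is unchanged. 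The main obstacle, I expect, is the explicit construction of $B_1$ in part (1)'s ($\Leftarrow$) direction, which is exactly where the chip-space closure hypothesis earns its keep: it is what ensures the candidate null space $J$ is well-defined and that the form $B_1$ can be coherently extended from the degree-one data.
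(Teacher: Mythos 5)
The paper does not actually prove this statement; it defers entirely to [N, Proposition 5.2], so there is no in-paper argument to compare against. Your cascade structure (one-step extension $\to$ iterated flat extension $\to$ full GNS extension) is a reasonable reconstruction of what such a proof must look like, and the forward direction of part (1) via Cauchy--Schwarz plus the chip-space closure (peeling $c$ off one letter at a time, each intermediate $c_j\cdots c_k\vartheta$ remaining in $\rC$) is correct in outline.

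There is, however, a genuine gap where you acknowledge ``the main obstacle.'' In the $\Leftarrow$ direction of part (1), it is not enough to produce a PSD bilinear form $B_1$ on $\RR\axs_1\rC$ that agrees with $\lambda$ on the prescribed block: you must produce a \emph{linear functional} $\tilde\lambda$ on $\rC^*\RR\axs_2\rC$. The bilinear map $(a,b)\mapsto a^*b$ from $\RR\axs_1\rC\times\RR\axs_1\rC$ onto $\rC^*\RR\axs_2\rC$ has a nontrivial kernel, generated by ``overlap'' relations $a\otimes b - a'\otimes b'$ with $a^*b=a'^*b'$ (e.g.\ $a'=u^*a$, $b=ub'$ for a word $u$), and the proposed $B_1$ must vanish on all of these. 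Declaring $J$ null and ``defining $B_1$ on $T$ in the only way consistent with $\lambda$'' does not by itself guarantee this; the verification is the heart of the argument and uses both the positivity hypothesis and the structure of $\rC$. The key structural fact you need but do not isolate is that the null condition forces $\RR\axs_1\rC = J+\rC$ where $J=\{\iota\in\RR\axs_1\rC:\lambda(b^*\iota)=0\ \forall b\in\rC\}$, so that the complement $T$ can be taken \emph{inside} $\rC$; only then are all the relevant cross terms $\lambda(j_a^*b)$, $\lambda(a^*j_b)$, $\lambda(j_a^*j_b)$ actually in the domain of $\lambda$, and only then can the consistency relations be reduced (via symmetry of $\lambda$ and the chip property on intermediate monomials) to identities already visible in $\rC^*\RR\axs_1\rC$. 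Your description of $J$ as ``$\{\vartheta\in\rC:\lambda(\vartheta^*\vartheta)=0\}$ together with its left translates $c\vartheta$ that remain in $\rC$'' is also too small and stays inside $\rC$, whereas the correct $J$ lives in $\RR\axs_1\rC$ and is the kernel of the pairing against $\rC$. Similarly, in part (2) the assertion that ``the new null elements are exactly the left multiples $w\iota$'' is precisely the recursive-generation property of a flat extension and needs to be proved, not asserted; it is again where the chip-space hypothesis is doing real work.
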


\begin{proof}
 See \cite[Proposition 5.2]{N}.
\end{proof}

\begin{corollary}
\label{cor:GNS}
Let $\rC \subset \RR^{1 \times \ell}\axs$
be a full, finite right chip space.
Let $\lambda$ be a positive linear functional on
$\rC^*\RR\axs_1 \rC$, and let
$J$ be the set
\[J := \{ \vartheta \in \rC \mid \lambda(\vartheta^*\vartheta) = 0\}.\]
Suppose that 
if $\vartheta \in J$,
then $\lambda(b^*c\vartheta) =
0$ for each polynomial $b \in \RR\axs_1 \rC$ and each $c \in \RR\axs$
such that $c\vartheta \in \rC$.
Let $n := \dim(\rC) -
\dim(J \cap \rC)$,
and suppose $n > 0$.
Then there exists a $g$-tuple $X$ of  $n \times n$ real matrices, and a
vector $v \in \RR^{\ell n}$ such that for each $p
\in\rC^*\RR\axs_1\rC$ we have
\[v^{\ast}p(X)v = \lambda(p),\]
 and $\RR^{\ell n} = \{p(X)v \mid p \in\rC\}$.
\end{corollary}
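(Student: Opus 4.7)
The plan is to funnel the hypothesis through Propositions~\ref{prop:flatExtRC} and~\ref{prop:GNS}, and then unpack the finite-dimensionality to land in concrete matrices and vectors.

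First, I would observe that the hypothesis of the corollary is precisely the condition of Proposition~\ref{prop:flatExtRC}(1) applied with $W = \rC^*\RR\axs_1\rC$: for every $\vartheta \in \rC$ with $\lambda(\vartheta^*\vartheta)=0$, and every $b \in \RR\axs_1\rC$ and $c\in\RR\axs$ with $c\vartheta \in \rC$, one has $\lambda(b^*c\vartheta)=0$. Consequently Proposition~\ref{prop:flatExtRC}(1) yields a positive extension of $\lambda$ to $\rC^*\RR\axs_2\rC$, part~(2) then produces a unique flat extension $\bar\lambda$ to $\rC^*\RR\axs\rC$, and part~(3) extends $\bar\lambda$ positively to all of $\RR^{\ell\times\ell}\axs$.

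Next, I would apply the GNS construction of Proposition~\ref{prop:GNS} to $\bar\lambda$. This produces an inner-product space $\cH = \RR^{1\times\ell}\axs/I$ with $I = \{\vartheta \mid \bar\lambda(\vartheta^*\vartheta)=0\}$, a $g$-tuple $X$ of symmetric operators on $\cH$, and a distinguished vector $v$ built from the classes of the standard row units $e_1,\dots,e_\ell$, satisfying $\langle p(X)v, v\rangle = \bar\lambda(p)$ for all $p\in\RR^{\ell\times\ell}\axs$. Since $\bar\lambda$ restricts to $\lambda$ on $\rC^*\RR\axs_1\rC$, this delivers $v^*p(X)v = \lambda(p)$ for every $p$ in that subspace. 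Note also that $I \cap \rC = J$, since elements of $\rC^*\rC$ lie in the domain on which $\bar\lambda$ agrees with $\lambda$.

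Finally, I would compute $\dim\cH$. The flatness description in Proposition~\ref{prop:flatExtRC}(2) says $I$ is generated as a left module by elements of $\RR\axs_1\rC$. Combined with fullness of $\rC$---which guarantees that right-chip reductions stay inside $\rC$---a degree induction shows that the natural map $\rC \to \cH$ is surjective with kernel $J$, so $\dim\cH = \dim\rC - \dim J = n$. Choosing an orthonormal basis identifies $\cH \cong \RR^n$; each $X_j$ becomes an $n\times n$ real matrix, the vector $v$ becomes a concrete element of $\RR^{\ell n}$ (the concatenated images of $e_1,\dots,e_\ell$), and the spanning statement is read off directly from $\cH = \{[p] \mid p \in \rC\}$. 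I expect the main technical point to be precisely this degree induction: fullness of $\rC$ is exactly what lets one substitute a generator of $I$ inside a word and push the tail back into $\rC$, so every class in $\cH$ has a $\rC$-representative; everything else is assembly of the previously established machinery.
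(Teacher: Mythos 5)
Your proposal follows the paper's proof exactly: the hypothesis is precisely the condition of Proposition~\ref{prop:flatExtRC}(1), whose parts (1)--(3) yield a flat extension $\bar{\lambda}$ of $\lambda$ to all of $\RR^{\ell\times\ell}\axs$, and Proposition~\ref{prop:GNS} then produces $X$ and $v$. Two small remarks: the dimension count $\dim\cH=n$ falls directly out of the flat-extension definition --- one has $\RR^{1\times\ell}\axs = I\oplus T$ with $T$ a complement of $J$ inside $\{\omega : \omega^*\omega\in\rC^*\RR\axs_1\rC\}$, whose dimension is $\dim\rC$ --- so the separate degree induction you sketch (which as phrased would only reach $\RR\axs\rC$, not all of $\RR^{1\times\ell}\axs$) is not needed; and in the nonsymmetric setting $\RR\axs$ the GNS operators $X_j$ are \emph{not} symmetric, contrary to what you wrote, though this does not affect the argument.
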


\begin{proof}
By Proposition \ref{prop:flatExtRC}, there exists a flat extension
$\bar{\lambda}$ of
$\lambda$ to all of $\RR^{\ell \times \ell}\axs$.
Given this flat extension, apply Proposition \ref{prop:GNS} to produce the
desired
$X$ and $v$.
\end{proof}

\subsection{Truncated Test Modules}
\label{sect:truncTestMod}

Let $L \in \RR^{\nu \times \nu}\axs$ for some  $\nu\in\N$.
Let $T \subset \RR^{1 \times \ell}\axs$ and $U \subset \RR^{\nu \times
\ell}\axs$ be
vector spaces.
Define $M_{T,U}(L)$ as
\begin{equation}
\label{eq:MVWab}
  M_{T,U}(L) := \left\{ \sum_{i}^{\finite}
t_i^{\ast}t_i + \sum_{j}^{\finite} u_j^{\ast}Lu_j \mid t_i \in T,\
u_j \in U\right\}.
\end{equation}
We call $M_{T,U}(L)$ a \textbf{truncated (quadratic) module}\index{truncated module}.

Let $I \subset \RR^{1 \times \ell}\axs$ be a left module, $L
\in \RR^{\nu \times \nu}\axs$ a symmetric polynomial, and $\rC
\subset \RR^{1 \times \ell}\axs$ a right chip space.
Decompose
$\rC$ as
\[ \rC = (I \cap \rC) \oplus T,\]
for some space $T \subset \rC$.
Decompose $\RR^{\nu \times 1}T$ as
\[ \RR^{\nu \times 1} T = J \oplus K,\]
where $J$ is the subspace of $\RR^{\nu \times 1}T$ defined by
\[J = \{ \vartheta \in \RR^{\nu \times 1}T \mid L \vartheta \in \RR^{\nu \times
1}I\},\]
and $K \subset \rC$ is some complementary subspace.
Since $J \cap K = \{0\}$,  we have that $L\kappa \not\in \RR^{\nu \times 1}I$
for each $\kappa \in
K
\setminus \{0\}$.
The following is a \textbf{truncated test module}\index{truncated test module}
for $I$,
$L$ and $\rC$:
\begin{equation}
 \label{eq:Md}
\Md := M_{T, K}(L) = \left\{ \sum_{i}^{\finite}
\tau_i^{\ast}\tau_i + \sum_{j}^{\finite} \kappa_j^{\ast}L\kappa_j \mid \tau_i
\in T,\
\kappa_j \in K\right\}.
\end{equation}

\begin{lemma}
\label{prop:lowdegqradcond}
 Let $I \subset \RR^{1 \times \ell}\axs$ be a left module,
let $L \in \RR^{\nu
\times \nu}\axs$ be a
symmetric polynomial,
and let $\rC \subset \RR^{1 \times \ell}\axs$ be a
finite right chip space.
Let $\Md = M_{T,K}(L)$ be a truncated test module for $I$, $L$ and $\rC$, as in
\eqref{eq:Md}.
If $I$ is $(L, \rC)$-real, then
\[ \big(\RR^{\ell \times 1} I + I^{\ast}\RR^{1 \times \ell}\big) \cap \Md = \{0\}. \]
\end{lemma}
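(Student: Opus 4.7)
The plan is a direct unpacking of the definitions, applying $(L,\rC)$-realness of $I$ to each element of the stated intersection and using the two direct-sum decompositions that define $T$ and $K$ to force every summand to vanish.

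First I would take an arbitrary element $m = \sum_i \tau_i^*\tau_i + \sum_j \kappa_j^* L \kappa_j$ of $\Md$ which lies in $\RR^{\ell\times 1}I + I^*\RR^{1\times\ell}$. Here the $\tau_i$ live in $T\subseteq\rC$ and the $\kappa_j$ live in $K\subseteq\RR^{\nu\times 1}T\subseteq\RR^{\nu\times 1}\rC$, so the hypotheses of the $(L,\rC)$-real condition in \eqref{eq:qradCond} are met. Hence, by the assumption that $I$ is $(L,\rC)$-real, each $\tau_i\in I$ and each $L\kappa_j\in\RR^{\nu\times 1}I$.

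Second, I would combine these two conclusions with the two direct-sum decompositions that were built into the construction of $\Md$. For the $\tau_i$'s: since $\tau_i\in T$ and $\tau_i\in I$, we have $\tau_i\in T\cap(I\cap\rC)$, which is $\{0\}$ by the decomposition $\rC=(I\cap\rC)\oplus T$. For the $\kappa_j$'s: by definition, $J=\{\vartheta\in\RR^{\nu\times 1}T\mid L\vartheta\in\RR^{\nu\times 1}I\}$, so the conclusion $L\kappa_j\in\RR^{\nu\times 1}I$ together with $\kappa_j\in\RR^{\nu\times 1}T$ gives $\kappa_j\in J$; but $\kappa_j\in K$ as well, and $J\cap K=\{0\}$ by the decomposition $\RR^{\nu\times 1}T = J\oplus K$. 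Therefore every $\tau_i$ and every $\kappa_j$ is zero, which forces $m=0$.

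There is really no obstacle in this argument; the whole content of the lemma is that the data defining $\Md$ has been arranged precisely so that the $(L,\rC)$-real hypothesis on $I$ becomes a ``no nontrivial collisions'' statement inside $\Md$. The only subtlety to be careful about while writing the proof is to confirm that the elements $\tau_i$ and $\kappa_j$ really do sit in the chip spaces $\rC$ and $\RR^{\nu\times 1}\rC$ appearing in the definition of $(L,\rC)$-real, so that definition applies verbatim; this is immediate from $T\subseteq\rC$ and $K\subseteq \RR^{\nu\times 1}T\subseteq\RR^{\nu\times 1}\rC$.
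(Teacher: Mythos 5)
Your proof is correct and follows the same route the paper takes: apply the $(L,\rC)$-real hypothesis to conclude $\tau_i\in I$ and $L\kappa_j\in\RR^{\nu\times 1}I$, then invoke the direct-sum decompositions $\rC=(I\cap\rC)\oplus T$ and $\RR^{\nu\times 1}T=J\oplus K$ to force $\tau_i=0$ and $\kappa_j=0$. The paper states the final step ("which implies each $\tau_i=0$ and $\kappa_j=0$") more tersely, but your unpacking of exactly why the two direct sums kill the summands, and your explicit check that $T\subseteq\rC$ and $K\subseteq\RR^{\nu\times 1}\rC$ so that the $(L,\rC)$-real condition is legitimately applicable, is the same reasoning made more transparent.
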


\begin{proof}
Suppose
\[  \sum_i^{\finite} \tau_{i}^{\ast}\tau_{i} + \sum_{j}^{\finite}
\kappa_{j}^{\ast}L\kappa_{j} \in \RR^{\ell
\times 1} I + I^{\ast}\RR^{1 \times \ell},\]
where each $\tau_{i} \in T$ and each $\kappa_j \in K$.
Since $I$ is
$(L,\rC)$-real, it must be that each $\tau_i \in I$ and each $L\kappa_j
\in \RR^{\nu \times 1}I$, which implies that each $\tau_i = 0$ and each
$\kappa_j =
0$.
\end{proof}

\subsection{Building Positive Linear Functionals via Matrices}

Recall that $\mathbb{S}^{k}$ is the set of $k \times k$ symmetric matrices
over $\RR$. Define $\langle A, B \rangle := \operatorname{Tr}(AB)$ to be the
inner product
on $\mathbb{S}^{k}$.

\begin{lemma}
\label{lem:pSd}
 Let $\cB \subset \mathbb{S}^{k}$ be a vector subspace.
Then exactly one of the following holds:
\begin{enumerate}[\rm(1)]
 \item\label{item:1} There exists $B \in \cB$ such that $B \succ
0$, and there exists no nonzero $A \in \cB^{\bot}$ with $A \succeq 0$.
\item\label{item:2} There exists $A \in \cB^{\bot}$ such that $A \succ 0$, and
there exists no nonzero $B \in \cB$ with $B \succeq 0$.
\item There exist nonzero $B \in \cB$ and $A \in \cB^{\bot}$ with
$A, B \succeq
0$, but there exist no $B \in \cB$ nor $A \in \cB^{\bot}$ with either $A \succ
0$ or $B \succ 0$.
\end{enumerate}
\end{lemma}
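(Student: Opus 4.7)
The plan is to establish the three cases by combining two ingredients: mutual exclusivity, which is a direct trace inequality, and exhaustiveness, which uses the self-duality of the positive semidefinite cone $\mathbb{S}^k_+$ under $\langle A,B\rangle=\operatorname{Tr}(AB)$ together with a separation argument.

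First I would record the key positivity fact: if $B\succ 0$ and $A\succeq 0$ with $A\neq 0$, then $\operatorname{Tr}(AB)>0$. Writing $B=C^\ast C$ with $C$ invertible, one has $\operatorname{Tr}(AB)=\operatorname{Tr}(CAC^\ast)\geq 0$ with equality forcing $CAC^\ast=0$, hence $A=0$. This single fact delivers mutual exclusivity: if $\cB$ contains some $B\succ 0$, then no nonzero $A\in\cB^\bot$ can be PSD, because $\operatorname{Tr}(AB)=0$ for $A\in\cB^\bot$. Symmetrically for $\cB^\bot$ containing a PD matrix. Hence (1), (2), (3) cannot overlap.

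Next I would handle exhaustiveness via the following dichotomy: either $\cB$ meets $\operatorname{int}(\mathbb{S}^k_+)$, or it does not. In the latter case, since $\operatorname{int}(\mathbb{S}^k_+)$ is a nonempty open convex set disjoint from the subspace $\cB$, the Hahn--Banach separation theorem yields a nonzero linear functional $\langle A,\cdot\rangle$ vanishing on $\cB$ and nonnegative on $\mathbb{S}^k_+$. Thus $A\in\cB^\bot$, $A\neq 0$, and (by self-duality of the PSD cone, i.e.\ $(\mathbb{S}^k_+)^\ast=\mathbb{S}^k_+$) $A\succeq 0$. The symmetric statement holds with $\cB$ and $\cB^\bot$ swapped.

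Finally, combining these two pieces:
\begin{itemize}
\item If $\cB$ contains some $B\succ 0$, the trace fact rules out any nonzero PSD element of $\cB^\bot$, giving case (1).
\item If $\cB^\bot$ contains some $A\succ 0$, symmetrically we are in case (2).
\item Otherwise, neither $\cB$ nor $\cB^\bot$ meets $\operatorname{int}(\mathbb{S}^k_+)$. Applying the separation argument to $\cB$ produces a nonzero $A\in\cB^\bot\cap\mathbb{S}^k_+$, and applying it to $\cB^\bot$ (using $(\cB^\bot)^\bot=\cB$) produces a nonzero $B\in\cB\cap\mathbb{S}^k_+$, placing us in case (3).
\end{itemize}
The main subtlety, and essentially the only nonroutine step, is the separation-plus-self-duality argument producing a nonzero PSD element of $\cB^\bot$ when $\cB$ avoids the PD cone; everything else is direct from the trace inequality. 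No degree- or chip-space machinery is needed here—this is purely finite-dimensional convex geometry of $\mathbb{S}^k$.
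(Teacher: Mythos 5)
Your proof is correct. The core step you establish via Hahn--Banach separation and self-duality of $\mathbb{S}^k_+$ --- namely that a subspace $\cB\subset\mathbb{S}^k$ either meets the open PD cone or else $\cB^\bot$ contains a nonzero PSD matrix, and never both --- is exactly the Bohnenblust dichotomy the paper invokes (delegating the details to \cite{Bon48} and \cite[Lemma 5.7]{N}). You re-derive that dichotomy and then assemble the three-way case split by applying it to $\cB$ and to $\cB^\bot$, combined with the trace inequality $\operatorname{Tr}(AB)>0$ for $B\succ0$, $0\neq A\succeq0$; this is the same architecture the cited references use, so your argument is essentially a self-contained version of the paper's. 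One very minor caveat not affecting correctness: the lemma implicitly assumes $k\geq1$, since for $k=0$ none of the three alternatives can hold.
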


\begin{proof}
This is a consequence of the Bohnenblust 
\cite{Bon48} dichotomy; see \cite[Lemma 5.7]{N} for a detailed proof.
\end{proof}

Using Lemma \ref{lem:pSd} we can establish
the existence of certain positive linear functionals
with desirable properties.

\begin{lemma}
\label{lem:goodSepFun}
Let $L \in \RR^{\nu \times \nu}\axs$ be a linear pencil,
$\rC \subset \RR^{1
\times \ell}\axs$
be a finite right chip space, $I \subset \RR\axs^{1 \times \ell}$ be a left
module
generated by polynomials in $\RR\axs_1\rC$, and let $p \in
\rC^*\RR\axs_1\rC$ be a symmetric polynomial.
Let $\Md = M_{T,K}(L)$ be a truncated test
module for $\Lradd{L}{\rC}{I}$, $L$, and $\rC$.
If \[p
\not \in \Md + \RR^{\ell\times 1}\Lradd{L}{\rC}{I} +
\big(\Lradd{L}{\rC}{I}\big)^{\ast}\RR^{1 \times \ell},\] then there
exists a positive linear functional $\lambda$ on
$\rC^*\RR\axs_1\rC$ with the following properties:
\begin{enumerate}[\rm(1)]
 \item $\lambda(a) > 0$ for each $a \in \Md \setminus \{0\}$;
 \item $\lambda(\iota) = 0$ for each $\iota \in \big(\RR^{\ell \times
1}\Lradd{L}{\rC}{I} +
[\Lradd{L}{\rC}{I}]^{\ast}\RR^{1 \times \ell}\big) \cap \rC^*\RR\axs_1\rC$;
 \item $\lambda(p) < 0$.
\end{enumerate}
\end{lemma}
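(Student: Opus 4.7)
The plan is a finite-dimensional convex separation argument in the symmetric part $W := (\rC^{\ast} \RR\axs_1 \rC)^{\mathrm{sym}}$, combined with the Bohnenblust dichotomy (Lemma~\ref{lem:pSd}) to upgrade a mere separating functional into one that is \emph{strictly} positive on $\Md \setminus \{0\}$. Set $\cU := (\RR^{\ell \times 1}\Lradd{L}{\rC}{I} + \Lradd{L}{\rC}{I}^{\ast}\RR^{1 \times \ell}) \cap W$; this is a $\ast$-invariant subspace. Lemma~\ref{prop:lowdegqradcond} gives $\Md \cap \cU = \{0\}$, while the hypothesis says $p \notin \Md + \cU$.

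First I would produce an initial separating functional $\lambda_0$. Since $T$ and $K$ are finite dimensional and the relevant "squaring" maps are proper, standard arguments (Caratheodory bounding the number of summands, plus a compactness step) show that the convex cone $\Md$ is closed in $W$; combined with $\Md \cap \cU = \{0\}$ this yields closedness of $\Md + \cU$. Hahn-Banach then produces $\lambda_0 \in W^{\ast}$ with $\lambda_0(p) < 0$, $\lambda_0 \equiv 0$ on $\cU$, and $\lambda_0 \geq 0$ on $\Md$. By the usual symmetry-forcing argument, $\lambda_0$ extends uniquely to a symmetric functional on all of $\rC^{\ast}\RR\axs_1\rC$ that automatically vanishes on the full (possibly non-symmetric) subspace $\big(\RR^{\ell\times 1}\Lradd{L}{\rC}{I} + \Lradd{L}{\rC}{I}^{\ast}\RR^{1\times\ell}\big) \cap \rC^{\ast}\RR\axs_1\rC$.

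Next, and this is the crux, I would construct an auxiliary $\mu \in W^{\ast}$ that vanishes on $\cU$ and is \emph{strictly} positive on $\Md \setminus \{0\}$. Consider $\Lambda := \{\lambda \in W^{\ast} : \lambda|_{\cU} = 0\}$, and embed it in $\mathbb{S}^{\dim T} \oplus \mathbb{S}^{\dim K}$ via the pair of moment-matrix maps $\lambda \mapsto \big([\lambda(\tau_i^{\ast}\tau_j)]_{i,j},\, [\lambda(\kappa_i^{\ast}L\kappa_j)]_{i,j}\big)$ for fixed bases of $T$ and $K$; call the image $\cB$. Strict positivity of $\lambda$ on $\Md$ translates to positive definiteness of both Hankel blocks simultaneously. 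Applying Lemma~\ref{lem:pSd} to $\cB$, either case (1) yields the desired $\mu$, or a nonzero PSD element of $\cB^{\perp}$ exists; pairing this back through the moment-matrix embedding would produce a nonzero element of $\Md \cap \cU$, contradicting Lemma~\ref{prop:lowdegqradcond}. Finally, set $\lambda := \lambda_0 + \epsilon \mu$ with $\epsilon > 0$ small enough that $\lambda(p) < 0$ is preserved; this $\lambda$ satisfies all three conclusions.

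The main obstacle is the identification in the Bohnenblust step: encoding all compatibility relations of the moment matrices into the subspace $\cB$, so that a PSD element of $\cB^{\perp}$ genuinely corresponds to a nonzero element of the truncated test module $\Md$ modulo $\cU$, allowing the ``bad'' branch of the dichotomy to be excluded. The bookkeeping here—keeping track of which linear combinations on the matrix side are forced to zero by relations among the polynomials $\tau_i^{\ast}\tau_j$ and $\kappa_i^{\ast}L\kappa_j$ inside $W$—is where most of the care is needed, and is the technical analogue of \cite[Lemma 5.7]{N}.
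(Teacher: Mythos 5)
Your argument is correct and reaches the same conclusion, but it is organized rather differently from the paper's proof, and the difference is worth noting. The paper makes a single application of Lemma~\ref{lem:pSd}: it first decides, by a case split on whether $\Md \cap \big(\RR p + \RR^{\ell\times 1}\Lradd{L}{\rC}{I} + \Lradd{L}{\rC}{I}^{\ast}\RR^{1\times\ell}\big)$ is trivial, whether to throw the one-dimensional space $W = \RR p$ into the subspace being killed; it then applies Bohnenblust to the resulting $\widehat{\cZ}$ to get a $\tilde{\lambda}$ that is already strictly positive on $\Md$, and only afterwards handles $\lambda(p)<0$ either by reading it off from the identity $\alpha p + \iota = m$ (case $W=\{0\}$) or by adding $\epsilon\xi$ with $\xi$ from Hahn--Banach (case $W=\RR p$). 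Your proposal inverts the order: separate $p$ from the closed cone $\Md + \cU$ to get a nonstrict $\lambda_0$ with $\lambda_0(p)<0$, then apply Bohnenblust once to produce a $\mu$ strictly positive on $\Md\setminus\{0\}$ and vanishing on $\cU$, and finally perturb $\lambda_0 + \epsilon\mu$. Both are sound, and yours has the virtue of avoiding the somewhat ad hoc $W=\RR p$ versus $W=\{0\}$ dichotomy. The price you pay is that the initial separation needs $\Md + \cU$ to be closed, which the paper sidesteps entirely. That closedness is true but is precisely the point you gloss over: the clean argument is that the linear map $(A,B)\mapsto \tau^{\ast}A\tau + \kappa^{\ast}B(L\kappa)$ from $\mathbb{S}^{\mu}_{\succeq 0}\times\mathbb{S}^{\sigma}_{\succeq 0}$ onto $\Md$ has trivial kernel intersected with the PSD cone (this uses $(L,\rC)$-realness of $\Lradd{L}{\rC}{I}$ together with $T\cap I=\{0\}$ and $K\cap J=\{0\}$), so $\Md$ is closed as the image of a closed cone; then $\Md + \cU$ is closed because $\Md \cap \cU = \{0\}$ by Lemma~\ref{prop:lowdegqradcond}. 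If you make that step explicit, your proof is complete, and the dual identification $\cB^{\perp}=\{(Z_{\tau},Z_{\kappa}) : \tau^{\ast}Z_{\tau}\tau + \kappa^{\ast}Z_{\kappa}(L\kappa)\in\cU\}$ that worries you is exactly the paper's $\cZ$ and works out by unwinding the trace pairing.
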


\begin{proof}
Without loss of generality we may assume $I = \Lradd{L}{\rC}{I}$ since
by
Proposition
\ref{prop:genOfQDRad}, $\Lradd{L}{\rC}{I}$ is also generated by
polynomials in $\RR\axs_1\rC$.

First, $T \neq \{0\}$ since otherwise  $I =
\RR^{1 \times \ell}\axs$.
Further, the case where
$K = \{0\}$ is similar to the case where $K \neq \{0\}$, so
without loss of generality assume that $K
\neq \{0\}$.

 If
\[\Md \cap \left( \RR p+ \RR^{\ell \times 1} I + I^* \RR^{1 \times \ell}
\right) = \{0\},\]
then let $W = \RR p$. Otherwise, let $W = \{0\}$.  In
either case, by Lemma \ref{prop:lowdegqradcond}, we have
\[\Md \cap \left( W+ \RR^{\ell \times 1} I + I^* \RR^{1 \times \ell}
\right) = \{0\}.\]

Let $\tau_1, \ldots, \tau_{\mu}$ be a basis for $T$ and let
$\kappa_1, \ldots, \kappa_{\sigma}$ be a basis for $K$. Define
column vectors $\tau :=
(\tau_i)_{1
\leq i \leq \mu}$, $\kappa:= (\kappa_j)_{1 \leq j \leq \sigma}$, and $L\kappa
:= (L \kappa_j)_{1 \leq j \leq \sigma}$.
The set $\Md$ is characterized as being
the set of polynomials of the
form $\tau^* A \tau + \kappa^* B (L\kappa)$,
where $A$ and $B$ are positive-semidefinite matrices.
By hypothesis, if
$\tau^* A \tau + \kappa^* B (L\kappa) \in W + \RR^{\ell \times 1}I + I^*\RR^{1
\times \ell}$
and $A,B \succeq 0$, then  $A, B
= 0$.

Let $\cZ \subset \mathbb{S}^{\mu} \times \mathbb{S}^{\sigma}$ be defined by
\[
\cZ := 
\left\{
(Z_{\tau}, Z_{\kappa}) \mid
\tau^*
Z_{\tau}
\tau
+
\kappa^*
Z_{\kappa}
(L\kappa) \in W+ \RR^{\ell \times 1}I + I^*\RR^{1 \times \ell}
\right\}.\]
By assumption, the space $\cZ$ contains no pairs $(Z_{\tau}, Z_{\kappa})$ with
$Z_{\tau}, Z_{\kappa} \succeq
0$ except $(0,0)$.
Therefore there is no nonzero positive-semidefinite matrix in the space
$\widehat{\cZ} \subset \mathbb{S}^{\mu + \sigma}$ defined by
\[
\widehat{\cZ} :=
\left\{
Z_{\tau} \oplus Z_{\kappa}
\mid
(Z_{\tau} , Z_{\kappa}) \in \cZ
\right\}.\]
By Lemma \ref{lem:pSd} there exists a positive definite matrix $C \in
\widehat{\cZ}^{\bot}$.  Let $C_{\tau} \in \mathbb{S}^{\sigma}$ and $C_{\kappa}
\in
\mathbb{S}^{\tau}$ be such that $C$ is of the form
\[
C = \left(
\begin{array}{cc}
 C_{\tau}&\tilde{C}\\
\tilde{C}^{\ast}&C_{\kappa}
\end{array}
\right)
\]
for some block matrix $\tilde{C}$.
Since $C \succ 0$, we have $C_{\tau}, C_{\kappa} \succ 0$, and if
$(Z_{\tau},
Z_{\kappa}) \in \cZ$, then since $C \in \cZ^{\bot}$,
\[
\langle C_{\tau}, Z_{\tau} \rangle
+
\langle C_{\kappa}, Z_{\kappa} \rangle
=
\left\langle
C, Z_{\tau} \oplus Z_{\kappa}
\right\rangle = 0.
\]

Decompose $\rC^*\RR\axs_1\rC$ as
\[ \rC^*\RR\axs_1\rC = \left(M + W + [(\RR^{\ell
\times
1}I + I^*\RR^{1 \times \ell}) \cap \rC^*\RR\axs_1\rC] \right)
\oplus S,
\]
for some space $S \subset \rC^*\RR\axs_1\rC$.
Define $\tilde{\lambda}$ on $\rC^*\RR\axs_1\rC$ as
follows:
\beq\label{eq:tlambda}
 \tilde{\lambda}\left(
\tau^*
A
\tau
+
\kappa^*
B
(L\kappa) + \iota + s
\right)
=
\operatorname{Tr}
(AC_{\tau}) + \operatorname{Tr}
(BC_{\kappa}),
\eeq
where $A \in \RR^{\mu \times \mu}$, $B \in \RR^{\sigma \times \sigma}$,
$\iota \in W +
\RR^{\ell \times 1}I + I^*\RR^{1 \times \ell}$
and $s \in S$.  We now verify that $\tilde{\lambda}$ is
well defined.

First,
if $\tau^*A \tau + \kappa^* B (L \kappa) \in W +
\RR^{\ell \times 1}I + I^*\RR^{1 \times \ell}$ for some $A \in \RR^{\mu \times
\mu}$ and $B \in \RR^{\sigma \times
\sigma}$,  not necessarily
symmetric,
 then $\tau^*A^{\ast} \tau + \kappa^* B^{\ast} (L \kappa) \in W +
\RR^{\ell \times 1}I + I^*\RR^{1 \times \ell}$, which implies that
\[\tau^*(A + A^{\ast}) \tau + \kappa^* (B + B^{\ast}) (L \kappa) \in W +
\RR^{\ell \times 1}I + I^*\RR^{1 \times \ell}.\]
Therefore
\[\operatorname{Tr}
(AC_{\tau}) + \operatorname{Tr}
(BC_{\kappa}) = \frac{1}{2} \big( \operatorname{Tr}
[(A+ A^{\ast})C_{\tau}] + \operatorname{Tr}
[(B + B^{\ast})C_{\kappa}]
\big) = 0.\]

Next, suppose $\tau^*A_1 \tau + \kappa^* B_1 (L \kappa) + \iota_1 + s_1 = 
\tau^*A_2
\tau +
\kappa^* B_2 (L \kappa) + \iota_2 + s_2$, where $A_1,A_2 \in \RR^{\mu \times 
\mu}$, 
$B_1, B_2 \in \RR^{\sigma \times \sigma}$, $\iota_1, \iota_2 \in W +
\RR^{\ell \times 1}I + I^*\RR^{1 \times \ell}$, and $s_1, s_2 \in S$.
Then
\[\tau^*(A_1-A_2) \tau + \kappa^* (B_1-B_2) (L\kappa) + (\iota_1-\iota_2) +
(s_1-s_2) =
0.\]
By construction, we must have $s_1 - s_2 = 0$.  Therefore
\[\tau^*(A_1-A_2) \tau + \kappa^* (B_1-B_2) (L \kappa) \in W +
\RR^{\ell \times 1}I + I^*\RR^{1 \times \ell}. \]
Hence
\begin{align}
\notag
 \tilde{\lambda}(\tau^*A_1 \tau + \kappa^* B_1 (L \kappa) + \iota_1 + s_1)
&= \operatorname{Tr}
(A_1C_{\tau}) + \operatorname{Tr}
(B_1C_{\kappa})\\
\notag
&=\operatorname{Tr}
(A_2C_{\tau}) + \operatorname{Tr}
(B_2C_{\kappa})\\
\notag
& \quad +
\operatorname{Tr}
([A_1-A_2]C_{\tau}) + \operatorname{Tr}
([B_1-B_2]C_{\kappa})
\\
\notag
&=\tilde{\lambda}(\tau^*A_2 \tau + \kappa^* B_2 (L \kappa) + \iota_2 + s_2).
\end{align}
Therefore $\tilde{\lambda}$ is well defined.

Next, if $W = \{0\}$, let $\lambda = \tilde{\lambda}$.
If $W = \RR p$, we define $\lambda$ as follows.
Choose a symmetric functional
$\xi$ on
$\rC^*\RR\axs_1\rC$ such that $\xi(p) < 0$ and $\xi(\RR^{\ell
\times 1} I + I^* \RR^{1 \times \ell}) = \{0\}$, which exists by the
Hahn-Banach Theorem.
Let $\tilde{C}_{\tau} = (\xi[\tau_i^*\tau_j])_{1 \leq i,j \leq \mu}$ and
$\tilde{C}_{\kappa}=(\xi[\kappa_i^*L\kappa_j])_{1 \leq i,j \leq \sigma}$.
Choose $\epsilon > 0$ such that $C_{\tau} + \epsilon \tilde{C}_{\tau} \succ 0$
and
$C_{\kappa}
+ \epsilon \tilde{C}_{\kappa} \succ 0$. Define $\lambda =
\tilde{\lambda} + \epsilon \xi$.

It follows immediately, by definition of $\tilde{\lambda}$ that
$\tilde{\lambda}(\iota) = 0$ for each $\iota \in W +
\RR^{\ell \times 1}I + I^*\RR^{1 \times \ell}$.
If $W = \{0\}$, then $\lambda(\RR^{\ell \times 1}I + I^*\RR^{1 \times \ell}) =
0$.  If $W = \RR p$, since $\xi(\RR^{\ell \times 1}I + I^*\RR^{1 \times
\ell}) = \{0\}$, we also have $\lambda(\RR^{\ell \times 1}I + I^*\RR^{1 \times
\ell}) =
0$.
It is also clear that $\lambda(b^*) = \lambda(b)$ for each $b\in
\rC^*\RR\axs_1 \rC$ in both cases.

Each nonzero element of $M$ is of the form $\tau^*A\tau + \kappa^*B (L\kappa)$,
where $A,B \succeq 0$, and at least one of $A$ and $B$ is nonzero.
If $W = \{0\}$, then
\[\lambda(\tau^*A\tau + \kappa^*B(L\kappa)) =
\operatorname{Tr}
(AC_{\tau}) + \operatorname{Tr}
(BC_{\kappa}) > 0,\]
since $C_{\tau}, C_{\kappa} \succ 0$.
If $W = \RR p$, and if $A=(A_{ij})_{1 \leq i,j \leq \mu}$,
and $B = (B_{ij})_{1 \leq i,j \leq \sigma}$, then
\begin{align}
 \notag
\lambda(\tau^*A\tau + \kappa^*B(L\kappa)) &=
\operatorname{Tr}
(AC_{\tau}) + \operatorname{Tr}
(BC_{\kappa})\\
\notag
& \quad
+ \epsilon \sum_{i=1}^{\mu} \sum_{j=1}^{\mu}
a_{ij} \xi(\tau_i^*\tau_j)
+ \epsilon \sum_{i=1}^{\sigma} \sum_{j=1}^{\sigma}
b_{ij} \xi(\kappa_i^*L\kappa_j)\\
\notag
&=\operatorname{Tr}
(A[C_{\tau} + \epsilon \tilde{C}_{\tau}]) + \operatorname{Tr}
(B[C_{\kappa} + \epsilon \tilde{C}_{\kappa}]) > 0,
\end{align}
since
$C_{\tau} + \epsilon \tilde{C}_{\tau}, C_{\kappa} + \epsilon \tilde{C}_{\kappa}
\succ 0$.
Further, if $q \in \rC$, then $q = \iota + \tau$ for some $\iota \in I$ and
$\tau \in T$.
We see that
\[\lambda(q^*q) = \lambda(\tau^*\tau) + \lambda(\iota^*\tau) +
\lambda(\tau^*\iota) + \lambda(\iota^*\iota) = \lambda(\tau^*\tau) \geq 0,\]
since $\iota \in I$ and $\tau^*\tau \in M$.  Therefore $\lambda$ is positive.

Finally, consider $\lambda(p)$.
If $W = \RR p$, then $\tilde{\lambda}(p) = 0$.
Hence
$\lambda(p) = \epsilon \xi(p) < 0$.
If $W = \{0\}$,
then
\[\RR p \cap (\Md + \RR^{\ell \times 1} I + I^* \RR^{1 \times \ell}) \neq
\{0\}.\]
Therefore
$\alpha p + \iota = m$ for some $\alpha \in \RR$ and some $m \in \Md \setminus
\{0\}$.
We cannot have $\alpha > 0$ since this would imply that $p \in \Md +
\RR^{\ell \times 1} I + I^* \RR^{1 \times \ell}$.  Similarly,
$\alpha \neq 0$ since otherwise $\iota \in \Md \setminus \{0\}$.  Hence
$\alpha < 0$,
so that $\lambda(p) = \alpha\lambda(m) < 0$.
\end{proof}

\begin{lemma}
 \label{lem:main}
 Let $L \in \RR^{\nu \times \nu}\axs$ be a linear pencil, $\rC \subset
\RR^{1 \times \ell}\axs$
be a full finite right chip space,
$I
\subset \RR^{1 \times \ell}\axs$ be a
left module generated by polynomials in $\RR\axs_1 \rC$, and
$p \in \rC^*\RR\axs_1\rC$ be a symmetric polynomial.
Let $\Md = M_{T,K}(L)$ be a truncated test module for $I$,$L$ and $\rC$.
Set $n = \dim(\rC) - \dim\big(\Lradd{L}{\rC}{I} \cap \rC\big)$.
\begin{enumerate}[\rm(1)]
 \item If $p
\not\in \Md + \RR^{\ell \times 1}\Lradd{L}{\rC}{I} +
(\Lradd{L}{\rC}{I})^*\RR^{1
\times \ell}$ then there exists $(X,v) \in V(I)^{(n)}$ such that
$v^*p(X)v < 0$ but $L(X) \succeq 0$.
\item If $p
\not\in \Md + \RR^{\ell \times 1}\LraddS{L}{\rC}{I} +
(\LraddS{L}{\rC}{I})^*\RR^{1
\times \ell}$, then there exists $(X,v) \in V(I)^{(n)}$ 
such that
$v^*p(X)v < 0$ and $L(X) \succ 0$.
\end{enumerate}
\end{lemma}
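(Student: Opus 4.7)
The proof plan is an application of the GNS-style Corollary \ref{cor:GNS} to a suitable separating positive linear functional. First, invoke Lemma \ref{lem:goodSepFun} with the same data (which applies precisely under the hypothesis of part (1)) to produce a positive linear functional $\lambda$ on $\rC^*\RR\axs_1\rC$ that is strictly positive on $\Md\setminus\{0\}$, vanishes on $\big(\RR^{\ell\times 1}\Lradd{L}{\rC}{I}+(\Lradd{L}{\rC}{I})^{\ast}\RR^{1\times\ell}\big)\cap\rC^*\RR\axs_1\rC$, and satisfies $\lambda(p)<0$.

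Second, verify the hypothesis of Corollary \ref{cor:GNS}. Given $\vartheta\in\rC$ with $\lambda(\vartheta^\ast\vartheta)=0$, decompose $\vartheta=\iota+\tau$ along $\rC=(\Lradd{L}{\rC}{I}\cap\rC)\oplus T$. Each row of $\iota^\ast\iota$, $\iota^\ast\tau$ and $\tau^\ast\iota$ lies in $\Lradd{L}{\rC}{I}$ by left module closure, so these terms sit in the annihilated subspace and $\lambda(\tau^\ast\tau)=0$; strict positivity on $\Md$ forces $\tau=0$, whence $\vartheta\in\Lradd{L}{\rC}{I}$. For any $b\in\RR\axs_1\rC$ and any $c\in\RR\axs$ with $c\vartheta\in\rC$, the rows of $b^\ast c\vartheta$ again lie in $\Lradd{L}{\rC}{I}$, so $\lambda(b^\ast c\vartheta)=0$. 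In particular the null space $J\cap\rC$ of $\lambda$ on $\rC$ is exactly $\Lradd{L}{\rC}{I}\cap\rC$, so the dimension $n$ from Corollary \ref{cor:GNS} matches the one in the statement.

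Applying Corollary \ref{cor:GNS} yields $X\in(\RR^{n\times n})^g$ and $v\in\RR^{\ell n}$ with $v^\ast q(X)v=\lambda(q)$ for $q\in\rC^\ast\RR\axs_1\rC$ and the usual cyclicity. Then $v^\ast p(X)v=\lambda(p)<0$ is immediate. To show $(X,v)\in V(I)^{(n)}$, extend $\lambda$ flatly to $\rC^\ast\RR\axs\rC$ by Proposition \ref{prop:flatExtRC}(2); the same row-in-module argument places $\Lradd{L}{\rC}{I}\cap\RR\axs_1\rC$ in the generating set of the null space of the extension, and since that null space is a left module it contains all of $\Lradd{L}{\rC}{I}\supseteq I$. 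Hence $\iota(X)v=0$ for each $\iota\in I$.

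The main obstacle is $L(X)\succeq 0$. By cyclicity every $w\in\RR^{\nu n}$ can be written $w=q(X)v$ for some $q\in\RR^{\nu\times 1}\rC$; the $\RR^{\nu\times 1}(\Lradd{L}{\rC}{I}\cap\rC)$-component of $q$ annihilates $v$, so we may take $q\in\RR^{\nu\times 1}T$, and decompose $q=j+\kappa$ along $\RR^{\nu\times 1}T=J\oplus K$. By definition of $J$ we have $Lj\in\RR^{\nu\times 1}I$, so $L(X)j(X)v=0$, and by symmetry of $L$ also $(j(X)v)^\ast L(X)=0$. Therefore
\begin{equation*}
w^\ast L(X)w=(\kappa(X)v)^\ast L(X)(\kappa(X)v)=\lambda(\kappa^\ast L\kappa)\geq 0,
\end{equation*}
since $\kappa^\ast L\kappa\in\Md$. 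For part (2) the identical plan works with $\Lradd{L}{\rC}{I}$ replaced by $\LraddS{L}{\rC}{I}$: the strong condition collapses $J$ to $\{0\}$, so $K=\RR^{\nu\times 1}T$ and $\lambda$ is strictly positive on $\kappa^\ast L\kappa$ for every nonzero $\kappa$. A dimension count ($\dim\RR^{\nu\times 1}T=\nu n$) combined with the injectivity inherent in the GNS construction shows $\kappa\mapsto\kappa(X)v$ is a bijection onto $\RR^{\nu n}$, upgrading $L(X)\succeq 0$ to $L(X)\succ 0$.
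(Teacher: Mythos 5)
Your proposal is correct and follows essentially the same route as the paper: build the separating positive functional from Lemma \ref{lem:goodSepFun}, feed it into Corollary \ref{cor:GNS}, then verify membership in $V(I)$, positive semidefiniteness (resp. definiteness) of $L(X)$ via the $J\oplus K$ decomposition of $\RR^{\nu\times 1}T$, and conclude with $v^*p(X)v=\lambda(p)<0$. Two small things you should state explicitly: first, the WLOG reduction $I=\Lradd{L}{\rC}{I}$ (resp.\ $\LraddS{L}{\rC}{I}$) at the outset, which is what justifies identifying your decomposition $\rC=(\Lradd{L}{\rC}{I}\cap\rC)\oplus T$ with the truncated test module data in the hypothesis; the paper makes this reduction first. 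Second, the line ``each row of $\iota^*\tau$ lies in $\Lradd{L}{\rC}{I}$'' is off for the mixed term --- the rows of $\iota^*\tau$ are scalar multiples of $\tau$, not of $\iota$; what is true is that $\iota^*\tau=(\tau^*\iota)^*\in(\Lradd{L}{\rC}{I})^*\RR^{1\times\ell}$, which still lands it in the annihilated ideal, so your conclusion is unaffected. Beyond these, your verification of the flatness hypothesis of Corollary \ref{cor:GNS} (which the paper leaves implicit), your flat-extension argument that $(X,v)\in V(I)$, and your bijection argument for $L(X)\succ0$ are all correct variants of the paper's computations.
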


\begin{proof}
Without loss of generality, let $I$ be $(L,\rC)$-real
since, by Proposition
\ref{prop:genOfQDRad},
 the $(L, \rC)$-real radical of $I$ is also generated by
polynomials in $\RR\axs_1 \rC$.
Also, $p \not\in \RR^{\ell \times 1}I$ 
implies that $I \neq \RR\axs \rC$.  In particular, this implies that $n = 
\dim(\rC) - \dim(I \cap \rC) > 0$.
Let $\lambda$ be a linear functional with the properties described by
Lemma
\ref{lem:goodSepFun}.
By Corollary \ref{cor:GNS},
we produce a tuple of $n \times n$ matrices $X$, together with a
vector $v \in \RR^{\ell n}$
such that \[v^{\ast}a(X)v = \lambda(a)\] for each $a \in
\rC^*\RR\axs_1\rC$, and such that
 $\RR^{\ell n} =
\{ q(X)v \mid q \in \rC\}.$

If $\iota \in I \cap \RR\axs_1 \rC$, then for each $q \in
\rC$,
we
have \[(q(X)v)^{\ast}\iota(X)v = \lambda(q^{\ast}\iota) = 0\] since
$\lambda\big([\RR^{\ell \times
1}I +
I^{\ast}\RR^{1 \times \ell}] \cap \rC^*\RR\axs_1 \rC\big) =
\{0\}$.  Therefore $\iota(X)v = 0$.
Since
$I$ is generated by $I \cap \RR\axs_1\rC$,
this
implies that $(X,v) \in V(I)^{(n)}$.

Next, consider $L(X)$.  Let $q \in \RR^{\nu \times 1}\rC$ be decomposed as
$q = \varphi + \kappa$, where $L\varphi \in \RR^{\nu \times 1}\axs_1 \rC \cap
I$, and
$\kappa \in K$.
We then see that
\[ (q(X)v)^{\ast}L(X)(q(X)v) = v^{\ast}\kappa(X)^{\ast}L(X)\kappa(X)v =
\lambda(\kappa^{\ast}L\kappa) \geq
0.\]
Hence $L(X) \succeq 0$.
If $I$ is strongly $(L,\rC)$-real, then
$\varphi \in \RR^{\nu \times 1}I$.
Therefore
$q(X)v \neq
0$
if and only if $\kappa \neq 0$. In this case,
\[(q(X)v)^{\ast}L(X)(q(X)v) =
\lambda(\kappa^{\ast}L\kappa) > 0\]
 since $\kappa^{\ast} L \kappa \in M
\setminus
\{0\}$.
Hence $L(X) \succ 0$.

Finally, $v^{\ast}p(X)v = \lambda(p) < 0$.
\end{proof}

As a consequence of Lemma \ref{lem:main},
we can describe $\Lrad{L}{I} \cap \rC$  for linear pencils $L$.

\begin{corollary}
\label{cor:lradd}
Suppose $L \in \RR^{\nu \times \nu}\axs$ is a linear pencil.
Let $\rC \subset
\RR^{1 \times \ell}\axs$
be a full finite right chip space,
and let $I \subset
\RR^{1 \times \ell}\axs$ be a left module generated by polynomials in
$\RR\axs_1 \rC$. Then,
\[ \Lrad{L}{I} \cap \rC = \Lradd{L}{\rC}{I} \cap
\rC
\quad \mbox{and} \quad
\LradS{L}{I} \cap \rC = \LraddS{L}{\rC}{I} \cap
\rC.\]
\end{corollary}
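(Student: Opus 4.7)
The plan is to prove each equality by double inclusion, handling the ordinary and strong cases in parallel.

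The easy inclusion $\Lradd{L}{\rC}{I}\cap\rC \subseteq \Lrad{L}{I}\cap\rC$ follows from the fact that being $L$-real is a stronger condition than being $(L,\rC)$-real: the former demands the implication for all $p_i\in\RR^{1\times\ell}\axs$ and $q_j\in\RR^{\nu\times\ell}\axs$, while the latter only asks it for $p_i\in\rC$ and $q_j\in\RR^{\nu\times 1}\rC$. Hence $\Lrad{L}{I}$ is itself $(L,\rC)$-real and contains $I$, so it must contain $\Lradd{L}{\rC}{I}$. The same reasoning gives $\LraddS{L}{\rC}{I}\subseteq \LradS{L}{I}$.

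For the harder inclusion $\Lrad{L}{I}\cap\rC\subseteq \Lradd{L}{\rC}{I}\cap\rC$, I proceed by contrapositive: given $p\in\rC\setminus\Lradd{L}{\rC}{I}$, I aim to exhibit a pair $(X,v)\in V(I)$ with $L(X)\succeq 0$ and $p(X)v\neq 0$. Once such a pair is found, Proposition \ref{prop:idealOfVarIsLRad} tells us that $\cI(\{(X,v)\})$ is an $L$-real left module containing $I$ but not $p$, hence $p\notin\Lrad{L}{I}$. To build $(X,v)$ I apply Lemma \ref{lem:main}(1) with the ``$I$'' of that lemma taken to be $\Lradd{L}{\rC}{I}$ itself (which is $(L,\rC)$-real by definition and, by Proposition \ref{prop:genOfQDRad}, is generated in $\RR\axs_1\rC$ since $\deg L\leq 1$), applied to the symmetric polynomial $-p^*p\in\rC^*\rC\subseteq \rC^*\RR\axs_1\rC$.

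The crux is verifying the hypothesis that
\[
-p^*p \;\notin\; \Md + \RR^{\ell\times 1}\Lradd{L}{\rC}{I} + (\Lradd{L}{\rC}{I})^*\RR^{1\times\ell}.
\]
Assume, toward a contradiction, that such a decomposition exists. Writing the $\Md$-term as $\sum\tau_i^*\tau_i + \sum\kappa_j^*L\kappa_j$ with $\tau_i\in T\subseteq\rC$ and $\kappa_j\in K\subseteq\RR^{\nu\times 1}\rC$, and moving it to the left, yields
\[
p^*p + \sum_i \tau_i^*\tau_i + \sum_j \kappa_j^*L\kappa_j \;\in\; \RR^{\ell\times 1}\Lradd{L}{\rC}{I} + (\Lradd{L}{\rC}{I})^*\RR^{1\times\ell}.
\]
Since $p,\tau_i\in\rC$ and $\kappa_j\in\RR^{\nu\times 1}\rC$, the $(L,\rC)$-real property of $\Lradd{L}{\rC}{I}$ forces $p\in\Lradd{L}{\rC}{I}$, contradicting our choice of $p$. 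This sign-manipulation step---rewriting an alleged membership of $-p^*p$ as an honest $(L,\rC)$-real witness for $p^*p$---is the only non-routine step in the argument.

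The strong statement $\LradS{L}{I}\cap\rC=\LraddS{L}{\rC}{I}\cap\rC$ is proved in exactly the same fashion, replacing Lemma \ref{lem:main}(1) by Lemma \ref{lem:main}(2), $\Lradd{L}{\rC}{I}$ by $\LraddS{L}{\rC}{I}$, and using the strong $(L,\rC)$-real condition together with the second half of Proposition \ref{prop:idealOfVarIsLRad}, which guarantees that $\cI(\{(X,v)\})$ is strongly $L$-real when $L(X)\succ 0$.
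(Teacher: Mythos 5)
Your proof is correct and follows essentially the same route as the paper's: the easy inclusion follows because $L$-real implies $(L,\rC)$-real, and the hard inclusion is established by applying Lemma \ref{lem:main}(1) (resp.\ (2)) to $-p^*p$, checking the hypothesis by the sign-manipulation you describe, and then using Proposition \ref{prop:idealOfVarIsLRad} to conclude via the $L$-real (resp.\ strongly $L$-real) module $\cI(\{(X,v)\})$. The paper phrases the argument as a contradiction rather than a contrapositive and works with the truncated test module for $I$ directly rather than swapping $I$ for $\Lradd{L}{\rC}{I}$, but since $V(\Lradd{L}{\rC}{I})\subseteq V(I)$ and the proof of Lemma \ref{lem:main} already performs that WLOG reduction, the two versions are equivalent.
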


\begin{proof}
If a left module is $L$-real, then by definition it is also $(L, 
\rC)$-real, so $\Lradd{L}{\rC}{I} \subset \Lrad{L}{I}$.  Conversely, assume 
there exists $p \in \left( \Lrad{L}{I} \cap \rC\right)
\setminus
\left(\Lradd{L}{\rC}{I} \cap \rC \right)$.
Let $\Md$ be a truncated test module for $I$, $L$ and $\rC$.
We claim that \[-p^*p \not\in
\Md + \RR^{\ell \times 1}\Lradd{L}{\rC}{I} +
(\Lradd{L}{\rC}{I})^*\RR^{1
\times \ell}.\]
Indeed, as otherwise there would exist $m \in \Md$
such that
\[ p^*p + m \in \RR^{\ell \times 1}\Lradd{L}{\rC}{I} +
(\Lradd{L}{\rC}{I})^*\RR^{1
\times \ell}, \]
which would imply that $p \in \Lradd{L}{\rC}{I}$.
Now by Lemma \ref{lem:main} there exists $(X,v) \in V(I)$ such that
\[
-v^*p(X)^*p(X)v < 0\qquad\text{and}\qquad L(X) \succeq 0.
\]
  Since $\cI\big(\{(X,v)\}\big)$ is $L$-real
by Proposition \ref{prop:idealOfVarIsLRad}, we see that
that $p \not\in \cI((X,v)) \supset \Lrad{L}{I}$, which is a contradiction.

The $\LradS{L}{I}$ case is similar.
\end{proof}

\section{Main Results}
\label{sec:mainResults}
 
 In this section we use the results of \S\ref{sect:linFun} to prove several
Positivstellens\"atze and 
 Nullstellens\"atze. We first prove the main theorem of this paper, Theorem \ref{thm:mainNMon}.

\subsection{Proof of The Main Theorem \ref{thm:mainNMon}}

 If $p$ is of the form of \eqref{eq:clasPosIdeal}, then Proposition
\ref{prop:idealOfVarIsLRad} implies that $v^*p(X)v \geq 0$ if $(X,v) \in V(I)$
and
$L(X) \succeq 0$.

Conversely, suppose $p$ is not of the form of \eqref{eq:clasPosIdeal}.
By Proposition \ref{prop:genOfQDRad}, $\Lradd{L}{\rC}{I}$ is generated by
polynomials in $\RR\axs_1 \rC$.
By \cite[Lemma 4.2]{N}, 
the set of symmetric elements of the set
\[(\RR^{\ell \times
1} \Lradd{L}{\rC}{I} + \Lradd{L}{\rC}{I}\RR^{1 \times \ell}) \cap \rC^*
\RR\axs_1 \rC\] is 
all elements of the form
\[\sum_k^{\finite} (r_k^*\iota_k + \iota_k^*r_k),\]
with each $\iota_k \in
\Lradd{L}{\rC}{I} \cap \RR\axs_1 \rC$ and each $r_k \in \rC$.
Therefore,
\[p
\not\in \Md + \RR^{\ell \times 1}\Lradd{L}{\rC}{I} +
(\Lradd{L}{\rC}{I})^*\RR^{1
\times \ell},
\]
 where $\Md$ is some truncated test module for $I$, $L$ and
$\rC$.
Now Lemma \ref{lem:main} implies that there exists $(X,v)
\in V(I)$ such that $v^*p(X)v < 0$ and $L(X) \succeq 0$.

The strongly $L$-real case is similar, so its proof is omitted.
\qed

In the rest of this section we state and prove a few corollaries of Theorem \ref{thm:mainNMon}.
These contain several of those listed in the introduction.

\subsection{Degree Bounds}

Using the machinery of right chip spaces we deduce degree bounds on the terms
appearing in the Positivstellensatz certificate \eqref{eq:clasPosIdeal}.

\begin{corollary}
\label{cor:mainWDegBds}
 Let $L \in \RR^{\nu \times \nu}\axs$ be a linear pencil.
Let $I
\subset \RR^{1 \times \ell}\axs$ be a
left module generated by polynomials with degree bounded by
$d$, degree in each variable $x_k$ bounded by $d_k$, and degree in each
variable $x_k^*$ bounded by $d_{k+g}$.
Let $p \in \RR^{\ell \times \ell}\axs$ be a symmetric polynomial of
degree $\delta$, degree $\delta_k$ in each variable $x_k$, and degree
$\delta_{k+g}$ in
each variable $x_k^*$.
\begin{enumerate}[\rm(1)]
 \item\label{item:bigdegreebds} $v^{\ast}p(X)v \geq 0$ whenever $(X,v) \in V(I)$
and $L(X) \succeq
0$ if and only if $p$ is of the form
\begin{equation}
\label{eq:clasPosIdealdegbds}
  p = \sum_{i}^{\finite}p_i^{\ast}p_i + \sum_{j}^{\finite} q_j^{\ast}Lq_j +
 \sum_k^{\finite} (r_k^{\ast}\iota_k + \iota_k^{\ast}r_k)
\end{equation}
where each $p_i, r_k \in \RR^{1 \times \ell}\axs$, each $q_j \in
\RR^{\ell \times \ell} \axs$ and $\iota_k \in \Lrad{L}{I}$, 
with the following degree bounds:
\begin{enumerate}[\rm(a)]
 \item each $p_i$, $q_j$, and $r_k$ has degree bounded by $\max\big\{d-1,
\lceil \frac{\delta-1}{2}, 0 \rceil\big\}$.
\item each $\iota_k$ has degree bounded by $\max\big\{d, \lceil \frac{\delta + 1}{2}
\rceil\big\}$.
\item each $p_i$, $q_j$, $r_k$ and $\iota_k$ has degree in each variable $x_k$
bounded by $\max\{d_k, \delta_k \}$,
\item each $p_i$, $q_j$, $r_k$ and $\iota_k$ has degree in each variable $x_k$
bounded by $\max\{d_{k+g}, \delta_{k+g} \}$,
\end{enumerate}
\item $v^{\ast}p(X)v \geq 0$ whenever $(X,v) \in V(I)$ and $L(X) \succ 0$,
if and only if $p$ is of the form
\eqref{eq:clasPosIdealdegbds}
where each $p_i, r_k \in \RR^{1 \times \ell}\axs$, each $q_j \in
\RR^{\ell \times \ell} \axs$ and $\iota_k \in \LradS{L}{I}$, 
and the same degree bounds as in
\eqref{item:bigdegreebds} hold.
\end{enumerate}
\end{corollary}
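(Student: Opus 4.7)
The plan is to apply Theorem \ref{thm:mainNMon} with a carefully designed right chip space $\rC$ that encodes the claimed degree restrictions, and then to convert the conclusion from the $(L,\rC)$-real radical to the $L$-real radical. The key elementary observation making the conversion work is that any $L$-real left module is automatically $(L,\rC)$-real, so $\Lradd{L}{\rC}{I}\subseteq \Lrad{L}{I}$, and likewise $\LraddS{L}{\rC}{I}\subseteq \LradS{L}{I}$.

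Concretely, set $D := \max\{d-1,\lceil (\delta-1)/2\rceil,0\}$ and $\beta_j := \max\{d_j,\delta_j\}$ for $j=1,\ldots,2g$, where the index $j=k+g$ corresponds to the starred variable $x_k^*$. Let $\rC\subset \RR^{1\times \ell}\axs$ be the span of monomials $e_i\otimes w$ with $|w|\le D$ and with the count of each of the $2g$ letters in $w$ bounded by the appropriate $\beta_j$. These constraints are preserved by passing to any subword, so $\rC$ is a full finite right chip space in the sense of \S\ref{sub:rightChip}. Checking the hypotheses of Theorem \ref{thm:mainNMon} is then bookkeeping: each generator of $I$ has total degree $\le d\le D+1$ and per-variable counts bounded by $d_j\le \beta_j$, so removing its leftmost letter yields a right chip lying in $\rC$, hence the generator lies in $\RR\axs_1\rC$. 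Similarly, each monomial $E_{ij}\otimes w$ of $p$ can be split as $w=u\cdot v\cdot w'$ with $|v|=1$ and $|u|,|w'|\le D$, producing a factorization in $\rC^*\RR\axs_1\rC$. The per-variable bookkeeping in this last step is the delicate point, since the involution exchanges $x_k$ and $x_k^*$; one exploits the symmetry of $p$ (so $\delta_k=\delta_{k+g}$) together with the choice $\beta_j\ge \max\{\delta_j,\delta_{j+g \bmod 2g}\}$ to see that each sub-word fits into $\rC$ both directly and after involution.

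Theorem \ref{thm:mainNMon} then supplies a representation as in \eqref{eq:clasPosIdealdegbds} with $p_i,r_k\in \rC$, $q_j\in \RR^{\nu\times 1}\rC$, and $\iota_k\in \Lradd{L}{\rC}{I}\cap \RR\axs_1\rC\subseteq \Lrad{L}{I}$. Membership in $\rC$ gives the bounds $\deg p_i,\deg q_j,\deg r_k\le D$ and per-variable degrees $\le \beta_j$ immediately, while $\iota_k\in \RR\axs_1\rC$ yields $\deg\iota_k\le D+1=\max\{d,\lceil(\delta+1)/2\rceil\}$. The strong $L$-real case is proved identically with $\LraddS{L}{\rC}{I}$ in place of $\Lradd{L}{\rC}{I}$. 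The main obstacle I anticipate is the per-variable degree bound on $\iota_k$: the naive estimate from $\iota_k\in \RR\axs_1\rC$ is $1+\beta_j$, one more than stated. Recovering the sharper bound $\max\{d_j,\delta_j\}$ will likely require refining Proposition \ref{prop:genOfQDRad} to observe that the generators of $\Lradd{L}{\rC}{I}$ beyond those in $\rC$ have the form $e_k L q_j$ with $q_j\in \rC$, and that the single extra letter contributed by $L$ in a given variable can be reabsorbed into the $p_i,q_j,r_k$ side of the certificate using the linearity of $L$; this is not a new idea but a bookkeeping refinement of the existing machinery, and everything else in the argument is essentially routine once $\rC$ is fixed.
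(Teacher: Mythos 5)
Your approach matches the paper's almost exactly: both construct the right chip space $\rC$ spanned by monomials of total degree at most $\max\{d-1,\lceil(\delta-1)/2\rceil,0\}$ with per-variable degree caps $\max\{d_j,\delta_j\}$, verify that $I$ is generated in $\RR\axs_1\rC$ and $p\in\rC^*\RR\axs_1\rC$, and then cite Theorem \ref{thm:mainNMon}. The paper additionally invokes Corollary \ref{cor:lradd} to record that $\Lrad{L}{I}\cap\rC = \Lradd{L}{\rC}{I}\cap\rC$, but for the ``only if'' direction your one-sided containment $\Lradd{L}{\rC}{I}\subseteq\Lrad{L}{I}$ already suffices, and the ``if'' direction only needs Proposition \ref{prop:idealOfVarIsLRad}, so your minimalism is harmless.

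You have also put your finger on a genuine discrepancy, and I want to underline that the paper's own proof does not resolve it. Theorem \ref{thm:mainNMon} delivers $\iota_k\in\Lradd{L}{\rC}{I}\cap\RR\axs_1\rC$, and membership in $\RR\axs_1\rC$ bounds the per-variable degree of $\iota_k$ by $\max\{d_j,\delta_j\}+1$, not $\max\{d_j,\delta_j\}$. Consistently, bound (b) on the total degree of $\iota_k$ is one larger than bound (a) on $p_i,q_j,r_k$, yet bounds (c) and (d) on the per-variable degrees are identical for all four families, which is what you noticed. The paper's three-line proof simply says the result ``follows directly'' and offers no argument that the extra letter cannot occur in the offending variable; indeed a monomial such as $x_1(x_2x_1)$ lies in $\RR\axs_1\rC$ whenever $x_2x_1\in\rC$ yet exceeds the stated per-variable cap for $x_1$. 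Your proposed repair — refining Proposition \ref{prop:genOfQDRad} to see that the new generators arise as rows of $Lq$ with $q\in\rC$ and reabsorbing the extra letter using linearity of $L$ — is a reasonable direction, but it is not carried out and is nontrivial (the letter from $L$ is fixed by which coefficient $A_i$ hits $q$, so it cannot always be ``pushed back'' onto $r_k$ without changing the pencil). So: your reconstruction of the paper's argument is faithful, and the obstacle you flagged is real; the safest conclusion from the machinery as stated is that $\iota_k$ satisfies the per-variable bound $\max\{d_j,\delta_j\}+1$, parallel to the total-degree bound (b), and the tighter bound in (c)--(d) would need the additional argument you sketched or a correction to the statement.
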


\begin{proof}
Let $\rC$ be spanned by all monomials in $\RR^{1 \times \ell}\axs$ with degree
bounded by $\max\big\{d-1, \lceil \frac{\delta -1}{2}, 0 \rceil \big\}$, degree in each
$x_i$ bounded by $\max\{d_i, \delta_i\}$, and degree in each $x_i^*$ bounded by
$\max\{d_{i+g}, \delta_{i+g}\}$.  Then $\rC$ is a full finite right chip space,
$I$ is generated  by some polynomials in $\RR\axs_1 \rC$, and $p \in
\rC^*\RR\axs_1 \rC$.
Also note that $\Lrad{L}{I} \cap \rC = \Lradd{L}{\rC}{I} \cap \rC$ and
$\LradS{L}{I} \cap \rC = \LraddS{L}{\rC}{I} \cap \rC$ by Corollary
\ref{cor:lradd}.
The result now
follows directly from Theorem \ref{thm:mainNMon}.
\end{proof}

\begin{remark}
Given a finitely-generated left module $I \subset \RR^{1 \times
\ell}\axs$
and a symmetric polynomial $p \in \RR^{\ell \times \ell}\axs$, in general one
can construct a right chip space $\rC$ satisfying the conditions of Theorem
\ref{thm:mainNMon} with dimension much smaller than the space of polynomials
with degree bounds given in Corollary \ref{cor:mainWDegBds}.
\end{remark}

\subsection{Convex Positivstellensatz for General Linear Pencils}

Theorem \ref{thm:mainNMon} and  Corollary \ref{cor:radZero} below are extensions of
the Convex
Positivstellensatz from
\cite{HKMb}.

\begin{corollary}
\label{cor:radZero}
Let $\rC \subset \RR^{1 \times \ell}\axs$ be a full, finite right chip space.
 Let $L \in \RR^{\nu \times \nu}\axs$ be a linear pencil.
Let $p \in \rC^*\RR\axs_1\rC$ be a symmetric polynomial.
Then $p(X) \succeq 0$ whenever $L(X) \succeq
0$ if and only if $p$ is of the form
\begin{equation*}
  p = \sum_{i}^{\finite}p_i^{\ast}p_i + \sum_{j}^{\finite} q_j^{\ast}Lq_j +
 \sum_k^{\finite} (r_k^{\ast}\iota_k + \iota_k^{\ast}r_k)
\end{equation*}
where $p_i, r_k \in \rC$, $q_j \in
\RR^{\nu \times 1}\rC$ and $\iota_k \in \Lradd{L}{\rC}{\{0\}} \cap \RR\axs_1
\rC$.
\end{corollary}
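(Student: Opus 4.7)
My plan is to deduce this as a direct specialization of Theorem \ref{thm:mainNMon}(1) by taking the left module $I = \{0\}$. The zero left module is trivially generated by polynomials in $\RR\axs_1 \rC$, so the hypotheses of the main theorem are met for the given $\rC$, $L$, and symmetric $p \in \rC^*\RR\axs_1\rC$. The right-hand side of Theorem \ref{thm:mainNMon}(1) then specializes exactly to the representation in the statement of Corollary \ref{cor:radZero}, with the radical set $\Lradd{L}{\rC}{\{0\}} \cap \RR\axs_1 \rC$.

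The remaining point is to translate the ``$v^*p(X)v \geq 0$ on $V(I)$'' condition in Theorem \ref{thm:mainNMon}(1) into the matrix-positivity condition ``$p(X) \succeq 0$''. For $I = \{0\}$ we have
\[
V(\{0\}) = \bigcup_{n\in\NN} (\RR^{n\times n})^g \times \RR^{\ell n},
\]
so the hypothesis of Theorem \ref{thm:mainNMon}(1) becomes: $v^*p(X)v \geq 0$ for \emph{every} vector $v \in \RR^{\ell n}$ and every tuple $X$ with $L(X) \succeq 0$. Since $p(X)$ is a symmetric $\ell n \times \ell n$ real matrix, the universal quantification over $v$ is equivalent, by the standard characterization of positive semidefiniteness, to $p(X) \succeq 0$. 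Hence the condition in Corollary \ref{cor:radZero} coincides precisely with the hypothesis of the main theorem in this case.

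There is no serious obstacle in this reduction; the only care required is to note that $p \in \rC^*\RR\axs_1\rC$ is indeed an $\ell \times \ell$ matrix polynomial (because $\rC \subset \RR^{1 \times \ell}\axs$), so that the passage between the scalar condition $v^*p(X)v \geq 0$ and the matricial condition $p(X) \succeq 0$ is legitimate. Combining this translation with Theorem \ref{thm:mainNMon}(1) applied to $I = \{0\}$ completes the proof.
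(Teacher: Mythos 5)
Your proof is correct and is exactly the paper's approach: the paper's own proof of Corollary \ref{cor:radZero} is the one-liner ``Apply Theorem \ref{thm:mainNMon} with $I = \{0\}$.'' You have simply made explicit the (routine) observation that for $I=\{0\}$ the condition $v^*p(X)v \geq 0$ for all $(X,v)\in V(\{0\})$ with $L(X)\succeq 0$ is the same as $p(X)\succeq 0$ on $\cD_L$, which is the intended reading.
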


\begin{proof}
 Apply Theorem \ref{thm:mainNMon} with $I = \{0\}$.
\end{proof}

Here is the restriction of Corollary \ref{cor:radZero} to the monic
case (cf.~\cite[Theorem 1.1 (2)]{HKMb}).

\begin{corollary}
\label{cor:convPSS}
 Let $L \in \RR^{\nu \times \nu}\axs$ be a monic linear pencil, let $\rC
\subset \RR^{1 \times \ell}\axs$ be a finite chip space, and suppose
$p
\in \rC^*\RR\axs_1 \rC$ is symmetric.   Then $p(X) \succeq 0$ whenever
$L(X) \succeq 0$ if and only if $p$ is of the form
\begin{equation}
\label{eq:truncModForm}
 p = \sum_{i}^{\finite} p_i^*p_i + \sum_{j}^{\finite}
q_j^*Lq_j,
\end{equation}
where each $p_i \in \rC$ and each $q_j \in \RR^{\nu
\times \ell}\rC$.
\end{corollary}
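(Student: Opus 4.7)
The plan is to derive this as an immediate specialization of Corollary \ref{cor:radZero} by showing that, for a monic linear pencil $L$, the $(L,\rC)$-real radical of the zero module is itself trivial, so that the ``ideal correction'' terms of the form $r_k^*\iota_k + \iota_k^*r_k$ disappear from the representation. The $(\Leftarrow)$ direction is immediate: if $p=\sum p_i^*p_i+\sum q_j^*Lq_j$ and $L(X)\succeq 0$, then each $p_i(X)^*p_i(X)\succeq 0$ and each $q_j(X)^*L(X)q_j(X)\succeq 0$, so $p(X)\succeq 0$.

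For the $(\Rightarrow)$ direction, I would apply Corollary \ref{cor:radZero} to $p$ with $I=\{0\}$ (enlarging $\rC$ to be full finite if it is not already; since $p\in\rC^*\RR\axs_1\rC\subseteq\hat\rC^*\RR\axs_1\hat\rC$ this is harmless). This produces a representation
\[
p=\sum_i^{\finite}p_i^*p_i+\sum_j^{\finite}q_j^*Lq_j+\sum_k^{\finite}(r_k^*\iota_k+\iota_k^*r_k),
\]
with $p_i,r_k\in\rC$, $q_j\in\RR^{\nu\times 1}\rC$, and $\iota_k\in\Lradd{L}{\rC}{\{0\}}\cap\RR\axs_1\rC$. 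It therefore suffices to establish
\[
\Lradd{L}{\rC}{\{0\}}=\{0\},
\]
since that forces every $\iota_k=0$ and leaves exactly the shape \eqref{eq:truncModForm}.

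To verify this radical is trivial, I would use Proposition \ref{prop:qRadRealHom}. The left module $\{0\}$ is homogeneous, and it is real, because $\sum p_i^*p_i=0$ in $\RR^{1\times\ell}\axs$ forces each $p_i=0$ (evaluate at sufficiently many tuples of matrices to get injectivity). Since $L$ is monic, Proposition \ref{prop:qRadRealHom} yields that $\{0\}$ is $L$-real, so $\Lrad{L}{\{0\}}=\{0\}$. Every $L$-real left module is in particular $(L,\rC)$-real, so the intersection defining $\Lradd{L}{\rC}{\{0\}}$ is contained in $\Lrad{L}{\{0\}}$, giving $\Lradd{L}{\rC}{\{0\}}=\{0\}$ as required.

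The only delicate point I anticipate is the bookkeeping around the ``full'' hypothesis: Corollary \ref{cor:radZero} is stated for a full finite chip space, whereas here $\rC$ is only assumed finite. If $\rC$ is not already full, I would replace it by the smallest full finite chip space $\hat\rC$ containing it; the argument above then produces $p_i,r_k\in\hat\rC$ and $q_j\in\RR^{\nu\times 1}\hat\rC$. One then observes that since $p\in\rC^*\RR\axs_1\rC$ and the correction terms are zero, the sum-of-squares part can be rearranged to live in the original $\rC$, for instance via the explicit degree/chip bookkeeping underlying Corollary \ref{cor:mainWDegBds}.
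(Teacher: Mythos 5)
Your core argument is exactly the paper's: use Proposition \ref{prop:qRadRealHom} (monic $L$ plus the homogeneous real module $\{0\}$ gives $\Lrad{L}{\{0\}}=\{0\}$, hence $\Lradd{L}{\rC}{\{0\}}=\{0\}$), then specialize Corollary \ref{cor:radZero} to $I=\{0\}$ so the $\iota_k$-terms vanish. That part is correct and is what the paper does.

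The one place you go beyond the paper is the full-versus-finite bookkeeping, and there your proposed workaround does not hold up. You observe, correctly, that Corollary \ref{cor:radZero} is stated for a \emph{full} finite chip space while Corollary \ref{cor:convPSS} only says finite; this is a genuine inconsistency already present in the paper (Theorem \ref{thm:mainNMon} and Corollary \ref{cor:convPSS} say ``finite'', but the proof routes through Lemma \ref{lem:main} and Corollary \ref{cor:GNS}, which require fullness). However, your proposed fix—enlarge to the smallest full $\hat\rC\supseteq\rC$, get $p_i,q_j$ in $\hat\rC$, and then ``rearrange'' the sum of squares back into $\rC$—is not an argument. Nothing in Corollary \ref{cor:mainWDegBds} or elsewhere shows that a weighted sum-of-squares representation supported on $\hat\rC$ can be compressed onto a non-full subchip space $\rC$, and such a restriction would itself be a theorem in need of proof (the truncation machinery of \S\ref{sect:linFun} is tied to full chip spaces precisely because Corollary \ref{cor:GNS} needs them). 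The intended reading is almost certainly that ``finite chip space'' should be ``full, finite right chip space'' in the hypothesis; your instinct to flag the discrepancy is sound, but you should state that the hypothesis needs fullness rather than assert an unproved restriction step.
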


\begin{proof}
 Since $L$ is monic,
by Proposition \ref{prop:qRadRealHom} we have $\sqrt[(L)]{\{0\}} = \{0\}$.
Now apply Corollary \ref{cor:radZero}.
\end{proof}

Our results on right chip spaces
yield tighter bounds on the polynomials $p_i$ and $q_j$ in
\eqref{eq:truncModForm}
than previous results in \cite{HKMa,HKMb}.

\subsection{Size Bounds}\label{subsec:sizeBd}

In this section we present size bounds; that is, given a linear pencil $L$ and a polynomial
$p\in\rC^* \RR\axs_1 \rC$, the positivity of $p$ on $\cD_L$ only needs to be tested on $n\times n$ matrices $X\in\cD_L$
for $n=\dim(\rC)$. More precisely, we have

\begin{corollary}
\label{cor:lmidomgen}
Let $\rC \subset \RR^{1 \times \ell}\axs$ be a right chip space.
 Let $L \in \RR^{\nu \times \nu}\axs$ be a linear pencil and let $p \in
\rC^* \RR\axs_1 \rC$. Set
\[
 n = \dim(\rC) - \dim(\Lradd{L}{\rC}{\{0\}} \cap \rC)
\quad \mbox{and} \quad
 n_{+} = \dim(\rC) - \dim(\LraddS{L}{\rC}{\{0\}} \cap \rC)
\]
Then:
\begin{enumerate}[\rm(1)]
 \item\label{it:strict} $p \big|_{\cD_L} \succ 0$ if and only if $p
\big|_{\cD_L(n_+)} \succ 0$;
 \item\label{it:nonstrict} $p \big|_{\cD_L} \succeq 0$ if and only if
$p \big|_{\cD_L(n)}
\succeq 0$.
\end{enumerate}
\end{corollary}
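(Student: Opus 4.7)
The plan is to apply Lemma~\ref{lem:main} specialized to $I=\{0\}$: part~(2) uses Lemma~\ref{lem:main}(1) together with $\Lradd{L}{\rC}{\{0\}}$, while part~(1) uses Lemma~\ref{lem:main}(2) together with $\LraddS{L}{\rC}{\{0\}}$. The forward ($\Rightarrow$) implications of both statements are immediate from $\cD_L(n),\cD_L(n_+)\subseteq\cD_L$, so only the converses require argument.

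For part~(2), assume $p\big|_{\cD_L(n)}\succeq 0$. Since $V(\{0\})^{(n)}=(\RR^{n\times n})^g\times\RR^{\ell n}$, the hypothesis says precisely that no $(X,v)\in V(\{0\})^{(n)}$ with $L(X)\succeq 0$ satisfies $v^{*}p(X)v<0$. The contrapositive of Lemma~\ref{lem:main}(1) then yields
\[
p\in\Md+\RR^{\ell\times 1}\Lradd{L}{\rC}{\{0\}}+\big(\Lradd{L}{\rC}{\{0\}}\big)^{*}\RR^{1\times\ell}
\]
for a truncated test module $\Md$ as in~\eqref{eq:Md}. Using $p=p^{*}$ and the characterization of symmetric elements of the ideal piece invoked in the proof of Theorem~\ref{thm:mainNMon} (via \cite[Lemma~4.2]{N}), this rewrites as
\[
p=\sum_i p_i^{*}p_i+\sum_j q_j^{*}Lq_j+\sum_k\big(r_k^{*}\iota_k+\iota_k^{*}r_k\big)
\]
with $p_i,r_k\in\rC$, $q_j\in\RR^{\nu\times 1}\rC$, and $\iota_k\in\Lradd{L}{\rC}{\{0\}}\cap\RR\axs_1\rC$. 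Evaluating at any $X\in\cD_L$, Proposition~\ref{prop:zeroOnModPosIntro} (applied with $I=\{0\}$, whose free variety is everything) forces each $\iota_k(X)=0$, and $L(X)\succeq 0$ makes the two remaining sums positive semidefinite; hence $p\big|_{\cD_L}\succeq 0$.

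Part~(1) follows by the same template, with Lemma~\ref{lem:main}(2) in place of Lemma~\ref{lem:main}(1). The hypothesis $p\big|_{\cD_L(n_+)}\succ 0$ implies in particular $v^{*}p(X)v\geq 0$ for every size-$n_+$ triple with $L(X)\succ 0$; running the proof of Lemma~\ref{lem:main}(2) but with the initial reduction replaced by ``\emph{WLOG} $I=\LraddS{L}{\rC}{\{0\}}$'' (so that the GNS-produced test tuple has size $\dim(\rC)-\dim(\LraddS{L}{\rC}{\{0\}}\cap\rC)=n_+$) yields the analogous SOS-modulo-ideal decomposition, this time with $\iota_k\in\LraddS{L}{\rC}{\{0\}}$. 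By Proposition~\ref{prop:idealOfVarIsLRad}, the strong radical vanishes at every $(X,v)$ with $L(X)\succ 0$, so the representation forces $p\succeq 0$ on the strict positivity set $\cD_L^\circ$.

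The main obstacle is the final step of (1), namely upgrading $p\succeq 0$ on $\cD_L^\circ$ to the strict $p\succ 0$ on the closed set $\cD_L$ claimed in the statement. The intended remedy is to keep track of strictness inside Lemma~\ref{lem:goodSepFun}: the separating functional produced there already satisfies $\lambda(p)<0$ with \emph{strict} inequality, and feeding it through Proposition~\ref{prop:flatExtRC} and Corollary~\ref{cor:GNS} yields a size-$n_+$ witness $(X,v)$ with $v^{*}p(X)v<0$ and $L(X)\succeq 0$---not only $L(X)\succ 0$. Under the strict hypothesis $p\big|_{\cD_L(n_+)}\succ 0$ no such witness can exist, and the contrapositive then delivers $p\succ 0$ everywhere on $\cD_L$, completing the proof.
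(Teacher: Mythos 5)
Your proof of part (2) is correct and is essentially the argument the paper gives, read in the opposite direction: the paper runs the contrapositive ($p|_{\cD_L}\not\succeq0 \Rightarrow p$ not of form \eqref{eq:clasPosIdeal} $\Rightarrow$ size-$n$ witness $\Rightarrow p|_{\cD_L(n)}\not\succeq0$), whereas you start from $p|_{\cD_L(n)}\succeq0$, use the contrapositive of Lemma \ref{lem:main}(1) to land in the truncated quadratic module, and then read off $p|_{\cD_L}\succeq0$ from the representation. Both rest on Lemma \ref{lem:main}(1) together with the symmetric-part identification from \cite[Lemma 4.2]{N}.

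For part (1), you correctly spot a genuine subtlety that the paper's phrase ``essentially the same'' leaves unaddressed: a strong-form representation with $\iota_k \in \LraddS{L}{\rC}{\{0\}}$ only forces $v^{*}p(X)v \geq 0$ when $L(X)\succ 0$ (via Proposition \ref{prop:idealOfVarIsLRad}), and so controls neither the locus in $\cD_L$ where $L(X)$ is singular nor the strictness of the inequality. However, your proposed remedy does not close that gap. The sentence ``the contrapositive then delivers $p\succ 0$ everywhere on $\cD_L$'' tacitly assumes the implication ``$p|_{\cD_L}\not\succ 0 \Rightarrow p\notin \Md_+ + \RR^{\ell\times1}\LraddS{L}{\rC}{\{0\}} + (\LraddS{L}{\rC}{\{0\}})^{*}\RR^{1\times\ell}$,'' and that is precisely the step which fails: from $p|_{\cD_L}\not\succ 0$ one only obtains a pair $(X_0,v_0)$ with $L(X_0)\succeq 0$ (not $\succ$) and $v_0^{*}p(X_0)v_0\leq 0$ (not $<$), and neither inequality is strong enough to exclude a strong-form decomposition, since the $\iota_k$ need not annihilate $v_0$ at a singular $X_0$. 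So all the contrapositive of Lemma \ref{lem:main}(2) actually returns is $p\succeq 0$ on $\cD_L^\circ$ --- the very statement you set out to strengthen. Closing this gap requires an additional argument (for instance, a perturbation $p-\epsilon I$ combined with a compactness/density reduction to the boundary, or hypotheses on $L$ ensuring $\overline{\cD_L^\circ}=\cD_L$ and $n_+=n$), which neither your remedy nor the paper's one-line dispatch supplies.
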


\begin{proof}
 The proof of \eqref{it:strict} essentially the same as the proof of
\eqref{it:nonstrict}, so we will only give the proof of \eqref{it:nonstrict}.

First, the implication $(\Rightarrow)$ is clear. Let $I = \Lradd{L}{\rC}{\{0\}}$. If $p
\big|_{\cD_L}
\not\succeq 0$, then $p$ is not of the form \eqref{eq:clasPosIdeal}, 
so Lemma \ref{lem:main} implies that there
exists $(X,v) \in \left( \RR^{n \times n} \right)^g \times \RR^n$ with
$v^*p(X)v < 0$ and $L(X) \succeq 0$.
\end{proof}

\begin{remark}
 If $\deg(p) \leq 2k+1$, then we have $p
\in \rC^*\RR\axs_1 \rC$ for $\rC = \RR^{1 \times \ell}\axs_{k}$.
\end{remark}

\begin{corollary}
\label{cor:lmidom}
 Let $L \in \RR^{\nu \times \nu}\axs$ and $\hat{L} \in \RR^{\ell \times
\ell}\axs$ be linear pencils. Let
\[
 n = \ell - \dim(\Lradd{L}{\RR^{1 \times \ell}}{\{0\}} \cap \RR^{1 \times \ell})
\quad \mbox{and} \quad
 n_{+} = \ell - \dim(\LraddS{L}{\RR^{1 \times \ell}}{\{0\}} \cap \RR^{1 \times
\ell})
\]
Then:
\begin{enumerate}[\rm(1)]
 \item\label{it:strict2} $\hat{L} \big|_{\cD_L} \succ 0$ if and only if $\hat{L}
\big|_{\cD_L(n_{+})} \succ 0$;
 \item\label{it:nonstrict2} $\hat{L} \big|_{\cD_L} \succeq 0$ if and only if
$\hat{L} \big|_{\cD_L(n)}
\succeq 0$.
\end{enumerate}
\end{corollary}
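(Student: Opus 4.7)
The plan is to recognize Corollary \ref{cor:lmidom} as the specialization of the already-established Corollary \ref{cor:lmidomgen} obtained by choosing the right chip space $\rC := \RR^{1\times\ell}$ (row vectors of scalars) and the test polynomial $p := \hat{L}$. So the task reduces to checking that this choice is admissible for Corollary \ref{cor:lmidomgen} and that the resulting dimension counts coincide with those in the statement.

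First I would verify that $\rC = \RR^{1 \times \ell}$ is a full, finite right chip space of dimension $\ell$: it is spanned by the monomials $e_1,\dots,e_\ell$ (each of length zero), so the chip-closure conditions in the definitions of ``right chip space'' and ``full'' are vacuous, and $\dim(\rC)=\ell$. Next I would check that $\hat{L} \in \rC^{\ast}\RR\axs_1\rC$. Expanding $\rC^{\ast}\RR\axs_1\rC$ gives the span of $e_i^{\ast}\, q \, e_j = E_{ij}\otimes q$ with $q\in\RR\axs$ of degree at most one; since an $\ell\times\ell$ linear pencil is by definition a sum $A_0 + \sum_k A_k\otimes x_k + \sum_k A_k^{\ast}\otimes x_k^{\ast}$ with $A_0,A_k\in\RR^{\ell\times\ell}$, we indeed have $\hat{L}\in\rC^{\ast}\RR\axs_1\rC$. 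Observe that for tuples $X$ of $n\times n$ matrices the polynomial $\hat{L}(X)$ lives in $\RR^{\ell n\times\ell n}$, so $\hat{L}(X)\succeq 0$ (resp.\ $\succ 0$) on $\cD_L$ is exactly the positivity condition appearing in Corollary \ref{cor:lmidomgen}.

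With the setup verified, Corollary \ref{cor:lmidomgen} gives size bounds $n = \dim(\rC)-\dim(\Lradd{L}{\rC}{\{0\}}\cap\rC)$ and $n_{+} = \dim(\rC)-\dim(\LraddS{L}{\rC}{\{0\}}\cap\rC)$. Substituting $\rC = \RR^{1\times\ell}$ and $\dim(\rC)=\ell$ yields exactly the numbers $n$ and $n_{+}$ defined in the statement of Corollary \ref{cor:lmidom}. Both directions of the equivalence then follow: one direction is trivial ($\cD_L(n_+)\subset\cD_L$, and similarly for $n$), while the nontrivial direction is precisely the content of Corollary \ref{cor:lmidomgen} applied to $(\rC,\hat{L})$.

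There is no genuine obstacle; the only subtlety worth noting is the book-keeping in identifying $\Lradd{L}{\rC}{\{0\}}\cap\rC$ with the intersection $\Lradd{L}{\RR^{1\times\ell}}{\{0\}}\cap\RR^{1\times\ell}$ appearing in the statement, which is immediate once one writes out the definition of the $(L,\rC)$-real radical for this minimal chip space (and analogously for the strong version). In fact, by Corollary \ref{cor:lradd}, this intersection equals $\Lrad{L}{\{0\}}\cap\RR^{1\times\ell}$, making the numbers $n,n_+$ intrinsic to $L$ alone.
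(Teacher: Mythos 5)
Your proposal is correct and follows exactly the paper's own route: the paper proves Corollary \ref{cor:lmidom} by setting $\rC = \RR^{1\times\ell}$ and invoking Corollary \ref{cor:lmidomgen}, which is precisely what you do (with the admissibility checks made explicit). No differences worth noting.
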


\begin{proof}
Let $\rC = \RR^{1 \times \ell}$ and apply Corollary \ref{cor:lmidomgen}.
\end{proof}

Note that
 Corollaries \ref{cor:lmidomgen} and \ref{cor:lmidom} do not assume that
$\cD_L$ is bounded nor do they
assume that it has an interior point.

\subsection{The Left Nullstellensatz}
\label{subsub:lnss}

In this section we prove Corollary \ref{thm:mainFromNotes}, which is the main result of \cite{N}
and is
a generalization of the 
Real Nullstellensatz from \cite{chmn}.

We begin with the following corollary of
Theorem \ref{thm:mainNMon}:

\begin{corollary}
   \label{thm:lnss}
If $I \subset \RR^{1 \times \ell}\axs$ is a finitely-generated left module,
then $\rr{I} = \sqrt{I}$.
\end{corollary}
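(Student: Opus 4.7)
The plan is to deduce the corollary from the main theorem, Theorem \ref{thm:mainNMon}, specialized to the trivial linear pencil $L=1$. The inclusion $\rr{I} \subset \sqrt{I}$ has already been noted (as a consequence of Corollary \ref{cor:cIcCIsReal}), so all the work goes into the reverse inclusion.

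Fix $q \in \sqrt{I} = \cI(V(I))$, so that $q(X)v = 0$ for every $(X,v) \in V(I)$. The key trick is to apply the main theorem not to $q$ itself but to the symmetric polynomial $p := -q^*q$. For any $(X,v) \in V(I)$,
\[
v^* p(X) v \;=\; -\|q(X)v\|^2 \;=\; 0 \;\geq\; 0,
\]
and the pencil $L=1$ is positive everywhere, so the hypothesis of Theorem \ref{thm:mainNMon}(1) is satisfied. To fit the theorem's framework, I would choose a full finite right chip space $\rC \subset \RR^{1 \times \ell}\axs$ that contains $q$ together with a finite generating set of $I$ (and all of their right chips); since $I$ is finitely generated such a $\rC$ exists, $I$ is generated by polynomials in $\RR\axs_1 \rC$, and $-q^* q \in \rC^* \RR\axs_0 \rC \subset \rC^* \RR\axs_1 \rC$.

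Theorem \ref{thm:mainNMon}(1) applied with $L=1$ and $\nu=1$ yields a representation
\[
-q^* q \;=\; \sum_i p_i^* p_i \;+\; \sum_j q_j^* q_j \;+\; \sum_k \left( r_k^* \iota_k + \iota_k^* r_k \right),
\]
with $p_i, q_j, r_k \in \rC$ and each $\iota_k \in \Lradd{1}{\rC}{I} \cap \RR\axs_1 \rC$. Since every $1$-real (i.e., real) left module is automatically $(1,\rC)$-real, we have $\Lradd{1}{\rC}{I} \subset \rr{I}$, so each $\iota_k \in \rr{I}$. Rearranging,
\[
q^* q \;+\; \sum_i p_i^* p_i \;+\; \sum_j q_j^* q_j \;=\; -\sum_k \left( r_k^* \iota_k + \iota_k^* r_k \right) \;\in\; \RR^{\ell \times 1}\rr{I} + \rr{I}^* \RR^{1 \times \ell}.
\]

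By construction $\rr{I}$ is a real left module, so Corollary \ref{cor:diffMiReal} forces each of the squared summands on the left-hand side to lie in $\rr{I}$; in particular $q \in \rr{I}$, as required. I do not anticipate a serious obstacle here, since all the substantive analysis (flat extensions, the separation argument via Lemma \ref{lem:pSd}, and the $(L,\rC)$-real machinery) is already packaged inside Theorem \ref{thm:mainNMon}. The only conceptual move is the standard one of passing from ``$q$ vanishes on the variety'' to ``$-q^*q$ is nonnegative on the variety,'' which converts a Nullstellensatz question into a Positivstellensatz question that the main theorem is tailored to answer.
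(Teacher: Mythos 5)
Your proposal is correct and matches the paper's own proof essentially step for step: specialize Theorem \ref{thm:mainNMon} to $L=1$, apply it to the symmetric polynomial $-q^*q$ (which is nonnegative on $V(I)$), rearrange the resulting certificate, and invoke the realness of $\rr{I}$ to conclude $q\in\rr{I}$. Your extra care in constructing the chip space $\rC$ is a reasonable expansion of a step the paper leaves implicit, but it does not change the argument.
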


\begin{proof}
 Let $L = 1$.  By definition, 
 $\rr{I} = \Lrad{L}{I}$.
By Proposition \ref{prop:cIcCIsReal}, we have $\rr{I}
\subset \sqrt{I}$.
Suppose $r \in \sqrt{I}$.
It follows that $-v^*r(X)^*r(X) v = 0$ for each  $(X,v)
\in V(I)$.  Since $L(X) \succeq 0$ for each $X$, Theorem \ref{thm:mainNMon}
implies that $-r^*r$ is of the form
\[ -r^*r = \sum_i^{\finite} p_i^*p_i + \sum_j^{\finite} q_j^*q_j + \iota +
\iota^*,\]
where $\iota \in \RR^{\ell \times 1} \rr{I}$.
Therefore
\[ r^*r + \sum_i^{\finite} p_i^*p_i + \sum_j^{\finite} q_j^*q_j \in \RR^{\ell
\times 1} \rr{I} + \left(
\rr{I}\right)^* \RR^{1 \times \ell},\]
which implies that $r \in
\rr{I}$.
\end{proof}

We now prove Corollary \ref{thm:mainFromNotes}.

\begin{proof}[Proof of Corollary {\rm\ref{thm:mainFromNotes}}]
 Note that $p_i(X)v = 0$ means each row of $p_i(X)v$ is $ 0$, i.e. $e_k^*p_i(X)v =
0$ for each $e_k \in \RR^{1 \times \nu_i}$.  Therefore
\[ V(I) = V\left( \sum_{i=1}^k
\RR^{1 \times \nu_i} \axs p_i \right).\]
The first part of the result now follows from Corollary \ref{thm:lnss}.

Next, if $q$ is an element
of the left module \eqref{eq:checkIReal}, then
\[q = \sum_i^{\finite} \sum_{j=1}^k a_{ij} b_{ij} p_j\]
for some $a_{ij} \in \RR^{\nu \times 1}$ and $b_{ij} \in \RR^{1 \times
\ell}\axs$.
Therefore,
\[q = \sum_{j=1}^k\left( \sum_{i}^{\finite} a_{ij} b_{ij} \right) p_j. \qedhere \]
\end{proof}

\subsection{Positivity on a Left Module}

We can characterize polynomials $p$ which are positive on the variety of a left module as follows:

\begin{corollary}
\label{thm:prob1}
Let $\rC \subset \RR^{1 \times \ell}\axs$ be a full, finite right chip space.
 Let $I \subset \RR^{1 \times \ell}\axs$ be a left module generated by
polynomials
 in $\RR\axs_1 \rC$,
and
let $p \in \rC^*\RR\axs_1 \rC$ be a symmetric polynomial.
Then $v^{\ast}p(X)v \geq 0$
for each $(X,v) \in V(I)$
if and only $p$ is of
the form
\begin{equation*}
 p = \sum_{i}^{\finite} q_i^*q_i + \sum_j^{\finite} (r_j^{\ast}\iota_j +
\iota_j^{\ast}r_j),
\end{equation*}
where each $q_i, r_j \in \rC$ and each $\iota_j \in \rr{I} \cap \RR\axs_1 \rC$.
\end{corollary}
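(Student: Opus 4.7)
The plan is to invoke Theorem \ref{thm:mainNMon}(1) with the trivial linear pencil $L = 1 \in \RR^{1 \times 1}\axs$. With this choice the condition $L(X)\succeq 0$ is automatic, so ``$v^*p(X)v \geq 0$ for $(X,v) \in V(I)$ with $L(X) \succeq 0$'' coincides with the present hypothesis. Moreover $\nu = 1$, so the vectors $q_j$ produced by Theorem \ref{thm:mainNMon}(1) lie in $\rC$, and the middle piece $\sum_j q_j^* L q_j = \sum_j q_j^* q_j$ is an additional sum of squares; I would absorb it into the first sum to obtain a single $\sum_i q_i^* q_i$ with all $q_i \in \rC$.

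For the ``only if'' direction, it remains to replace the index set $\Lradd{1}{\rC}{I}\cap \RR\axs_1\rC$ appearing in Theorem \ref{thm:mainNMon} by the larger $\rr{I}\cap \RR\axs_1\rC$. The key point is that every real left module is automatically $(1,\rC)$-real, since the defining condition of $(1,\rC)$-reality merely restricts the test polynomials in the definition of $1$-reality to the subspace $\rC \subseteq \RR^{1\times\ell}\axs$. Hence $\rr{I}$ is $(1,\rC)$-real and contains $I$, and therefore $\Lradd{1}{\rC}{I}\subseteq \rr{I}$ by minimality. This upgrades the conclusion of Theorem \ref{thm:mainNMon}(1) to the desired representation.

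For the ``if'' direction I would use Corollary \ref{cor:cIcCIsReal}: the space $\cI(V(I))$ is a real left module containing $I$, so $\rr{I}\subseteq \cI(V(I))$, which means each $\iota_j\in\rr{I}\cap \RR\axs_1\rC$ satisfies $\iota_j(X)v = 0$ for every $(X,v)\in V(I)$. Evaluating the claimed representation at any such $(X,v)$ kills both cross terms $v^*r_j(X)^*\iota_j(X)v$ and $v^*\iota_j(X)^*r_j(X)v$, leaving $v^*p(X)v = \sum_i\|q_i(X)v\|^2 \geq 0$. I expect no genuine obstacle here: all the substantive content has been absorbed into Theorem \ref{thm:mainNMon}, and this corollary is essentially a convenient repackaging of the case $L = 1$, with the minor bookkeeping step being the inclusion $\Lradd{1}{\rC}{I}\subseteq \rr{I}$.
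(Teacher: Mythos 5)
Your proof is correct and follows essentially the same route as the paper: specialize Theorem~\ref{thm:mainNMon} to the trivial pencil $L=1$, observe that $L(X)\succeq 0$ is automatic and $q^*Lq = q^*q$, and then match the radical appearing in the theorem ($\Lradd{1}{\rC}{I}$) against $\rr{I}$. The paper's proof is more terse -- it simply notes $\rr{I}=\Lrad{1}{I}$ by definition and then cites the theorem -- whereas you explicitly supply the two bookkeeping inclusions ($\Lradd{1}{\rC}{I}\subseteq\rr{I}$ by minimality, and $\rr{I}\subseteq\cI(V(I))$ via Corollary~\ref{cor:cIcCIsReal} for the easy direction); this is the same argument, just with the implicit steps written out.
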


\begin{proof}
If $L = 1$, then $\rr{I} = \sqrt[(L)]{I}$ by definition.
We see $L(X) \succ 0$ for all tuples of matrices $X$, and we see
for any $q \in \RR^{1 \times \ell}\axs$ that $q^*Lq = q^*q$.
Therefore Theorem \ref{thm:mainNMon} gives the result.
\end{proof}

\subsection{Zero on the Intersection of the Variety of a Left Module and the
Positivity Set of a Linear
Pencil}\label{subsec:cor410}

We return to polynomials $p$ which vanish
on the intersection of the variety of a left module with a spectrahedron.
We next prove Proposition \ref{prop:zeroOnModPosIntro} and its
strongly $L$-real radical analog:

\begin{corollary}
 \label{cor:zeroOnModPos}
 Let $L \in \RR^{\nu \times \nu}\axs$
be a linear pencil. 
Let $I \subset \RR^{1 \times \ell}\axs$ be a finitely-generated left module,
and let $p \in \RR^{1 \times \ell}\axs$.
\begin{enumerate}[\rm(1)]
 \item $p(X)v = 0$ whenever $(X,v) \in
V(I)$ and $L(X)
\succeq 0$ if and only if $p \in \Lrad{L}{I}$;
\item $p(X)v = 0$ whenever
$(X,v) \in V(I)$ and $L(X) \succ 0$ if and only if $p \in \LradS{L}{I}$.
\end{enumerate}
\end{corollary}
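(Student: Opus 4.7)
The plan is to handle both parts by a common strategy, differing only in replacing $L(X)\succeq 0$ with $L(X)\succ 0$ and the $L$-real radical with its strong variant.

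For the $(\Leftarrow)$ direction of (1), I would observe that the left module
\[
J := \{p\in\RR^{1\times\ell}\axs \mid p(X)v=0 \text{ for all } (X,v)\in V(I) \text{ with } L(X)\succeq 0\}
\]
contains $I$ and is $L$-real by Proposition \ref{prop:cIcCIsReal} applied to $V=V(I)$; minimality then gives $\Lrad{L}{I}\subseteq J$. For the $(\Leftarrow)$ direction of (2), the analogous set $J^\circ$ (with $L(X)\succ 0$) is an intersection of left modules $\cI(\{(X,v)\})$ over $(X,v)\in V(I)$ with $L(X)\succ 0$, each of which is strongly $L$-real by Proposition \ref{prop:idealOfVarIsLRad}; hence $J^\circ$ is strongly $L$-real and contains $I$, yielding $\LradS{L}{I}\subseteq J^\circ$.

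For the $(\Rightarrow)$ direction of (1), I would choose $d$ large enough that $I$ has generators of degree at most $d$ and $\deg p \leq d$, and set $\rC := \RR^{1\times\ell}\axs_d$. Then $\rC$ is a full, finite right chip space by Example \ref{ex:ezChip}, the generators of $I$ lie in $\RR\axs_1\rC$, and the symmetric polynomial $-p^*p$ lies in $\rC^*\RR\axs_1\rC$. The hypothesis reads $v^*(-p^*p)(X)v = -\|p(X)v\|^2 \geq 0$ for every $(X,v)\in V(I)$ with $L(X)\succeq 0$, so Theorem \ref{thm:mainNMon}(1) produces a representation
\[
-p^*p = \sum_i^{\finite} p_i^*p_i + \sum_j^{\finite} q_j^*Lq_j + \sum_k^{\finite} (r_k^*\iota_k + \iota_k^*r_k),
\]
with $\iota_k\in \Lradd{L}{\rC}{I}\subseteq \Lrad{L}{I}$ (since every $L$-real module is automatically $(L,\rC)$-real). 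Each $r_k^*\iota_k$ has rows that are polynomial left multiples of $\iota_k$, hence lies in $\RR^{\ell\times 1}\Lrad{L}{I}$; symmetrically for $\iota_k^*r_k$. Rearranging gives
\[
p^*p + \sum_i^{\finite} p_i^*p_i + \sum_j^{\finite} q_j^*Lq_j \in \RR^{\ell\times 1}\Lrad{L}{I} + (\Lrad{L}{I})^*\RR^{1\times\ell},
\]
and the defining $L$-real property of $\Lrad{L}{I}$ (applied with $p$ adjoined to the list $p_1,p_2,\ldots$) forces $p\in\Lrad{L}{I}$. Part (2)-$(\Rightarrow)$ is identical, using Theorem \ref{thm:mainNMon}(2) and the strong $L$-realness of $\LradS{L}{I}$.

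The substantive work is entirely absorbed into Theorem \ref{thm:mainNMon}; the remaining obstacle is bookkeeping: choosing the chip space $\rC$ large enough to simultaneously contain a generating set for $I$ and the polynomial $p$, after which the transition from $\Lradd{L}{\rC}{I}$ back to $\Lrad{L}{I}$ is automatic by the inclusion noted above (or by Corollary \ref{cor:lradd}).
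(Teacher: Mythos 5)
Your proof is correct and follows essentially the same route as the paper: the easy direction comes from the $L$-realness (resp.\ strong $L$-realness) of the vanishing modules $\cI(\{(X,v)\})$ via Propositions \ref{prop:cIcCIsReal}/\ref{prop:idealOfVarIsLRad}, and the hard direction applies Theorem \ref{thm:mainNMon} to $-p^*p$, moves the $\iota$-terms across, and invokes the $L$-real property of $\Lrad{L}{I}$. The paper's proof is slightly terser (it does not spell out the choice of chip space nor the inclusion $\Lradd{L}{\rC}{I}\subseteq\Lrad{L}{I}$), but your extra bookkeeping is exactly what is implicitly being used there.
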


\begin{proof}
Let $(X,v) \in V(I)$ be such that $L(X) \succeq 0$.
 Proposition \ref{prop:idealOfVarIsLRad} implies
that
 $\cI(\{(X,v)\})$ is an $L$-real left module containing $I$. Therefore,
$\Lrad{L}{I} \subset \cI(\{(X,v)\})$.

Conversely, suppose $p(X)v = 0$ whenever $(X,v) \in V(I)$ and $L(X)
\succeq 0$.  Then \[v^*\big(-p(X)^*p(X)\big)v \geq 0\] whenever $(X,v) \in V(I)$ and $L(X)
\succeq 0$.  Theorem \ref{thm:mainNMon} implies that
\[ -p^*p = \sum_j^{\finite} q_j^*q_j + \sum_k^{\finite} r_k^*Lr_k + \iota +
\iota^*\] for some $q_j \in \RR^{1 \times \ell}\axs$, $r_k \in \RR^{\nu \times
\ell}\axs$, and $\iota \in \RR^{\ell \times 1}\Lrad{L}{I}$.  Therefore
\[ p^*p + \sum_j^{\finite} q_j^*q_j + \sum_k^{\finite} r_k^*Lr_k \in \RR^{\ell
\times 1}\Lrad{L}{I} + \left[\Lrad{L}{I}\right]^*\RR^{1 \times \ell},\]
which implies that $p \in \Lrad{L}{I}$.

The strongly $L$-real case is similar.
\end{proof}

\section{Thick Spectrahedra and Thick Linear Pencils}
\label{sec:thick}

This section proves a ``Randstellensatz" for $\cD_L$
and properties of $L$-real radicals for monic linear pencils $L$
satisfying the zero determining property (ZDP).
 These are Theorem \ref{thm:detBdry2} and Proposition
\ref{prop:LradL}  stated in the introduction. 
Then in Subsection \ref{subsec:ZDP} we exhibit big
classes of linear pencils having  ZDP.

\subsection{Randstellensatz}

\begin{definition}
 If $L \in \RR^{\ell \times \ell}\axs$ is a linear pencil, let $I_L = \RR^{1
\times \ell} \axs L \subset \RR^{1 \times \ell}\axs$ be the left module
generated by the rows of $L$.
\end{definition}

\begin{prop}
\label{prop:ILrealS}
 Let $L \in \RR^{\ell \times \ell}\axs$ be a monic linear pencil.  Then
$\sqrt[\real]{I_L} = I_L$.
\end{prop}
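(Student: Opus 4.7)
By Corollary \ref{cor:cIcCIsReal}, the vanishing ideal $\cI(V(I_L))$ is already a real left module containing $I_L$, so $\rr{I_L}\subseteq\cI(V(I_L))$; by the left Nullstellensatz (Corollary \ref{thm:lnss}), in fact $\rr{I_L} = \sqrt{I_L} = \cI(V(I_L))$. Thus the proposition reduces to the reverse containment $\cI(V(I_L))\subseteq I_L$: every $p\in\RR^{1\times\ell}\axs$ with $p(X)v=0$ whenever $L(X)v=0$ must already lie in $I_L$.

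I plan to argue contrapositively: given $p\notin I_L$ of degree $d$, I would produce a finite-dimensional $g$-tuple of matrices $X$ and a vector $v$ with $L(X)v=0$ but $p(X)v\neq 0$. Setting $\rC := \RR^{1\times\ell}\axs_d$ (a full finite right chip space that contains $p$ and relative to which $I_L$ is generated by the rows of $L$), the plan is to invoke the Hahn--Banach / SOS separation of Lemma \ref{lem:goodSepFun} (applied with trivial pencil $L=1$) to produce a positive linear functional $\lambda$ on $\rC^*\RR\axs_1\rC$ satisfying $\lambda(p^*p)>0$ together with $\lambda$ annihilating $\bigl(\RR^{\ell\times 1}I_L + I_L^*\RR^{1\times\ell}\bigr)\cap \rC^*\RR\axs_1\rC$. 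Feeding $\lambda$ into Corollary \ref{cor:GNS} then supplies matrices $X$ and a vector $v\in\RR^{\ell n}$ with $\lambda(a)=v^*a(X)v$ on $\rC^*\RR\axs_1\rC$. Since each row $L_j$ of $L$ lies in $I_L$, the product $L^*L=\sum_j L_j^*L_j$ lies in $\RR^{\ell\times 1}I_L$, so $\lambda(L^*L)=0$ forces $\|L(X)v\|^2=0$ and hence $L(X)v=0$; meanwhile $\|p(X)v\|^2=\lambda(p^*p)>0$ gives $p(X)v\neq 0$, the required witness.

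The main obstacle is verifying the hypothesis of Lemma \ref{lem:goodSepFun}: that $-p^*p$ does not lie in the truncated sum-of-squares module plus $\RR^{\ell\times 1}I_L + I_L^*\RR^{1\times\ell}$ inside $\rC^*\RR\axs_1\rC$ when $p\notin I_L$. The monic form of $L$ is the decisive input here; writing $L=\Id+L_1$ with $L_1$ homogeneous of degree $1$, a minimum-degree analysis in the spirit of Proposition \ref{prop:qRadRealHom} applied to any hypothetical identity $p^*p + \sum_i s_i^*s_i = AL + L^*A^*$ cascades through homogeneous components, forcing the lowest-degree parts of $p$ and the $s_i$ stagewise into $I_L$; iteration would then yield $p\in I_L$, contradicting the assumption $p\notin I_L$.
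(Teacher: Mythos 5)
Your reduction to $\cI(V(I_L))\subseteq I_L$ is valid, and the GNS/Hahn--Banach framework you invoke is the right ambient machinery, but the plan has two problems: one structural, one a genuine gap in the key step.

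The structural issue is that the GNS detour is superfluous given how you propose to resolve the ``main obstacle.'' Lemma \ref{lem:goodSepFun} requires the target polynomial to lie outside $\Md + \RR^{\ell\times 1}\Lradd{1}{\rC}{I_L} + (\Lradd{1}{\rC}{I_L})^*\RR^{1\times\ell}$ --- note the hypothesis quantifies over the $(1,\rC)$-real radical, not over $I_L$ itself. To get from $p\notin I_L$ to the needed non-membership you would have to know $\Lradd{1}{\rC}{I_L}=I_L$, i.e.\ that $I_L$ is already $(1,\rC)$-real. But that \emph{is} the proposition (via Corollary \ref{cor:lradd}, letting $\rC$ exhaust degrees). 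So if your final paragraph succeeded, you could skip separation and GNS entirely; if it fails, the separation step is blocked anyway.

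The genuine gap is in the minimum-degree cascade. Proposition \ref{prop:qRadRealHom} works precisely because the module there is \emph{homogeneous}, so the lowest-degree homogeneous component of a sum of squares in $\RR^{\ell\times 1}I + I^*\RR^{1\times\ell}$ is again in $\RR^{\ell\times 1}I + I^*\RR^{1\times\ell}$. Here $I_L$ is \emph{not} homogeneous: $L=\Id_\ell-\Lambda$ has a degree-$0$ and a degree-$1$ part. Concretely, if $\delta$ is the minimum degree occurring among the $p_i$, and you extract the degree-$2\delta$ homogeneous part of a hypothetical identity $\sum p_i^*p_i = CL + L^*C^*$, you get
\[
\sum_i \tilde p_i^{\,*}\tilde p_i \;=\; C^{(2\delta)} + (C^{(2\delta)})^{*} \;-\; C^{(2\delta-1)}\Lambda \;-\; \Lambda\,(C^{(2\delta-1)})^{*},
\]
and the right-hand side is \emph{not} of the form $D L + L^* D^*$; substituting $\Lambda=\Id-L$ shows it equals a bona fide element of $\RR^{\ell\times1}I_L + I_L^{*}\RR^{1\times\ell}$ plus a leftover \emph{constant} matrix. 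In the base case $\delta=0$ the identity degenerates to $\sum_i\tilde p_i^{\,*}\tilde p_i = C^{(0)}+(C^{(0)})^{*}$, a constraint that is satisfied by \emph{any} constant $p_i$ (take $C^{(0)}=\tfrac12\sum \tilde p_i^{\,*}\tilde p_i$) and therefore imposes nothing. The cascade never starts, and iteration cannot force $p\in I_L$.

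The paper's proof takes a fundamentally different, semantic route. Using Proposition \ref{prop:genOfQDRad} with $\rC=\RR^{1\times\ell}$, the real radical $\rr{I_L}$ is generated by $I_L$ together with \emph{constants}, so one only needs to show each constant $c\in\rr{I_L}$ already lies in $I_L$. Since $\rr{I_L}\subseteq\sqrt{I_L}$, such a $c$ satisfies $cv=0$ whenever $L(X)v=0$. The decisive step --- which your syntactic argument does not replicate --- is then to evaluate $L$ at carefully chosen real and complex scalar points built from eigenvectors of $A_i+A_i^*$ and $\imath A_i-\imath A_i^*$. These evaluations show $\Lambda c^*=0$, whence (after a change of basis putting $c=e_1$) the first row of $L$ is $e_1$, so $c\in I_L$. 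This spectral/evaluation argument is exactly what is needed to break the degree-$0$ ambiguity that stalls your cascade, and there does not seem to be a purely algebraic, degree-filtered substitute for it.
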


\begin{proof}
  Let $L$ be
    \beq\label{eq:pencNS}
   L = \operatorname{Id}_{\ell} - \Lambda \quad \mbox{where} \quad \Lambda =
\sum_{i=1}^{\rm
finite} \left( A_i \otimes
x_i + A_i^* \otimes x_i^* \right)
  \eeq
  and each $A_i \in \mathbb{R}^{\ell \times \ell}$.  Consider $\sqrt[\real]{I_L}$. By
Proposition \ref{prop:genOfQDRad}, $\sqrt[\real]{I_L} = \sqrt[1]{I_L}$ is
generated by $I_L$
together with possibly some
constant polynomials.  Let $c \in \sqrt[\real]{I_L}$ be constant.  To show
that $\sqrt[\real]{I_L} = I_L$ it suffices to show that $c \in I_L$.

By Corollary \ref{thm:lnss}, if $(X,v) \in \left(\RR^{n \times n} \right)^g
\times \RR^{\ell n}$, then $L(X)v
= 0$ implies that $cv = 0$.
Using the embedding $\CC \rightarrow \RR^{2 \times 2}$ given by
\[
 a + b \imath \mapsto \begin{pmatrix}
              a&b\\-b&a
             \end{pmatrix}
\]
we can consider evaluating $L$ at tuples of complex numbers.
Fix a variable $x_i$ and let $x_i = a_i + \imath b_i$, where $a_i$ and $b_i$
are real variables. If $v$ is an eigenvector
of $A_i + A_i^*$ with nonzero eigenvalue $\lambda$, then $\lambda$ must be real
and
\[
 L\Big(0, \ldots, 0, \frac1{\lambda}, 0, \ldots, 0\Big)v = \operatorname{Id}_{\ell}v -
\frac{1}{\lambda} (A_i + A_i^*)v = 0.
\]
Hence $cv = 0$.  Since $A_i + A_i^*$ is symmetric, there exists an
orthonormal
basis for $\RR^{\ell}$ consisting of eigenvectors of $A_i + A_i^*$.  Therefore,
since
$c^*$ is orthogonal to all eigenvectors with nonzero eigenvalues, $c^*$ must be
an eigenvector of $A_i + A_i^*$ with eigenvalue $0$.

Similarly, consider $\imath A_i - \imath A_i^*$.  If $v$ is an eigenvector
of $\imath A_i - \imath A_i^*$ with nonzero eigenvalue $\lambda$, then
$\lambda$ must be real
and
\[
 L\Big(0, \ldots, 0, \frac{\imath}{\lambda}, 0, \ldots, 0\Big)v = \operatorname{Id}_{\ell}v -
 \frac{1}{\lambda} (\imath A_i - \imath A_i^*)v = 0.
\]
Since $\imath A_i - \imath A_i^*$ is Hermitian, there exists an
orthonormal
basis for $\CC^{\ell}$ consisting of eigenvectors of $\imath A_i - \imath
A_i^*$. Therefore, since
$c^*$ is orthogonal to all eigenvectors with nonzero eigenvalues, $c^*$ must be
an eigenvector of $\imath A_i - \imath A_i^*$ with eigenvalue $0$.
This implies that $\Lambda c^* = 0$.

 After a change of basis,
if $c =
e_1$, then $\Lambda e_1 = 0$, which implies
\[
 L = \begin{pmatrix}
      1&0& \cdots & 0\\
      0&L_{22}& \cdots & L_{2\ell}\\
      \vdots&\vdots&\ddots&\vdots\\
      0&L_{\ell 2}& \vdots& L_{\ell \ell}
     \end{pmatrix}
\]
Therefore $c \in I_L$.
\end{proof}

\begin{proof}[Proof of Proposition {\rm\ref{prop:LradL}}]
By Proposition \ref{prop:ILrealS}, $\rr{I_L}=I_L$. The equality
$\sqrt{I_L}=\rr{I_L}$ is \cite[Theorem 1.3]{N}.
\end{proof}

\begin{prop}
\label{prop:pss}
Let $L \in \RR^{\ell \times \ell}\axs$ be a monic linear pencil 
satisfying the hypotheses of Theorem {\rm\ref{thm:detBdry2}}.
 Suppose $p \in \RR^{\ell \times \ell}\axs$ is of the form
 \begin{equation}
 \label{eq:sosL}
  p = \sum_{i}^{\rm finite} q_i^*q_i + \sum_{j}^{\rm finite} r_j^*Lr_j \in
\RR^{\ell \times 1} I_L
+ I_L^*\RR^{1 \times \ell}.
 \end{equation}
 Then each $q_i \in I_L$ and 
 for each $r_j$,
 \[
  r_j \in \RR^{\ell \times 1}I_L + \big\{C \in \RR^{\ell \times \ell} \mid LC =
CL\big\}.
 \]
\end{prop}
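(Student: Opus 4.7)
The proposition splits naturally into two assertions, and I would dispatch them in order.

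\emph{Step 1: handling the $q_i$'s.} By the zero determining property hypothesized through Theorem \ref{thm:detBdry2}, the discussion just after Proposition \ref{prop:LradL} gives the equivalent algebraic condition $\Lrad{L}{I_L} = I_L$, so $I_L$ is $L$-real. The matrix-sized version of real-ness, Proposition \ref{prop:diffMiReal}, applied to the given decomposition
\[
\sum_i q_i^* q_i + \sum_j r_j^* L r_j \;\in\; \R^{\ell\times 1}I_L + I_L^*\R^{1\times\ell},
\]
yields at once that every row of each $q_i$ lies in $I_L$---this is the first stated conclusion $q_i \in I_L$---and simultaneously that each $L r_j \in \R^{\ell\times 1}I_L = \R^{\ell\times\ell}\axs L$, i.e., $L r_j = S_j L$ for some $S_j \in \R^{\ell\times\ell}\axs$.

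\emph{Step 2: the algebraic lemma about $r_j$'s.} The second assertion reduces to the following purely algebraic statement in the free algebra: \emph{if $L = \Id + \Lambda$ is a monic linear pencil and $L r = S L$ in $\R^{\ell\times\ell}\axs$, then $r = N L + C$ for some $N \in \R^{\ell\times\ell}\axs$ and constant $C \in \R^{\ell\times\ell}$ with $L C = C L$.} I would prove this by induction on $\deg r$. The base case $\deg r = 0$ is immediate: writing $r = C$ and comparing the constant and degree-$1$ parts of $L C = S L$ forces $S$ constant with $S = C$ and $\Lambda C = C \Lambda$, so $C \in Z_L$ and $N = 0$ works.

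For the inductive step with $\deg r = d \geq 1$, decompose $r = r_0 + r_1 + \cdots + r_d$ into homogeneous components and seek $N = N_0 + N_1 + \cdots$ together with a constant $C$ satisfying $r = N L + C$. Equating homogeneous parts yields the formal recurrence
\[
N_0 = r_0 - C, \qquad N_k = r_k - N_{k-1}\Lambda \quad (k \geq 1),
\]
which determines $N$ as a formal power series once $C$ is chosen. Demanding that $N$ truncate as a polynomial of bounded degree collapses to a single algebraic condition on $C$ in terms of $r_1,\ldots,r_d$ and $\Lambda$. On the other hand, comparing homogeneous parts of the given equation $L r = S L$ forces $\deg S \leq d$ together with the top-degree identity $\Lambda r_d = S_d \Lambda$ and analogous recursive identities at lower degrees; these are precisely the compatibility conditions that make the termination identity for $N$ solvable and simultaneously force $\Lambda C = C \Lambda$, hence $C \in Z_L$. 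The key non-degeneracy is that the powers $\Lambda^m$ are nonzero in $\R^{\ell\times\ell}\axs$ for all $m \geq 1$, an elementary consequence of monic-ness of $L$; this rules out pathological non-terminating formal solutions.

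The main obstacle in Step 2 is the simultaneous verification that (a) the formal recurrence for $N$ truncates as a polynomial, and (b) the residual constant $C$ lies in $Z_L$. Both are consequences of the same compatibility identities extracted from $L r = S L$, but tracking the dependence precisely is where the proof spends its effort.
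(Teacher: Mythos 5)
Your Step 1 is correct; it recasts the paper's geometric argument (evaluate the certificate at points of $\dbcD_L^\circ$, invoke the zero determining property to pass to $V(L)$, then apply Proposition~\ref{prop:LradL}) in purely algebraic terms via the equivalence between ZDP and $\Lrad{L}{I_L} = I_L$. Either route is fine.

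Step 2 has a genuine gap, already visible in your base case. You claim that when $r = C$ is constant, comparing the constant and degree-one parts of $LC = SL$ forces $S = C$ constant and $\Lambda C = C\Lambda$. This is false whenever $\mathcal{N} := \bigcap_i \operatorname{Null}(A_i) \cap \bigcap_i \operatorname{Null}(A_i^*) \neq \{0\}$. Concretely, take $\ell = 2$, $g = 1$, $L = \Id_2 - E_{11}\otimes(x_1 + x_1^*)$, so $\mathcal{N} = \RR e_2$, and set $C = E_{12}$. Then $LC = E_{12}\otimes(1 - x_1 - x_1^*) = SL$ with the \emph{non-constant} $S = E_{12}\otimes(1-x_1-x_1^*)$, while $CL = E_{12} \neq LC$. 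The lemma you state is nevertheless true: $E_{12} = NL + C'$ with the nonzero constant $N = E_{12}$ (note $E_{12}\Lambda = 0$ here) and $C' = 0$. So the decomposition exists, but not with $N = 0$ and $C = r$ as your base case asserts. The degree-by-degree recurrence for $N$ has nontrivial terminating solutions even when $\deg r = 0$, and the compatibility conditions from $Lr = SL$ do not of themselves force $\Lambda C = C\Lambda$; one must use residual freedom in choosing the decomposition, and that freedom exists precisely because $\mathcal{N} \neq \{0\}$. This is why the paper's proof splits into two cases: when $\mathcal{N} = \{0\}$ it reduces $r$ to its normal form $\tilde r$ modulo $I_L$ and uses the leading monomials of $L\tilde r$ against a degree-one Gr\"obner basis for $I_L$ to force $\tilde r$ constant, then a separate comparison gives $L\tilde r = \tilde r L$; when $\mathcal{N} \neq \{0\}$ it block-decomposes $L = \tilde L \oplus \Id_\nu$ and reduces to the first case. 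Your sketch captures neither the case split nor the leading-term mechanism. Your appeal to $\Lambda^m \neq 0$ is also beside the point: it fails for $L = \Id_\ell$, and more importantly the obstruction that actually arises when $\mathcal{N}\neq\{0\}$ is that $M\Lambda^m$ can vanish for nonzero constant matrices $M$ even though $\Lambda^m \neq 0$.
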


\begin{proof}
From \eqref{eq:sosL} it follows that $q_i$ and $Lr_j$ vanish on 
$\dbcD_L^\circ$, so by \eqref{eq:conj+} they vanish on $V(L)$,
i.e., $q_i\in I_L$ and $Lr_j\in \sqrt{I_L}=I_L$ by Proposition \ref{prop:LradL}.
 So consider some $Lr \in I_L$.  Let ${\mathcal 
N}$ be the vector subspace
\[{\mathcal N} =
\left( \bigcap_{i=1}^{g} \operatorname{Null}(A_i) \right) \cap \left( 
\bigcap_{i=1}^{g} \operatorname{Null}(A_i^*) \right).\]  
We consider two cases.

{\em Case $1$}: Suppose ${\mathcal N} = \{0\}$. Let $W$ be the set of all
monomials which are not the leading monomial of an element of $I_L$. Decompose
$r$ as $\theta + \tilde{r}$, where $\theta \in \RR^{\ell \times 1}I_L$ and
\[
\tilde{r} = \sum_{\omega \in W} R_{\omega} \otimes \omega. 
\]
  We see that $L \tilde{r} = Lr - L\theta \in \RR^{\ell \times 1}I_L$.
If $\deg(\tilde{r}) > 0$, then the leading degree terms of $L\tilde{r}$ are
\[
 \sum_{i=1}^g \sum_{|\omega| = \deg(\tilde{r})} (A_i R_{\omega} \otimes x_i
\omega + A_i^* R_{\omega} \otimes x_i^*\omega),
\]
which must be nonzero since ${\mathcal N} = \{0\}$.  Since $I_L$ is
generated by polynomials of degree at most $1$,
there exists a left Gr\"obner basis for $I_L$ consisting of polynomials with
degree bounded by $1$. We see, however, that the leading degree
terms of $L \tilde{r}$ are not divisible on the right by
the leading terms of the polynomials in the left Gr\"obner basis for $I_L$, since
their rightmost degree $\deg(\tilde{r})$ piece is in $W$. This is a
contradiction. Hence
$\tilde{r}$ is constant.

Suppose $L\tilde{r} = q L$ for some matrix polynomial $q$.  If $q$ is of the form
\[
 q = \sum_{m \in \ax} Q_m \otimes m
\]
then the leading degree terms of $qL$ are
\[
 \sum_{i=1}^g \sum_{|m| = \deg(q)} Q_{m}(A_i  \otimes mx_i + A_i^* \otimes 
mx_i^*),
\]
which are nonzero since ${\mathcal N} =
\{0\}$.
Because
\[\deg(qL) = 
\deg(L\tilde{r}) \leq 1,\]
we see that $q$ is constant.  Therefore
\[
 L\tilde{r} = \tilde{r} - \sum_{i=1}^g (A_i \tilde{r} \otimes x_i + A_i^* 
\tilde{r} \otimes x_i^*) = qL = q -
\sum_{i=1}^g (q A_i \otimes x_i + q A_i^* \otimes x_i^*).
\]
Matching up terms shows $q = \tilde{r}$ and thus $L \tilde{r} = \tilde{r} L$.

{\em Case $2$}: Suppose ${\mathcal N} \neq \{0\}$.  After applying an
orthonormal change of basis to $L$ we may assume that $L$ is of the form
\[
 L = \begin{pmatrix}
      \tilde{L}& 0\\0&\operatorname{Id}_{\nu}
     \end{pmatrix}
     \quad
     \mbox{and}
     \quad
     \tilde{L} = \operatorname{Id}_{\ell - \nu} - \sum_{i=1}^{g} (\tilde{A}_i
\otimes x_i + \tilde{A}_i \otimes x_i^*)
\]
for some $\nu$,
where 
\[\left( \bigcap_{i=1}^g \operatorname{Null}(\tilde{A}_i) \right) \cap \left( 
\bigcap_{i=1}^g \operatorname{Null}(\tilde{A}_i^*) \right)= \{0\}.\]  Next, 
express
$r$ as
\[
 r = \begin{pmatrix}
      r_{11}&r_{12}\\r_{21}&r_{22}
     \end{pmatrix}
\]
where $r_{12}$ and $r_{22}$ have column dimension $\nu$, then
$r_{12}, r_{22} \in \RR^{\ell \times 1}I_L$.  Further, there exists a $q \in
\RR^{\ell \times \ell}\axs$ such that
\[
 \begin{pmatrix}
      \tilde{L}& 0\\0&\Id_{\nu}
     \end{pmatrix}\begin{pmatrix}
      r_{11}&0\\r_{21}&0
     \end{pmatrix}
     =
     \begin{pmatrix}
      \tilde{L}r_{11}&0\\r_{21}&0
     \end{pmatrix}
     =
     \begin{pmatrix}
      q_{11}&q_{12}\\q_{21}&q_{22}
     \end{pmatrix}
      \begin{pmatrix}
      \tilde{L}& 0\\0&\Id_{\nu}
     \end{pmatrix}
     =
     \begin{pmatrix}
      q_{11}\tilde{L}&q_{12}\\q_{21}\tilde{L}&q_{22}
     \end{pmatrix}
\]
which shows $r_{21}, \tilde{L}r_{11} \in \RR^{\ell \times 1}I_L$.
By Case 1, $r_{11}$ may be decomposed as $r_{11} = s\tilde{L} + C$,
where $s \in \RR^{\ell \times \ell}\ax$ and $C$ is a constant matrix satisfying
$\tilde{L}C = C\tilde{L}$.  Then
\[
 \begin{pmatrix}
  r_{11}&0\\0&0
 \end{pmatrix}
 =
 \begin{pmatrix}
  s&0\\0&0
 \end{pmatrix}L
 + \begin{pmatrix}
    C&0\\0&0
   \end{pmatrix}
   \quad \mbox{and}
   \quad
   \begin{pmatrix}
    C&0\\0&0
   \end{pmatrix}L
=
 L  \begin{pmatrix}
    C&0\\0&0
   \end{pmatrix}.
   \qedhere
\]
\end{proof}

We are now ready to  prove Theorem \ref{thm:detBdry2}.

\begin{proof}[Proof of Theorem {\rm\ref{thm:detBdry2}}]
 First, $L(X) \succeq 0$ implies that $p(X) \succeq 0$ is equivalent to $p$
being of the form \eqref{eq:sosL} by \cite[Theorem 1.1 (2)]{HKMb}.
Further, $L(X)v = 0$ implies that $p(X)v = 0$ is equivalent to $p \in
\RR^{\ell \times 1}\sqrt[\real]{I_L} = \RR^{\ell \times 1}I_L$ by Corollary
\ref{thm:lnss}. Therefore Proposition \ref{prop:pss} gives the result.
\end{proof}

\subsection{The zero determining property, ZDP}
\label{subsec:ZDP}

In this subsection we shall describe a rich class of pencils $L$ with the
ZDP.
We do not know of any examples of minimal pencils which fail to satisfy it.

\def\tp{{\tilde p}}
\def\del{{\delta}}

\def\mi{\deg}
\def\mddf{{minimum degree defining polynomial for}}
\def\mdd{{minimum degree defining polynomial}}

Let $p$ be a classical  commutative polynomial
with $p(0)>0$.
The closed set 
${\cC_p}$ is defined to be the closure of
the connected component of 0 of
$$\{x \in {\mathbb R}^g: p(x) > 0\}.$$
%
%
We call $\tp$
a \df{\mdd} for  $\cC_p$ if 
$\tp$ is the lowest degree polynomial for which $\cC_p=\cC_\tp$.
Recall from \cite{HV07} (see Lemma \ref{lem:mindeg} for details) 
there is only one \mddf \ $\cC_p$, i.e., $\tp$ is unique up to multiplication by a positive scalar.
 Denote by $\mi(\cC_p)$ the degree of such a minimal $\tp$.
Observe that this definition also applies to spectrahedra $\cD_L(n) \subseteq \matng$
associated to a monic linear pencil $L$.

Given a linear pencil $L$, let  
\beq
\delta_n(X):= \det \big(L(X)\big) \quad\text{ for }  X \in \matng.
\eeq
For example, consider the \df{free ball}
\[
L(x)= \begin{pmatrix}
1 & x_1^* & x_2^* & \cdots & x_g^* \\
x_1 & 1 & 0 & \cdots & 0\\
x_2 & 0 & 1 & \ddots & \vdots \\
\vdots & \vdots & \ddots &  \ddots & 0 \\
x_g & 0 & \cdots & 0 & 1
\end{pmatrix}.
\]
Then 
$\cD_L(n)=\{X\in\matng \mid \|X\|\leq1\}$, 
and 
\[
\delta_n(X) = \det \Big(I - \sum_{j=1}^g X_j^*X_j\Big)
\]
by way of Schur complements.
Note  $\del_n$ is a degree $2n$ polynomial in the entries of the $X_j$s.

\begin{theorem}
\label{thm:minRules}
Suppose $L$ is  a monic linear pencil.
\ben[\rm(1)]
\item
$\cZ(\dbcD_L^\circ) \subset V(L)$.
\item
Suppose $\delta_n$ is a minimal degree defining polynomial 
for $\cD_L(n)$ for every $n$,
 then   $\cZ(\dbcD_L^\circ) = V(L)$.
 \item
 The conclusion of  $(2)$ holds  even if $\delta_n =\mu_n^m$ with $\mu_n$
the minimal degree defining polynomial for $ \cD_L(n)$.
\een
\end{theorem}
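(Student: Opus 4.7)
\emph{Plan.}

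Part $(1)$ is formal: $\dbcD_L^\circ \subset V(L)$ by definition, and $V(L) = V(\cI(V(L)))$ is free Zariski closed, so order-reversal of $V, \cI$ gives
\[
\cZ(\dbcD_L^\circ) = V(\cI(\dbcD_L^\circ)) \subset V(\cI(V(L))) = V(L).
\]

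Since $(2)$ is the $m=1$ case of $(3)$, I prove only $(3)$. The target inclusion $\cZ(\dbcD_L^\circ) \supset V(L)$, combined with $\cI(V(L)) = \sqrt{I_L} = I_L$ from Proposition~\ref{prop:LradL}, is equivalent to $\cI(\dbcD_L^\circ) \subset I_L$, which by Proposition~\ref{prop:zeroOnModPosIntro} rewrites as the zero determining property $\Lrad{L}{I_L} = I_L$. Because $\cI$ is defined level by level, it suffices to prove the following \emph{level-wise Zariski density}: at every $n\in\NN$, the real Zariski closure of $\dbcD_L^\circ(n)$ inside $\matng \times \RR^{n\ell}$ equals the real algebraic variety $V(L)^{(n)} = \{(X,v): L(X)v = 0\}$. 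Granting this, any $q \in \cI(\dbcD_L^\circ)$ is a polynomial (in the entries of $X$, linear in those of $v$) vanishing on a Zariski-dense subset of $V(L)^{(n)}$, hence vanishing on all of $V(L)^{(n)}$; intersecting over $n$ then yields $q \in \cI(V(L)) = I_L$.

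The level-wise Zariski density is established in two steps. In \textbf{Step A (base)} one invokes the classical real algebraic geometry of minimum degree defining polynomials (Lemma~\ref{lem:mindeg} and \cite{HV07}) applied to the convex set $\cD_L(n) \subset \matng \simeq \RR^{n^2 g}$: the real Zariski closure of its boundary $\partial\cD_L(n)^\circ$ equals $\{X \in \matng : \mu_n(X) = 0\}$, with $\mu_n$ the minimum degree defining polynomial of $\cD_L(n)$. Hypothesis~$(3)$ gives $\delta_n = \det L = \mu_n^m$ at level $n$, so this Zariski closure coincides with $\{\det L = 0\}$. In \textbf{Step B (lift)} one decomposes $V(L)^{(n)} = (\matng \times \{0\}) \cup \cE_n$, where $\cE_n = \{(X,v) : \det L(X) = 0,\, v \in \ker L(X)\}$. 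The trivial stratum $\matng\times\{0\}$ is covered because $L$ is monic, so $0 \in \cD_L(n)^\circ$ and $\cD_L(n)$ is Zariski dense in $\matng$; hence $\cD_L(n) \times \{0\} \subset \dbcD_L^\circ(n)$ has Zariski closure $\matng \times \{0\}$. For $\cE_n$, restrict first to the Zariski-open locus of $\{\det L = 0\}$ where $\operatorname{rank} L(X)$ is constant; there $\cE_n$ is an algebraic vector bundle, and above the open subset $\partial\cD_L(n)^\circ$ (dense in the base by Step A) the algebraic kernel of $L(X)$ coincides with the $0$-eigenspace of the positive semidefinite matrix $L(X)$, giving Zariski density of the corresponding piece of $\dbcD_L^\circ(n)$ in $\cE_n$. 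Rank-jump strata of higher codimension in the base are then covered by a standard upper-semicontinuous-kernel limit argument.

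The base density of Step A is classical; the main obstacle I anticipate is the clean treatment of Step B in the presence of rank jumps, since $X \mapsto \ker L(X)$ is only upper semi-continuous, so naive real limits of boundary kernels may land in a proper subspace of the target $\ker L(X_0)$ and one must argue irreducible-component-by-component to cover all of $\cE_n$. Once Step B is settled, Part $(3)$ follows, and Part $(2)$ is its $m=1$ specialization.
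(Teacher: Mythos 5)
Your Part (1) is correct and matches the paper, and your reduction via Propositions \ref{prop:LradL} and \ref{prop:zeroOnModPosIntro} (together with thickness of $\cD_L$ for monic $L$) to the level-wise Zariski density of $\dbcD_L^\circ(n)$ in $V(L)^{(n)}$ is sound and is essentially the same reduction the paper uses. Step A correctly invokes Lemma \ref{lem:mindeg}(3), and the treatment of the trivial stratum $\matng\times\{0\}$ is fine.

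The genuine gap is exactly where you flag it: Step B over $\cE_n$. Your constant-rank vector-bundle argument is fine on the open stratum, but the phrase ``covered by a standard upper-semicontinuous-kernel limit argument'' is not a proof, and your own closing remark explains why it fails: real limits of one-dimensional boundary kernels can only produce a line in $\ker L(X_0)$, never the whole fiber at a rank-jump point $X_0$. A concrete obstruction: for $L=\mathrm{diag}(1-x_1,1-x_2)$ at level $n=1$, the stratum $\{x_1=x_2=1\}\times\RR^2$ is a full component of $V(L)^{(1)}$ that is disjoint from the Zariski closure of the line bundle over the generic rank locus; it lands in $\cZ(\dbcD_L^\circ)(1)$ only because the point $(1,1)$ itself lies in $\partial\cD_L(1)$ and contributes its entire fiber, not by any limit of nearby lines. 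Your proposal has no mechanism for producing such fibers, and the limit argument you gesture at will systematically miss them.

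What the paper uses here, and what you should incorporate, is Lemma \ref{lem:null1dim}: the minimality hypothesis on $\delta_n$ (via Renegar's directional derivative) forces $\dim\ker L(X)=1$ on a Zariski-dense open subset $\cU$ of each irreducible component of $\mathrm{Var}(\delta_n)$, and forces $\partial^1\cD_L(n)$ to meet each component in a relatively open dense set. This collapses your general vector-bundle argument to a line-bundle argument on each component and, crucially, ties the smooth boundary to every component of $\mathrm{Var}(\delta_n)$. Even so, the passage from the resulting kernel identity over $\cU$ to equality of the full varieties $\cZ(\dbcD_L^\circ)^{(n)}=V(L)^{(n)}$ still needs a short supplementary argument that the rank-jump fibers are reached; as your example-based concern correctly suggests, you cannot dismiss them as ``higher codimension'' because their fibers can compensate in dimension. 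I would rewrite Step B around Lemma \ref{lem:null1dim} and then treat the rank-jump locus separately by using the full boundary fibers $\{(X,v):X\in\partial\cD_L(n),\,v\in\ker L(X)\}$ rather than a limit argument.
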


\begin{cor}
\label{cor:mindegyes}
Suppose $L$ is an $\ell \times \ell$
 monic pencil.
\ben[\rm(1)]
\item $\deg \del_n \geq\mi(\cD_L(n)) \geq n \deg  \del_1$.
\item 
Suppose $\del_1$ is a \mddf{} $\cD_L(1)$.
If $ \deg  \del_1  =\ell$, then $\mi(\cD_L(n)) = n \deg  \del_1$ and
 $\delta_n$ is a minimal degree defining polynomial 
for $\cD_L(n)$ for every $n$, so $\cZ(\dbcD_L^\circ) = V(L)$.
\item
 If $ \cD_L$ is the free ball, then $ \mi(\cD_L(n)) = n \deg  \del_1$ and
 $\delta_n$ is a minimal degree defining polynomial 
for $\cD_L(n)$ for every $n$, so $\cZ(\dbcD_L^\circ) = V(L)$.
\een
\end{cor}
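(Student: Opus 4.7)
The plan is to extract parts (2) and (3) from part (1) combined with Theorem \ref{thm:minRules}(2), with part (1) itself a degree-counting argument along the scalar slice $X_i = x_i \operatorname{Id}_n$. The upper bound $\deg \delta_n \geq \deg(\cD_L(n))$ in (1) is immediate since $\delta_n$ is by construction a defining polynomial for $\cD_L(n)$. For the lower bound $\deg(\cD_L(n)) \geq n \deg \delta_1$, I would introduce the embedding $\phi : \RR^g \to \matng$ with $\phi(x) = (x_1 \operatorname{Id}_n, \ldots, x_g \operatorname{Id}_n)$. Since $L(\phi(x)) = L(x) \otimes \operatorname{Id}_n$, one has
\[
\delta_n(\phi(x)) = \det\bigl(L(x)\otimes \operatorname{Id}_n\bigr) = \delta_1(x)^n \quad\text{and}\quad \phi^{-1}(\cD_L(n)) = \cD_L(1).
\]
If $p$ is a minimum-degree defining polynomial for $\cD_L(n)$, then $q := p \circ \phi$ is a defining polynomial for $\cD_L(1)$ with $\deg q \leq \deg p$.

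The crucial point is that $q$ must vanish to order at least $n$ at each smooth boundary point $x^* \in \partial \cD_L(1)$ where $\ker L(x^*)$ is one-dimensional: indeed, $\ker L(\phi(x^*)) = \ker L(x^*)^{\oplus n}$ has dimension $n$, and by the real-zero/determinantal-representation theory underlying Lemma \ref{lem:mindeg} (cf.~\cite{HV07}), any defining polynomial of a spectrahedron vanishes at a boundary point to order at least the dimension of the kernel of $L$ there. Since this holds on a Zariski-dense subset of the smooth part of $\partial\cD_L(1)$, each irreducible factor of $\delta_1$ appears in $q$ with multiplicity at least $n$, so $\delta_1^n \mid q$ in $\RR[x]$. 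Hence $\deg p \geq \deg q \geq n \deg \delta_1$, completing (1).

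Part (2) follows by combining (1) with the evident bound $\deg \delta_n \leq n \ell$ (the determinant of an $\ell n \times \ell n$ matrix with affine entries in $X$) and the hypothesis $\deg \delta_1 = \ell$: the chain
\[
n \deg \delta_1 \leq \deg(\cD_L(n)) \leq \deg \delta_n \leq n \ell = n \deg \delta_1
\]
forces equality throughout, so $\delta_n$ is a minimum-degree defining polynomial for $\cD_L(n)$ at every level $n$, and Theorem \ref{thm:minRules}(2) then delivers $\cZ(\dbcD_L^\circ) = V(L)$. Part (3) is analogous: a Schur-complement computation on the free-ball pencil gives $\delta_1(x) = 1 - \sum_j x_j^2$ of degree $2$ and $\delta_n(X) = \det(\operatorname{Id}_n - \sum_j X_j^* X_j)$ of degree exactly $2n$. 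Since $\delta_1$ is the classical minimum-degree defining polynomial of the Euclidean unit ball, (1) gives $2n = n\deg\delta_1 \leq \deg(\cD_L(n)) \leq \deg \delta_n = 2n$, so $\delta_n$ is again minimum-degree at every level and Theorem \ref{thm:minRules}(2) applies.

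The main obstacle is the order-of-vanishing step in the lower bound of (1): justifying that a minimum-degree defining polynomial of $\cD_L(n)$ vanishes at a rank-drop-by-$n$ boundary point to order at least $n$. Without this input the identity $\delta_n\circ\phi = \delta_1^n$ only yields $\deg(\cD_L(n)) \geq \deg \delta_1$, far too weak to run (2) and (3). This is precisely the classical real-zero/determinantal-representation content that Lemma \ref{lem:mindeg} packages, so the proof ultimately reduces to invoking that lemma cleanly.
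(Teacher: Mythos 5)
Your reduction of (2) and (3) to (1) plus Theorem~\ref{thm:minRules}(2) matches the paper exactly, and the degree bound $\deg\delta_n\le n\ell$ and the free-ball computation $\delta_n(X)=\det(\Id_n-\sum_jX_j^*X_j)$ of degree $2n$ are both correct. The problem is in the lower bound of (1), and specifically in the order-of-vanishing claim that you yourself flag as the crucial obstacle.

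You assert that any defining polynomial of a spectrahedron vanishes at a boundary point to order at least $\dim\ker L$ there, and that this is the content packaged by Lemma~\ref{lem:mindeg}. Neither is true. Lemma~\ref{lem:mindeg} is purely a statement about supports --- any $q$ vanishing on $\partial\cC$ factors as $q=p\,h$ with $p$ the minimum-degree defining polynomial --- and carries no multiplicity information at individual boundary points. And the order-of-vanishing claim fails outright for non-reduced pencils: take $L$ to be the standard $2\times2$ pencil for the unit disk and set $L'=L\oplus L$. Then $\cD_{L'}(1)$ is still the disk, the minimum-degree defining polynomial is still $1-x^2-y^2$, vanishing to order one along the boundary, yet $\dim\ker L'$ equals two there. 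In your setting the claim is applied at the highly degenerate boundary points $\phi(x^*)=(x_1^*\Id_n,\ldots,x_g^*\Id_n)$, where $\dim\ker L(\phi(x^*))=n$ even though generic smooth points of $\partial\cD_L(n)$ have one-dimensional kernel; there is nothing forcing a minimum-degree defining polynomial of $\cD_L(n)$ to vanish to order $n$ along this codimension-large stratum. The one polynomial for which you can compute the multiplicity directly is $\delta_n$ itself, but using that is circular: establishing that the \emph{minimal} defining polynomial has this multiplicity is equivalent to the conclusion that $\delta_n$ is minimal.

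The underlying issue is that the scalar slice $\phi(\RR^g)\hookrightarrow\matng$ is too small to see the factor of $n$. Restricting a degree-$d$ defining polynomial $p$ of $\cD_L(n)$ to it gives a defining polynomial $q$ of $\cD_L(1)\subset\RR^g$, and all that Lemma~\ref{lem:mindeg} yields is $\mu_1\mid q$, i.e.\ $d\ge\deg\delta_1$ --- far too weak, as you note. The paper instead restricts to the block-diagonal slice $X=X^1\oplus\cdots\oplus X^n$, an $ng$-dimensional affine subspace on which $L(X)=\bigoplus_j L(X^j)$ and $\delta_n|_{\mathrm{slice}}=\prod_j\delta_1(X^j)$; the intersection with $\cD_L(n)$ is $\cD_L(1)^n$, whose boundary has $n$ distinct codimension-one pieces $\cD_L(1)^{j-1}\times\partial\cD_L(1)\times\cD_L(1)^{n-j}$. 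Applying Lemma~\ref{lem:mindeg}(2) on this slice forces each of the $n$ factors $\mu_1(X^j)$ into the restriction of $p$, yielding $\deg p\ge n\deg\mu_1=n\deg\delta_1$ with no multiplicity claim needed. To repair your proof you should replace the scalar slice by the block-diagonal slice; the rest of your write-up then goes through unchanged.
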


\begin{cor}\label{cor:genZDP}
A generic $\ell\times\ell$ monic linear pencil $L$ in $g>2$ variables has ZDP.
\end{cor}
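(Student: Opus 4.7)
The plan is to invoke Corollary \ref{cor:mindegyes}(2), reducing the claim to two generic conditions on a monic $\ell\times\ell$ pencil $L$ in $g>2$ variables: (a) $\deg\delta_1=\ell$ where $\delta_1=\det L$, and (b) $\delta_1$ is a minimum degree defining polynomial for the spectrahedron $\cD_L(1)$. Both of these conditions should be checked to hold on a Zariski-open dense subset of the parameter space of monic pencils.

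Condition (a) is the routine step. Writing $L(x)=I_\ell+\sum_{j=1}^g B_j x_j$ with symmetric $B_j=A_j+A_j^*$, the top-degree homogeneous part of $\det L$ is $\det\bigl(\sum_j B_j x_j\bigr)$, a polynomial whose coefficients are polynomial in the entries of the $B_j$. The locus where this top piece vanishes identically is a proper Zariski-closed subset of $(\RR^{\ell\times\ell})^g$, so (a) is generic.

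For (b), the strategy is to first show that $\det L$ is irreducible in $\CC[x_1,\ldots,x_g]$ for generic $L$. Granting this, suppose $\tilde p\in\RR[x]$ satisfies $\tilde p(0)>0$ and $\cC_{\tilde p}=\cD_L(1)$; then $\tilde p$ must vanish on $\partial\cD_L(1)$. Since $L$ is monic, $0$ is an interior point of $\cD_L(1)$, and for generic $L$ the boundary is a smooth real hypersurface of real dimension $g-1$ around any smooth boundary point. Because $\det L$ has real coefficients, the tangent space to this real hypersurface at any smooth boundary point is a totally real subspace whose complexification equals the complex tangent space to $V_\CC(\det L)$; since $V_\CC(\det L)$ is irreducible by assumption, the real boundary is then Zariski-dense in $V_\CC(\det L)$. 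Hence $\tilde p$ vanishes on all of $V_\CC(\det L)$, and by the Hilbert Nullstellensatz $\det L\mid\tilde p$, forcing $\deg\tilde p\geq\ell$ as required.

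The main obstacle is the generic irreducibility of $\det L$ in $\CC[x]$, and this is where the hypothesis $g>2$ is used essentially: for $g=1$ the pencil determinant always factors as a product of linear forms over $\CC$, so the strategy cannot even begin. For $g\geq 2$, irreducibility of a polynomial is a Zariski-open condition, and it pulls back along the determinantal map $(A_j)\mapsto\det L$ to a Zariski-open condition on pencil coefficients. To verify that this open set is nonempty for $g>2$, I would exhibit a single monic $\ell\times\ell$ pencil in $g$ variables with irreducible determinant---for example, by an explicit companion-matrix-style block construction, or by a dimension-counting argument showing that the image of the determinantal map cannot be contained in the (proper) variety of reducible polynomials in $g>2$ variables. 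Constructing such a witness pencil is the technical heart of the argument; everything else follows from the standard Nullstellensatz together with Corollary \ref{cor:mindegyes}(2).
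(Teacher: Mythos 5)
Your reduction to Corollary~\ref{cor:mindegyes}(2) is exactly what the paper does, and your treatment of condition~(a) (that $\deg\delta_1=\ell$ generically, since the leading form $\det(\sum_j B_j x_j)$ does not vanish identically on a nonempty open set of pencil data) is fine. You have also correctly isolated generic irreducibility of $\det L$ over $\CC$ as the crux of condition~(b), and your use of irreducibility (Zariski density of $\partial\cD_L(1)$ in $V_\CC(\det L)$, then the Nullstellensatz forcing $\det L\mid\tilde p$) is a sound way to conclude once irreducibility is in hand.

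The problem is that you do not actually prove generic irreducibility: you observe that irreducibility is Zariski-open and pulls back to an open condition on pencil coefficients, and then say that nonemptiness of that open set would follow from ``a single monic $\ell\times\ell$ pencil in $g$ variables with irreducible determinant,'' but you neither exhibit such a pencil nor give the dimension count you allude to. You acknowledge this yourself (``the technical heart of the argument''), so what you have is a reduction, not a proof. The paper closes this gap by citing the classical fact (B\^ocher) that the determinant of a \emph{generic symmetric matrix} is an irreducible polynomial in its $\binom{\ell+1}{2}$ entries, and then applying Bertini's theorem to a generic linear section to deduce irreducibility of $\delta_1=\det L(x)$ for a generic pencil. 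That Bertini argument is also where the hypothesis $g>2$ actually enters; your explanation of the role of $g>2$ addresses only the degenerate case $g=1$ (where $\det L$ splits into linear forms) and says nothing about why $g=2$ is excluded, so even the discussion of the hypothesis is incomplete. If you want to keep your witness-based route you must either produce the explicit pencil or supply the Bertini-type argument; as written the central step is missing.
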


\begin{proof}
It is clear that $\delta_1=\det L(x)$ is of degree $\leq\ell$.
Furthermore, the determinant of a generic symmetric
matrix is irreducible (see e.g..~\cite[\S 61, p.~177]{Boc07}).
Then the zero set $V$ of $\delta_1$ is a generic linear section of this irreducible
variety and $\delta_1$ is thus irreducible by Bertini's theorem \cite[Theorem II.3.\S1.6, p.~249]{Saf99}.
In particular, the Zariski closure of $\cD_L(1)$ is $V$.
Now if $r$ is a \mddf{} $\cD_L(1)$, then $r$ vanishes on $V$ and is thus a multiple of $\delta_1$ 
by irreducibility.
Hence $\delta_1$ is a degree $\ell$ \mddf{} $\cD_L(1)$ and the desired conclusion follows
from Corollary \ref{cor:mindegyes}.
\end{proof}

\begin{remark}
\label{rem:mindegyes}
\mbox{}\par
\ben[\rm(1)]
\item
For a given $L$ the minimality of $\delta_1$ can be easily checked with computer algebra. 
It suffices to establish that the ideal in $\R[x]$ generated by $\det L$ is real radical.
We refer the reader to \cite{BN93,Neu98} for algorithmic aspects of real radicals in commutative polynomial rings.
\item From 
Corollary \ref{cor:genZDP}
we infer that there are numerous examples of ZDP pencils.
\item
Also for perspective,  any bivariate RZ polynomial $p(x_1 ,x_2)$ of degree $\ell$,
has a determinantal representation $p(x_1 ,x_2)=\det L(x_1 ,x_2)$ for 
some $\ell\times\ell$ monic linear pencil $L$, and $\cC_p=\cD_L(1)$ \cite{HV07}.
(However, in more than 2 variables $p$ may only admit $\ell\times\ell$ 
determinantal representations for $\ell> \deg p$, cf.~\cite{Vin12}.)
\een
\end{remark}

\begin{exa}
Let 
\[L(x_1,x_2)=\begin{pmatrix} 1+x_1 & x_2 \\ x_2 & 1-x_1
                  \end{pmatrix}.\]
Then $\cD_L(1)=\{(x_1 ,x_2)\in\R^2\mid 1-x_1 ^2-x_2^2\geq0\}$,
so $\det L(x_1 ,x_2) = 1-x_1 ^2 - x_2^2$ is the 
minimum degree defining polynomial for
$\cC_p=\cD_L(1)$. Hence $L$ has ZDP.
\end{exa}

To prove the above theorem  we need some lemmas and
we set about to prepare them.

\subsubsection{Minimal degree defining polynomials}

\def\Zar{{\rm Zar}}
\def\Var{{\rm Var}}

Here we give some basic facts about minimal degree defining polynomials.
We require background from the proof of Lemma 2.1 of \cite{HV07},
so the proof is reproduced  in an online 
 Appendix \ref{app:HV}. 

Given a commutative polynomial $p$ let $\Var(p)$ denote its zero set.
If $S\subseteq\R^m$, then $\Zar(S)\subseteq\R^m$ denotes the Zariski closure of $S$,
i.e., the set of common zeros of all polynomials vanishing on $S$.

\def\bcD{{\partial \cD}}

Beware the polynomials in the lemma are commutative. 
\begin{lemma}
\label{lem:mindeg}
A  \mdd{} $p$ for $\cC=\cC_p$ is unique up to a constant. Moreover,
\ben[\rm(1)]
\item
 any other polynomial $q$ with $ \cC_q = \cC_p$
is given by $q=ph$ where $h$ is an arbitrary polynomial
which is positive on a dense connected subset of $\cC_p$. 
\item  
 any other polynomial $q$ which vanishes on $ \partial \cC_p$
is given by $q=ph$ where $h$ is an arbitrary polynomial. 
\item
$\Zar(\partial \cC_p) = \Var(p)$.
\een
\end{lemma}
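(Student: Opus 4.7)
My plan is to show that the \mdd{} $\tilde p$ is, up to a positive scalar, the product $\prod_{i=1}^{k} P_i$ of the distinct irreducible real polynomials $P_1,\dots,P_k$ whose zero sets $V_i=\Var(P_i)$ are the codimension-one irreducible components of $\Zar(\partial\cC_p)$. Once this concrete description of $\tilde p$ is available, parts (1)--(3) follow rapidly from unique factorization in $\R[x_1,\dots,x_g]$.

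First, I would analyze $\partial\cC_p$ via the sign-change structure of $p$. Factoring $p=\prod p_j^{a_j}$ into distinct irreducibles over $\R[x_1,\dots,x_g]$, one isolates those $p_j$ across whose real zero set $p$ itself actually changes sign; these are the $P_i$ above, and standard real-algebraic-geometry facts about smooth points of real hypersurfaces give that each $V_i\cap\R^g$ has a Zariski-dense subset lying on $\partial\cC_p$. This identifies all codimension-one components of $\Zar(\partial\cC_p)$ and provides, for each $P_i$, a natural Zariski-dense test set on which to check vanishing.

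Next I would prove $P_i\mid\tilde p$ for each $i$. Since $\tilde p$ vanishes on $\partial\cC_p$, it vanishes on a Zariski-dense subset of $V_i$, hence on all of $V_i$, so by irreducibility $P_i\mid\tilde p$. Because the $P_i$ are pairwise coprime, $\prod_iP_i\mid\tilde p$. After normalizing signs so $P_i(0)>0$ (possible because $p(0)>0$ places $0$ in the open interior of $\cC_p$, hence off $\bigcup_iV_i$), the product $\prod_iP_i$ is positive at $0$, vanishes on $\partial\cC_p$, and by continuity and connectedness is positive throughout the connected open set $C_0\subseteq\cC_p$ with $\overline{C_0}=\cC_p$. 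A short argument using $\partial\{\prod_iP_i>0\}\subseteq\bigcup_iV_i\subseteq\partial\cC_p$ then shows $\cC_{\prod_iP_i}=\cC_p$, so $\prod_iP_i$ is a defining polynomial. Minimality of $\deg\tilde p$ forces $\tilde p=c\prod_iP_i$ for some $c>0$, giving uniqueness up to a positive constant.

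Parts (1)--(3) then fall out. For (2), if $q$ vanishes on $\partial\cC_p$, the same Zariski-density argument gives $P_i\mid q$ for each $i$, hence $\tilde p\mid q$. For (1), $\cC_q=\cC_p$ forces $q$ to vanish on $\partial\cC_p$, so by (2) we have $q=\tilde p\,h$, and positivity of $h$ on a dense connected subset of $\cC_p$ (namely $C_0$) is forced by both $q$ and $\tilde p$ being positive there; conversely any such $h$ keeps the connected component of $0$ in $\{q>0\}$ equal to $C_0$ and hence gives $\cC_q=\cC_p$. For (3), $\Var(\tilde p)=\bigcup_iV_i=\Zar(\partial\cC_p)$ is immediate from the factorization. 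The main obstacle is the real-algebraic content of the first step: rigorously showing that each codimension-one component of $\Zar(\partial\cC_p)$ is the zero locus of a real irreducible polynomial whose real points are Zariski-dense in the component, while correctly excluding both the complex-only components of $\Var(p)$ and the even-multiplicity factors of $p$ which produce no sign change and hence cannot appear in $\tilde p$.
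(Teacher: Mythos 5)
Your end goal---that the minimum-degree defining polynomial $\tilde p$ equals, up to a positive constant, the product of the irreducible polynomials cutting out the codimension-one components of $\Zar(\partial\cC_p)$---is the same description the paper reaches in the proof of Lemma~\ref{lem:HV07}. The route you propose, however, has a genuine gap: the sign-change criterion you use to identify the factors $P_i$ is false, in both directions.

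It is too inclusive. A factor of the original $p$ may change sign across a hypersurface that never meets $\partial\cC_p$: for $p=(1-x^2)(4-x^2)$ in one variable, $\cC_p=[-1,1]$ and $\partial\cC_p=\{\pm1\}$, yet both factors of $p$ are sign-change factors; only $1-x^2$ belongs in $\tilde p$. More seriously, it is too exclusive. Your closing sentence asserts that ``even-multiplicity factors of $p$ \dots produce no sign change and hence cannot appear in $\tilde p$.'' That is not true. Take $p=(x_1-\frac{1}{2})^2(1-x_1^2-x_2^2)$ in $\RR^2$. Here $\cC_p$ is the closed half-disk $\{x_1\le\frac{1}{2},\ x_1^2+x_2^2\le1\}$, whose boundary contains a piece of the chord $\{x_1=\frac{1}{2}\}$. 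The minimum-degree defining polynomial is $(\frac{1}{2}-x_1)(1-x_1^2-x_2^2)$, which does contain the linear factor coming from a squared (hence non-sign-changing) factor of $p$. So the factorization and sign-change structure of the original $p$ simply do not determine $\tilde p$; one must work directly with $\partial\cC_p$, as the paper does, rather than with $p$.

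There is a second gap. Your containment $\bigcup_iV_i\subseteq\partial\cC_p$ is false in general: the Zariski closure typically adds points not in the semialgebraic boundary (if $\cC_p$ is the bounded region between a parabolic arc and a chord, $\Zar(\partial\cC_p)$ is the full parabola together with the full line). And in the other direction, to conclude that $\prod_iP_i$ vanishes on all of $\partial\cC_p$ you must know that $\Zar(\partial\cC_p)$ has no irreducible components of dimension below $m-1$. This equidimensionality is precisely the real-algebraic input the paper supplies: the proof of Lemma~\ref{lem:HV07} shows $\dim(\partial\cC)_x=m-1$ at every $x\in\partial\cC$ (using that $\partial\cC=\overline{\operatorname{int}\cC}\setminus\operatorname{int}\cC$ together with \cite[Lemma 4.5.2]{BCR98}) and deduces that every component $V_i$ of $\Zar(\partial\cC)$ has dimension $m-1$. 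Once that is in hand, the iterated application of the real Nullstellensatz for principal ideals that you invoke to get $P_i\mid\tilde p$, and then $\prod_iP_i\mid q$ for any $q$ vanishing on $\partial\cC_p$, is exactly the device used in the paper for parts (2) and (3). So the back end of your argument is sound; the front end---identifying the $P_i$ and showing $\prod_iP_i$ is a defining polynomial---needs to be replaced by the local-dimension argument.
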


\begin{proof}
(1)\  This is Lemma \ref{lem:HV07}.

(2) \ Let  $V$ be the Zariski closure of $\partial {\mathcal C}\subseteq\R^m$,
and let $V = V_1 \cup \cdots \cup V_k$ be the decomposition of $V$ into irreducible components satisfying  $\dim V_i = m-1$ for each $i$
established in the proof of   Lemma \ref{lem:HV07}.
Write $p=p_1\cdots p_k$, where $p_i$ 
is an irreducible polynomial vanishing on $V_i$.

Since $q$ vanishes on $\partial\cC$, it vanishes on each $V$ and thus on each $V_i$.
By the real Nullstellensatz for principal ideals \cite[Theorem 4.5.1]{BCR98},
$q=p_1 r_1$ for some $r_1$. Since $p_1$ does not identically vanish on $V_2$,
$r_1$ vanishes on $V_2$. Thus there is $r_2$ with $r_1=p_2 r_2$. Repeating this
$k$ times leads to $q= p_1 p_2 \cdots p_k h$ for some polynomial $h$.

(3) \ This is basically a restatement of (2).
\end{proof}

\def\foot{{\rm foot}}
Let $\foot(S)$ denote the \df{footprint} 
$$\foot(S) := \{X \mid  (X,v) \in S \}$$
of the  set $S$ in $\matng \times \RR^n$.

\begin{lem}
\label{lem:null1dim}
Let $L$ be a monic linear pencil, and suppose that
$\delta_n$ is a minimal degree defining polynomial 
for $\cD_L(n)$ for some $n$.
\ben[\rm (1)]
\item Then $\partial^1 \cD_L(1)$ defined by
\[\partial^1 \cD_L(n) = \{X\in\matng \mid    \dim\ker \big(L(X)\big)=1 \;\& \;
L(X) \succeq 0 \}\]
is nonempty and dense in $\partial\cD_L(n)$.
\item $ V^1(L)$ defined by 
\[ V^1(L)(n) = \{X\in\matng \mid \dim\ker \big(L(X)\big)=1\}\]
is nonempty and dense in $\foot[V(L)(n)]$.
\een
\end{lem}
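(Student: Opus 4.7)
The plan is to exploit the minimality of $\delta_n$ to force a squarefree factorization, then use a first-order perturbation argument relating the kernel dimension of $L(X)$ to the vanishing order of $\delta_n$; density will follow by semialgebraic dimension counting.

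First I would apply Lemma \ref{lem:mindeg} to $\cD_L(n) \subset \matng$, with $\delta_n$ playing the role of $p$. This yields the equality $\Zar(\partial \cD_L(n)) = \Var(\delta_n)$ together with a factorization $\delta_n = c\, p_1 \cdots p_k$ into distinct irreducibles, where the $V_i := \Var(p_i)$ are the codimension-one irreducible components of $\Var(\delta_n)$. Squarefreeness is built in: any repeated irreducible factor could be dropped without changing the zero set, producing a defining polynomial of strictly smaller degree and contradicting minimality.

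The key algebraic input is this: if $\dim\ker L(X_0) \geq 2$ at some $X_0 \in \matng$, then $\delta_n = \det L$ vanishes to order at least two at $X_0$. Indeed, for any perturbation direction $W$, the two smallest eigenvalues of $L(X_0+\epsilon W)$ are $O(\epsilon)$ while the remaining ones stay bounded away from zero, so $\det L(X_0 + \epsilon W) = O(\epsilon^2)$ and every first-order partial of $\delta_n$ vanishes at $X_0$. Consequently, if the corank-$\geq 2$ locus of $L$ were Zariski dense in some $V_i$, one could pick a point $X_0$ smooth in $V_i$ and off every other $V_j$ sharing the property; regularity of the local ring of $V_i$ at $X_0$ matches the vanishing order of $\delta_n$ along $V_i$ with the multiplicity of $p_i$ in $\delta_n$, forcing $p_i^2 \mid \delta_n$ and contradicting squarefreeness. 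Hence
\[
U_i := \{X \in V_i : X \text{ smooth in } V_i,\ \dim \ker L(X) = 1\}
\]
is Zariski open and Zariski dense in $V_i$ for each $i$.

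For part (1), set $B_i := \partial \cD_L(n) \cap V_i$. Each $B_i$ is Zariski dense in $V_i$: were $\Zar(B_i) \subsetneq V_i$, the equality $\Zar(\bigcup_j B_j) = \bigcup_j V_j$ combined with the distinct irreducibility of the $V_j$ would fail, since $V_i \setminus \bigcup_{j \neq i} V_j$ is Zariski dense in $V_i$ and must lie in $\Zar(B_i)$. Thus $\dim B_i = \dim V_i$, while $B_i \setminus U_i$ sits inside the proper Zariski-closed $V_i \setminus U_i$ of strictly smaller dimension, and so is Euclidean nowhere dense in $B_i$. Hence $B_i \cap U_i$ is Euclidean dense in $B_i$, and taking the union over $i$ shows $\partial^1 \cD_L(n) \supseteq \bigcup_i (B_i \cap U_i)$ is Euclidean dense in $\partial \cD_L(n) = \bigcup_i B_i$, and nonempty unless $\delta_n$ is constant (in which case the statement is vacuous). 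Part (2) is then immediate: $\foot[V(L)(n)]$, interpreted via nonzero kernel vectors as the statement implicitly requires, equals $\Var(\delta_n) = \bigcup V_i$, and $V^1(L)(n) \supseteq \bigcup_i U_i$ is Zariski — and hence Euclidean — dense there by construction.

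The main obstacle I foresee is the clean passage from the analytic estimate ``$\det L$ vanishes to order $\geq 2$'' to the algebraic statement ``$p_i^2 \mid \delta_n$.'' This matching requires choosing a smooth point $X_0$ of $V_i$ that avoids all other $V_j$, together with regularity of the local ring of $V_i$ at $X_0$; the Zariski-density hypothesis on the corank-$\geq 2$ locus guarantees such $X_0$ exists, and the remainder is a standard local calculation in a regular local ring.
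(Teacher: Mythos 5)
Your proof is essentially correct but takes a genuinely different route from the paper's. For the crucial nonemptiness/density of the corank-one locus, the paper invokes Renegar's theory of hyperbolic (RZ) polynomials: since $\delta_n$ is of minimal degree, the algebraic interior of its Renegar directional derivative $\delta_n'$ strictly contains $\cC$, producing a simple root of $\delta_n$ on the boundary; density and the identification of $\partial^1\cD_L(n)$ with the smooth boundary points are then cited from Renegar's Theorems~6 and~7. You replace this machinery with an elementary, self-contained argument: the eigenvalue-perturbation observation that $\dim\ker L(X_0)\geq 2$ forces every first-order partial of $\det L$ to vanish at $X_0$, combined with a vanishing-order computation at a smooth point of $V_i$ off the other components (order of $\delta_n$ there is exactly~$1$ since $\delta_n=p_1\cdots p_k$ is squarefree), yielding a contradiction if the corank-$\geq 2$ locus contained any $V_i$. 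This buys independence from the hyperbolic-polynomial literature at the cost of more bookkeeping. One soft spot: the inference ``$B_i\setminus U_i$ has strictly smaller dimension, so it is Euclidean nowhere dense in $B_i$'' is not automatic for arbitrary semialgebraic $B_i$; you need the fact, established inside the proof of Lemma~\ref{lem:HV07} in the appendix, that $\partial\cD_L(n)$ has local dimension $m-1$ at \emph{every} point. Once you invoke that, it is cleaner to argue globally: the bad set $\partial\cD_L(n)\cap\{\operatorname{corank}\geq 2\}$ is Zariski closed of dimension $<m-1$ (since by your claim it meets each $V_i$ properly), hence closed with empty interior in the everywhere-$(m-1)$-dimensional set $\partial\cD_L(n)$, and density follows without decomposing into the $B_i$. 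Your ``Zariski, hence Euclidean, dense'' jump in part~(2) does not hold for general real varieties, but the paper's proof of part~(2) has the same feature, so I do not count it against you.
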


\begin{proof}

(1) \ Consider Renegar's directional derivative $\delta_n'$  of $\delta_n$. Like $\delta_n$, it is 
an RZ polynomial, and the corresponding algebraic interior $\cC'$ contains $\cC$, 
cf.~\cite[\S 4]{Ren06} or \cite[\S2]{Vin12}. By minimality of $\delta_n$, $\cC'\supsetneq\cC$, i.e.,
there is $X\in\partial\cD_L(n)$ with $\delta_n'(X)\neq0$. That is, $X$ is a simple 
root of $\delta_n$ and thus $\dim\ker L(X)=1$.

Having established that $\partial^1\cD_L(n)\neq\varnothing$, the density
 follows from \cite[Theorem 6]{Ren06}.
 Note that in this case $\partial^1\cD_L(n)$ are exactly smooth points of $\partial\cD_L(n)$ \cite[Lemma 7]{Ren06}.
 
 (2)  We use the decomposition 
 $$ \foot[V(L)(n)]= V_1 \cup \cdots \cup V_k$$
 into irreducible components described in the proof of Lemma \ref{lem:mindeg}.
 Each $V_j$ has relatively open intersection  with $ \partial^1\cD_L(n)$,
 so there is an $X^j$ in $V_j$ for which $\dim \ker L(X^j)=1$.
 Thus $\dim \ker L(X)=1$ for $X$ in an open dense subset of $V_j$.  
\end{proof}

\begin{proof}[Proof of Theorem {\rm\ref{thm:minRules}}]
 (1) is obvious.

(2) Since $\del_n$ is the \mddf \  $\bcD_L(n)=\foot[\widehat \bcD_L^\circ(n)]$, 
by Lemma \ref{lem:mindeg}(3)
 Zariski closures satisfy $\Zar( \bcD_L(n))= \Var(\del_n )$
 for each $n$. So
  \beq
  \label{eq:ZisV}
  \foot[\cZ( \widehat \bcD_L^\circ)(n)] = \Zar(\bcD_L(n))=  \Var (\del_n )= \foot[V(L)(n)] 
  \eeq
  Intuitively, this says the ``footprint" of what we are trying to prove is as claimed.
  By Lemma \ref{lem:null1dim}, we get $1 = \dim \ker L(X) $
  for an open dense set $\cU$ of  $X$ in $\foot[V(L)(n)]$,
  so also for such  $X \in \foot[\cZ(\widehat  \bcD_L^\circ)(n)]$.
 This with \eqref{eq:ZisV} and  (1) says  
$\ker  L(X)(n) = 
\{u \in   \RR^n \mid \ (X,u)\in \cZ(\dbcD_L^\circ(n)) \} $,
which proves (2).
 
(3) follows from the same arguments as (2).
\end{proof}

\begin{proof}[Proof of Corollary {\rm\ref{cor:mindegyes}}]

(1) \
To get a lower bound on the minimal degree required to define 
$\cD_L(n)$
take $X=X^1\oplus\cdots\oplus X^n$ diagonal matrices in $\matng$, with 
$X^j\in\R^g$, and evaluate
\beq
\label{eq:dnd1}
\delta_n(X)=\det L(X) =  \prod_{j=1}^n\det L(X^j)= \prod_{j=1}^n \del_1( X^j).
\eeq
Note it has (on commutative $X$) degree $n \deg \del_1$.
No lower degree polynomial will vanish on all of 
the diagonal $X \in \bcD_L(n)$,  since $\del_1$
is minimal, so $\mi(\cD_L(n)) \geq n \deg \del_1$.
As $\del_n$ 
is a defining polynomial for  $\cD_L(n)$,
it follows that
 $\deg \del_n \geq \mi(\cD_L(n)).$ This proves (1).

An implication of (1) is that when $\del_1$ is minimal,
 $\del_n$ must be  minimal   for those $L$
 with 
 $\deg \del_n \leq n \deg \del_1.$
 This forces 
 $$\mi(\cD_L(n))= \deg  \del_n=  n \deg  \del_1.$$  From 
 this we get (2) and (3) immediately:
 \\
 (2) \
 By the definition of the determinant we have $\deg \del_n \leq  n d$,
 where $d$ is the size of $L$. 
 By hypothesis $d=\deg \del_1$, so $\deg \del_n \leq  n \deg  \del_1$,
 proving (2).
\\
 (3) Note for the ball that $n \deg \del_1 =2n$
 and we already saw  that $\del_n$ has degree $2n$.
\end{proof}

\subsubsection{Necessary side}

We present a necessary condition for a pencil to satisfy ZDP.

\def\SOS{{\rm SOS}}

\begin{lemma}
Suppose $L$ is a monic linear pencil, and let $\mu$ denote the \mddf \   $\cD_L(1)$.
Necessary for the zero determining property of $L$ is
that 
$$\delta_1= \mu^{m} \rho$$
 for some $m$
and polynomial $\rho$ which has zeros only on $\Var(\mu)$
but does not vanish everywhere on $\Var(\mu)$.  
Moreover, by the real Nullstellensatz,
the existence of such a $\rho$ is equivalent to
$$f \delta_1= 
 \mu^{m}(\mu^{2s} +\SOS)$$
for some polynomial $f$ and some $s$.
\end{lemma}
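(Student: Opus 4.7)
The plan is three-fold. First, since $\delta_1 = \det L$ vanishes on $\partial\cD_L(1)$ (where $L$ becomes singular), Lemma~\ref{lem:mindeg}(2) gives $\mu \mid \delta_1$ in $\R[x]$. Write $\delta_1 = \mu^m \rho$ with $m \geq 1$ chosen maximal such that $\mu^m \mid \delta_1$, forcing $\mu \nmid \rho$ as polynomials. The two properties of $\rho$ that need verification are $\Var(\rho) \subseteq \Var(\mu)$ and that $\rho$ is not identically zero on $\Var(\mu)$; the first is the real content of the statement, and the second will then follow from the maximality of $m$ (if $\rho$ vanished identically on $\Var(\mu)$, the first containment together with the real Nullstellensatz would let one transfer an additional $\mu$-factor into $\mu^m$, contradicting maximality).

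The key step is to establish $\Var(\rho) \subseteq \Var(\mu)$ assuming ZDP, which I plan to argue by contradiction. Suppose $x_0 \in \Var(\rho) \setminus \Var(\mu)$. Then $L(x_0)$ is singular while $x_0 \notin \partial\cD_L(1)$ (since $\partial\cD_L(1) \subseteq \Var(\mu)$), and singularity rules out $x_0$ from the interior of $\cD_L(1)$, so $x_0$ lies in its exterior. Pick a nonzero $v_0 \in \ker L(x_0)$, giving $(x_0, v_0) \in V(L)(1)$. Under ZDP, combined with Proposition~\ref{prop:LradL} (which yields $\sqrt{I_L} = I_L$), one has $\cI(\dbcD_L^\circ) = \cI(V(L)) = \sqrt{I_L} = I_L$; consequently, every NC row polynomial vanishing on $\dbcD_L^\circ$ is of the form $hL$ and hence automatically annihilates $(x_0, v_0)$. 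The planned contradiction is to produce a polynomial $p \in \cI(\dbcD_L^\circ)$ with $p(x_0) v_0 \neq 0$, which cannot then lie in $I_L$. The idea is that $L(x_0)$ being singular strictly outside $\cD_L(1)$ while $\mu(x_0) \neq 0$ reveals algebraic redundancy in $L$: after an orthogonal change of basis, $L$ splits off a subpencil that is forced to be strictly positive throughout $\overline{\cD_L^\circ}$, and this subpencil provides a constant row vector $w \in \R^{1 \times \ell}$ with $w v = 0$ for every $v$ appearing in $\dbcD_L^\circ$ yet $w v_0 \neq 0$. The toy model is $L = \operatorname{diag}(1+x+x^*,\,1+2(x+x^*))$: here $\delta_1 = (1+2x)(1+4x)$, $\mu = 1+4x$, the spurious root is $x_0 = -1/2$, and $w = (1,0)$ serves as the separating polynomial. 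Making this separating construction rigorous in the general, non-diagonal setting is the main technical obstacle of the proof.

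Finally, the ``moreover'' clause is an application of Stengle's real Nullstellensatz to the principal ideal $(\rho) \subset \R[x]$: the containment $\Var(\rho) \subseteq \Var(\mu)$ is equivalent to $\mu \in \sqrt[\real]{(\rho)}$, i.e., to the existence of $s \geq 0$, a sum of squares $\sigma$, and $f \in \R[x]$ with $\mu^{2s} + \sigma = f \rho$. Multiplying by $\mu^m$ and substituting $\delta_1 = \mu^m \rho$ gives $f \delta_1 = \mu^m(\mu^{2s} + \sigma)$; conversely, this identity evaluated at any $x \in \Var(\rho)$ forces $\mu^{2s}(x) + \sigma(x) = 0$, which in turn forces $\mu(x) = 0$ because $\sigma$ is a sum of squares, recovering the containment.
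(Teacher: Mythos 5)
Your factorization step and your handling of the ``moreover'' clause are both sound and essentially match the paper: you write $\delta_1 = \mu^m\rho$ with $m$ maximal (so $\mu \nmid \rho$), conclude from $I(\Var(\mu))=(\mu)$ that $\rho$ cannot vanish identically on $\Var(\mu)$, and then translate the containment $\Var(\rho)\subseteq\Var(\mu)$ into $\mu\in\sqrt[\real]{(\rho)}$ via Stengle's Nullstellensatz and multiply through by $\mu^m$. All of that is fine.

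The genuine gap is in the middle step, and you yourself flag it as ``the main technical obstacle.'' Having assumed ZDP and deduced $\cI(\dbcD_L^\circ)=\cI(V(L))=\sqrt{I_L}=I_L$ (correct chain, using Proposition~\ref{prop:LradL}), you need a row polynomial $p$ that vanishes on $\dbcD_L^\circ$ yet has $p(x_0)v_0\neq0$. Your proposal is to split $L$ after an orthogonal change of basis into a direct sum where one block is strictly positive on all of $\overline{\cD_L^\circ}$, and to take $p$ a constant row vector supported on that block. This works in your diagonal toy model precisely because $L$ is already diagonal there, but you give no reason why a spurious root of $\delta_1$ should force $L$ to admit such a block-diagonal split in general; a monic pencil that is singular and indefinite at $x_0$ with $\mu(x_0)\neq0$ need not decompose this way. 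As presented the argument does not go through.

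The paper's proof of this step takes a different and purely commutative route. It observes that under ZDP the two sides of $\cZ(\dbcD_L^\circ)=V(L)$ must in particular have the same level-one footprints, so $\Var(\delta_1)=\foot[V(L)(1)]$ must coincide with $\Zar(\partial\cD_L(1))$, and the latter equals $\Var(\mu)$ by Lemma~\ref{lem:mindeg}(3). If $\rho$ vanished at some $x_0\notin\Var(\mu)$, then $x_0\in\Var(\delta_1)\setminus\Var(\mu)$, contradicting this equality ``at the footprint level.'' Thus the paper never constructs an explicit separating noncommutative polynomial --- the contradiction lives entirely in the commutative Zariski closures of $\partial\cD_L(1)$ and $\Var(\delta_1)$. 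Your plan targets the right conclusion but by a harder and unfinished route; I would recommend replacing the subpencil split by the footprint comparison against Lemma~\ref{lem:mindeg}(3), which is exactly what the paper uses and sidesteps the construction you were stuck on.
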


\begin{proof} From  
Lemma \ref{lem:mindeg}(2) we get
 $\delta_1= \mu^{m} \rho$
 for some $m$
and $\rho$ which is not zero 
everywhere on $\Var(\mu)$.  
If $\rho $ is 0 at some $X \not \in \Var(\mu)$,
then  $X \not \in \Zar(\cD_L)$ contradicting ZDP
even at the ``footprint level".

We just proved  $\rho=0$ implies $\mu=0$.
The real Nullstellensatz says equivalent to this  is
\beq
\label{eq:nssmu}
\mu^{2s} +\SOS = f \rho
\eeq
for some polynomial $f$. So
$f \delta_1= \mu^{m} f \rho=
 \mu^{m}(\mu^{2s} +\SOS)$.
Conversely, if such $f$ exists, then  $\delta_1= \mu^{m} \rho$ implies
that $\rho $ satisfies \eqref{eq:nssmu}.
\end{proof}

\section{Decomposition of Thin Linear Pencils}
\label{sect:L0}

The main concern of this section is a linear pencil $L$ for which the spectrahedron $\cD_L(1)$ has no 
interior, i.e., a thin linear pencil.
The special case of Theorem \ref{thm:mainNMon} where $I = \{0\}$ and $p = 0$
gives a characterization of the space $\Lrad{L}{\{0\}}$, which in turn
gives a nice algebro-geometric interpretation of spectrahedra $\cD_L$ having no interior points.

\subsection{Characterization of \texorpdfstring{$\Lrad{L}{\{0\}}$}{[sqrt L 0]}}

Recall that a spectrahedron with empty interior lies on an affine hyperplane
\cite{Ba}.
We now give a matricial version of this result.

\begin{prop}
\label{prop:charOfL}
  Let $L \in \RR^{\nu \times \nu}\axs$ be a linear pencil.
The space $\Lrad{L}{\{0\}} \subset \RR^{1 \times \ell}\axs$ is characterized by
\[\Lrad{L}{\{0\}} = \{ p \in \RR^{1 \times \ell}\axs \mid p(X) = 0\
\mbox{whenever}\ L(X)
\succeq 0\}.\]
\end{prop}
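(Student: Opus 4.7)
The plan is to deduce Proposition \ref{prop:charOfL} as a direct special case of Corollary \ref{cor:zeroOnModPos} by taking $I = \{0\}$. The key point is that with $I = \{0\}$, the vanishing set $V(\{0\})$ is all of $\bigcup_{n \in \NN} (\RR^{n \times n})^g \times \RR^{\ell n}$, since the defining condition ``$p(X)v = 0$ for all $p \in \{0\}$'' is vacuous. Thus the right-hand side of Corollary \ref{cor:zeroOnModPos}(1) becomes: $p(X)v = 0$ for every $(X,v)$ with $L(X) \succeq 0$.

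Next I would reconcile the two formulations of ``vanishing'': the proposition asks for $p(X) = 0$ as a matrix in $\RR^{n \times \ell n}$, while Corollary \ref{cor:zeroOnModPos} gives the condition $p(X)v = 0$ for every $v \in \RR^{\ell n}$. These are equivalent, since $p(X) = 0$ means every column (equivalently, every value $p(X)v$ on a basis $v$) is zero. Hence the condition ``$p(X) = 0$ whenever $L(X) \succeq 0$'' is precisely the condition appearing in Corollary \ref{cor:zeroOnModPos}(1) with $I = \{0\}$.

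Finally, that corollary identifies this condition with $p \in \Lrad{L}{\{0\}}$, which is the desired characterization. The zero module is trivially finitely-generated (by no generators), so the finite-generation hypothesis in Corollary \ref{cor:zeroOnModPos} is satisfied. There is no real obstacle here: once one has Proposition \ref{prop:zeroOnModPosIntro} / Corollary \ref{cor:zeroOnModPos} in hand, the present proposition is essentially a one-line specialization. All the substantive work---producing the positive linear functional via Lemma \ref{lem:goodSepFun} and applying the GNS-style construction of Corollary \ref{cor:GNS} to extract a matrix tuple $(X,v)$ in $V(I)$ with $L(X) \succeq 0$ on which a given polynomial is negative---has already been completed in \S\ref{sect:linFun} and \S\ref{sec:mainResults}.
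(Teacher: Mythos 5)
Your proposal is correct and matches the paper's proof, which is likewise a one-line specialization of Corollary \ref{cor:zeroOnModPos} to $I=\{0\}$, using the observation that $V(\{0\})$ is the entire universe of pairs $(X,v)$. The reconciliation of ``$p(X)=0$'' with ``$p(X)v=0$ for all $v$'' that you spell out is implicit in the paper but is exactly the right remark.
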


\begin{proof}
Every pair $(X,v)$ evaluated at $0$ is $0$.
The result follows from Corollary \ref{cor:zeroOnModPos}.
\end{proof}

Interestingly, there is a strong relation between a free spectrahedron $\cD_L$ 
and its scalar counterpart $\cD_L(1)$.

\begin{prop}
 \label{cor:infeas}
Let $L$ be a linear pencil, and let $\{0\}\subset\RR\axs$ be the trivial ideal.
Then the following are equivalent:
\begin{enumerate}[\rm(i)]
 \item\label{it:L0is0} $\Lrad{L}{\{0\}} = \RR\axs$.
 \item\label{it:infeasScal} The linear matrix inequality
$L(x) \succeq 0$ is infeasible over $x \in \RR^g$, i.e., $\cD_L(1)=\varnothing$.
\item\label{it:infeasGen} The linear matrix inequality
$L(X) \succeq 0$ is infeasible over matrix tuples $X \in (\RR^{n \times n})^g$
for each $n \in \NN$, i.e., $\cD_L=\varnothing$.
\end{enumerate}
\end{prop}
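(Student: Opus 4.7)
The plan is to prove the cycle $\text{(iii)} \Rightarrow \text{(ii)} \Rightarrow \text{(iii)}$ by a one-dimensional compression, and then derive $\text{(i)} \Leftrightarrow \text{(iii)}$ as an immediate consequence of Proposition \ref{prop:charOfL}. The only nontrivial content is pushing scalar infeasibility up to matricial infeasibility.

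The implication $\text{(iii)} \Rightarrow \text{(ii)}$ is immediate from $\cD_L(1) \subseteq \cD_L$. For the converse, I would argue by contrapositive. Given any $X = (X_1,\ldots,X_g) \in \cD_L(n)$ and any unit vector $v \in \RR^n$, set $V := \id_\nu \otimes v$ and $x_i := v^* X_i v \in \RR$. Writing $L = A_0 + \sum_i (A_i \otimes x_i + A_i^* \otimes x_i^*)$, a direct calculation using $v^* v = 1$ gives
\[
V^* L(X) V \;=\; A_0 + \sum_{i=1}^g \bigl(A_i\, (v^* X_i v) + A_i^*\, (v^* X_i^* v)\bigr) \;=\; L(x).
\]
Positive semidefiniteness is preserved under compression, so $L(x) \succeq 0$, and hence $x \in \cD_L(1)$. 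Thus $\cD_L \neq \varnothing$ forces $\cD_L(1) \neq \varnothing$.

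For $\text{(i)} \Leftrightarrow \text{(iii)}$ the key observation is that $\RR\axs$ is generated as a left module by $1$, so $\Lrad{L}{\{0\}} = \RR\axs$ is equivalent to $1 \in \Lrad{L}{\{0\}}$. By Proposition \ref{prop:charOfL}, the latter says that $1(X) = 0$ for every $X$ with $L(X) \succeq 0$. But $1(X) = \id_n$ is never zero, which forces the set of such $X$ to be empty, i.e., $\cD_L = \varnothing$. The converse is vacuous: if $\cD_L = \varnothing$, then every $p \in \RR\axs$ trivially satisfies the condition of Proposition \ref{prop:charOfL}, and hence $\Lrad{L}{\{0\}} = \RR\axs$.

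No real obstacle is anticipated: with Proposition \ref{prop:charOfL} in hand, the whole proposition reduces to the standard one-dimensional compression identity for linear pencils together with the fact that $\RR\axs$ is a cyclic left module over itself.
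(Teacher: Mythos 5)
Your proof is correct, but it deliberately sidesteps the route the paper takes. The paper notes that the equivalence of (ii) and (iii) is classical (it cites an earlier reference for it) and instead chooses to demonstrate its own real-radical machinery: it proves (ii) $\Rightarrow$ (i) directly via Lemma \ref{lem:main}, producing a separating positive linear functional, a GNS pair $(X,v)$ with $X$ a $1\times 1$ matrix tuple (i.e., a point of $\RR^g$), and a contradiction; only then does it deduce (iii) from (i) via Proposition \ref{prop:charOfL}. You instead prove (ii) $\Leftrightarrow$ (iii) by the elementary one-dimensional compression $V^* L(X) V = L(x)$ with $V = \Id_\nu \otimes v$ and $x_i = v^*X_i v$ — the ``easy'' argument the paper explicitly declines to use — and then read (i) $\Leftrightarrow$ (iii) off of Proposition \ref{prop:charOfL}, where (iii) $\Rightarrow$ (i) is vacuous. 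Your route is shorter and more self-contained for this particular proposition; the paper's route has the expository purpose of showing Lemma \ref{lem:main} in action and is the ``new proof'' the authors advertise just before the statement. Both routes ultimately lean on Proposition \ref{prop:charOfL}, which rests on the main theorem, so neither is strictly more elementary globally, but locally your (ii) $\Rightarrow$ (iii) step avoids the truncated test module and GNS construction entirely.
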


The equivalence between (ii) and (iii) of this proposition is easy---for example, see \cite[Corollary 4.1.4]{KS11}---but
we present a new proof here using the real radical theory of this paper.

\begin{proof}
That \eqref{it:infeasGen} implies \eqref{it:infeasScal} is clear.

Next, assume \eqref{it:infeasScal} holds.
Lemma \ref{lem:main} implies that if $-1 \not\in M + \Lradd{L}{\RR}{\{0\}}
+ (\Lradd{L}{\RR}{\{0\}})^*$,
where $M$ is a truncated test module for $\{0\}$, $L$ and $\RR$,
then there exists a tuple $X$ of $n \times n$ matrices
such that $L(X) \succeq 0$, where $n = \dim(\RR) - \dim(\{0\}) = 1$.
Since $n=1$, such an $X$ is actually in $\RR^g$, which is a contradiction.
Therefore, $-1 \in M + \Lradd{L}{\RR}{\{0\}}
+ (\Lradd{L}{\RR}{\{0\}})^*$.  Hence $1 + M \in \Lradd{L}{\RR}{\{0\}}
+ (\Lradd{L}{\RR}{\{0\}})^*$, which implies that $1 \in \Lradd{L}{\RR}{\{0\}}$,
which implies \eqref{it:L0is0}.

  Finally, suppose \eqref{it:L0is0} holds. 
  The condition $1(X) = 0$ is infeasible for all matrix tuples $X$.
By Proposition \ref{prop:charOfL},
and since $1 \in \Lrad{L}{\{0\}}$, it must be that $L(X) \succeq 0$ is
infeasible over all matrix tuples, which gives \eqref{it:infeasGen}.
\end{proof}

In $\RR^{1 \times \ell}\axs$, the $L$-real radical of $\{(0, \ldots, 0)\}$ can be derived easily from $\Lrad{L}{\{0\}} \subset \RR\axs$, as the following corollary shows.

\begin{corollary}
\label{cor:dimOfLrad0}
 Let $L \in \RR^{\nu \times \nu}\axs$ be a linear pencil.
Let $\{0\}$ denote the ideal of $ \RR\axs$ and let $\{(0, \ldots,
0)\}$ denote the left $\RR\axs$-module generated by $(0, \ldots, 0) \in \RR^{1 \times
\ell}\axs$.
Then $\Lrad{L}{\{(0, \ldots, 0)\}} = \RR^{1 \times \ell}
\otimes
\Lrad{L}{\{0\}}$.
\end{corollary}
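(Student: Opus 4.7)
The plan is to reduce both sides of the claimed equality to a characterization of polynomials vanishing on the spectrahedron $\cD_L$, and then connect the two via a coordinate-wise decomposition.

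First I would apply Proposition \ref{prop:charOfL} in two different ambient spaces. In $\RR\axs$ (i.e.\ the $\ell=1$ case) it gives
\[
\Lrad{L}{\{0\}} = \{q\in\RR\axs \mid q(X)=0 \text{ whenever } L(X)\succeq 0\}.
\]
In $\RR^{1\times\ell}\axs$, applied to the trivial left module $\{(0,\ldots,0)\}$, it gives
\[
\Lrad{L}{\{(0,\ldots,0)\}} = \{p\in\RR^{1\times\ell}\axs \mid p(X)=0 \text{ whenever } L(X)\succeq 0\}.
\]
(Equivalently, both characterizations follow from Corollary \ref{cor:zeroOnModPos} applied to the trivial module, noting that the variety of the trivial module is the entire space of $(X,v)$.)

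The bridge is a canonical componentwise decomposition. Any $p\in\RR^{1\times\ell}\axs$ has a unique expression $p=\sum_{i=1}^{\ell} e_i\otimes p_i$ with $p_i\in\RR\axs$. For a tuple $X\in(\RR^{n\times n})^g$ the evaluation is $p(X)=\sum_{i=1}^{\ell} e_i\otimes p_i(X)\in\RR^{n\times\ell n}$, which is the block row $[\,p_1(X)\;p_2(X)\;\cdots\;p_\ell(X)\,]$. Consequently $p(X)=0$ if and only if $p_i(X)=0$ for every $i$.

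Assembling these facts yields the equivalences: $p\in\Lrad{L}{\{(0,\ldots,0)\}}$ iff $p(X)=0$ for all $X$ with $L(X)\succeq 0$, iff every $p_i(X)=0$ on that set, iff every $p_i\in\Lrad{L}{\{0\}}$, iff $p\in\RR^{1\times\ell}\otimes\Lrad{L}{\{0\}}$. There is no real obstacle, since the corollary is essentially a componentwise repackaging of Proposition \ref{prop:charOfL}; the only thing worth stating carefully is that evaluation of a row-valued noncommutative polynomial produces the block row of its coordinate evaluations, which is immediate from the definition of $p(X)$ for matrix polynomials.
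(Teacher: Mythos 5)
Your proposal is correct and follows essentially the same route as the paper: both invoke Proposition \ref{prop:charOfL} to identify each real radical with the set of row polynomials vanishing on $\cD_L$, and both conclude via the coordinate decomposition $p=\sum_i e_i\otimes p_i$ together with the observation that $p(X)=0$ if and only if every $p_i(X)=0$. The only cosmetic difference is that you additionally flag Corollary \ref{cor:zeroOnModPos} as an alternate source, but the argument is the same.
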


\begin{proof}
By Proposition \ref{prop:charOfL},
we have
\[ \Lrad{L}{\{(0, \ldots, 0)\}} = \{p \in \RR^{1 \times \ell} \mid p(X) = 0\
\mbox{whenever}\ L(X) \succeq 0\}.\]
If $p = \sum_{i=1}^{\ell} e_i \otimes p_i \in \RR^{1 \times \ell}\axs$,
then $p(X) = 0$ means that each $p_i(X) = 0$.  Therefore $p \in \Lrad{L}{\{(0,
\ldots, 0)\}}$ if and only if each $p_i \in \Lrad{L}{\{0\}}$.
\end{proof}

\begin{definition}
A set $S \subset \RR^{\nu \times\nu}\axs$ is said to be \textbf{$\ast$-closed}
\index{star-closed set@$\ast$-closed set}
if $S^* = S$. A {\bf $\ast$-ideal}\index{$\ast$-ideal} is a two-sided ideal $I \subset \RR\axs$
which is $\ast$-closed, that is, $I = I^*$.
If $U \subset \RR\axs$, then the $\ast$-ideal generated by $U$
is
the two-sided ideal in $\RR\axs$ generated by $U + U^*$.
\end{definition}

\begin{corollary}
\label{cor:LradIsAClTwo}
 Let $L \in \RR^{\nu \times \nu}\axs$ be a linear pencil.
Then $\Lrad{L}{\{0\}} \subset \RR\axs$ is a  $\ast$-closed
real
ideal.
\end{corollary}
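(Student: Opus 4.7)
My plan is to deduce all four properties (left ideal, right ideal, $\ast$-closed, real) from the concrete characterization of $\Lrad{L}{\{0\}}$ furnished by Proposition~\ref{prop:charOfL}, namely
\[
\Lrad{L}{\{0\}} = \{ p \in \RR\axs \mid p(X) = 0 \text{ whenever } L(X) \succeq 0\}.
\]
With this description in hand, each required property becomes a one-line verification.

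First, $\Lrad{L}{\{0\}}$ is automatically a left module (equivalently, a left ideal in $\RR\axs$, since all three standpoints coincide when $\ell = 1$) by its very definition as an intersection of $L$-real left modules. For the right ideal property, I would observe that if $p(X) = 0$ for every $X \in \cD_L$ and $r \in \RR\axs$ is arbitrary, then $(pr)(X) = p(X)\,r(X) = 0$ for every such $X$, so $pr \in \Lrad{L}{\{0\}}$. For $\ast$-closedness, I would use $p^*(X) = p(X)^*$: if $p(X) = 0$ on $\cD_L$, then $p^*(X) = 0$ on $\cD_L$ as well, hence $p^* \in \Lrad{L}{\{0\}}$. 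Combined with the left ideal property, this gives a two-sided $\ast$-ideal.

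Finally, realness follows simply by specializing the $L$-real condition to $L = 1$: the defining relation $\sum_i^{\finite} p_i^* p_i + \sum_j^{\finite} q_j^* L q_j \in \RR^{1\times 1}I + I^* \RR^{1 \times 1}$ collapses, upon taking $q_j = 0$, to the real radical condition, so every $L$-real left module (in particular $\Lrad{L}{\{0\}}$) is real. There is essentially no obstacle here; the entire content of the corollary is packaged in Proposition~\ref{prop:charOfL}, and the proof amounts to unwinding what "vanishes on $\cD_L$" means under the algebraic operations of $\RR\axs$.
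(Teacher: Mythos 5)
Your proposal is correct and is essentially the paper's own argument: both rest on the characterization of $\Lrad{L}{\{0\}}$ from Proposition~\ref{prop:charOfL}, deduce the two-sided ideal property by evaluating a product $pq$ or $qp$ at $X\in\cD_L$, and observe that every $L$-real module is real by suppressing the $q_j^*Lq_j$ terms. The only cosmetic difference is in the $\ast$-closedness step, where you invoke $p^*(X)=p(X)^*$ directly, whereas the paper notes that $pp^*\in\Lrad{L}{\{0\}}$ (using the ideal property) and then applies realness to conclude $p^*\in\Lrad{L}{\{0\}}$; both routes are equally valid and of comparable length.
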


\begin{proof}
 Let $p \in \Lrad{L}{\{0\}}$.  If $q \in \RR\axs$, then
\[p(X)q(X) = 0q(X) = q(X)0 =
q(X)p(X) = 0.\]
Therefore $pq,qp \in
\Lrad{L}{\{0\}}$.
Next, by definition, $\Lrad{L}{\{0\}}$ is real.  Further, if $p \in \Lrad{L}{\{0\}}$,
then $pp^* \in
\Lrad{L}{\{0\}}$, which implies, since $\Lrad{L}{\{0\}}$ is real, that $p^* \in
\Lrad{L}{\{0\}}$.
\end{proof}

\begin{prop}
\label{prop:L0genByLinear}
 If $L \in \RR^{\nu \times \nu}\axs$ is a linear pencil, then
$\Lrad{L}{\{0\}}$ is the $\ast$-ideal in $\RR\axs$ generated
by  $(\Lradd{L}{\RR}{\{0\}})_1$.
Further, if $L(0) \succeq 0$, then $(\Lradd{L}{\RR}{\{0\}})_1$ is
spanned by a set of homogeneous linear forms.
\end{prop}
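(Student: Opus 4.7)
Write $J$ for the $\ast$-ideal in $\R\axs$ generated by $(\Lradd{L}{\R}{\{0\}})_1$. I plan to establish the two inclusions $J\subset\Lrad{L}{\{0\}}$ and $\Lrad{L}{\{0\}}\subset J$ separately, and then dispatch the supplementary claim about homogeneity.

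For the easy direction $J\subset\Lrad{L}{\{0\}}$, I would trace through the iterative construction of $\Lradd{L}{\R}{\{0\}}$ from the proof of Proposition \ref{prop:genOfQDRad}: starting from $I^{(0)}=\{0\}$ one enlarges by appending constants $p_i\in\R$ and entries of $Lq_j$ (for $q_j\in\R^{\nu\times 1}$) arising from a relation $\sum p_i^{2}+\sum q_j^{\ast}Lq_j\in I^{(a)}+(I^{(a)})^{\ast}$. Inductively assuming $I^{(a)}\subset\Lrad{L}{\{0\}}$, for any $X\in\cD_L$ both sides of this relation vanish and the positive semidefinite structure of the left side forces $p_i(X)=0$ and $L(X)q_j(X)=0$; so the freshly added generators lie in $\Lrad{L}{\{0\}}$. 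This proves $(\Lradd{L}{\R}{\{0\}})_1\subset\Lrad{L}{\{0\}}$, and since $\Lrad{L}{\{0\}}$ is a $\ast$-closed two-sided ideal by Corollary \ref{cor:LradIsAClTwo}, the $\ast$-ideal $J$ it generates lies inside it.

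For the reverse $\Lrad{L}{\{0\}}\subset J$, the plan is to verify that $J$ is itself $L$-real; since $J\supset\{0\}$, the minimality of the $L$-real radical among $L$-real left ideals containing $\{0\}$ will then give the inclusion. Thus assume $\sigma=\sum p_i^{\ast}p_i+\sum q_j^{\ast}Lq_j\in J+J^{\ast}=J$. Evaluation at any $X\in\cD_L$ yields $\sigma(X)=0$ (because $J\subset\Lrad{L}{\{0\}}$), whence each $p_i(X)=0$ and $L(X)q_j(X)=0$, placing the $p_i$ and the entries of $Lq_j$ into $\Lrad{L}{\{0\}}$. The main obstacle, which I expect to be the technical heart of the proof, is to upgrade ``in $\Lrad{L}{\{0\}}$'' to ``in $J$''; my strategy is strong induction on degree. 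For an arbitrary $p\in\Lrad{L}{\{0\}}$ one applies the Convex Positivstellensatz (Corollary \ref{cor:radZero}) to $-p^{\ast}p\succeq 0$ on $\cD_L$, obtaining a decomposition whose ``ideal piece'' is supported on elements $\iota_k\in\Lradd{L}{\rC}{\{0\}}\cap\R\axs_1\rC$ whose complexity is controlled by the refined degree bounds of Corollary \ref{cor:mainWDegBds}; inductively these $\iota_k$ already lie in $J$, and the $(L,\rC)$-real radical reasoning then places $p$ itself into $J$. The infeasible case $\cD_L=\varnothing$ is handled separately: by Proposition \ref{cor:infeas}, $\Lrad{L}{\{0\}}=\R\axs$, and by Corollary \ref{cor:lradd}, $1\in\Lrad{L}{\{0\}}\cap\R=\Lradd{L}{\R}{\{0\}}\cap\R\subset(\Lradd{L}{\R}{\{0\}})_1$, so $1\in J$ and hence $J=\R\axs$.

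Finally, for the supplementary claim: if $L(0)\succeq 0$ then $0\in\cD_L$, so every $p\in\Lrad{L}{\{0\}}$ satisfies $p(0)=0$, i.e., has zero constant term. Applied to elements of $(\Lradd{L}{\R}{\{0\}})_1\subset\Lrad{L}{\{0\}}$---polynomials of degree at most one with zero constant term---this forces each to be a homogeneous linear form.
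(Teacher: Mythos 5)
The first inclusion $J\subset\Lrad{L}{\{0\}}$ and the supplementary claim about homogeneity are handled correctly. The trouble is in the reverse inclusion. Your plan to show $J$ is $L$-real runs into a circularity you yourself flag: from $\sum p_i^{\ast}p_i+\sum q_j^{\ast}Lq_j\in J+J^{\ast}$ and evaluation on $\cD_L$ you obtain $p_i,\,e_kLq_j\in\Lrad{L}{\{0\}}$, but what $L$-reality of $J$ requires is that they lie in $J$, and to bridge ``in $\Lrad{L}{\{0\}}$'' to ``in $J$'' by induction you would already need to know $\Lrad{L}{\{0\}}\subset J$. Your proposed induction does not close: applying Corollary \ref{cor:radZero} (with the bounds of Corollary \ref{cor:mainWDegBds}) to $-p^{\ast}p$ for $p\in\Lrad{L}{\{0\}}$ of degree $\delta_p$ produces witnesses $\iota_k$ of degree up to $\lceil\frac{2\delta_p+1}{2}\rceil=\delta_p+1$, which is not smaller than $\deg p$, so ``inductively these $\iota_k$ already lie in $J$'' has no inductive anchor. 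The heavy Positivstellensatz machinery simply regurgitates membership in $\Lrad{L}{\{0\}}$ rather than descending into $J$.

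The missing idea is elementary and exploits precisely that $J$ is generated by \emph{linear} forms. After reducing to the feasible case and normalizing so that $L(0)\succeq 0$, one fixes an order on the letters, replaces each $p_i,q_j$ by its remainder on division by the linear forms generating $J$ (so their monomials contain no leading letter of $J$), and writes $L=\tilde L+L_I$ with $L_I$ supported on $J$, reducing the hypothesis to $\sum p_i^{\ast}p_i+\sum q_j^{\ast}\tilde L q_j=0$ exactly. If some $p_i$ or $\tilde L q_j$ were nonzero, take the minimal degree $\delta$ at which a $p_i$ or $q_j$ has a nonvanishing piece; for each monomial $m$ of length $\delta$, reading off the coefficient of $m^{\ast}m$ gives $\sum A_{m,p_i}^{\ast}A_{m,p_i}+\sum B_{m,q_j}^{\ast}L(0)B_{m,q_j}=0$, and positive semidefiniteness of $L(0)$ forces $A_{m,p_i}=0$ and $L(0)B_{m,q_j}=0$; a further look at the degree-$(2\delta+1)$ coefficients then forces $\tilde L B_{m,q_j}=0$, contradicting minimality. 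This purely algebraic lowest-degree argument, not the Positivstellensatz, is what makes $J$ $L$-real, and it is exactly where the linearity of the generators (i.e., the structure you are trying to prove) enters. Without some substitute for this step your proof does not go through.
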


\begin{proof}
First, if $\Lrad{L}{\{0\}} = \RR\axs$, then Corollary \ref{cor:lradd} implies
that $1 \in \Lradd{L}{\RR}{\{0\}}$ so that $(\Lradd{L}{\RR}{\{0\}})_1 = \RR\axs_1$, 
which gives the result.  Therefore, by Corollary \ref{cor:infeas},
the only case that remains is the case where $L(x) \succeq 0$ is feasible
over $\RR^g$.
Without loss of generality, we can apply an affine change of variables to $x$ so
that $L(0) \succeq 0$.  Further, if $\iota \in \Lrad{L}{\{0\}}$, then Proposition 
\ref{prop:charOfL} implies that $\iota(0) =0$.  In particular, this implies that
$(\Lradd{L}{\RR}{\{0\}})_1$ is spanned by linear forms.

Next,  let $I \subset \Lrad{L}{\{0\}}$ be the 
$\ast$-ideal generated by $(\Lradd{L}{\RR}{\{0\}})_1$.
Suppose
\begin{equation}
\label{eq:1testI}
  \sum_i^{\finite} p_i^*p_i + \sum_j^{\finite} q_j^*Lq_j \in I.
\end{equation}
Let $\prec$ be a total
order on the letters $x_1, \ldots, x_g, x_1^*, \ldots, x_g^*$.
Since $I$ is generated by linear forms,
it is straightforward to 
reduce each $p_i$ and $q_j$ in \eqref{eq:1testI} to have monomials with no
letters which are the leading letter of an element of $I$.
Further, we can express $L$ as $L = \tilde{L} + L_I$, where $L_I \in
\RR^{\ell
\times \ell} \otimes I$ and $\tilde{L}$ has terms which contain no letters which are the leading
letter of an element of $I_1$.  Under
this condition
\begin{equation}
\label{eq:2test0}
 \sum_i^{\finite} p_i^*p_i + \sum_j^{\finite} q_j^*\tilde{L}q_j = 0.
\end{equation}
If at least some of the $p_i$ or $Lq_j$ in \eqref{eq:2test0} are nonzero, let
$\delta$ be the smallest degree such that at least some of the degree $\delta$ terms  $\tilde{p}_i$ of $p_i$
or $\tilde{q}_j$ of $q_j$ satisfy $\tilde{p} \neq 0$ or $\tilde{L}\tilde{q}_j \neq 0$.  For each $m \in \axs_{\delta}$, let $A_{m,p_i} \in \RR$ be
the coefficient of $p_i$ in $m$, and let $B_{m,q_j} \in \RR^{\nu \times 1}$
be the coefficient of $q_j$ in $m$.  Then the coefficient of $m^*m$ in
\eqref{eq:2test0} is
\begin{equation}
\label{eq:maximalI}
\sum_i^{\finite} A_{m,p_i}^*A_{m,p_i} + \sum_j^{\finite}
B_{m,q_j}^*L(0)B_{m,q_j} = 0.
\end{equation}
Since $L(0) \succeq
0$, each $A_{m,p_i} = 0$ and each $L(0)B_{m,q_j} = 0$.
Further, the terms of \eqref{eq:2test0} in $m^*\RR\axs_1^{\hom}m$ are
\[ m^*\left(\sum_j^{\finite}
B_{m,q_j}^*(\tilde{L}-L(0))B_{m,q_j}\right)m = 0.\]
Hence
\begin{equation*}
\sum_j^{\finite}B_{m,q_j}^*L B_{m,q_j} =
\sum_j^{\finite}B_{m,q_j}^*L_I B_{m,q_j} \in \RR^{\ell \times
1}\Lradd{L}{\RR}{\{0\}} +
\left(\Lradd{L}{\RR}{\{0\}}\right)^*\RR^{1 \times \ell}.
\end{equation*}
This implies that $L B_{m,q_j} \in \RR^{\nu
\times 1} (\Lradd{L}{\RR}{\{0\}})_1$ for each $j$.
Therefore, if \eqref{eq:2test0} holds, then each $LB_{m,q_j} \in I$, which
implies that $\tilde{L}B_{m,q_j} = 0$.  
Since $m$ was arbitrary, this yields $\tilde{p}_i = 0$ and
$\tilde{L}\tilde{q}_j = 0$, which is a contradiction.
 Hence each $p_i = 0$ and each $\tilde{L}q_j = 0$.
Therefore, \eqref{eq:1testI} holds if and only if
each
$p_i \in I$ and each $Lq_j \in \RR^{\nu \times 1}I$, which implies that $I$ is
$L$-real.  Since $I \subset \Lrad{L}{\{0\}}$, we have $I =
\Lrad{L}{\{0\}}$.
\end{proof}

\subsection{Decomposition of Linear Pencils}

In this subsection we express a thin spectrahedron as the intersection
of a thick spectrahedron with an affine subspace.

\begin{prop}
\label{prop:decompOfL}
Let $L \in \RR^{\nu \times \nu}\axs$ be a linear pencil such that the linear
matrix inequality $L(X) \succeq 0$ is feasible.
Decompose $\RR\axs_1$ as $(\Lrad{L}{\{0\}})_1 \oplus T$ for some space $T$.
Let $L_T$ be the projection
of $L$ onto $\RR^{\nu \times \nu} \otimes T$, so that $L - L_T \in \RR^{\nu 
\times \nu} \otimes (\Lrad{L}{\{0\}})_1$.
Let \[\cN = \{n \in \RR^{\nu} \mid L_Tn = 0\}\] and suppose $\dim(\cN) < \nu$. Let
$\tilde{L} = C^* L_T C$, where the columns of $C$ form a basis for $\cN^{\bot}$.
\begin{enumerate}[\rm(1)]
 \item \label{item:LLtildeIota}Given a tuple of matrices $X$, $L(X) \succeq
0$ if and only if
$\tilde{L}(X) \succeq 0$ and $\iota(X) = 0$ for each $\iota \in
\Lrad{L}{\{0\}}$.
\item \label{item:TildeLPD}There exists $x \in \RR^g$ such that
$\tilde{L}(x) \succ 0$.
\end{enumerate}
\end{prop}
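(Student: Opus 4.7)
For part (1), my plan is to align the block structure of $L_T$ with the orthogonal decomposition $\RR^\nu = \cN \oplus \cN^\perp$. Since $(\Lrad{L}{\{0\}})_1$ is $*$-closed by Corollary \ref{cor:LradIsAClTwo}, I may (and will) choose the complement $T$ to be $*$-closed, so that $L_T$ is a symmetric matrix polynomial. Letting $D$ have columns forming a basis of $\cN$, the definition of $\cN$ yields $L_T D = 0$, and symmetry of $L_T$ then forces $D^* L_T = 0$; hence in the basis $[D\;C]$, $L_T$ is block-diagonal with upper-left block $0$ and lower-right block $\tilde L = C^* L_T C$. Consequently $L_T(X) \succeq 0$ iff $\tilde L(X) \succeq 0$ for every matrix tuple $X$. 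Combined with Proposition \ref{prop:charOfL}---which forces $\iota(X) = 0$ for every $\iota \in \Lrad{L}{\{0\}}$ whenever $L(X) \succeq 0$, and hence also $(L - L_T)(X) = 0$ since the entries of $L - L_T$ lie in $(\Lrad{L}{\{0\}})_1$---this gives both directions in (1).

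For part (2), Corollary \ref{cor:infeas} produces a scalar $x_0 \in \RR^g$ with $L(x_0) \succeq 0$, so by (1) $\tilde L(x_0) \succeq 0$ and the scalar spectrahedron $\tilde S := \{y \in \RR^g : \tilde L(y) \succeq 0\}$ is nonempty. I argue by contradiction: suppose no $y \in \RR^g$ yields $\tilde L(y) \succ 0$, so $\tilde S$ has empty interior in $\RR^g$. The classical theory of scalar spectrahedra (\cite{Ba}, or the iterative peeling in \cite{KS}) then furnishes a nonzero $v \in \RR^{\nu'}$ with $\tilde L(y) v = 0$ for every $y \in \tilde S$. The structural fact I will exploit is that every entry of $\tilde L = C^* L_T C$ lies in $T$, so each component of the vector polynomial $\tilde L v$ also lies in $T$. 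My plan is to show that each component of $\tilde L v$ belongs to $\Lrad{L}{\{0\}}$; each such component then lies in $(\Lrad{L}{\{0\}})_1 \cap T = \{0\}$, forcing $\tilde L v \equiv 0$ as a matrix polynomial. Since by construction $\tilde L$ has no nonzero matrix-polynomial kernel (if $\tilde L w \equiv 0$, then $L_T(X) Cw$ belongs simultaneously to $\cN$ via $C^* L_T Cw = 0$ and to $\cN^\perp$ via symmetry of $L_T$ together with $Cw \in \cN^\perp$, hence equals $0$, forcing $Cw \in \cN \cap \cN^\perp = \{0\}$ and $w = 0$), we obtain $v = 0$, the desired contradiction.

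The main obstacle is establishing the entry-wise containment of $\tilde L v$ in $\Lrad{L}{\{0\}}$. I would reduce this to showing that the symmetric linear form $f := v^* \tilde L v \in T$ lies in $\Lrad{L}{\{0\}}$: once this is in hand, $\tilde L(X) \succeq 0$ on $\cD_L$ together with $f(X) = (v \otimes I_n)^* \tilde L(X) (v \otimes I_n) = 0$ on $\cD_L$ force $\tilde L(X)(v \otimes I_n) = 0$ on $\cD_L$, which is exactly the desired NC vanishing of $\tilde L v$. To prove $f \in \Lrad{L}{\{0\}}$ I plan to combine the scalar vanishing $f(x) = 0$ on $\cD_L(1)$ with the matrix-level nonnegativity $f(X) \succeq 0$ on $\cD_L$, running the iterative spectrahedral reduction of \cite{KS} on $f$ and lifting each resulting scalar linear form to its NC counterpart in $T$ via Proposition \ref{prop:charOfL}.
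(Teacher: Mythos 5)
Your proof of part (1) follows the paper's argument, with a welcome explicit remark that $T$ can be taken $*$-closed (using Corollary \ref{cor:LradIsAClTwo}), so that $L_T$ is itself symmetric; the paper uses this implicitly but never says it.

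Part (2) is where you diverge substantially, and where there is a genuine gap. The paper shows directly that $\{0\}$ is strongly $(\tilde L,\RR)$-real and then invokes Lemma \ref{lem:main}, which does all the scalar-versus-matrix work. You instead argue by contradiction, producing a nonzero $v\in\RR^{\nu'}$ in the common kernel of $\tilde L$ over the scalar spectrahedron, and then try to promote the scalar vanishing of $f:=v^*\tilde L v$ to the membership $f\in\Lrad{L}{\{0\}}$. This last promotion is exactly the scalar-to-free transition that costs something. Your stated plan---``running the iterative spectrahedral reduction of \cite{KS} on $f$ and lifting each resulting scalar linear form to its NC counterpart via Proposition \ref{prop:charOfL}''---does not close it: Proposition \ref{prop:charOfL} characterizes $\Lrad{L}{\{0\}}$ by vanishing on \emph{matrix} tuples, so it cannot serve as a lifting device, and the identification of the KS ideal with the commutative collapse of $\Lrad{L}{\{0\}}$ is itself a consequence of Theorem \ref{thm:geom}, which in turn relies on the very proposition being proved. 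As written, the argument is circular.

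The good news is that your route can be repaired non-circularly using a result the paper already has in hand before this point. Since $f=v^*\tilde L v$ has degree $\leq 1$ and $f(y)=0$ for all $y\in\cD_L(1)$ (scalar), apply Corollary \ref{cor:lmidomgen} with $\rC=\RR$: feasibility gives $\Lradd{L}{\RR}{\{0\}}\cap\RR=\{0\}$, so $n=1$, and $(-f)\big|_{\cD_L(1)}\succeq 0$ lifts to $(-f)\big|_{\cD_L}\succeq 0$. Combined with $f(X)=v^*\tilde L(X)v\succeq 0$ on $\cD_L$ (from part (1)), this gives $f(X)=0$ on all of $\cD_L$, whence $f\in\Lrad{L}{\{0\}}$ by Proposition \ref{prop:charOfL}, and your argument proceeds to force $\tilde L v\equiv 0$ and $v=0$. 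Both this repair and the paper's Lemma-\ref{lem:main} argument are powered by the same underlying machinery (separating linear functionals plus GNS); the paper's route is more direct, while yours, once fixed, makes explicit the geometric picture of facial reduction. One small further caution: your step from ``no $y$ with $\tilde L(y)\succ 0$'' to the existence of the kernel vector $v$ should be justified by a maximum-rank/facial argument rather than by appealing to ``$\tilde S$ has empty interior''---the two are equivalent only because you have already killed the polynomial kernel of $\tilde L$, so the ordering of claims matters.
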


\begin{proof}
\eqref{item:LLtildeIota}
Let $X \in (\RR^{n \times n})^g$ for some $n \in \NN$.

First, suppose $L(X) \succeq 0$. Proposition \ref{prop:charOfL} implies that
$\iota(X) = 0$ for each
$\iota \in \Lrad{L}{\{0\}}$.
Since $L-L_T \in \RR^{\nu \times \nu} \otimes \Lrad{L}{\{0\}}$, this implies
that
\[ C^*L(X)C = C^*\big([L-L_T](X) + L_T(X)\big)C = \tilde{L}(X) \succeq 0.\]

Conversely, suppose $\tilde{L}(X) \succeq 0$ and $\iota(X) = 0$ for each
$\iota \in \Lrad{L}{\{0\}}$.  We have that $L - L_T \in \RR^{\nu \times \nu}
\otimes \Lrad{L}{\{0\}}$, hence $L(X) = L_T(X)$.  Each $v \in
\RR^{\nu}$ can be expressed as $n_v + C c_v$, where $n_v \in \cN$ and $C c_v \in
\cN^{\bot}$.  Therefore,
\[ v^*L(X)v = (C c_v)^*L_T(X) C c_v =  c_v^*\tilde{L}(X)
c_v \geq 0.\]
Hence $L(X) \succeq 0$.

\eqref{item:TildeLPD}
Let $\nu' = \dim(\cN^{\bot})$.
Suppose
\[\sum_i^{\finite} p_i^*p_i + \sum_j^{\finite} q_j^*\tilde{L}q_j = 0\]
for some $p_i \in \RR$ and $q_j \in \RR^{\nu' \times 1}$.
Then
\[ \sum_i^{\finite} p_i^*p_i + \sum_j^{\finite} q_j^*C^* L C q_j \in \RR^{\nu
\times 1} \Lrad{L}{\{0\}} + (\Lrad{L}{\{0\}})^*\RR^{1 \times \nu}. \]
Therefore $L Cq_j \in \RR^{\nu \times 1}\Lrad{L}{\{0\}}$ for each $j$.
This implies that 
each $\tilde{L} q_j \in  \RR^{\nu' \times
1}\Lrad{L}{\{0\}}$.  Since $\tilde{L} \in \RR^{\nu' \times
\nu'} \otimes T$, and $T \cap \Lrad{L}{\{0\}} = \{0\}$, we have
$\tilde{L}q_j =
0$ for all $j$.
By construction, however, this implies that each $q_j = 0$, which also implies
that each $p_i = 0$.  So $\{0\}$ is strongly $(\tilde{L},
\RR)$-real.

Let $M$ be a truncated test module for $\{0\}$, $\tilde{L}$, and $\RR$.
We see that $-1 \not\in M$ since otherwise $1 + m = 0$ for some $m
\in M$, which would imply that $1 \in \{0\} =
\LraddS{\tilde{L}}{\RR}{\{0\}}$.
By Lemma \ref{lem:main}, there
exists $(X,v) \in V(\{0\})^{n}$ such that $\tilde{L}(X) \succ 0$ and $-v^*1v
<
0$, where $n = \dim(\RR\axs_0) - \dim(\{0\}) = 1$.  Therefore $X \in \RR^g$ and
$\tilde{L}(X) \succ 0$.
\end{proof}

\subsection{Geometric Interpretation of $\Lrad{L}{\{0\}}$}
\label{sub:geomInt}
Given a linear pencil $L \in \RR^{\nu \times \nu}[x]$ of the form
\[
L = A_0 + A_1 x_1 + \cdots + A_g x_g
\]
we can easily construct a linear pencil $L_0 \in \RR^{\nu \times \nu}\axs$
such that $L_0(x) = L(x)$ for each $x \in \RR^g$, namely
\[
L = A_0 + \frac{1}{2} A_1 x_1 + \cdots + \frac{1}{2} A_g x_g + \frac{1}{2} A_1 x_1^* + \cdots + \frac{1}{2} A_g x_g^*.
\]
Using this, we now can prove Theorem \ref{thm:geom}.

\begin{proof}[Proof of Theorem {\rm\ref{thm:geom}}]
Let $L_0$ be such that $L_0(x) = L(x)$ for each $x \in \RR^g$.
Let $\tilde{L}$ be the pencil obtained from $L_0$ using
Proposition \ref{prop:decompOfL}  
Then Proposition \ref{prop:decompOfL} \eqref{item:LLtildeIota} implies that
\[
\{x \in \RR^g \mid L(x) \succeq 0\} = \{x \in \RR^g \mid \tilde{L}(x) \succeq 0 \mbox{ and } \iota(x) = 0 \mbox{ for each } \iota \in I\},
\]
since $x \in \RR^g$ can be viewed as a tuple of $1 \times 1$ matrix variables.
Further, Proposition \ref{prop:decompOfL} \eqref{item:TildeLPD} implies that the spectrahedron $\cD_{\tilde L}(1)$ has nonempty interior.
\end{proof}

Here is the geometric interpretation of Theorem \ref{thm:geom}:  if $L$ is a linear pencil which defines a spectrahedron with empty interior, then either the spectrahedron is empty or it can be viewed as a spectrahedron inside a proper affine linear subspace of $\RR^g$, with the new spectrahedron having nonempty interior. The affine linear subspace is defined by
\[
\{x \in \RR^g \mid \iota(x) = 0 \mbox{ for each } \iota \in \Lrad{L}{\{0\}}\}
\]
and the new spectrahedron is defined by $\tilde{L}(x) \succeq 0$.
Hence 
 the commutative collapse of $\Lrad{L}{\{0\}}$ defines the affine subspace 
containing the thin spectrahedron $\cD_L(1)$ found in \cite{KS}.

\section{Computation of Real Radicals of Left Modules}
\label{sect:CompRad}

Given a linear pencil $L \in
\RR^{\nu \times \nu}\axs$, a left module $I \subset \RR^{1 \times \ell}\axs$, 
and a
right chip space $\rC \subset \RR^{1 \times
\ell}\axs$, 
this section describes algorithms for computing the real radicals
$\rr{I}$, $\Lrad{L}{\{0\}}$ and $\Lradd{L}{\rC}{I}$.
Computing these radicals helps one verify
whether or not 
a polynomial $p \in \RR^{\ell
\times \ell}\axs$ is positive where $L$ is positive and each $\iota\in I$ vanishes, i.e., whether
\eqref{eq:clasPosIdeal} holds for  $p$, which we describe in detail in \S\ref{sub:verp}.

\subsection{Left Gr\"obner Bases}
\label{sect:LGB}

Left monomial orders on $\axs$ are used to compute left Gr\"obner bases
for left ideals $I \subset \RR\axs$.
There is a general theory of one-sided Gr\"obner bases for one-sided modules
with coherent bases over algebras with ordered multiplicative basis
\cite{Gre00}.
In \cite{N} there is a version of this theory specific to our case.
 Left Gr\"obner bases
are easily
computable and are used to algorithmically determine membership in a left
module.  In this section we recall the highlights of the left Gr\"obner basis
theory found in \cite{N}.

Given a total order $\prec$ on $\R^{1 \times \ell}\axs$, we say the {\bf leading
monomial} of a polynomial $p$ is the highest monomial, according to $\prec$,
appearing in $p$.  We denote this leading monomial as $\lead{p}$.  Given a
subset $S \subset \RR^{1 \times \ell}\axs$, let $\lead{S}$ denote the set of
leading monomials of elements of $S$.

A {\bf left admissible order} $\prec$ on the monomials in $\RR^{1 \times 
\ell} \axs$ is a well-order such
that $a \prec b$ for some monomials $a, b \in \RR^{1 \times \ell}\axs$ 
implies that for each $c \in \axs$ we
have $ca \prec cb$.
Given a left module $I \subset \RR^{1 \times
\ell}\axs$, a subset $\cG \subset I$
is a {\bf left Gr\"obner basis of $I$ with respect to $\prec$}\index{left
Grobner basis} if the left module generated by $\lead{\cG}$ equals the left
module generated by $\lead{I}$.
We say a polynomial $p$ is {\bf monic} if the coefficient of $\lead{p}$ in $p$
is $1$.
We say a left Gr\"obner basis $\cG$ is {\bf reduced} if the following hold:
\begin{enumerate}[\rm(1)]
\item Every element of $\cG$ is monic.
 \item If $\iota_1, \iota_2 \in \cG$, then $\lead{\iota_1}$ does not divide any
of the terms of $\iota_2$ on the right.
\end{enumerate}

\begin{prop}
 Let $I \subset \RR^{1 \times \ell}\axs$ be a left module and let $\prec$ be a
left admissible order. Then
 \begin{enumerate}[\rm(1)]
  \item There is a left Gr\"obner basis for $I$ with respect to $\prec$.
  \item There is a unique reduced left Gr\"obner basis for $I$ with respect to
$\prec$.
  \item If $\cG$ is a left Gr\"obner basis for $I$ with respect to $\prec$,
then $\cG$ generates $I$ as a left module.
 \item $\RR^{1 \times \ell} = I \oplus \operatorname{Span} \left( \nonlead{I}
\right)$.
 \end{enumerate}
\end{prop}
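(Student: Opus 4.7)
The plan is to establish (4) first, since (1)--(3) are then standard consequences. Throughout, the key observation is that in the left module $\RR^{1 \times \ell}\axs$, the leading monomial of a left multiple $u \cdot q$ of $q$ is $u \cdot \lead{q}$ (by left admissibility of $\prec$), and $u \cdot (e_i \otimes v) = e_i \otimes uv$. Hence $\lead{u \cdot q}$ equals $\lead{q}$ multiplied by $u$ on the left of the word component, i.e., $\lead{q}$ divides $\lead{u \cdot q}$ on the right in the sense of \S\ref{sub:rightChip}. Thus the left submodule of $\RR^{1 \times \ell}\axs$ generated by a set $S$ of monomials is exactly $\RR\axs_{} \cdot S$, the span of monomials having some element of $S$ as a right divisor.

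For (4), the intersection $I \cap \spn(\nonlead{I}) = \{0\}$ is immediate: any nonzero $p \in I$ has $\lead{p} \in \lead{I}$, hence $p \notin \spn(\nonlead{I})$. For $I + \spn(\nonlead{I}) = \RR^{1 \times \ell}\axs$, I would use a left reduction. Given $p \neq 0$, if $\lead{p}$ lies in the left submodule generated by $\lead{I}$, then $\lead{p} = u \cdot \lead{q}$ for some $q \in I$ and some $u \in \axs$; subtracting a scalar multiple of $u \cdot q$ from $p$ strictly decreases the leading monomial of $p$ with respect to $\prec$. Since $\prec$ is a well-order, iterating this process (formally, by Noetherian induction on $\lead{p}$) terminates in a remainder $s$ with every monomial outside $\lead{I}$, and $p - s \in I$.

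For (1), take $\cG = I$ itself, which is trivially a left Gröbner basis. For (3), given a left Gröbner basis $\cG$ and $p \in I$, the reduction algorithm from (4) can be executed using only elements of $\cG$, because whenever $\lead{p}$ is divisible on the right by some element of $\lead{I}$, it is also divisible on the right by some element of $\lead{\cG}$ (as these generate the same left submodule of monomials). This yields $p = i + s$ where $i$ lies in the left submodule generated by $\cG$ and $s \in \spn(\nonlead{I})$. Because $p \in I$, we get $s \in I \cap \spn(\nonlead{I}) = \{0\}$ by (4), so $p$ is a left $\RR\axs$-combination of elements of $\cG$.

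For (2), existence of a reduced left Gröbner basis comes from iteratively applying the reduction from (4) to tails, then discarding elements whose leading monomial is properly right-divisible by another leading monomial in the set. For uniqueness, observe that the minimal elements of $\lead{I}$ under right-divisibility are intrinsic to $I$, and these must coincide with $\lead{\cG}$ for any reduced Gröbner basis $\cG$. For each such minimal leading monomial $m$, a reduced basis element with $\lead{} = m$ is monic with all non-leading terms in $\spn(\nonlead{I})$, and any two such elements differ by a member of $I \cap \spn(\nonlead{I}) = \{0\}$. The main obstacle is purely bookkeeping: translating classical Gröbner basis arguments into the left-module setting over $\RR^{1 \times \ell}\axs$, where divisibility of leading monomials must be interpreted as right-divisibility within a fixed row index $e_i$, and where the left admissibility property of $\prec$ (namely $a \prec b \Rightarrow ca \prec cb$) is what guarantees that reductions strictly decrease leading monomials.
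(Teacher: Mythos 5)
Your proof is correct and follows the standard translation of commutative Gr\"obner-basis theory to the one-sided noncommutative setting. Note that the paper itself does not prove this proposition---its proof consists of the citation \cite[Propositions 4.2, 4.4]{Gre00}---so you have supplied a self-contained argument for what the authors treat as imported background; your plan of establishing the direct-sum decomposition in (4) by a left reduction and then deriving (1), (3), and (2) as corollaries is exactly what that reference does in greater generality. The one place to be a little careful is the existence half of (2): an ``iteratively reduce tails and discard'' clean-up is awkward when $I$ is not finitely generated, which can happen since $\RR\axs$ is not Noetherian, and the clean-up itself has no evident termination. A better route falls directly out of your (4): for each monomial $m$ that is minimal in $\lead{I}$ under right-divisibility, decompose $m=i_m+s_m$ with $i_m\in I$ and $s_m\in\spn(\nonlead{I})$; comparing leading monomials shows $\lead{i_m}=m$ (any other possibility would place a monomial of $s_m$ into $\lead{I}$), and the non-leading terms of $i_m$ all lie in $\nonlead{I}$ by construction, so $\{i_m\}$ is a reduced left Gr\"obner basis with no iteration at all. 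Your uniqueness argument via $I\cap\spn(\nonlead{I})=\{0\}$ is exactly right, but you use implicitly a consequence of well-ordering worth recording: $um\succ m$ for every monomial $m$ and every word $u\in\axs$ with $u\neq 1$ (otherwise $\{u^k m\}_{k\ge 0}$ would be an infinite strictly decreasing chain of distinct monomials). This is what guarantees that the leading monomial of a reduced basis element cannot right-divide one of its own non-leading terms, and hence that the reduced element with a prescribed minimal leading monomial is uniquely determined.
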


\begin{proof}
 See \cite[Propositions 4.2, 4.4]{Gre00}.
\end{proof}

\begin{prop}[\protect{\cite[Lemma 8.2]{N}}]
\label{prop:lGBVerMemOG}
Let $I \subset \RR^{1 \times \ell}\axs$ be a left module and let
$\{\iota_i\}_{i\in \alpha}$ be a left Gr\"obner basis for $I$.
Every element $p \in I$ can be expressed
uniquely as
\begin{equation}
p = \sum_{i}^{\finite} q_i \iota_i,
\end{equation}
for some $q_i \in \RR\axs$.
In particular,
the leading monomial of $p$ is divisible on the
right by the leading monomial of one of the left Gr\"obner basis elements
$\iota_i$.
\end{prop}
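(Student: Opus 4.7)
The plan is to establish the ``in particular'' clause first (it follows almost immediately from the definition of a Gr\"obner basis), then use it as the key reduction step to construct the representation $p=\sum q_i\iota_i$ by descent in the well-order $\prec$, and finally address uniqueness.

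First I would argue right-divisibility of $\lead{p}$. By definition of a left Gr\"obner basis, the left $\R\axs$-module generated by $\{\lead{\iota_i}\}_{i\in\alpha}$ coincides with the one generated by $\lead{I}$. Since $p\in I$, the monomial $\lead{p}$ lies in this module; but a monomial lies in a left module generated by monomials only if it is itself a left multiple of one of the generators, so $\lead{p}=m\,\lead{\iota_i}$ for some $m\in\axs$ and some $i\in\alpha$. This is exactly the ``in particular'' assertion, and it also supplies the basic reduction step I will iterate.

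Next I would prove existence of the representation by descent. Let $c\in\R$ be the coefficient of $\lead{p}$ in $p$, and set $p':=p-cm\iota_i$. Then $p'\in I$ (because $I$ is a left module), and the leading term of $p$ has been cancelled, so either $p'=0$ or $\lead{p'}\prec\lead{p}$. Iterating produces a sequence in $I$ whose leading monomials strictly descend in $\prec$; since $\prec$ is a well-order the process terminates at $0$ after finitely many steps, and regrouping the subtracted terms by index $i$ yields the required expression $p=\sum_i^{\finite} q_i\iota_i$.

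The hardest step will be uniqueness. My plan is to make the reduction above deterministic by fixing, for each $w\in\lead{I}$, a canonical choice of $(i,m)$ with $w=m\,\lead{\iota_i}$---say, the least $i$ in some enumeration of $\alpha$. Given two such canonical expressions $\sum q_i\iota_i=\sum q_i'\iota_i$, setting $h_i:=q_i-q_i'$ gives $\sum h_i\iota_i=0$; if some $h_i\neq 0$, the maximal monomial among $\{\lead{h_i}\,\lead{\iota_i}\}_i$ occurs in exactly one summand by the canonical choice rule and so cannot be cancelled, a contradiction. The main technical obstacle is verifying that the canonical choice genuinely prevents overlap among the leading monomials of different $h_j\iota_j$ terms; this is essentially the structural content of the noncommutative left division algorithm of \cite[\S4]{Gre00}, on which the statement rests.
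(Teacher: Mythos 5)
Your treatment of the ``in particular'' clause and of existence by descent is sound, and correctly isolates the essential fact that in a free left module a monomial belongs to the submodule generated by a set of monomials only if it is a left multiple of one of them. The difficulty is entirely in your uniqueness argument, and there is a genuine gap there.

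First, the claim itself needs a hypothesis you never invoke: for a general left Gr\"obner basis, uniqueness of the coefficients $q_i$ is \emph{false}. Take $\iota_1 = x_1$ and $\iota_2 = x_2 x_1$ and a degree order. Since $\iota_2 = x_2 \iota_1$, both generate the same left module $I = \RR\axs x_1$, and $\lead{\iota_1}=x_1$, $\lead{\iota_2}=x_2x_1$ generate the same tip module as $\lead{I}$, so $\{\iota_1,\iota_2\}$ is a left Gr\"obner basis for $I$ in the sense defined in the paper. But $x_2x_1 = 1\cdot\iota_2 + 0\cdot\iota_1 = 0\cdot\iota_2 + x_2\cdot\iota_1$, two distinct representations. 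Uniqueness therefore requires the basis to be tip-reduced (no $\lead{\iota_i}$ right-divides a distinct $\lead{\iota_j}$), which holds in particular for the reduced Gr\"obner basis; your proof must make this assumption explicit.

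Second, the step you yourself flag as the obstacle is not resolved by your ``canonical choice rule.'' Restricting attention to ``canonical expressions'' is a mismatch with what is to be proved: uniqueness is the assertion that $\sum_i h_i\iota_i = 0$ forces every $h_i = 0$, and the $h_i = q_i - q_i'$ arising from two arbitrary representations need not obey any canonical selection discipline, so the rule tells you nothing about where the maximal monomial $\max_i\{\lead{h_i}\lead{\iota_i}\}$ can occur. The correct justification is combinatorial and uses tip-reducedness: if $\lead{h_i}\lead{\iota_i} = \lead{h_j}\lead{\iota_j}$ with $i\neq j$, then writing $\lead{\iota_i}=e_k\otimes w_i$, $\lead{\iota_j}=e_k\otimes w_j$ (same $e_k$, since otherwise the products cannot coincide) and comparing lengths shows the shorter of $w_i, w_j$ is a right factor of the longer, so one of $\lead{\iota_i},\lead{\iota_j}$ right-divides the other, contradicting tip-reducedness. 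Hence the maximal monomial is attained at a unique $j$, its coefficient in $\sum_i h_i\iota_i$ is the product of the nonzero leading coefficients of $h_j$ and $\iota_j$, and one obtains the contradiction without ever invoking a canonical division rule.
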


\subsubsection{Algorithm for Computing Reduced Left Gr\"obner Bases}

Let $\prec$ be a left monomial order on $\RR^{1 \times \ell}\axs$.
Let
$I$ be the left module generated by polynomials $\iota_1, \ldots, \iota_{\mu}
\in
\RR^{1 \times \ell}\axs$.
It is easy to show that inputting $\iota_1, \ldots, \iota_{\mu}$ into the
following algorithm computes a
reduced left
Gr\"obner basis for $I$.

\begin{enumerate}[\rm(1)]
 \item \label{step:begin} Given: $\cG = \{ \iota_1, \ldots, \iota_{\mu}\}$.
\item\label{step:begin2} If $0 \in \cG$, remove it.
Further, perform scalar multiplication so that each element of $\cG$ is monic.
\item
For each $\iota_i, \iota_j \in \cG$,
compare $\lead{\iota_i}$ with the terms of $\iota_j$.
\begin{enumerate}
 \item If $\lead{\iota_i}$ divides a term of $\iota_j$ on the right,
let
$q \in \axs$ and $\xi \in \RR$ be such that $\xi q\lead{\iota_i}$ is a
term in $\iota_j$. 
Replace $\iota_j$ with $\iota_j - \xi q \iota_i$.
Return to \eqref{step:begin2}.
\item If $\lead{\iota_i}$ does not divide any terms of any $\iota_j$
on the right for any $i
\neq j$,
stop and output $\cG$.
\end{enumerate}
\end{enumerate}

\subsection{The \texorpdfstring{$L$}{[L]}-Real Radical Algorithm}
\label{sect:algorithms}

We now turn our attention to computing $\Lradd{L}{\rC}{I}$.
A special case of $\Lradd{L}{\rC}{I}$ is $\Lradd{L}{\RR}{\{0\}}$.
Proposition \ref{prop:L0genByLinear} implies that $\Lradd{L}{\RR}{\{0\}}$
is a generating set for $\Lrad{L}{\{0\}}$.
For each linear pencil $L$, each left module $I \subset \RR\axs$, and
each full right chip space $\rC \subset \RR\axs$, we always have
$\Lrad{L}{\{0\}}
\subset \Lradd{L}{\rC}{I}$ since $0 \in I$ and $\RR \subset \rC$.
Further, Corollary \ref{cor:dimOfLrad0} implies that
computing
$\Lrad{L}{\{0\}}$ automatically gives $
\Lrad{L}{\{(0, \ldots, 0)\}} \subset \RR^{1 \times \ell}\axs$.
As previously noted, the commutative collapse of $\Lrad{L}{\{0\}}$ is generated by a set of linear polynomials, which  is given by \cite{KS}.

When a linear pencil $L \in \RR^{\nu \times \nu}\axs$ is inputted into the
following algorithm, the algorithm outputs a
generating set of linear polynomials for $\Lrad{L}{\{0\}}$.

\subsubsection{The \texorpdfstring{$L$}{[L]}-Real Radical Algorithm for
\texorpdfstring{$\Lrad{L}{\{0\}}$}{[sqrt L 0]}}
\label{alg:Lradd0}
\begin{enumerate}[\rm(1)]
 \item Let $I^{(0)} = \{0\}$, $T^{(0)} = \{1, x_1, \ldots, x_g, x_1^*, \ldots,
x_g^*\}$.  Fix a total
order $\prec$ on the letters $x_1, \ldots, x_g, x_1^*, \ldots, x_g^*$.
\item\label{item:cN0} Compute the space $\cN^{(0)}
\subset \RR^{\nu^{(i)}}$
defined as
\[ \cN^{(0)} = \{ n \in \RR^{\nu} \mid L n = 0\}.\]
Define $\nu^{(0)}$ to be the dimension of
$(\cN^{(0)})^{\bot}$.  If $\nu^{(0)} = 0$, then stop and output $I^{(0)} =
\{0\}$ and
$\tilde{L} = 1 \in \RR^{1 \times 1}\axs$.
Otherwise, let $\{\xi_1^{(0)}, \ldots, \xi_{\nu^{(0)}}^{(0)}\} \subset
\RR^{\nu}$
be an orthonormal basis for $\cN^{\bot}$, and
compress $L$ onto $([\cN^{(0)}]^{\bot})^*([\cN^{(0)}]^{\bot})$ as
\[L^{(0)} :=
\begin{pmatrix}
\xi_1^{(0)}& \cdots & \xi_{\nu^{(0)}}^{(0)}
\end{pmatrix}^*
L
\begin{pmatrix}
\xi_1^{(0)}& \cdots & \xi_{\nu^{(0)}}^{(0)}
\end{pmatrix}.
\]
\item Let $i = 0$.
 \item \label{step:Agen} Consider the problem
\begin{equation}
  \label{eq:toSolveInAlg}
\operatorname{Tr}(L^{(i)} A^{(i)}) + c^{(i)} = 0 \qquad \qquad A^{(i)} \succeq
0,\ c^{(i)} > 0.
\end{equation}
\begin{enumerate}
\item If \eqref{eq:toSolveInAlg} has a solution with $c^{(i)} > 0$, output
$\{1\}$, $\tilde{L} = 1 \in \RR^{1 \times 1}\axs$, and stop.
 \item If \eqref{eq:toSolveInAlg} has a solution $0 \neq A^{(i)} \succeq 0$ and
$c^{(i)}=0$,
then
let $\iota_{(1,1)}^{(i)}, \ldots, \iota_{(\nu^{(i)},\nu^{(i)})}^{(i)}$ be
defined by
\begin{equation}
 \label{eq:defiota}
\left(
\begin{array}{cccc}
 \iota_{(1,1)}^{(i)}&\iota_{(1,2)}^{(i)}&\cdots&\iota_{(1,\nu^{(i)})}^{(i)}\\
 \iota_{(2,1)}^{(i)}&\iota_{(2,2)}^{(i)}&\cdots&\iota_{(2,\nu^{(i)})}^{(i)}\\
\vdots&\vdots&\ddots&\vdots\\
 \iota_{(\nu^{(i)},1)}^{(i)}&\iota_{(\nu^{(i)},2)}^{(i)}&\cdots&\iota_{(\nu^{(i)
},\nu^{ (i)})}^{ (i)}\\
\end{array}
\right)
:= L^{(i)}\sqrt{A^{(i)}}.
\end{equation}
\begin{enumerate}
\item Define $I^{(i+1)}$ to be a reduced left Gr\"obner basis for the set
\begin{equation}
\label{eq:defIiplus1}
  I^{(i)} \cup \left\{ \iota_{(j,k)}^{(i)} \right\}_{j,k=1}^{\nu^{(i)}}.
\end{equation}
\item If $I^{(i+1)} = \{1\}$, stop and output $I^{(i+1)}$ and $\tilde{L} = 1$.
\item Let $T^{(i+1)}$ be the set containing $1$ and all letters $x_i$ and
$x_i^*$
such that neither $x_i$ nor $x_i^*$ is a
leading letter of an element of $I^{(i+1)}$ or $(I^{(i+1)})^*$.
\item Perform division in order of $\prec$ on the entries of $L^{(i)}$ using
$I^{(i+1)}$ and $(I^{(i+1)})^*$ to get
\[L^{(i)} = L_I^{(i)} + L_T^{(i)},\]
where the entries of $L_I^{(i)}$ are spanned by
 $I^{(i+1)} + (I^{(i+1)})^*$ and the entries of $L_T^{(i)}$ are in $T^{(i+1)}$.
\item Compute the space $\cN^{(i+1)}
\subset \RR^{\nu^{(i)}}$
defined as
\[ \cN^{(i+1)} = \{ n \in \RR^{\nu^{(i)}} \mid L_T^{(i)} n = 0\}.\]
Define $\nu^{(i+1)}$ to be the dimension of
$(\cN^{(i+1)})^{\bot}$.
\item If $\nu^{(i+1)} = 0$ then stop and output $I^{(i+1)}$ and $\tilde{L} = 1
\in
\RR^{1 \times 1}\axs$.
\item Otherwise, let $\{\xi_1^{(i+1)}, \ldots, \xi_{\nu^{(i+1)}}^{(i+1)}\}
\subset \RR^{\nu^{(i)}}$
be a basis for $(\cN^{(i+1)})^{\bot}$.  Let $L^{(i+1)}$ be defined by
\[L^{(i+1)} :=
\begin{pmatrix}
\xi_1^{(i+1)}& \cdots & \xi_{\nu^{(i+1)}}^{(i+1)}
\end{pmatrix}^*
L_T^{(i)}
\begin{pmatrix}
\xi_1^{(i+1)}& \cdots & \xi_{\nu^{(i+1)}}^{(i+1)}
\end{pmatrix}.
\]
\item Let $i:= i+1$ and go to \eqref{step:Agen}.
\end{enumerate}
\item If \eqref{eq:toSolveInAlg} has no nonzero solution $A^{(i)} \succeq 0$
and $c^{(i)} \geq 0$,
then stop and output $I^{(i)}$ and $L^{(i)}$.
\end{enumerate}
\end{enumerate}

\subsubsection{Properties of the \texorpdfstring{$L$}{[L]}-Real Radical Algorithm
for
\texorpdfstring{$\Lrad{L}{\{0\}}$}{[sqrt L 0]}}
\begin{prop}
Let $L \in \RR^{\nu \times \nu}\axs$ be a linear pencil.
 The $L$-Real Radical algorithm for $\Lrad{L}{\{0\}}$ in {\rm\S\ref{alg:Lradd0}} has
the
following properties.
\begin{enumerate}[\rm(1)]
 \item The algorithm terminates in a finite number of steps.
 \item The only polynomials involved in the algorithm have degree $\leq1$.
 \item The algorithm outputs a space of linear polynomials
which generate
$\Lradd{L}{\RR}{\{0\}}$ and, consequentially, $\Lrad{L}{\{0\}}$.
 \item \label{it:outputTildeL}The algorithm also outputs a linear pencil
$\tilde{L}$ such that
$L(X) \succeq 0$ if and only if
$\tilde{L}(X)
\succeq 0$ and $\iota(X) = 0$ for each $\iota \in \Lrad{L}{\{0\}}$.
Further, there exists a real scalar solution $x\in\R^g$ to the linear matrix equality
$\tilde{L}(x) \succ 0$.
\end{enumerate}
\end{prop}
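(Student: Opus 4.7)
First I would address items (1) and (2) together from the structural features of the algorithm. Every polynomial the algorithm manipulates---each entry $\iota^{(i)}_{(j,k)}$ of a product of a linear matrix with a constant matrix, every compressed pencil $L^{(i)}$, and every left Gr\"obner reduction of a linear polynomial by a linear ideal---remains linear, establishing (2). For termination, note that each nonhalting pass through step~(4)(b) strictly enlarges $I^{(i+1)}$ over $I^{(i)}$: at least one of the $\iota^{(i)}_{(j,k)}$ must introduce a new leading letter, for otherwise left-Gr\"obner reduction modulo $I^{(i)}$ would have eliminated every $\iota^{(i)}_{(j,k)}$ before the update step. Because all elements of every $I^{(i)}$ are linear, their reduced leading letters come from the finite set $\{1, x_1, \dots, x_g, x_1^*, \dots, x_g^*\}$, so the loop halts within at most $2g+1$ iterations.

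Next I would prove (3) by establishing both containments between the terminal $I^{(i)}$ and $\Lradd{L}{\RR}{\{0\}}$. The inclusion $I^{(i)} \subset \Lrad{L}{\{0\}}$ I would show by induction: assuming $I^{(i)} \subset \Lrad{L}{\{0\}}$, suppose $A^{(i)} = BB^* \succeq 0$ satisfies $\operatorname{Tr}(L^{(i)} A^{(i)}) = 0$ with $B$ having columns $b_k$; then $\sum_k b_k^* L^{(i)} b_k = 0$ identically. For every matrix tuple $X$ with $L(X) \succeq 0$, Proposition \ref{prop:decompOfL}\eqref{item:LLtildeIota} combined with the inductive hypothesis gives $L^{(i)}(X) \succeq 0$, so each $b_k^* L^{(i)}(X) b_k \geq 0$, and summing to $0$ forces $L^{(i)}(X) b_k = 0$. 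Pulling back through the accumulated compression yields $L(X) v_k = 0$ for the lift $v_k$ of $b_k$, and Proposition \ref{prop:charOfL} places each entry of $L v_k$---in particular each $\iota^{(i)}_{(j,k)}$---inside $\Lrad{L}{\{0\}}$. The reverse containment is the delicate part. When the algorithm terminates at step~(5), the Bohnenblust dichotomy (Lemma \ref{lem:pSd}) applied to $\{L^{(i)}(x) \mid x \in \RR^g\} \subset \mbS^{\nu^{(i)}}$ produces an $x \in \RR^g$ with $L^{(i)}(x) \succ 0$. Using this strict feasibility, any relation $\sum_j p_j^* p_j + \sum_k q_k^* L q_k \in \RR \cdot I^{(i)} + (I^{(i)})^* \RR$ with $p_j \in \RR$ and $q_k \in \RR^{\nu}$ descends through the accumulated compression to a polynomial identity in which the constant $p_j^*p_j$ part and a positive quadratic form in the projected $\tilde q_k$ sum to zero; evaluating at the strictly feasible $x$ forces each $p_j = 0$ and $L q_k \in \RR^{\nu} \cdot I^{(i)}$. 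This shows $I^{(i)}$ is $(L,\RR)$-real, whence the minimality of $\Lradd{L}{\RR}{\{0\}}$ gives the reverse containment, and Proposition \ref{prop:L0genByLinear} promotes the conclusion to $\Lrad{L}{\{0\}}$.

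For (4), the terminal $\tilde L = L^{(i)}$ is, by construction, precisely the compression described in Proposition \ref{prop:decompOfL} for $I = I^{(i)} = \Lradd{L}{\RR}{\{0\}}$, so the equivalence of $L(X) \succeq 0$ with $\tilde L(X) \succeq 0$ together with vanishing of all $\iota \in \Lrad{L}{\{0\}}$ follows directly from Proposition \ref{prop:decompOfL}\eqref{item:LLtildeIota}, and the strict scalar feasibility $\tilde L(x) \succ 0$ is already delivered by the Bohnenblust dichotomy. The main obstacle throughout is the bookkeeping of the accumulated compressions---each iteration both grows $I^{(i)}$ and projects the ambient pencil onto $(\cN^{(i+1)})^\perp$---and verifying that $(L^{(i)},\RR)$-reality of the terminal $I^{(i)}$ genuinely lifts to $(L,\RR)$-reality via the cumulative splittings $L = L_I + L_T$; once this bookkeeping is settled, each of (1)--(4) falls out cleanly from Propositions \ref{prop:charOfL}, \ref{prop:L0genByLinear}, \ref{prop:decompOfL} and Lemma \ref{lem:pSd}.
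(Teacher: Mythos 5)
Your overall plan tracks the paper's structure---inclusion of $I^{(i)}$ in the radical by induction, the reverse containment via $(L,\RR)$-reality plus minimality of $\Lradd{L}{\RR}{\{0\}}$, and Proposition \ref{prop:decompOfL} for item (4). For the inclusion direction you take a genuinely different, evaluation-based route (pass through Proposition \ref{prop:charOfL} and lift vectors across the compression), whereas the paper reads the identity
\[
c^{(i)} + \sum_j\bigl([C^{(i)}]^*a_j\bigr)^*L\bigl([C^{(i)}]^*a_j\bigr) \in \Lradd{L}{\RR}{\{0\}} + \bigl(\Lradd{L}{\RR}{\{0\}}\bigr)^*
\]
and invokes the defining $(L,\RR)$-reality of the radical directly, never evaluating at a point. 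Both work, though the paper's is shorter and avoids any appeal to Proposition \ref{prop:charOfL}, which sits later in the dependency chain.

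The genuine gap is your appeal to Bohnenblust for strict feasibility, which you use both for the reverse containment in (3) and for item (4). You write that ``Lemma \ref{lem:pSd} applied to $\{L^{(i)}(x)\mid x\in\RR^g\}\subset\mbS^{\nu^{(i)}}$ produces $x$ with $L^{(i)}(x)\succ0$.'' But Lemma \ref{lem:pSd} is a dichotomy for a \emph{linear} subspace $\cB\subset\mbS^k$, and $\{L^{(i)}(x)\}$ is an affine slice: the constant block is pinned to coefficient $1$. Case (1) of the lemma only yields $B\succ0$ in the linear span of the coefficient matrices, which need not have positive $A_0^{(i)}$-coordinate, so it does not produce a point of the form $L^{(i)}(x)$. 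To repair this you would need to homogenize (e.g.\ apply the lemma to $\operatorname{span}\{A_0^{(i)}\oplus 1,\ A_j^{(i)}\oplus 0,\dots\}\subset\mbS^{\nu^{(i)}+1}$ and use the extra diagonal slot to track the $A_0^{(i)}$-coefficient), or give a direct separating-hyperplane argument, and then check that the termination condition excludes the other Bohnenblust alternatives. The paper sidesteps this entirely: the reverse containment in (3) needs \emph{no} strict feasibility at all, since any relation $\sum p_j^*p_j+\sum q_k^*Lq_k\in\RR\axs I^{(i)}+(\RR\axs I^{(i)})^*$ would itself yield a nonzero pair $\bigl(\sum(C^{(i)}q_k)(C^{(i)}q_k)^*,\ \sum p_j^2\bigr)$ solving \eqref{eq:toSolveInAlg}, contradicting termination; and the strict feasibility in (4) is imported wholesale from Proposition \ref{prop:decompOfL}\eqref{item:TildeLPD}, which rests on Lemma \ref{lem:main} rather than a direct Bohnenblust application. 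You also omit the $c^{(i)}>0$ branch of the inductive step, which must be handled separately since there the algorithm halts with output $\{1\}$.
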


\begin{proof}
First, it is clear that the algorithm
only involves linear polynomials.
Next, the algorithm stops at step
\eqref{item:cN0} if and only if $L = 0$.  In this case, it is trivial to see
that $\Lradd{0}{\RR}{\{0\}} = \{0\}$,
and so the algorithm outputs $\{0\}$ which generates $\Lradd{0}{\RR}{\{0\}}$.
Also note that $\tilde{L} = 1$ is always positive definite.

Given an index $i$, assume inductively that
$I^{(i)} \subset (\Lradd{L}{\RR}{\{0\}})_1$, that
\begin{equation}
\label{eq:inductionC1}
  \RR\axs_1 = \operatorname{span}(I^{(i)} + [I^{(i)}]^{\ast}) \oplus
\operatorname{span}T^{(i)},
\end{equation}
and that for $i > 0$,
the set $I^{(i)}$ is a reduced left Gr\"obner basis
for $\Lradd{L}{\RR}{\{0\}}$.
Also assume $L^{(i)}$ is of the form
\begin{equation}
 \label{eq:inductionC2}
 L^{(i)} = (C^{(i)}) \big(L - L_I^{(i)}\big) (C^{(i)})^*,
\end{equation}
where $L_I^{(i)} \in \RR^{\nu \times \nu} \otimes (I^{(i)}+ [I^{(i)}]^{\ast})$
so that $L -
L_I^{(i)} \in \RR^{\nu \times \nu} \otimes T^{(i)}$, and $C^{(i)}$ is
a matrix whose columns are a basis for the space $(\cZ^{(i)})^{\bot}$
defined by
\[ \cZ^{(i)} := \{ n \in \RR^{\nu} \mid  (L - L_I^{(i)})n = 0 \}.\]

Suppose there is a nonzero solution to \eqref{eq:toSolveInAlg}.
Decompose $A^{(i)}$ as $A^{(i)} = (U^{(i)})^{\ast}\Lambda (U^{(i)})$, where
$\Lambda$
is a diagonal matrix with entries $\psi_1^{(i)}, \ldots, \psi_{\nu^{(i)}}^{(i)}$
and $U^{(i)} \in \RR^{\nu^{(i)} \times \nu^{(i)}}$ is an orthogonal matrix.
Then
\[A^{(i)} = \sum_{j=1}^{\nu^{(i)}}
\left(a_j^{(i)}\right)\left(a_j^{(i)}\right)^*,\]
where $a_j^{(i)} = \sqrt{\xi_j^{(i)}} U^{\ast} e_j^* \in \RR^{\nu{(i)}}$.
Hence \eqref{eq:toSolveInAlg} implies that
\[c^{(i)} + \sum_{j=1}^{\nu^{(i)}} \left(a_j^{(i)}\right)^* L^{(i)}
\left(a_j^{(i)}\right)  = 0.\]
Therefore,
\[c^{(i)} + \sum_{j=1}^{\nu^{(i)}} \left(\left[ C^{(i)}
\right]^*a_j^{(i)}\right)^* L
\left(\left[ C^{(i)}
\right]^*a_j^{(i)}\right) \in  \Lradd{L}{\RR}{\{0\}} +
(\Lradd{L}{\RR}{\{0\}})^*,
\]
since $I^{(i)} \subset \Lradd{L}{\RR}{\{0\}}$.  This implies that
each $L^{(i)} [C^{(i)}]^* a_j^{(i)} \in \Lradd{L}{\RR}{\{0\}}$ and
$\sqrt{c^{(i)}} \in \Lradd{L}{\RR}{\{0\}}$.  If $c^{(i)} > 0$, then this
implies that $\Lradd{L}{\RR}{\{0\}} = \RR\axs$ so that the algorithm outputs
$\{1\}$, a
generating set for $\Lradd{L}{\RR}{\{0\}}$, and $\tilde{L} = 1$,
and the condition that
$\iota(X) =
0$ for
each $\iota \in \Lradd{L}{\RR}{\{0\}}$ is infeasible.  If $c^{(i)} = 0$ but
$A^{(i)} \neq 0$,
then since not all of the $a_j^{(i)}$ are $0$, it follows that each
nonzero $L^{(i)}
a_j^{(i)} \in \Lradd{L}{\RR}{\{0\}}$.
Therefore,
\[L^{(i)}\sqrt{A^{(i)}} = L^{(i)} \begin{pmatrix}
                                   a_1^{(i)}& \cdots &a_{\nu^{(i)}}^{(i)}
                                  \end{pmatrix}U
\]
has entries in $ \Lradd{L}{\RR}{\{0\}} \setminus I^{(i)}$.

Given $I^{(i+1)}$, we find $T^{(i+1)}$ which satisfies the necessary assumptions
given above.  Further, if $L_T^{(i)} = 0$, then we see that if $\iota(X) = 0$
for each $\iota \in I^{(i+1)}$, then $L(X) = 0$.  In this case, $L(X) \succeq
0$ if and
only if $\iota(X) = 0$ for each $\iota \in I^{(i+1)}$.  Therefore
$\Lradd{L}{\RR}{\{0\}}$ is generated by $I^{(i+1)}$, which we output, and we
choose $\tilde{L} = 1$, which is always positive definite.
If $L_T^{(i)} \neq 0$, then we have $L^{(i+1)}$ of the form $(C^{(i+1)})^*(L -
L_{I}^{(i+1)})(C^{(i+1)})$, which satisfies the assumptions given above.

This algorithm must terminate in a finite number of iterations since at each
iteration we add some linear polynomials to $I^{(i)}$ to get $\RR\axs
I^{(i)}\subsetneq
\RR\axs I^{(i+1)} \subset \Lradd{L}{\RR}{\{0\}}$.
At the end,
there is no nonzero solution to \eqref{eq:toSolveInAlg}.  Therefore, if
\[\sum_j^{\finite} p_j^*p_j + \sum_k^{\finite} q_k^*Lq_k \in (\RR\axs I^{(i)}) +
\left(\RR\axs I^{(i)}\right)^*,\]
since the entries of $L^{(i)}$ are in $T^{(i)}$,
\[\sum_j^{\finite} p_j^*p_j + \sum_k^{\finite}
(C^{(i)}q_k)^*L^{(i)}(C^{(i)}q_k) =0.\]
Hence
\[\operatorname{Tr}\left(L^{(i)}\left[\sum_k^{\finite}(C^{(i)}q_k)(C^
{ (i) }q_k)^*\right] \right) + \left(\sum_j^{\finite} p_j^*p_j \right) =
0,\]
which implies, since there is no nonzero solution to \eqref{eq:toSolveInAlg},
that each $C^{(i)}q_k = 0$ and each $p_j = 0$.  Therefore, each $Lq_k =
L^{(i)}q_k + L_I^{(i)} C^{(i)}q_k \in \RR^{\nu^{(i)} \times 1}I^{(i)}$.
This implies that $I^{(i)}$ is $(L,\RR)$-real.  Since $I^{(i)} \subset
\Lradd{L}{\RR}{\{0\}}$, this implies that $I^{(i)} = \Lradd{L}{\RR}{\{0\}}$.

Finally, Proposition \ref{prop:decompOfL}
implies, given the construction of $L^{(i)}$,
that the outputted
$\tilde{L} = L^{(i)}$ satisfies all the properties given in
 \eqref{it:outputTildeL}.
\end{proof}

\subsection{Examples}

Here are some examples of linear pencils $L$ 
and their real radicals
$\Lradd{L}{\RR}{\{0\}}$.

\begin{exa}
 Let $L$ be the pencil
\[
L = \left(
\begin{array}{cc}
 1&x_1\\x_1^*&1
\end{array}
\right).
\]
Note that 
\[
\cD_L =\{ X_1 \mid \|X_1\|\leq1\},
\]
and $L(X) \succ 0$ iff $\|X_1\| < 1$. Proposition
\ref{prop:charOfL} implies that $\Lrad{L}{\{0\}} = \{0\}$.  Proposition
\ref{prop:qRadRealHom} also implies that $\Lrad{L}{\{0\}} = \{0\}$.  Therefore
we expect the $L$-Real Radical algorithm to output $\{0\}$ and $L$.

We now run the algorithm.  First, we see that $Ln \neq 0$ for any $n\in \RR^2
\setminus \{0\}$ since $L(0) \succ 0$, whence $L^{(0)} = L$.  Next, we see that if
$A^{(0)} = (a_{jk})_{1 \leq j,k \leq 2}$, then
\[ \tr(L^{(0)} A^{(0)}) = a_{11} + a_{22} + a_{12} x_1 +a_{21}x_1^*.\]
For this to be constant, we need $a_{12} = a_{21} = 0$.  Next, if $A^{(0)}
\succeq 0$, then $a_{11}, a_{22} \geq 0$. Hence there is no nonzero
solution to \eqref{eq:toSolveInAlg}.  We therefore stop and output $\{0\}$ and
$\tilde{L} = L$.

\end{exa}

\begin{exa}
Let $L$ be the pencil
\[
L = \left(
\begin{array}{cc}
 1&x_1\\x_1^*&0
\end{array}
\right).
\]
Note that since there is a $0$ on the diagonal,   we have
\[
\cD_L=\{X_1 \mid X_1=0\}.
\]

For the algorithm, we first see that $Ln = 0$ has no nonzero solution in
$\RR^2$.  Therefore $L^{(0)} = L$.
Next, we see that if $A^{(0)} = E_{22} \succeq 0$, then
\[ \tr(L^{(0)}A^{(0)}) = 0.\]
Since $\sqrt{A^{(0)}} = E_{22}$, we see
\[ L^{(0)}\sqrt{A^{(0)}} = \left(
\begin{array}{cc}
 0&x_1\\
0&0
\end{array}
\right),\]
so $I^{(1)} = \RR x_1$.
We decompose $\RR\axs_1$ as
\[ \RR\axs_1 = (I^{(1)} + [I^{(1)}]^*) \oplus T^{(1)} \quad \mbox{with}
\quad T^{(1)} := \RR 
. \]
When we project $L$ onto $\RR^{2 \times 2} \otimes T^{(1)}$ we get
\[\left(
\begin{array}{cc}
 1&0\\
0&0
\end{array}
\right) = E_{11}. \]
The nullspace of $E_{11}$ is $\cN^{(1)} = \RR e_2$.  The
compression of $E_{11}$ onto the
space $([\cN^{(1)}]^{\bot})^*([\cN^{(1)}]^{\bot})$ is $L^{(1)} = (1) \in \RR^{1
\times 1}\axs$.

There is no nonzero solution to
\[ \tr(L^{(1)} A^{(1)}) + c = 0,\]
 so we output $\Lradd{L}{\RR}{\{0\}} = \{x_1\}$ and $\tilde{L} = (1)$.
\end{exa}

\begin{exa}
 Let $L$ be the pencil
\[
L = \left(
\begin{array}{cc}
 x_1+x_1^*&1\\1&0
\end{array}
\right).
\]
This pencil $L$ is clearly infeasible, i.e., $\cD_L=\varnothing$.

Applying the algorithm, we get $L^{(0)} = L$, and we see that
\[ \tr(L^{(0)}E_{22}) = 0.\]
Next,
\[
L^{(0)}\sqrt{E_{22}} =
\left(
\begin{array}{cc}
 0&1\\
0&0
\end{array}
\right),
\]
so we add $1$ to $I^{(0)}$ to get $I^{(1)} = \RR\axs_1$.
Output now $I^{(1)}$ and $\tilde{L} = 1$.
\end{exa}

\begin{exa}
This is a version of \cite[Example 4.6.3]{KS} presented in free non-symmetric variables.
 Let $L$ be the pencil
 \[
  L = \left(
  \begin{array}{ccc}
   0&x_1&0\\
   x_1^*&x_2+x_2^*&1\\
   0 & 1 & x_1+x_1^*
  \end{array}
  \right)
 \]

Applying the algorithm, we get $L^{(0)} = L$ and thus
\[
 \operatorname{Tr}(L^{(0)} E_{11}) = 0.
\]
Next,
\[
 L^{(0)} \sqrt{E_{11}} =
 \left( \begin{array}{ccc}
         0&0&0\\ x_1^*&0&0\\ 0&0&0
        \end{array}
 \right).
\]
Therefore we add $x_1^*$ to $I^{(0)}$ to get $I^{(1)} = \RR\axs x_1^*$.
This leads to 
\[
 L^{(1)} = \left( \begin{array}{cc}
        x_2+x_2^*&1\\ 1&0
        \end{array}
 \right).
\]
Then,
\[
  \operatorname{Tr}(L^{(1)} E_{22}) = 0,
\]
so we see
\[
 L^{(1)} \sqrt{E_{22}} =
 \left( \begin{array}{cc}
         0&1\\ 0&0
        \end{array}
 \right).
\]
We thus add $1$ to $I^{(1)}$ to obtain $I^{(2)} = \RR\axs$.
Hence $L$ is infeasible.
\end{exa}

\begin{exa}
Let $L$ be the pencil
 \[
L = \left(
\begin{array}{cccc}
1&x_1&x_2&x_3\\
x_1^*&1&0&0\\
x_2^*&0&1&0\\
x_3^*&0&0&0
\end{array}
\right).
\]
The pencil $L$ has no nullspace, so $L^{(0)} = L$.
We see that $\operatorname{Tr}(L^{(0)}E_{44}) = 0$ and 
\[
L^{(0)}\sqrt{E_{44}} = \left(
\begin{array}{cccc}
0&0&0&x_3\\
0&0&0&0\\
0&0&0&0\\
0&0&0&0
\end{array}
\right)
\]
Therefore $I^{(1)} = \RR x_3$.  We then get 
\[
\RR\axs_1 = \left( I^{(1)} + \left[I^{(1)}\right]^* \right) \oplus T^{(1)} \quad \mbox{with} \quad
T^{(1)} := \RR + \sum_{j=1}^2 \RR x_j + \sum_{j=1}^2 \RR x_j^*.
\]
When we project $L^{(0)}$ onto $T^{(1)}$ we get
\[
\left(
\begin{array}{cccc}
1&x_1&x_2&0\\
x_1^*&1&0&0\\
x_2^*&0&1&0\\
0&0&0&0
\end{array}
\right).
\]
This matrix has a null space $\cN^{(1)} = \RR e_4$, so we obtain
\[
L^{(1)} = \left(
\begin{array}{ccc}
1&x_1&x_2\\
x_1^*&1&0\\
x_2^*&0&1
\end{array}
\right).
\]
This pencil has non-empty interior, so $\tilde{L} = L^{(1)}$.  Geometrically, we see that the set $L(x) \succeq 0$ is the two-dimensional spectrahedron defined by $\tilde{L}$, which is the closed ball $$\{(x_1, x_2, 0) \in \RR^3 \mid x_1^2 + x_2^2 \leq 1\}.$$
\end{exa}

\def\al{\alpha}
\begin{exa} 
As our final example we present
a classical example of a spectrahedron used in mathematical optimization
to construct a semidefinite program (SDP) with nonzero duality gap, cf.~\cite[Example 4.6.4]{KS}.
 Let $L$ be the pencil
 \[
  L = \left(
  \begin{array}{ccc}
   \al+x_2+x_2^* & 0 & 0\\
   0 & x_1+x_1^*&x_2 \\
   0 & x_2^* & 0
  \end{array}
  \right)
 \]
for some $\al>0$.

Applying the algorithm, we get $L^{(0)} = L$ and thus
\[
 \operatorname{Tr}(L^{(0)} E_{33}) = 0.
\]
Next,
\[
 L^{(0)} \sqrt{E_{33}} =
 \left( \begin{array}{ccc}
         0&0&0\\0&0&x_2\\ 0&0&0
        \end{array}
 \right).
\]
Therefore we add $x_2$ to $I^{(0)}$ to get $I^{(1)} = \RR\axs x_2$. 
We decompose $\RR\axs_1$ as
\[ \RR\axs_1 = (I^{(1)} + [I^{(1)}]^*) \oplus T^{(1)} \quad \mbox{with}
\quad T^{(1)} := \RR + \RR x_1 +  \RR x_1^*
. \]
Projecting $L$ onto $\RR^{2 \times 2} \otimes T^{(1)}$ yields
\[
 L^{(1)} = \left( \begin{array}{cc}
        \al & 0 \\ 0 & x_1+x_1^* 
        \end{array}
 \right).
\]
There is no nonzero
solution to \eqref{eq:toSolveInAlg}.  We therefore stop and output $I^{(1)}$ and
$\tilde{L} = L^{(1)}$.
\end{exa}

\subsection{$\rC$-Bases}

For right chip spaces $\rC\subset\R^{1 \times \ell}\axs$, we would ideally like to find an order on $\R^{1 \times \ell}\axs$
 satisfying $a \prec b$ whenever $a \in \rC$ and $b
\not\in \rC$.  However, as it turns out 
 right chip space $\rC$ rarely admit 
left admissible orders (as defined previously in \S\ref{sect:LGB}) with this property. Therefore, we
discuss $\rC$-orders, which were introduced in \cite{N}.

Let $\rC \subset \RR^{1 \times \ell}\axs$ be a right chip space.
Let $\prec_0$ be a degree order on $\axs$, that is,  $\prec_0$ is a total
order on $\axs$ satisfying $a \prec_0 b$ whenever $|a| < |b|$.
We say that $\prec_{\rC}$ is a \textbf{$\rC$-order
(induced by $\prec_0$)}\index{C-order@$\rC$-order}
if $\prec_{\rC}$ is a total order on $\R^{1 \times \ell}\axs$ such that if $a,b \in
\R^{1 \times \ell}\axs$, then $a \prec_{\rC} b$ if
one of the following hold:
\begin{enumerate}
 \item $a \in \rC$ and $b \not\in \rC$;
 \item $a \in \RR\axs \rC$ and $b \not\in \RR\axs \rC$;
 \item $a = a_1a_2$, $b = b_1b_2$, where $a_2,b_2 \in \RR\axs_1 \rC \setminus
\rC$, $a_1, b_1 \in \axs$, and $a_1 \prec_0 b_1$;
\item $a = wa_2$, $b = wb_2$, where $a_2,b_2 \in \RR\axs_1 \rC \setminus
\rC$, $w \in \axs$, and $a_2 \prec_{\rC} b_2$.
\end{enumerate}
The above conditions in and of themselves only define a partial
order. By definition, a $\rC$ order $\prec_{\rC}$ is defined in some way
among
the elements
of $\rC$, $\RR\axs_1 \rC \setminus \rC$, and $\RR^{1 \times \ell} \setminus
\RR\axs \rC$ respectively to make it a total order.

Further, let $I \subset \RR^{1 \times
\ell}\axs$ be a left module generated by polynomials in $\RR\axs_1 \rC$.
We say that a pair of sets $(\{ \iota_i\}_{i \in A},
\{\vartheta_j\}_{j \in B})$ is a
\textbf{$\rC$-basis}\index{C-basis@$\rC$-basis} for $I$ if $\{ \iota_i\}_{i \in
A}$ is a maximal set of monic polynomials in
$I \cap
(\RR\axs_1 \rC \setminus \rC)$ with distinct leading monomials
and if $\{\vartheta_j\}_{j \in B}$
is a maximal (possibly empty) set of monic polynomials in $I \cap \rC$ with
distinct
leading monomials.

Here is a useful property of $\rC$-bases.

  \begin{lemma}[{\cite[Lemma 3.4]{N}}]
Let $\rC \subset \RR^{1 \times \ell}\axs$ be a finite right chip space
and let $\prec_{\rC}$
be a $\rC$-order induced by some degree order.
 Let $I \subset \RR^{1 \times \ell}\axs$ be a left module
generated by polynomials in $\RR\axs_1 \rC$, and
let $(\{\iota_i\}_{i=1}^{\mu},
\{\vartheta_j\}_{j=1}^{\sigma})$ be a $\rC$-basis for $I$.
Then each element of $I$ can be represented uniquely as
\begin{equation}
 \label{eq:formOfAllInI}
\sum_{i=1}^{\mu} p_i \iota_i + \sum_{j=1}^{\sigma} \alpha_j
\vartheta_j,
\end{equation}
where each $p_i \in \RR\axs$ and $\alpha_j \in \RR$.

Conversely, any pair of sets of monic polynomials $(\{\iota_i\}_{i=1}^{\mu},
\{\vartheta_j\}_{j=1}^{\sigma})$ with distinct leading monomials such that any
element of $I$
can be expressed
in the form \eqref{eq:formOfAllInI} is a $\rC$-basis for $I$.
\end{lemma}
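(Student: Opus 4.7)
The plan is to prove both halves of the lemma by induction on the leading monomial, using that $\prec_{\rC}$ is a well-order. The crucial structural input is that since $I$ is generated by polynomials in $\RR\axs_1\rC$, every element of $I$ has all of its monomials in $\axs\rC$, and the definition of $\prec_{\rC}$ stratifies $\axs\rC$ into three ascending strata: $\rC$, $\RR\axs_1\rC\setminus\rC$, and $\axs\rC\setminus\RR\axs_1\rC$. Clauses (1)--(2) of the $\rC$-order imply that if the leading monomial of a nonzero $p\in I$ lies in one of the lower strata, then every monomial of $p$ automatically lies in that stratum or below.

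For existence, I would split into three cases based on where the leading monomial $m$ of $p$ sits. If $m\in\rC$, then $p\in I\cap\rC$ by the stratification remark, and maximality of $\{\vartheta_j\}$ yields some $\vartheta_{j_0}$ with the same leading monomial as $p$; subtracting a scalar multiple reduces the leading monomial. If $m\in\RR\axs_1\rC\setminus\rC$, then analogously $p\in I\cap(\RR\axs_1\rC\setminus\rC)$, and maximality of $\{\iota_i\}$ yields $\iota_{i_0}$ with $\lead{\iota_{i_0}}=m$, allowing a reduction. If $m\in\axs\rC\setminus\RR\axs_1\rC$, use the right chip property of $\rC$ to decompose $m=w'\cdot a_2$, where $w'\in\axs$ has $|w'|\geq 1$ and $a_2\in\RR\axs_1\rC\setminus\rC$, obtained by taking the longest right chip $c$ of $m$ lying in $\rC$ and letting $a_2=xc$ with $x$ the letter immediately to the left of $c$ in $m$ (then $a_2\notin\rC$ by maximality of $c$). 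One then produces an $\iota_{i_0}$ with $\lead{\iota_{i_0}}=a_2$, so that $w'\iota_{i_0}$ has leading monomial $m$ by clauses (3)--(4) of the $\rC$-order, and subtracts an appropriate scalar multiple of $w'\iota_{i_0}$ from $p$. Induction on $\prec_{\rC}$ then concludes existence.

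For uniqueness, suppose $\sum p_i\iota_i+\sum\alpha_j\vartheta_j=0$ is a nontrivial relation. The top monomial of the whole sum must come from the $\iota$-part (if any $p_i\neq 0$), because the $\iota$-contributions have leading monomials in $\axs\cdot(\RR\axs_1\rC\setminus\rC)$, lying strictly above the stratum $\rC$ where the $\vartheta$-contributions live. Among the $\iota$-contributions, distinctness of $\lead{\iota_i}$ for different $i$, combined with the chip-compatibility clauses (3)--(4) of $\prec_{\rC}$ (which make the suffix $\lead{\iota_i}$ recoverable from the product $\lead{p_i}\cdot\lead{\iota_i}$), precludes cancellation of top terms: distinct pairs $(i,\lead{p_i})$ yield distinct products. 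Hence no nontrivial relation exists, and uniqueness follows. The converse half is then straightforward: given any representing pair $(\{\iota_i'\},\{\vartheta_j'\})$, one deduces from uniqueness that the $\iota_i'$ lie in $I\cap(\RR\axs_1\rC\setminus\rC)$, the $\vartheta_j'$ in $I\cap\rC$, with pairwise distinct leading monomials, and maximality in each set follows since a missing leading monomial would yield an element of $I$ without any representation of the required form.

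The main obstacle is Case~3 of existence, namely producing $\iota_{i_0}$ with $\lead{\iota_{i_0}}=a_2$. The claim to prove is: whenever some element of $I$ has leading monomial $w'\cdot a_2$ with $a_2\in\RR\axs_1\rC\setminus\rC$ and $|w'|\geq 1$, there already exists an element of $I\cap(\RR\axs_1\rC\setminus\rC)$ whose leading monomial is exactly $a_2$ (so by maximality, some $\iota_i$ does). This relies on the hypothesis that $I$ is generated by polynomials in $\RR\axs_1\rC$, combined with the right-chip-space axiom on $\rC$ and the precise form of $\prec_{\rC}$ on the middle stratum; one writes $p=\sum q_k g_k$ with $g_k\in\RR\axs_1\rC$, tracks how the monomial $m=w'a_2$ arises as a product $\lead{q_k}\cdot\lead{g_k}$, and uses the maximality of the chip $c$ in the decomposition $a_2=xc$ to exhibit an appropriate element of $I\cap\RR\axs_1\rC$ with leading monomial $a_2$.
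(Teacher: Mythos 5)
This lemma is cited from \cite[Lemma 3.4]{N} without a proof being reproduced in the present paper, so there is no in-paper argument to compare against; I will assess the proposal on its own terms. Your overall architecture is the right one: reduce on the leading monomial via the well-order $\prec_{\rC}$, using the stratification of the monomials of $\RR\axs\rC$ into $\rC$, $\RR\axs_1\rC\setminus\rC$, and the remainder to decide which part of the $\rC$-basis to subtract. Your uniqueness argument is also essentially sound, and the observation that two distinct leading monomials $\lead{\iota_i}\neq\lead{\iota_{i'}}$ in $\RR\axs_1\rC\setminus\rC$ cannot both be right chips of one monomial---since the right-chip-space axiom would force the shorter one into $\rC$---is precisely the structural fact that makes the top terms noncancelling.

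The genuine gap is in the step you correctly flag as the ``main obstacle.'' Your plan is to write $p=\sum_k q_kg_k$ with $g_k\in\RR\axs_1\rC$ and ``track how $m=w'a_2$ arises as a product $\lead{q_k}\lead{g_k}$.'' But there need not be any single $k$ with $\lead{q_k}\lead{g_k}=m$: the top terms of several $q_kg_k$ may cancel, so that $\lead{p}$ is strictly below $\max_k\lead{q_kg_k}$. Ruling this out is exactly the Buchberger-style confluence content of the statement; one must show that among all representations of $p$ there is one for which $\max_k\lead{q_kg_k}=\lead{p}$, and that in turn requires analyzing how overlapping top terms can be rewritten---none of which the sketch addresses. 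Even granting such a $k$, there is a second issue: the right chip $\lead{g_k}$ of $m$ might lie in $\rC$ rather than in $\RR\axs_1\rC\setminus\rC$, in which case you obtain a $\vartheta$-type reducer that cannot carry a nonconstant left multiplier in \eqref{eq:formOfAllInI}. One must then argue that multiplying $g_k$ on the left by successive letters of $w'$ until the leading monomial first exits $\rC$ still produces an element of $I$ whose leading monomial is a right chip of $m$ lying in $\RR\axs_1\rC\setminus\rC$, and only then can maximality of $\{\iota_i\}$ be invoked. Finally, every reduction step silently uses the compatibility $\lead{w\iota}=w\lead{\iota}$ for $\iota$ with $\lead{\iota}\in\RR\axs_1\rC\setminus\rC$; this does follow from clauses (3)--(4) once one checks that the decomposition $a=a_1a_2$ used there is unambiguous (which the right-chip-space axiom guarantees), but it is a separate lemma that deserves to be stated and proved rather than assumed.
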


\subsection{The $L$-Real Radical Algorithm for
\texorpdfstring{$\Lradd{L}{\rC}{I}$}{[sqrt L C I]}}

Here is an algorithm for the more general real radical $\Lradd{L}{\rC}{I}$, where $L \in
\RR^{\nu
\times \nu}\axs_{\sigma}$ is any symmetric polynomial, $\rC\subset \RR^{1
\times \ell}\axs$ is some
finite right chip space, and $I \subset \RR^{1 \times \ell}\axs$ is a left
module.
When a generating set $\{p_1, \ldots, p_{\mu}\}$ for $I$ is
inputted into the following algorithm, it outputs a $\rC$-basis
for $\Lradd{L}{\rC}{I}$.

\label{alg:Lraddd}
\begin{enumerate}[\rm(1)]
\item Fix some $\rC$-order on $\RR^{1 \times \ell}\axs$.
Compute a $\rC$-basis for $I$, 
and let
 $I^{(0)}$ be the outputted pair of sets.
 \item Let $i=0$.
\item
Let $T^{(i)} \subset
\rC$ be set of all monomials in $\rC$
which are not the leading monomial of an element in
$I^{(i)}$.
\item For the polynomial $\theta^{(i)}$,
\[ \theta^{(i)} = L \left(\sum_{m \in \rC} \alpha_m m \right), \]
where the $\alpha_m$ are $\nu$-dimensional column vector variables, use the
$\rC$-basis to solve for the space of $\alpha$ such that $\theta^{(i)}$ is
in the left module generated by $I^{(i)}$.
Using this solution, let $J^{(i)}$ be a basis for the following space:
\[ \left\{ \theta \in \RR^{\nu \times 1}T^{(i)} \mid L \vartheta
\in I^{(i)}
 \right\}.\]
Let $K^{(i)} \subset \RR^{\nu \times 1}T^{(i)}$ be a maximal set of linearly
independent
polynomials not in $J^{(i)}$.
\item Let $\tau^{(i)} = (\tau_j^{(i)})_{1 \leq j \leq \pi^{(i)}}$ be a vector
whose
entries are the elements of $T^{(i)}$, and let $\kappa^{(i)} =
(\kappa_j^{(i)})_{1
\leq j \leq \rho^{(i)}}$ be a vector whose entries are the elements of
$K^{(i)}$.
Define $L\kappa^{(i)} = (L\kappa_j^{(i)})_{1 \leq j \leq \rho}$.
 \item \label{step:A}
Let $I^{(i)} = \big(\{\iota_j^{(i)}\}_{j=1}^{\mu^{(i)}},
\{\vartheta_j^{(i)}\}_{j=1}^{\sigma^{(i)}}\big)$.
For $1 \leq j \leq \mu^{(i)}$, let $s_j^{(i)} \in \rC$ be defined as
\[s_j^{(i)} := \sum_{c \in \rC} \gamma_{c,j}^{(i)} c,\]
where the $\gamma_{c,j}^{(i)}$ are real-valued variables.
For $1 \leq j \leq \sigma^{(i)}$, let $\alpha_j^{(i)} \in \rC$ be
\[\alpha_j^{(i)} = \sum_{k \in \Gamma(\rC)} \xi_{k,j}^{(i)} e_k \otimes 1,\]
where the $\xi_{k,j}^{(i)}$ are real-valued variables.
Consider the problem of finding $A^{(i)}, B^{(i)} \succeq
0$ such that
\begin{align}
  \label{eq:toSolveInAlgD}
(\tau^{(i)})^*A^{(i)} (\tau^{(i)}) + (\kappa^{(i)})^*B^{(i)}(L\kappa^{(i)})
&= \sum_{j=1}^{\mu^{(i)}} \big([s_j^{(i)}]^*\iota_j^{(i)} +
[\iota_j^{(i)}]^*[s_j^{(i)}]\big)\\
\notag
&= \sum_{j=1}^{\sigma^{(i)}} \big([\alpha_j^{(i)}]^*\vartheta_j^{(i)} +
[\vartheta_j^{(i)}]^*\alpha_j^{(i)}\big)
\end{align}
for some values of $\gamma_{c,j}^{(i)}, \xi_{k,j}^{(i)} \in \RR$.
This can be solved by an algorithm
similar to the SOS algorithm in \cite[\S9.1]{N}; see \S\ref{subsec:modSOS} below.

 \item If \eqref{eq:toSolveInAlgD} has a nonzero solution,
then let $\iota_j^{(i)}$ and $\zeta_{(j,k)}^{(i)}$ be defined by
\begin{align}
 \label{eq:defiotaD}
\begin{pmatrix}
 \iota_1^{(i)}\\ \vdots \\ \iota_{\sigma^{(i)}}
\end{pmatrix}
&=
\sqrt{A^{(i)}} \tau^{(i)}\\
\notag
\left(
\begin{array}{cccc}
 \zeta_{(1,1)}^{(i)}& \zeta_{(1,2)}^{(i)}&\cdots &
\zeta_{(1,\rho^{(i)})}^{(i)}\\
 \zeta_{(2,1)}^{(i)}& \zeta_{(2,2)}^{(i)}&\cdots &
\zeta_{(2,\rho^{(i)})}^{(i)}\\
\vdots&\vdots&\ddots&\vdots\\
 \zeta_{(\rho^{(i)},1)}^{(i)}& \zeta_{(\rho^{(i)},2)}^{(i)}&\cdots &
\zeta_{(\rho^{(i)},\rho^{(i)})}^{(i)}
\end{array}
\right)  &= \sqrt{B^{(i)}}L\kappa^{(i)}.
\end{align}
Let $I^{(i+1)}$ be the $\rC$-basis generated
by the set
\[I^{(i)} \cup \{ \iota_j \}_{j}^{\sigma^{(i)}} \cup \{\zeta_{(j,k)}^{(i)}
\}_{j,k=1}^{\rho^{(i)}}.\]
Let $T^{(i+1)}$ be the space spanned by all monomials in $T^{(i)}$
which are not the leading monomial of
 an element of $I^{(i+1)}$.
Set $i:= i+1$ and go to \eqref{step:A}.
\item If \eqref{eq:toSolveInAlgD} has no nonzero solution,
then stop and output $I^{(i)}$.
\end{enumerate}

\subsubsection{Modified SOS Algorithm}\label{subsec:modSOS}
We now explain in detail how to solve the problem given in Step
\eqref{step:A} of the above algorithm.

\begin{enumerate}[(a)]
\item
Let $\cZ^{(i)}$ be the space
\[
\cZ^{(i)} = \{ (Z_{\tau}, Z_{\kappa}) \in \mathbb{S}^{\pi^{(i)}} \times
\mathbb{S}^{\rho^{(i)}} \mid (\tau^{(i)})^* Z_{\tau} (\tau^{(i)})
+  (\kappa^{(i)})^* Z_{\kappa} (\kappa^{(i)}) = 0 \},
\]
and let $(Z_{i,1,\tau}, Z_{i,1,\kappa}), \ldots, (Z_{i,n^{(i)},\tau},
Z_{i,n^{(i)},\kappa})$ be a basis for $\cZ^{(i)}$.
\item Express the right hand side of \eqref{eq:toSolveInAlgD} as
\begin{multline*}
 \sum_{c \in \rC} \sum_{j=1}^{\mu^{(i)}} \gamma_{c,j}^{(i)} \big(
[\tau^{(i)}]^* F_{c,i,j, \tau} [\tau^{(i)}] + [\kappa^{(i)}]^* F_{c,i,j, \kappa}
[\kappa^{(i)}]\big)
\\
+ \sum_{k \in \Gamma(\rC)} \sum_{j=1}^{\sigma^{(i)}} \xi_{k,j}^{(i)}
\big([\tau^{(i)}]^* H_{k,i,j, \tau}[\tau^{(i)}] + [\kappa^{(i)}]^* H_{k,i,j,
\kappa}[\kappa^{(i)}]\big),
\end{multline*}
for some symmetric matrices $F_{c,i,j, \tau}, F_{c,i,j, \kappa}, H_{k,i,j,\tau},
H_{k,i,j,\kappa}$.
\item
If the linear pencil
\begin{align}
\label{eq:linPenAlgMod}
  L_i(\alpha^{(i)}, \gamma^{(i)}, \kappa^{(i)}) &=
\sum_{j=1}^{n^{(i)}} \alpha^{(i)} (Z_{i,j, \tau} \oplus Z_{i,j,\kappa})
+ \sum_{c,j} \gamma_{c,j} (F_{c,i,j,\tau} \oplus F_{c,i,j,\kappa})\\
\notag
&\quad + \sum_{k,j} \xi_{k,j} ( H_{k,i,j,\tau} \oplus H_{k,i,j,\kappa})
\end{align}
contains any 0 on its diagonal, set all
entries in the row and column corresponding to the $0$ diagonal entry
to be $0$.  Use the resulting
linear equations to reduce the number of variables.
Repeat this step until there are no diagonal entries equal to $0$.
\item If we eventually get $L_i = 0$, stop and output
that there is no nonzero solution.
\item Solve the linear matrix inequality
\[
L_i(\alpha^{(i)}, \gamma^{(i)}, \kappa^{(i)}) \succeq 0
\]
to see if there is a nonzero solution $(\alpha^{(i)}, \gamma^{(i)},
\kappa^{(i)})$.
\item If there is not, stop and output 
that there is no nonzero solution.
\item Otherwise, output the obtained  solution.
\end{enumerate}

\subsubsection{Properties of the \texorpdfstring{$L$}{[L]}-Real Radical Algorithm
for
\texorpdfstring{$\Lradd{L}{\rC}{I}$}{[sqrt L C I]}}
\begin{prop}
Let $L \in
\RR^{\nu \times
\nu}\axs_{\sigma}$ be a symmetric polynomial, let $\rC \subset \RR^{1
\times \ell}\axs$ be a
finite right chip space, and let $I\subset \RR^{1 \times \ell}\axs$ be a left
module.
The $L$-Real Radical algorithm for $\Lradd{L}{\rC}{I}$ in {\rm \S\ref{alg:Lraddd}}
has
the
following properties.
\begin{enumerate}[\rm(1)]
 \item The algorithm terminates in a finite number of steps.
 \item If $I$ is generated by polynomials in $\RR\axs_{\sigma}\rC$,
 then the algorithm involves only computations on polynomials in
$\RR\axs_{\sigma}\rC$.
 \item The algorithm outputs a $\rC$-basis for
$\Lradd{L}{\rC}{I}$.
\end{enumerate}
\end{prop}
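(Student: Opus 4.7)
The plan is to handle the three claims in order, treating (1) and (2) as bookkeeping and concentrating on (3), which is where the real content lies.

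For termination (1), I would argue that each successful iteration strictly enlarges the left module $\langle I^{(i)}\rangle$ inside $\Lradd{L}{\rC}{I}$. Indeed, in step \eqref{step:A}, if \eqref{eq:toSolveInAlgD} has a nonzero solution, then at least one of the newly constructed $\iota_j^{(i)}$ or $\zeta_{(j,k)}^{(i)}$ must have its leading monomial in $T^{(i)}$ (otherwise it would already be reducible mod the current $\rC$-basis and could be omitted). Thus $\dim(T^{(i+1)})<\dim(T^{(i)})$, and since $\dim(T^{(0)})\le\dim(\rC)+\dim(\RR\axs_1\rC\setminus\rC)<\infty$, the process halts after finitely many steps.

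For the degree containment in (2), I would track where each polynomial introduced by the algorithm sits. The $\rC$-basis of $I$ lies in $\RR\axs_1\rC$ by hypothesis, and the new polynomials extracted via \eqref{eq:defiotaD} are of the form $\sqrt{A^{(i)}}\tau^{(i)}$, whose entries lie in $\operatorname{span}(T^{(i)})\subset\rC$, and $\sqrt{B^{(i)}}L\kappa^{(i)}$, whose entries lie in $L\cdot\RR^{\nu\times1}\rC\subset\RR\axs_\sigma\rC$. The ancillary $s_j^{(i)}$ and $\alpha_j^{(i)}$ used in \eqref{eq:toSolveInAlgD} are chosen from $\rC$ by construction, so multiplying by the generators of $I^{(i)}$ (which lie in $\RR\axs_1\rC$) keeps everything inside $\RR\axs_\sigma\cdot\rC\cdot\rC^*\subset\RR\axs_{\sigma}\rC$ after the relevant cancellations; hence all the matrix-equation computations stay in this fixed finite dimensional ambient space.

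The main obstacle is correctness (3). I would prove by induction on $i$ that the left module $\langle I^{(i)}\rangle$ is contained in $\Lradd{L}{\rC}{I}$. The base case is $\langle I^{(0)}\rangle=I\subset\Lradd{L}{\rC}{I}$. For the induction step, suppose \eqref{eq:toSolveInAlgD} has a nonzero solution, so
\[
(\tau^{(i)})^*A^{(i)}\tau^{(i)}+(\kappa^{(i)})^*B^{(i)}(L\kappa^{(i)})\in\RR^{\ell\times1}\langle I^{(i)}\rangle+\langle I^{(i)}\rangle^*\RR^{1\times\ell};
\]
diagonalizing $A^{(i)}=\sum a_k a_k^*$ and $B^{(i)}=\sum b_k b_k^*$ yields a sum-of-squares representation
\[
\sum_k (a_k^*\tau^{(i)})^*(a_k^*\tau^{(i)})+\sum_k(b_k^*\kappa^{(i)})^*L(b_k^*\kappa^{(i)})\in\RR^{\ell\times1}\Lradd{L}{\rC}{I}+\Lradd{L}{\rC}{I}^*\RR^{1\times\ell}.
\]
Since $a_k^*\tau^{(i)}\in\rC$ and $b_k^*\kappa^{(i)}\in\RR^{\nu\times1}\rC$, the $(L,\rC)$-real property of $\Lradd{L}{\rC}{I}$ forces each $a_k^*\tau^{(i)}$ and each $L(b_k^*\kappa^{(i)})$ into $\Lradd{L}{\rC}{I}$, which is exactly the statement that the entries of $\sqrt{A^{(i)}}\tau^{(i)}$ and $\sqrt{B^{(i)}}L\kappa^{(i)}$ lie in $\Lradd{L}{\rC}{I}$. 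Hence $\langle I^{(i+1)}\rangle\subset\Lradd{L}{\rC}{I}$.

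Conversely, at termination \eqref{eq:toSolveInAlgD} has no nonzero solution. I would then verify directly from the definition that $\langle I^{(i)}\rangle$ is $(L,\rC)$-real: any witness
\[
\sum p_j^*p_j+\sum q_k^*Lq_k\in\RR^{\ell\times1}\langle I^{(i)}\rangle+\langle I^{(i)}\rangle^*\RR^{1\times\ell},
\]
with $p_j\in\rC$, $q_k\in\RR^{\nu\times1}\rC$, can be written in the form of \eqref{eq:toSolveInAlgD} after decomposing $p_j$ and $q_k$ modulo $T^{(i)}$ and $K^{(i)}\oplus J^{(i)}$ respectively (using that $J^{(i)}$ captures exactly the $\vartheta\in\RR^{\nu\times1}T^{(i)}$ with $L\vartheta\in\langle I^{(i)}\rangle$). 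Nonexistence of a nonzero solution then forces each $p_j\in\langle I^{(i)}\rangle$ and each $Lq_k\in\RR^{\nu\times1}\langle I^{(i)}\rangle$, giving $(L,\rC)$-realness and thus $\langle I^{(i)}\rangle=\Lradd{L}{\rC}{I}$. The delicate point I expect to work hardest is ensuring the decomposition via $J^{(i)}$ and $K^{(i)}$ in step (4) correctly certifies $(L,\rC)$-realness; once that is in place the rest follows.
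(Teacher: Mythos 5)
Your proof follows the same architecture as the paper's: maintain the inductive invariant $I \subset \langle I^{(i)}\rangle \subset \Lradd{L}{\rC}{I}$, use the $(L,\rC)$-realness of $\Lradd{L}{\rC}{I}$ to show the $\iota_j^{(i)}$ and $\zeta_{(j,k)}^{(i)}$ extracted from a nonzero solution of \eqref{eq:toSolveInAlgD} belong to $\Lradd{L}{\rC}{I}$, and, at termination, verify directly that $\langle I^{(i)}\rangle$ is $(L,\rC)$-real. The ``delicate point'' you flag about $J^{(i)}$, $K^{(i)}$ correctly certifying $(L,\rC)$-realness is exactly what the paper handles by invoking Lemma~\ref{prop:lowdegqradcond}; your sketch of that reconciliation is consistent with it, and your item (2) bookkeeping matches the paper's.

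There is, however, a genuine gap in your termination argument. You claim that whenever \eqref{eq:toSolveInAlgD} has a nonzero solution, at least one of the new $\iota_j^{(i)}$ or $\zeta_{(j,k)}^{(i)}$ has leading monomial in $T^{(i)}$, hence $\dim T^{(i+1)} < \dim T^{(i)}$. This is false in general: while the $\iota_j^{(i)}$ are entries of $\sqrt{A^{(i)}}\tau^{(i)}\in\operatorname{span}(T^{(i)})\subset\rC$, the $\zeta_{(j,k)}^{(i)}$ are entries of $\sqrt{B^{(i)}}L\kappa^{(i)}$, which live in $\RR\axs_\sigma\rC$ rather than $\rC$. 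When $\sigma>0$, their leading monomials (in the $\rC$-order) typically lie in $\RR\axs_\sigma\rC\setminus\rC$, hence outside $T^{(i)}\subset\rC$. In particular, if $A^{(i)}=0$ and $B^{(i)}\neq0$, only $\zeta$'s are added and one can have $T^{(i+1)}=T^{(i)}$, so the quantity $\dim T^{(i)}$ need not strictly decrease. The correct monotone quantity is the $\rC$-basis $I^{(i)}$ itself (equivalently $\langle I^{(i)}\rangle\cap\RR\axs_\sigma\rC$): each iteration that does not halt adds at least one new polynomial from $\RR\axs_\sigma\rC$ with a leading monomial distinct from all leading monomials already present, and since $\RR\axs_\sigma\rC$ is finite-dimensional the algorithm halts after at most $\dim(\RR\axs_\sigma\rC)$ iterations. (That a $\zeta$ is genuinely new follows because the $\kappa_j^{(i)}\in K^{(i)}$ are chosen with $L\kappa_j^{(i)}\notin\RR^{\nu\times1}\langle I^{(i)}\rangle$, so any nonzero row of $\sqrt{B^{(i)}}L\kappa^{(i)}$ has an entry outside $\langle I^{(i)}\rangle$.) Your conclusion is right; the proof should replace the $T^{(i)}$-dimension count with this $\rC$-basis growth argument.
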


\begin{proof}

Given an index $i$, assume inductively that
$I \subset \RR\axs I^{(i)} \subset \Lradd{L}{\rC}{I}$, that $I^{(i)}$ is a
$\rC$-basis, and that
\begin{equation}
\label{eq:inductionC1again}
\rC = \left( \operatorname{span}I^{(i)} \cap \rC \right) \oplus
\operatorname{span}T^{(i)}.
\end{equation}
For $i=0$, this is true by construction
since every monomial in $\rC$ is either the leading monomial of
an element in $I^{(0)}$---in which case it is the leading monomial
of an element of $I^{(0)} \cap \rC$---or it is in $T^{(0)}$.
We compute $J^{(i)}$ and $K^{(i)}$ so that,
by Lemma \ref{prop:lowdegqradcond},  the left module generated
by $I^{(i)}$ is $(L,\rC)$-real if and only if whenever
\begin{equation}
 \label{eq:theorForm}
\sum_r^{\finite} \tau_r^*\tau_r + \sum_{j}^{\finite} \kappa_j^*L\kappa_j \in
\RR^{\ell \times 1}\axs I^{(i)} +
(I^{(i)})^*\RR^{1 \times \ell}\axs
\end{equation}
for $\tau_r \in \operatorname{span} T^{(i)}$ and $\kappa_j \in
\operatorname{span} K^{(i)}$,
then each $\tau_r  = 0 $ and each $\kappa_j = 0$.

If \eqref{eq:toSolveInAlgD} has a nonzero solution, then we see that
\[\sum_{j=1}^{\tau(i)} (\iota_j^{(i)})^*\iota_j^{(i)} + \sum_{r=1}^k
\begin{pmatrix}
 \zeta_{(1,r)}^{(i)}\\ \vdots \\
 \zeta_{(\rho^{(i)},r)}^{(i)}
\end{pmatrix}^*L \begin{pmatrix}
 \zeta_{(1,r)}^{(i)}\\ \vdots \\
 \zeta_{(\rho^{(i)},r)}^{(i)}
\end{pmatrix}  \]
is in the space $ \RR^{\ell \times 1}\axs I^{(i)} +
(I^{(i)})^*\RR^{1 \times \ell}\axs,$
which implies that the $\iota_j^{(i)}, \zeta_{(j,k)}^{(i)} \in
\Lradd{L}{\rC}{I}$.

If \eqref{eq:toSolveInAlgD} has no nonzero solution, then it follows that
\eqref{eq:theorForm} holds if and only if each $\tau_r, \kappa_j = 0$.
Therefore
the left module generated by $I^{(i)}$ is
$(L,\rC)$-real, and since $I \subset \RR\axs I^{(i)} \subset
\Lradd{L}{\rC}{I}$, we have that
 $I^{(i)}$ is a $\rC$-basis for $\Lradd{L}{\rC}{I}$.

At each iteration of the algorithm, we either stop or we add at
least one new
polynomial in $\RR\axs_{\sigma} \rC$ to
$I^{(i)}$.
Therefore the
algorithm  takes at most $\dim\big(\RR\axs_{\sigma}\rC\big)$
iterations.
Also, if I is generated by polynomials in $\RR\axs_{\sigma}\rC$, then
the generating set of each $I^{(i)}$ is made up of polynomials
in $\RR\axs_{\sigma}\rC$.
\end{proof}

\subsection{Verifying if a Polynomial is Positive on a Spectrahedron}
\label{sub:verp}

Given a left module $I \subset \RR^{1 \times \ell}\axs$, a finite right chip
space $\rC \subset \RR^{1 \times \ell}\axs$ and a linear pencil $L \subset
\RR^{\nu \times \nu}\axs$, we now show how to algorithmically verify whether a 
symmetric
polynomial $p \in \rC^*\RR\axs_1 \rC$ is of the form \eqref{eq:clasPosIdeal}.
In particular, by Theorem \ref{thm:mainNMon}, this tells us whether
$p$ is positive where $L$ is positive and each $\iota\in I$ vanishes.

\subsubsection{Algorithm}

\begin{enumerate}
 \item Compute a $\rC$-basis for $\Lradd{L}{\rC}{I}$.  Let $\tilde{\iota}$ be a
vector whose entries are all polynomials in the $\rC$-basis.
 \item Let $c$ be a vector whose entries are all monomials in $\rC$.
 \item Given a $\rC$-order, let $\tau$ be a vector whose entries are all
monomials in $\rC$ which are not divisible on the right by the leading monomial
of an element of the $\rC$-basis for $\Lradd{L}{\rC}{I}$.
 \item Let $\tilde{\kappa}$ be a vector whose entries are all polynomials of the
form $L e_i^* \tau_j$ for some entry $\tau_j$ of $\tau$.
 \item Consider the equation
 \begin{equation}
  \label{eq:inCoeffs}
  p = \tau^*A \tau + \tilde{\kappa}^* B \tilde{\kappa} + \tilde{\iota}^* C
c + \tilde{\iota}^* C^* c
 \end{equation}
 where $A$, $B$, $C$ are unknowns.
 This equation amounts to a series of linear equations in the entries of $A$,
$B$, $C$.
 \item The polynomial  $p$ is of the form \eqref{eq:clasPosIdeal} if and only if
the following linear matrix inequality
is feasible:
 \[
  A, B \succeq 0 \quad \mbox{ such that } \quad \eqref{eq:inCoeffs} \mbox{
holds}
 \]
\end{enumerate}

\section{Completely Positive Maps in the Absence of Invertible Positive Elements}\label{sec:cp}
\def\mbK{\mathbb K}

The theory we have developed can be used to strengthen the theory of complete
positivity (CP). The theorem at the core of the subject represents
a CP map $\tau$ between unital subspaces of matrix algebras
as $\tau(A) = V^*\phi(A) V$, where $\phi$ is an isometric isomorphism.
This can be thought of as an algebraic certificate  for CP,
and it is gotten by combining the Arveson extension theorem
with the Stinespring representation theorem.
In this section we give  algebraic certificates for CP
maps between nonunital subspaces of matrix algebras.

\subsection{Completely Positive Maps}
A subspace $\rA\subseteq \Rnn$ closed under the transpose will be called a 
\df{(nonunital) operator system}.  We write $\mbS(\rA)$ for the set of all symmetric
elements $A=A^*\in\rA$, 
and $\mbK(\rA)=\{A\in\rA \mid A^*=-A\}$ denotes the skew-symmetric elements of $\rA$.
Furthermore, 
\[\rA_{\succeq0}=\{A\in\mbS(\rA)\mid A\succeq0\}.\]

\begin{lem}\label{lem:decomp}
We have
\[
\rA= \mbS(\rA)\oplus\mbK(\rA).
\]
\end{lem}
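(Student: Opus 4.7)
The plan is to verify both that every element of $\rA$ decomposes as a sum of a symmetric and a skew-symmetric part, and that such a decomposition is unique; the closure of $\rA$ under the transpose is what makes the symmetric and skew parts themselves lie in $\rA$.

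First I would use the standard averaging trick: for any $A \in \rA$, write
\[
A = \tfrac{1}{2}(A + A^*) + \tfrac{1}{2}(A - A^*).
\]
Since $\rA$ is a subspace closed under the involution $A\mapsto A^*$, both $A^*\in\rA$ and the two summands lie in $\rA$. The first summand is its own transpose, hence belongs to $\mbS(\rA)$, and the second summand is the negative of its transpose, hence belongs to $\mbK(\rA)$. This shows $\rA = \mbS(\rA) + \mbK(\rA)$.

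For the directness of the sum, I would show $\mbS(\rA)\cap \mbK(\rA) = \{0\}$: if $A$ is both symmetric and skew-symmetric, then $A = A^* = -A$, so $2A = 0$ and hence $A = 0$. No step is really an obstacle here — this is a routine application of the symmetric/skew decomposition, with the only subtlety being the use of the hypothesis that $\rA$ is closed under transpose to ensure the summands stay inside $\rA$.
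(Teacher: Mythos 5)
Your proof is correct and uses exactly the same decomposition $A = \tfrac{1}{2}(A+A^*) + \tfrac{1}{2}(A-A^*)$ as the paper, which is the entire content of the paper's one-line proof. You additionally spell out the directness of the sum (intersection is zero), which the paper leaves implicit; that is a harmless and sensible extra detail.
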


\begin{proof}
Observe that
\[
A = \frac{A+A^*}2+ \frac{A-A^*}2. \qedhere
\]
\end{proof}

Let $\rB\subseteq\Rll$ be another operator system.
A linear $*$-map $\tau:\rA\to\rB$ is called \df{completely positive (CP)} if it is positive
(i.e., $0\preceq A\in\rA$ implies $\tau(A)\succeq0$) and all its \df{ampliations}
$\tau\otimes \id_k:M_k(\rA)\to M_k(\rB)$, $k\in\N$, are positive.

\begin{lem}\label{lem:CP1}
Suppose $\rA\subseteq\Rnn$ and $\rB\subseteq\Rll$ are operator systems,
and let $\tau:\rA\to\rB$ be a linear $*$-map. Suppose $\rA_{\succeq0}=\{0\}$. 
Then:
\ben[\rm(1)]
\item
$M_k(\rA)_{\succeq0}=\{0\}$ for all $k\in\N$;
\item
$\tau$ is CP.
\een
\end{lem}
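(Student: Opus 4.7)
The plan is to prove (1) directly and then observe that (2) is an immediate consequence. Both parts hinge on the standard linear-algebra fact that a positive semidefinite block matrix with zero diagonal blocks must be zero.

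First, for (1), I would take an arbitrary $A=(A_{ij})_{i,j=1}^k\in M_k(\rA)_{\succeq0}$, regarded as an honest $kn\times kn$ real symmetric PSD matrix with blocks $A_{ij}\in\rA\subseteq\Rnn$. Evaluating the quadratic form on vectors of the shape $e_i\otimes w$ with $w\in\R^n$ shows that each diagonal block satisfies
\[
w^{*}A_{ii}w = (e_i\otimes w)^{*}A(e_i\otimes w)\geq 0,
\]
so $A_{ii}\in\rA_{\succeq0}=\{0\}$. Hence every diagonal block vanishes.

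Second, I would invoke the standard fact that a PSD matrix with a zero diagonal entry has the corresponding row and column equal to zero (equivalently, $v^{*}Av=0$ for a PSD $A$ forces $Av=0$). Applying this to each $v=e_i\otimes w$ yields $A(e_i\otimes w)=0$ for all $i$ and all $w\in\R^n$, and since these vectors span $\R^{kn}$, we conclude $A=0$. This finishes (1).

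For (2), by definition $\tau$ is CP iff every ampliation $\tau\otimes\id_k$ sends PSD elements of $M_k(\rA)$ to PSD elements of $M_k(\rB)$. But by (1), $M_k(\rA)_{\succeq0}=\{0\}$, so the only input to test is $0$, whose image is $0\succeq0$. Thus (2) follows trivially from (1).

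I do not anticipate any genuine obstacle here; the only point that requires a moment of care is justifying that zero diagonal blocks of a PSD matrix force the whole matrix to vanish, but this is a one-line consequence of the Cauchy–Schwarz inequality applied to the PSD bilinear form $\langle x,y\rangle_A:=x^{*}Ay$. The argument is purely linear-algebraic and uses nothing from the heavy real radical machinery developed earlier in the paper.
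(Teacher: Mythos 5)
Your argument is correct and follows the same route as the paper: the diagonal blocks of a PSD element of $M_k(\rA)$ lie in $\rA_{\succeq0}=\{0\}$, and a PSD matrix with vanishing diagonal blocks must be zero, whence (2) is vacuous. You merely make explicit the step (PSD plus zero diagonal implies zero) that the paper's one-line proof leaves implicit.
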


\begin{proof}
If $0\neq A\in M_k(\rA)_{\succeq0}$, then at least one of the diagonal  $\nu\times\nu$ blocks $A_{jj}$ of $A$ is 
positive semidefinite and nonzero, violating $\rA_{\succeq0}=\{0\}$.  Item (2) now follows easily.
\end{proof}

\begin{remark}\label{rem:CP1}
For this reason we may restrict our attention the the case of operator systems $\rA$ with 
nonzero $\rA_{\succeq0}$. We point out that detecting whether $\rA_{\succeq0}=\{0\}$ 
is easily done using the machinery developed above. Indeed, 
choose a basis  $\{A_1,\ldots,A_s\}$ of $\mbS(\rA)$,  a basis
$\{A_{s+1},\dots,A_g\}$ of $\mbK(\rA)$, and form the linear pencil
\begin{multline*}
L(x)= A_1(x_1+x_1^*) + \cdots + A_s(x_s+x_s^*)   +  
A_{s+1}(x_{s+1}-x_{s+1}^*)+\cdots+A_g (x_g-x_g^*).
\end{multline*}
Consider the expanded pencil 
\beq\label{eq:Lhat}
\begin{split}
\hat L(x) & = L(x) \oplus \Big( \tr\big(L(x)\big)-1 \Big) \\ 
& = L(x) \oplus  \Big(
\tr(A_1)(x_1+x_1^*) + \cdots + \tr(A_g)(x_s+x_s^*) -1 \Big).
\end{split}
\eeq
Note that 
$\rA_{\succeq0}=\{0\}$ iff
$L(\cD_L)=\{0\}$ iff  $\cD_{\hat L}=\varnothing$ iff 
$\Lrad{\hat L}{\{0\}} = \RR\axs$ (by Proposition \ref{cor:infeas}), 
and the last condition is 
easily detected by the algorithms presented in \S\ref{sect:CompRad}.
\end{remark}

\begin{exa}
In general not every CP map $\tau:\rA\to\rB$ extends to a CP map 
$\hat\tau:\Rnn\to\Rll$.
\ben[\rm(1)]
\item
Suppose $\rA_{\succeq0}=\{0\}$, and let $\tau:\rA\to \Rll$ be any map
\emph{not} of the Arveson-Stinespring form (i.e., 
of the form $X\mapsto \sum_j V_j^*XV_j$). Then $\tau$ is CP by Lemma \ref{lem:CP1},
but cannot be extended to the full matrix algebra.
\item
For a slightly less trivial example, let $\rA=\begin{pmatrix} 0 & \R \\ \R & \R \end{pmatrix}\subseteq \R^{2\times 2}$, and consider
$\tau:\rA\to\R$ given by $\begin{pmatrix} a_{11} & a_{12} \\ a_{21} & a_{22} \end{pmatrix}
\mapsto a_{12}+a_{21}$. Then $\tau(\rA_{\succeq0})=\{0\}$, so $\tau$ is positive. Since it maps
into $\R$, an easy exercise (or see \cite{Pau02}) now shows $\tau$ is CP. But $\tau$ does not admit
an extension to a positive map on $\R^{2\times2}$.
\een
\end{exa}

\subsection{Pencils Associated with Operator Systems}

Retain the notation of Lemma \ref{lem:CP1}. We assume $\rA_{\succeq0}\neq\{0\}$.
As in Remark \ref{rem:CP1}
 we can select a basis $A_0,\ldots,A_{s-1},A_{s},\ldots,A_g$ of $\rA$ consisting
solely of symmetric and skew symmetric elements. Here, $A_0,\ldots,A_{s-1}\in\mbS(\rA)$ and
$A_{s},\ldots,A_g\in\mbK(\rA)$ for $s\in\N$. Let us call this a \df{symmetric basis} for $\rA$.
To such a basis we associate the linear pencil 
\begin{multline}\label{eq:pencCP}
L_{\rA}(x) = A_0 + A_1 (x_1 +x_1^*) + \cdots +  A_{s-1} (x_{s-1} +x_{s-1}^*)   \\
+ 
A_s (x_s -x_s^*)  + \cdots +A_g (x_g -x_g^*)  .
\end{multline}

\begin{prop}\label{prop:CPbounded}
Assume that $A_0\neq0$ 
is a maximum rank positive semidefinite matrix of $\rA$, 
and that $A_0,\ldots,A_{s-1}$ are pairwise orthogonal, i.e., $\tr(A_i^*A_j)=0$.
Then $\cD_{L_\rA}(1)$ is bounded.
\end{prop}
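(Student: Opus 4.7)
The plan is to argue by contradiction via a standard homogenization-compactness extraction: from an unbounded sequence in $\cD_{L_\rA}(1)$ I would produce a nonzero positive semidefinite element of $\rA$ that the maximum rank and orthogonality hypotheses then force to vanish.

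First I would observe that at level $1$ the pencil collapses. For $x \in \R^g$ one has $x_i^* = x_i$, so $A_i(x_i - x_i^*) = 0$ for the skew-symmetric basis elements, and hence
\[
L_\rA(x) = A_0 + 2\sum_{i=1}^{s-1} x_i A_i,
\]
a function of only the $s-1$ coordinates paired with the non-constant symmetric basis elements. The interesting content of the boundedness claim is therefore the boundedness of $\{(x_1,\dots,x_{s-1}) \in \R^{s-1} : A_0 + 2\sum_{i=1}^{s-1} x_i A_i \succeq 0\}$, and the argument below establishes this.

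Supposing this set is unbounded, I would pick $x^{(k)}$ with $\|x^{(k)}\| \to \infty$ and $L_\rA(x^{(k)}) \succeq 0$, and then normalize $y^{(k)} := x^{(k)}/\|x^{(k)}\|$. After passing to a subsequence $y^{(k)} \to y$ with $\|y\| = 1$. Dividing the positivity condition by $\|x^{(k)}\|$ and taking limits yields
\[
B := 2\sum_{i=1}^{s-1} y_i A_i \succeq 0,
\]
and $B \neq 0$ because $A_1,\dots,A_{s-1}$ are linearly independent and $y \neq 0$. The orthogonality hypothesis gives $\tr(A_0 B) = 2\sum_i y_i \tr(A_0 A_i) = 0$; combined with $A_0, B \succeq 0$ this forces $A_0^{1/2} B A_0^{1/2} = 0$, hence $A_0 B = 0$, so $\operatorname{range}(B) \subseteq \ker(A_0)$.

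To close I would invoke the maximum rank of $A_0$: for any $C \in \rA$ with $C \succeq 0$, the element $A_0 + C$ lies in $\rA$ and is PSD, so by maximality $\rank(A_0 + C) \leq \rank(A_0)$; since for two PSD matrices $\ker(A_0 + C) = \ker A_0 \cap \ker C$, this inequality forces $\ker A_0 \subseteq \ker C$. Applying this with $C = B$ yields $\ker A_0 \subseteq \ker B$; combined with the previous inclusion, $\operatorname{range}(B) \subseteq \ker B$, so $B^2 = 0$, and since $B$ is symmetric, $B = 0$ — the desired contradiction. The main obstacle is the clean setup of the kernel-containment consequence of the maximum rank hypothesis; once that is in place, orthogonality produces the opposite containment and a one-line squeeze kills $B$.
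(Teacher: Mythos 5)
Your proof is correct and reaches the same conclusion from the same two hypotheses (orthogonality of $A_0,\ldots,A_{s-1}$ and maximality of $\rank A_0$), but the way you deploy them is cleaner and coordinate-free compared to the paper's argument. The paper first puts $A_0=\Id_r\oplus 0$ by a WLOG change of basis, then proves a Claim about $A_1$ with block decomposition: orthogonality gives $\tr A_{11}=0$, which together with $\Id_r+\lambda A_{11}\succeq 0$ for all $\lambda\geq0$ forces $A_{11}=0$; a Schur-complement argument then kills $A_{12}$; and finally rank maximality of $A_0$ kills $A_{22}$. You instead normalize the unbounded sequence and pass to the limit to produce a PSD element $B=2\sum_i y_iA_i\neq0$ of $\rA$ directly (the paper's Claim works with the weaker-looking hypothesis $A_0+\lambda A_1\succeq 0$ for all $\lambda\geq0$, but that is in fact equivalent to $A_1\succeq0$ once $A_0\succeq0$, so there is no real loss here). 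You then observe $A_0B=0$ via the trace trick $\tr(A_0^{1/2}BA_0^{1/2})=0$, giving $\operatorname{range}B\subseteq\ker A_0$, and derive $\ker A_0\subseteq\ker B$ from rank maximality applied to $A_0+B$ (using $\ker(A_0+B)=\ker A_0\cap\ker B$ for PSD summands). The squeeze $B^2=0$ and symmetry of $B$ then finish it. Both approaches rely on the same structural ideas — orthogonality kills $B$ on $\operatorname{range}A_0$, rank maximality kills $B$ on $\ker A_0$ — but yours packages them without the change of basis, block bookkeeping, or Schur complements, which makes the proof both shorter and more transparent.
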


\def\la{\lambda} 

\begin{proof}
Without loss of generality we may assume
\[
A_0= \begin{bmatrix} \Id_r & 0 \\ 0 & 0_{\nu-r} \end{bmatrix}
\]
for some $1\leq r\leq \nu$.

\smallskip
{\bf Claim.} If for some $A_1= \begin{bmatrix} A_{11} & A_{12} \\ A_{12}^* & A_{22}\end{bmatrix}\in\rA$ with $A_{11}\in \mbS^r$ we have
$\langle A_0, A_1\rangle=0$ and
\beq\label{eq:block2}
A_0 + \la A_1 = 
\begin{bmatrix} \Id_r+ \la A_{11} & \la A_{12} \\ \la A_{12}^* & \la A_{22}
\end{bmatrix}
\succeq 0 \quad\text{ for all } \quad \la\in\R_{\geq0},
\eeq
then 
$A_1=0$.

\emph{Explanation.}
Since $\langle A_0,A_1\rangle=0$, 
$\tr(A_{11})=0$. This means that either $A_{11}=0$ or $A_{11}$ has both positive and negative eigenvalues.
In the latter case, fix an eigenvalue $\mu<0$ of $A_{11}$.
Then for every $\la\in\R$ with $\la>-\mu^{-1}>0$, 
we have that $\Id_r+\la A_{11} \not\succeq0$, contradicting \eqref{eq:block2}.
So $A_{11}=0$. If $r=\nu$ we are done. Hence assume $r<\mu$.

Now
\beq\label{eq:preschur}
A_0+ \la A_1 = \begin{bmatrix} \Id_r  & \la A_{12} \\ \la A_{12}^* & \la A_{22}
\end{bmatrix}
\succeq 0
\eeq
for all $\la\in\R_{\geq0}$.
Using Schur complements, \eqref{eq:preschur}
is equivalent to
\[
\la A_{22} - \la^2 A_{12}^*A_{12} \succeq0.
\]
Hence $A_{22} - \la A_{12}^*A_{12} \succeq0$ for all $\la\in\R_{\geq0}$. 
Equivalently,
$A_{12}=0$ and $A_{22}\succeq0$.
If $A_{22}\neq0$, then $0 \preceq A_0+A_1 \in\rA$, and
$$r=\rank(A_0) < \rank(A_0+A_1),$$ contradicting the maximality of the rank of $A_0$.
\hfill$\square$

\smallskip
We now show that $\cD_{L_{\rA}}(1)$ is bounded. Assume otherwise. 
Then there exists a sequence $(x^{(k)})_k$
in $\R^{s-1}$
such
  that $\|x^{(k)}\|=1$ for all $k$, and an increasing sequence
  $t_k\in\R_{>0}$ tending to $\infty$ such that
  $L_\rA(t_k x^{(k)})\succeq0$.  By convexity this implies $t_kx^{(j)}\in \cD_{L_\rA}$ for all $j\ge k$.
Without loss of generality we assume the sequence
   $(x^{(k)})_k$ converges to a vector $x=(x_1,\ldots,x_{s-1})\in\R^{s-1}$.
Clearly, $\|x\|=1$.
For any $t\in\R_{\ge0}$, $tx^{(k)}\to tx$, and for $k$ big enough,
$tx^{(k)}\in \cD_{L_\rA}$ by convexity.
So $x$ satisfies $L_\rA(t x)\succeq 0$ for
all $t\in\R_{\geq0}$.
In other words,
\[ A_0 + 2t ( A_1 x_1 + \cdots +A_{s-1} x_{s-1} ) \geq0
\]
for all $t\in\R_{\geq0}$. But now the claim implies
$A_1x_1+\cdots+A_{s-1}x_{s-1}=0$, contradicting the linear independence of the $A_j$s.
\end{proof}

\def\La{\Lambda}
\def\Rdd{\R^{d\times d}}

\begin{lemma}\label{lem:bounded34}
Let $L(x)$ be as in \eqref{eq:pencCP}, and assume the $A_j$ satisfy 
the assumptions of Proposition {\rm\ref{prop:CPbounded}}.
Then:
\begin{enumerate}[\rm (1)]
\item
  if
 $\Lambda\in \Rdd$ and $Z\in (\Rdd)^g,$
  and if
\begin{multline}\label{eq:T1}
S:= \La\otimes A_0 + (Z_1 +Z_1^*)\otimes A_1 + \cdots +   (Z_{s-1} +Z_{s-1}^*)   \otimes A_{s-1} \\
+ 
 (Z_s -Z_s^*)\otimes A_s  + \cdots + (Z_g -Z_g^*) \otimes A_g 
\end{multline}
  is symmetric, then  $\Lambda =\Lambda^*$;
\item if $S\succeq0$, then
$\Lambda\succeq 0$.
\end{enumerate}
\end{lemma}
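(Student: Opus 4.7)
The plan is to exploit the symmetry/skew-symmetry pattern of the basis together with the pairwise orthogonality and maximum-rank assumption on $A_0$.

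For part (1), I would simply compute $S - S^*$. For each $j$ with $1 \le j \le s-1$ the matrix $A_j$ is symmetric and $Z_j+Z_j^*$ is symmetric, so $(Z_j+Z_j^*)\otimes A_j$ is symmetric. For each $j$ with $s \le j \le g$ the matrix $A_j$ is skew and $Z_j - Z_j^*$ is skew, so their tensor product is again symmetric. Hence all cross terms cancel in $S - S^*$, leaving $(\Lambda - \Lambda^*)\otimes A_0 = 0$. Since $A_0 \ne 0$, this forces $\Lambda = \Lambda^*$.

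For part (2), the idea is to compress $S$ by a single vector on the left tensor factor and then exploit orthogonality via the trace pairing with $A_0$. Pick any $w\in\R^d$ and form $(w^T\otimes \Id_\nu)\, S\, (w\otimes \Id_\nu)\in\R^{\nu\times\nu}$. Because $Z_j - Z_j^*$ is a real skew-symmetric matrix, $w^T(Z_j - Z_j^*)w = 0$ for each $j\geq s$, so the skew part of the pencil contributes nothing. Thus
\[
(w^T\otimes \Id_\nu)\, S\, (w\otimes \Id_\nu) \;=\; (w^T\Lambda w)\, A_0 \;+\; 2\sum_{j=1}^{s-1}(w^T Z_j w)\, A_j.
\]
Since $S\succeq 0$, this $\nu\times\nu$ matrix is positive semidefinite.

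The key step is then to isolate $\Lambda$ using the assumptions on $A_0$. Because $A_0\succeq 0$, the trace pairing with any psd matrix is nonnegative, and because $A_0$ is orthogonal to each $A_j$ for $1\le j\le s-1$, one has $\tr(A_0 A_j)=0$. Therefore
\[
0 \;\leq\; \tr\!\Bigl( A_0 \bigl[(w^T\Lambda w) A_0 + 2\sum_{j=1}^{s-1}(w^T Z_j w) A_j\bigr]\Bigr) \;=\; (w^T\Lambda w)\,\tr(A_0^2).
\]
Since $A_0\neq 0$, we have $\tr(A_0^2)>0$, so $w^T\Lambda w \ge 0$ for every $w\in\R^d$, giving $\Lambda\succeq 0$. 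I do not anticipate a serious obstacle here; the only subtlety is recognizing that the pairwise orthogonality of the symmetric basis elements and the psd-ness of $A_0$ combine to kill every term except the $\Lambda$ term after taking the trace pairing, and that the skew part drops out automatically because $w$ is real.
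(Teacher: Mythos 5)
Your part (1) argument is exactly the paper's. For part (2), however, you take a genuinely different and more direct route. The paper compresses $S$ by $P_v\otimes\Id_\nu$ (with $v$ a hypothetical unit vector making $v^*\Lambda v<0$), produces a nonzero positive semidefinite element $\sum Y_i\otimes A_i$, shows $tY\in\cD_{L_\rA}$ for all $t>0$, and derives a contradiction from the \emph{boundedness} of $\cD_{L_\rA}(1)$ established in Proposition \ref{prop:CPbounded}. You instead compress by $w^T\otimes\Id_\nu$, and then isolate the $\Lambda$ coefficient by taking the trace pairing with $A_0$: positivity of the compression and of $A_0$ gives $\tr(A_0\,\cdot\,) \ge 0$, the pairwise orthogonality $\tr(A_0 A_j)=0$ kills the cross terms, and $\tr(A_0^2)>0$ yields $w^T\Lambda w\ge0$. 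Combined with part (1) (which gives $\Lambda=\Lambda^*$ since $S\succeq0$ is in particular symmetric), this shows $\Lambda\succeq0$. Your argument is shorter, stays entirely at the linear-algebra level, and does not invoke the boundedness of the spectrahedron (hence it never actually uses the maximum-rank hypothesis on $A_0$ — only $A_0\succeq0$, $A_0\neq0$, and the orthogonality). The paper's proof buys nothing extra here; it simply reuses Proposition \ref{prop:CPbounded}, which was established anyway for the CP correspondence, whereas your proof makes Lemma \ref{lem:bounded34} self-contained and slightly more general.
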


\begin{proof}
 To prove item (1), suppose $S$ is symmetric. Then
$
0=S-S^* = (\Lambda-\Lambda^*)\otimes A_0.
$
(Here we have used that $A_0,\ldots,A_{s-1}$ are symmetric and $A_s,\ldots,A_g $ are skew-symmetric.)
Since $A_0\neq0$, we get $\Lambda=\Lambda^*$.

For (2),
  if $\Lambda \not \succeq 0$, then there is a unit vector $v$ such that
$v^*\La v < 0$.
 Consider the orthogonal projection $P\in S\R^{d\nu\times d\nu}$ from $\R^d\otimes\R^\nu$ onto $\R v \otimes \R^\nu$, and
let 
$Y=((v^*Z_iv)P_v)_{i=1}^g\in(\mbS^d)^g$. Here $P_v\in \mbS^{d}$ is the orthogonal projection from $\R^d$ onto $\R v$. Note that
$P=P_v\otimes \Id_\nu$. Then the compression
\[
\begin{split}
PSP& = P\Big(
\La\otimes A_0 + (Z_1 +Z_1^*)\otimes A_1 + \cdots +   (Z_{s-1} +Z_{s-1}^*)   \otimes A_{s-1}
\\
& \qquad \; + 
 (Z_s -Z_s^*)\otimes A_s  + \cdots + (Z_g -Z_g^*) \otimes A_g \Big) P
\\ 
&
     = (v^*\La v)P_v\otimes A_0  +\sum_{i=1}^{s-1} 2 Y_i\otimes A_i \succeq 0,
\end{split}
\]
which implies that  $0\neq\sum_{i=1}^{s-1}Y_i\otimes A_i\succeq 0$ since $0\neq A_0\succeq0$ and $v^*\La v<0$.
This implies $0\neq t Y\in \cD_L$ for all $t>0$. In particular, the spectrahedron
$\cD_{\hat L}$ of the commutative collapse $\hat L$ of $L$ is unbounded. 
Hence  $\cD_L(1)$ is unbounded (cf.~\cite[\S4.1]{KS11}) contradicting Proposition \ref{prop:CPbounded}.
\end{proof}

\subsection{Characterizing Completely Positive Maps}\label{subsec:CP}

Suppose $\rA\subseteq\Rnn$ and $\rB\subseteq\Rll$ are operator systems, and
$\tau:\rA\to\rB$ is a linear $*$-map. Assume $\rA_{\succeq0}\neq\{0\}$ and select
a basis $A_0,\ldots,A_{s-1},A_s,\ldots,A_g$ of $\rA$ consisting of symmetric and
skew-symmetric elements and satisfying the assumptions of Proposition \ref{prop:CPbounded}.
Consider the linear pencil $L_\rA(x)$ given by \eqref{eq:pencCP}, and let
\begin{multline*}\label{eq:pencCP}
L_{\rB}(x) = \tau(A_0) + \tau(A_1) (x_1 +x_1^*) + \cdots +  \tau(A_{s-1}) (x_{s-1} +x_{s-1}^*)   \\
+ 
\tau(A_s) (x_s -x_s^*)  + \cdots + \tau(A_g) (x_g -x_g^*)  .
\end{multline*}

\begin{theorem}\label{thm:dominate}
The following are equivalent:
\ben[\rm (i)]
\item
$\tau$ is CP;
\item
$\cD_{L_\rA}\subseteq\cD_{L_\rB}$;
\item
$\cD_{L_\rA}(\ell)\subseteq\cD_{L_\rB}(\ell)$.
\een
\end{theorem}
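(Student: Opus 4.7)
The strategy is to establish $(i)\Leftrightarrow(ii)$ directly (this is the algebraic heart of the theorem) and to deduce $(ii)\Leftrightarrow(iii)$ from the size-bound Corollary~\ref{cor:lmidom}. Throughout I use the identification $M_n(\rA)\cong\rA\otimes M_n$, so that every symmetric element of $M_n(\rA)$ has a unique expansion in the symmetric basis as $T=\sum_{i=0}^{s-1}A_i\otimes Y_i+\sum_{i=s}^{g}A_i\otimes Z_i$ with $Y_i\in\mbS^n$ and $Z_i$ skew-symmetric, while $L_\rA(X)$ is precisely such a $T$ with $Y_0=\Id_n$, $Y_i=X_i+X_i^*$ for $1\le i\le s-1$, and $Z_i=X_i-X_i^*$ for $s\le i\le g$.

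For $(i)\Rightarrow(ii)$, given $X\in\cD_{L_\rA}(n)$ the matrix $T:=L_\rA(X)$ lies in $M_n(\rA)_{\succeq0}$, so $(\tau\otimes\id_n)(T)\succeq0$ by complete positivity; applying $\tau\otimes\id_n$ term by term to the above expansion produces exactly $L_\rB(X)$, proving $X\in\cD_{L_\rB}(n)$. For $(ii)\Rightarrow(i)$ it suffices to show that $\tau$ is $n$-positive for every $n$. Let $T\in M_n(\rA)_{\succeq0}$; Lemma~\ref{lem:bounded34}(2) forces $Y_0\succeq0$ in its expansion. Assume first $Y_0\succ0$, set $W:=Y_0^{1/2}$ (symmetric and invertible), and conjugate $T$ by $\Id_\nu\otimes W^{-1}$ to obtain a PSD element $T'\in M_n(\rA)$ whose $A_0$-coefficient is $\Id_n$. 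Then $T'=L_\rA(X')$ with $X'_i:=\tfrac12 W^{-1}Y_iW^{-1}$ for symmetric $A_i$ and $X'_i:=\tfrac12 W^{-1}Z_iW^{-1}$ for skew $A_i$, so $X'\in\cD_{L_\rA}(n)$ and (ii) yields $L_\rB(X')=(\tau\otimes\id_n)(T')\succeq0$. Conjugating back by $\Id_\ell\otimes W$ preserves positivity and recovers $(\tau\otimes\id_n)(T)\succeq0$. When $Y_0$ is merely PSD, replace $T$ with $T_\epsilon:=T+\epsilon(A_0\otimes\Id_n)$, which remains PSD since $A_0\succeq0$ and has PD $A_0$-coefficient $Y_0+\epsilon\Id_n$; apply the previous case to $T_\epsilon$ and let $\epsilon\to0^+$.

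For $(ii)\Leftrightarrow(iii)$, the implication $(ii)\Rightarrow(iii)$ is trivial by restriction. For the converse, Corollary~\ref{cor:lmidom} applied to $L_\rA$ and $\hat L=L_\rB$ furnishes an integer $n_0\le\ell$ such that $L_\rB|_{\cD_{L_\rA}(n_0)}\succeq0$ already implies $L_\rB|_{\cD_{L_\rA}}\succeq0$. Given $X\in\cD_{L_\rA}(n_0)$, pad to $\tilde X:=X\oplus 0_{\ell-n_0}\in(\R^{\ell\times\ell})^g$; a direct Kronecker-product computation shows that, up to a permutation similarity, $L_\rA(\tilde X)=L_\rA(X)\oplus(A_0\otimes\Id_{\ell-n_0})$, and both summands are PSD because $A_0\succeq0$. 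Hence $\tilde X\in\cD_{L_\rA}(\ell)$, so (iii) gives $L_\rB(\tilde X)\succeq0$; the analogous block decomposition for $L_\rB$ then forces the $L_\rB(X)$ summand to be PSD. The main obstacle in this plan is the degenerate case $Y_0\succeq0$ but $Y_0\not\succ0$ in $(ii)\Rightarrow(i)$, which is resolved by the perturbation by $\epsilon A_0\otimes\Id_n$; the remainder is routine tensor-product bookkeeping together with the invocation of the corollary.
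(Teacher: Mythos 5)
Your proof is correct and takes essentially the same approach as the paper: the nontrivial direction $(ii)\Rightarrow(i)$ is handled identically by decomposing a PSD element of $M_n(\rA)$ in the symmetric basis, invoking Lemma~\ref{lem:bounded34} to get the $A_0$-coefficient PSD, perturbing by $\epsilon A_0\otimes\Id_n$ to make it definite, conjugating by its inverse square root to land in $\cD_{L_\rA}$, applying $(ii)$, conjugating back, and letting $\epsilon\to0^+$; and $(iii)\Leftrightarrow(ii)$ reduces to Corollary~\ref{cor:lmidom}. The only difference is that you spell out the $0$-padding step needed to pass from the size bound $n_0\le\ell$ of Corollary~\ref{cor:lmidom} to test matrices of size exactly $\ell$, which the paper leaves implicit.
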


\cite[Theorem 3.5]{HKMa} obtained this result for $A_0\succ0$, and \cite[\S4]{KS11} 
considered (complete) positivity of linear functionals $\tau:\rA\to\R$.

\begin{proof}
The implication (i) $\Rightarrow$ (ii) is
obvious. We next prove its converse.

  Fix $d\in\N$. Suppose $S\in M_d(\rA)$ is positive semidefinite.
Then it is of the form \eqref{eq:T1} for some
$\Lambda\in\R^{d\times d}$ and $Z\in(\Rdd)^g$.
By Lemma \ref{lem:bounded34}, $\La\succeq0$.
If we replace $\Lambda$ by $\Lambda+\epsilon I$ for some $\epsilon>0$, the resulting
$S=S_\epsilon$ is still in $M_d(\rA)$, so without loss of generality
we may assume $\Lambda\succ0$.
   Hence,
   \begin{multline*}
     ( \Lambda^{-\frac12}\otimes I) S( \Lambda^{-\frac12}\otimes I) 
    =  
    I \otimes A_0 + \sum_{i=1}^{s-1} \Lambda^{-\frac12}
(Z_i +Z_i^*) \Lambda^{-\frac12} \otimes A_i + 
\sum_{i=s}^g 
\Lambda^{-\frac12}
(Z_i -Z_i^*) \Lambda^{-\frac12} \otimes A_i \succeq0.
\end{multline*}
  Since $\cD_{L_\rA}\subseteq\cD_{L_\rB}$, this implies
 \begin{multline*}
  I \otimes \tau(A_0) + \sum_{i=1}^{s-1} \Lambda^{-\frac12}
(Z_i +Z_i^*) \Lambda^{-\frac12} \otimes \tau(A_i)   + 
\sum_{i=s}^g 
\Lambda^{-\frac12}
(Z_i -Z_i^*) \Lambda^{-\frac12} \otimes \tau(A_i) \succeq0.
 \end{multline*}
   Multiplying on the left and right by $ \Lambda^{\frac12}\otimes I$
   shows
 \begin{multline*}
\tau(S_\epsilon)=
 \La\otimes \tau(A_0) + (Z_1 +Z_1^*)\otimes \tau(A_1) + \cdots +   (Z_{s-1} +Z_{s-1}^*)   \otimes \tau(A_{s-1}) \\
+ 
 (Z_s -Z_s^*)\otimes \tau(A_s)  + \cdots + (Z_g -Z_g^*) \otimes \tau(A_g)\succeq0. 
 \end{multline*}
  A straightforward approximation argument now
  implies that if $S\succeq 0$, then $\tau(S)\succeq 0$ and
  hence $\tau$ is CP. This proves (ii) $\Rightarrow$ (i).
  
  The implication (ii) $\Rightarrow$ (iii) is obvious, and (iii) $\Rightarrow$ (ii) is
  given in Corollary \ref{cor:lmidom}.
\end{proof}

\subsection{Algorithm for Determining Complete Positivity}

Given are operator systems $\rA\subseteq\Rnn$,  $\rB\subseteq\Rll$, and 
  a linear $*$-map $\tau:\rA\to\rB$.

\ben[\rm(1)]
\item\label{it:autoCP}
If $\rA_{\succeq0}=\{0\}$, then $\tau$ is CP. Stop.
\item\label{it:A0}
Find the maximum rank positive semidefinite $A_0\in\rA$.
\item
Compute a basis 
$A_0,\ldots,A_{s-1},A_s,\ldots,A_g$ of $\rA$ consisting only of symmetric and
skew-symmetric elements, satisfying the assumptions of Proposition \ref{prop:CPbounded}.
\item 
Form $L_\rA$ and $L_\rB$ as in \S\ref{subsec:CP}.
\item\label{it:dominate}
Is $L_\rB|_{\cD_{L_\rA}}\succeq0$? If yes, then $\tau$ is CP. If not, $\tau$ is not CP.
\een

The correctness of this algorithm follows from Theorem \ref{thm:dominate}.

\begin{remark}
\mbox{}\par
\ben[\rm(1)]
\item
How to implement \eqref{it:autoCP} 
is explained in Remark \ref{rem:CP1}.
\item
To find a matrix $A_0$ with maximum possible rank one solves the
strictly feasible SDP 
\[\lambda^* = \min  \big\{ \lambda :
\hat{L}(x) + \lambda I \succeq 0,\, \lambda\geq0,\, x\in\R^g\big\},\]
where $\hat L$ is as in \eqref{eq:Lhat},  using a path-following interior-point
method. As shown in \cite[Theorem 5.6.1]{dKTR00} (see also \cite[\S3]{dKl02}), the limit of
the central path is maximally complementary, therefore when $\lambda^* =
0$, the solution of this problem will produce $A_0$ with maximal rank. Note
that if $\lambda^* > 0$, then no feasible $A_0$ exists.
\item
The algorithmic interpretation \S\ref{sub:verp} 
of Theorem \ref{thm:mainNMon} enables us to compute a certificate for $L_\rB|_{\cD_{L_\rA}}\succeq0$, yielding at the same time a certificate for complete positivity of $\tau$.
\een
\end{remark}

\section{Adapting the Theory to Symmetric Variables}
\label{sec:symmVars}

In some contexts, it is desirable to work with NC polynomials in
symmetric variables.  Define $\ax$ to be the monoid freely generated by $x$ 
with identity the empty word, and
let $\RR\ax$ denote the $\RR$-algebra freely generated by $\ax$. Define the
involution $\ast$ on $\RR\ax$ to be linear such that $x_i^* = x_i$ and
such that $(pq)^* = q^*p^*$ for each $p, q \in \RR\ax$.  We say that
elements of $\RR\ax$ are NC polynomials in {\bf symmetric variables}.
We henceforth  refer to polynomials in $\RR\axs$ as polynomials in
{\bf non-symmetric variables}.

There are direct analogs of the results of this paper to the
case of symmetric variables.  It turns out that essentially the same proofs
given throughout this paper work for symmetric variables.
Alternately, some results for symmetric variables can be proved directly from
our existing results on non-symmetric variables.  In this section, we will
prove the analog of Theorem \ref{thm:mainNMon} for symmetric variables.

\begin{lemma}
 \label{lem:symm1}
 Let $I \subset \RR^{1 \times \ell}\ax$ be a left module and let $L \in
\RR^{\nu \times \nu}\ax$ be a linear pencil.
 Then
 \[
  J = \{ p \in \RR^{1 \times \ell}\ax \mid p(X)v = 0 \mbox{ whenever } (X,v)
\in V(I) \mbox{ and } L(X) \succeq 0\}
 \]
 is an $L$-real left module containing $I$.
\end{lemma}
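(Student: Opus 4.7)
The plan is to verify the three assertions in turn: $I\subseteq J$, $J$ is a left module, and $J$ is $L$-real. The first two are immediate from the definitions and will occupy only a line each. The genuine content is the third, and the proof will be essentially a replay of Proposition \ref{prop:cIcCIsReal} (which covered the non-symmetric case, $L=1$, with $V$ in place of $V(I)\cap\cD_L$), adapted to keep track of the $L$-factor as in Proposition \ref{prop:idealOfVarIsLRad}.

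First, if $p\in I$ then by the very definition of $V(I)$ we have $p(X)v=0$ for every $(X,v)\in V(I)$, whence $p\in J$. Next, if $p\in J$ and $q\in\RR\ax$, and $(X,v)\in V(I)$ with $L(X)\succeq0$, then $(qp)(X)v=q(X)\big(p(X)v\big)=0$, so $J$ is closed under the $\RR\ax$-action.

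For the $L$-real property, suppose
\[
\sum_i^{\finite} p_i^{\ast}p_i+\sum_j^{\finite} q_j^{\ast}Lq_j\in \RR^{\ell\times1}J+J^{\ast}\RR^{1\times\ell}
\]
with $p_i\in\RR^{1\times\ell}\ax$ and $q_j\in\RR^{\nu\times\ell}\ax$. Fix any $(X,v)\in V(I)$ with $L(X)\succeq0$. By the defining property of $J$, every polynomial in $J$ vanishes at $(X,v)$; row by row, this means $\iota(X)v=0$ for every $\iota\in\RR^{\ell\times1}J$, and similarly $v^{\ast}\iota(X)^{\ast}=0$ for every $\iota\in J^{\ast}\RR^{1\times\ell}$. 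Evaluating on both sides and sandwiching by $v$ therefore yields
\[
\sum_i^{\finite}\|p_i(X)v\|^2+\sum_j^{\finite}v^{\ast}q_j(X)^{\ast}L(X)q_j(X)v=0.
\]
Since $L(X)\succeq0$, every summand is nonnegative, so each must vanish: $p_i(X)v=0$ and $\sqrt{L(X)}\,q_j(X)v=0$, the latter forcing $L(X)q_j(X)v=0$. Because $(X,v)$ was an arbitrary point of $V(I)\cap\cD_L$, we conclude $p_i\in J$. For $Lq_j$, we observe that its $k$-th row is $e_k^{\ast}Lq_j$ and satisfies $(e_k^{\ast}Lq_j)(X)v=e_k^{\ast}L(X)q_j(X)v=0$, hence each row lies in $J$ and $Lq_j\in\RR^{\nu\times1}J$. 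This is precisely the $L$-real condition.

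The main (and only) subtlety I anticipate is purely cosmetic: one should verify that the row-wise interpretation of membership in $\RR^{\ell\times1}J$ and $J^{\ast}\RR^{1\times\ell}$ matches the definition used throughout the paper, so that the evaluation argument above is literally justified; this is already implicit in Proposition \ref{prop:diffMiReal} and Corollary \ref{cor:diffMiReal}. No new ideas beyond those used in \S\ref{sect:radIdeals} are needed.
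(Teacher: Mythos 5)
Your proof is correct and follows the same approach as the paper: evaluate the sum of squares at a point $(X,v) \in V(I)$ with $L(X)\succeq 0$, observe that the right-hand side vanishes when sandwiched by $v$, and conclude each nonnegative summand is zero. One trivial slip: for $\iota\in J^{\ast}\RR^{1\times\ell}$ the correct vanishing statement is $v^{\ast}\iota(X)=0$ (equivalently $\iota(X)^{\ast}v=0$), not $v^{\ast}\iota(X)^{\ast}=0$, but this does not affect the argument since you only use $v^{\ast}\iota(X)v=0$.
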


\begin{proof}
Suppose
 \[
  \sum_i^{\rm finite} p_i^*p_i + \sum_j^{\rm \finite} q_j^* L q_j  \in
\RR^{\ell \times 1} J + J^* \RR^{1 \times \ell}.
 \]
Now if $(X,v) \in V(I)$ is such that $L(X) \succeq 0$, then
 \[
  v^*\Big( \sum_i^{\rm finite} p_i(X)^*p_i(X) + \sum_j^{\rm \finite} q_j(X)^*
L(X) q_j(X) \Big) v = \sum_{i}^{\rm finite} \|p_i(X)v \|^2 + \sum_{j}^{\rm
finite} \|\sqrt{L}(X) q_j(X)\| ^2 = 0
 \]
 which implies that each $p_i \in J$ and each $L q_j \in \RR^{\nu \times 1}J$.
\end{proof}

For $p \in \RR^{a \times b}\axs$ define $\sym(p) \in \RR^{a \times b}\ax$ to be
the polynomial produced by setting each $x_i^*$ equal to $x_i$.  If $q \in
\RR^{a \times b} \ax$, define $\free(q) \in \RR^{a \times b}\axs$ to be
\[
\free(q)=
q\Big(\frac{1}{2}(x + x^*)\Big).
\]

Here is the symmetric analog of Theorem
\ref{thm:mainNMon}.

\begin{theorem}
 \label{thm:mainSymmAnalog}

Suppose $L \in \RR^{\nu \times \nu}\ax$ is a linear pencil.
Let $\rC \subset
\RR^{1 \times \ell}\ax$
be a finite chip space,
let $I
\subset \RR^{1 \times \ell}\ax$ be a
left $\RR\ax$-module generated by polynomials in $\RR\ax_1 \rC$,
and let $p \in \rC^*\RR\ax_1 \rC$ be a symmetric polynomial.
\begin{enumerate}[\rm(1)]
 \item\label{it:weakSym} $v^{\ast}p(X)v \geq 0$ whenever $(X,v) \in V(I)$ and
$L(X) \succeq
0$ if and only if $p$ is of the form
\begin{equation}
\label{eq:clasPosIdealSymm}
  p = \sum_{i}^{\finite}p_i^{\ast}p_i + \sum_{j}^{\finite} q_j^{\ast}Lq_j +
  \sum_k^{\finite} (r_k^{\ast}\iota_k + \iota_k^{\ast}r_k)
\end{equation}
where each $p_i, r_k \in \rC$, each $q_j \in
\RR^{\ell \times 1} \rC$ and each $\iota_k \in \Lradd{L}{\rC}{I} \cap \RR\axs_1
\rC$.
\item \label{it:strSym} $v^{\ast}p(X)v \geq 0$ whenever $(X,v) \in V(I)$ and
$L(X) \succ 0$,
if and only if $p$ is of the form
\eqref{eq:clasPosIdealSymm}
where each $p_i, r_k \in \rC$, each $q_j \in
\RR^{\nu \times 1}\rC$ and
each $\iota_k \in \LraddS{L}{\rC}{I} \cap \RR\axs_1 \rC$
\end{enumerate}
\end{theorem}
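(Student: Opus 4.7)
The plan is to mirror the proof of Theorem \ref{thm:mainNMon} in the symmetric variables setting, adapting each intermediate result along the way. For the sufficiency direction of (1), suppose $p$ has the form \eqref{eq:clasPosIdealSymm} and $(X,v) \in V(I)$ satisfies $L(X) \succeq 0$. Then $v^*p_i(X)^*p_i(X)v \geq 0$ and $v^*q_j(X)^*L(X)q_j(X)v \geq 0$ term by term; the cross terms $v^*(r_k^*\iota_k + \iota_k^*r_k)(X)v$ vanish because $\iota_k \in \Lradd{L}{\rC}{I}$ forces $\iota_k(X)v = 0$ on $V(I) \cap \{L \succeq 0\}$, which is the symmetric analog of Proposition \ref{prop:idealOfVarIsLRad} and in fact follows directly from Lemma \ref{lem:symm1}. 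Sufficiency for (2) is identical.

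For necessity in (1), assume $p$ is not of the form \eqref{eq:clasPosIdealSymm}. I would first introduce a truncated test module $\Md = M_{T,K}(L)$ for $\Lradd{L}{\rC}{I}$, $L$, and $\rC$ exactly as in \eqref{eq:Md}. The symmetric analog of \cite[Lemma 4.2]{N} characterizes the symmetric elements of $\RR^{\ell \times 1}\Lradd{L}{\rC}{I} + (\Lradd{L}{\rC}{I})^*\RR^{1\times\ell}$ inside $\rC^*\RR\ax_1\rC$ as precisely the sums $\sum_k(r_k^*\iota_k + \iota_k^*r_k)$ with $\iota_k \in \Lradd{L}{\rC}{I} \cap \RR\ax_1\rC$ and $r_k \in \rC$. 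Combined with the failure of \eqref{eq:clasPosIdealSymm} this yields
\[
p \not\in \Md + \RR^{\ell \times 1}\Lradd{L}{\rC}{I} + \bigl(\Lradd{L}{\rC}{I}\bigr)^*\RR^{1\times\ell}.
\]
Applying the Bohnenblust dichotomy (Lemma \ref{lem:pSd}) in the style of Lemma \ref{lem:goodSepFun} then produces a positive linear functional $\lambda$ on $\rC^*\RR\ax_1\rC$ with $\lambda > 0$ on $\Md \setminus \{0\}$, $\lambda$ vanishing on $\bigl(\RR^{\ell \times 1}\Lradd{L}{\rC}{I} + (\Lradd{L}{\rC}{I})^*\RR^{1\times\ell}\bigr) \cap \rC^*\RR\ax_1\rC$, and $\lambda(p) < 0$.

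Next, I would invoke symmetric variable analogs of the flat extension and GNS construction (Proposition \ref{prop:flatExtRC} and Proposition \ref{prop:GNS}, whose proofs carry over verbatim from \cite{N}). Setting $n = \dim(\rC) - \dim(\Lradd{L}{\rC}{I} \cap \rC)$, this produces a tuple $X$ of \emph{symmetric} $n \times n$ real matrices and a vector $v \in \RR^{\ell n}$ with $v^*q(X)v = \lambda(q)$ for every $q \in \rC^*\RR\ax_1\rC$ and $\RR^{\ell n} = \{q(X)v : q \in \rC\}$. The symmetry of $X_i$ is forced by the defining relation $x_i^* = x_i$ in $\RR\ax$: the GNS multiplication operator for $x_i$ must agree with its adjoint since $\lambda$ is symmetric and $x_i = x_i^*$ as elements of $\RR\ax$. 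Arguing exactly as in Lemma \ref{lem:main}, one checks $(X,v) \in V(I)^{(n)}$, $L(X) \succeq 0$, and $v^*p(X)v = \lambda(p) < 0$. Part (2) proceeds by the same recipe with $\Lradd{L}{\rC}{I}$ replaced by $\LraddS{L}{\rC}{I}$, the strong $(L,\rC)$-reality forcing $L(X) \succ 0$.

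The main obstacle is ensuring that the GNS construction honestly produces symmetric matrices, since none of the machinery of \S\ref{sect:linFun} was set up for the symmetric variable algebra. Verifying this reduces to the straightforward observation just made about multiplication operators on the GNS quotient, so the proofs in \cite{N} transfer with only cosmetic notational changes. As an alternative route, one could lift $(L, I, \rC, p)$ through the map $\free : \RR\ax \to \RR\axs$ and appeal to Theorem \ref{thm:mainNMon} directly; however this requires tracking how right chip spaces and $(L,\rC)$-real radicals transform under $\free$ and $\sym$, which is why the direct adaptation appears cleaner.
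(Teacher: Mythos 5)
Your proof is correct in substance, but it takes a genuinely different route from the paper's. You opt for a direct adaptation: redo the truncated test module, Bohnenblust separation, flat extension, and GNS machinery of \S\ref{sect:linFun} in the algebra $\RR\ax$, observing (correctly) that the GNS multiplication operator for $x_i$ is self-adjoint because $x_i = x_i^*$, so the resulting $X_i$ are symmetric with respect to any orthonormal basis of the quotient. You even flag the alternative of lifting through $\free$ and $\sym$ --- and that alternative is precisely what the paper does. After establishing the easy direction via Lemma \ref{lem:symm1}, the paper sets $\tilde{\rC}$ to be the span of $\sym^{-1}(\rC)$, lets $J$ be the left $\RR\axs$-module generated by $\sym^{-1}(I) \cap \RR\axs_1 \tilde{\rC}$, observes that $(Y,v) \in V(J)$ with $X = \tfrac12(Y+Y^*)$ gives $(X,v) \in V(I)$, $\free(L)(Y) = L(X)$, $\free(p)(Y) = p(X)$, then invokes Theorem \ref{thm:mainNMon} as a black box on $\free(p)$, $\free(L)$, $J$, $\tilde{\rC}$, pushes the resulting certificate back through $\sym$, and finishes by checking $\sym\bigl(\Lradd{\free(L)}{\tilde{\rC}}{J}\bigr) \subset \Lradd{L}{\rC}{I}$ directly from the definition of the $(L,\rC)$-real radical. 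You dismiss this route as requiring more bookkeeping, but in the paper it is under a page, whereas your approach implicitly requires a shadow re-development of \S\ref{sect:linFun} and the relevant parts of \cite{N} for $\RR\ax$ (flat extension, $\rC$-bases, \cite[Lemma 4.2]{N}, etc.). Those verifications are indeed routine, so your argument is sound, but the transfer principle the paper uses is the more economical proof and is the reason the authors isolated the non-symmetric case as the primary development.
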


\begin{proof}
 We will only prove  \eqref{it:weakSym}.  The proof of \eqref{it:strSym} is
similar.

If $p$ is of the form \eqref{eq:clasPosIdealSymm}, then Lemma \ref{lem:symm1}
implies that $v^*p(X)v \geq 0$ whenever $(X,v) \in I$ and $L(X) \succeq 0$.

Conversely, suppose $v^*p(X)v \geq 0$ whenever $(X,v) \in I$ and $L(X) \succeq
0$.  Let $\tilde{\rC} \subset \RR^{1 \times \ell}\axs$ be the right chip
space spanned by all monomials in $\sym^{-1}(\rC)$.  That is, the monomials in
$\tilde{\rC}$ have the property that when all of the $\ast$ are removed, one is
left with a monomial in $\rC$. Therefore, it is easy to see that $\tilde{\rC}$
is finite. 

Let $J \subset \RR^{1 \times \ell}\axs$ be the left module generated
by $\sym^{-1}(I) \cap \RR\axs_1 \tilde{\rC}$.  Note that $\sym(J) = I$. Further,
if $(Y,v) \in V(J)$, and $X = \frac{1}{2}(Y + Y^*)$, then for each $\iota \in I
\cap \RR\ax_1 \rC$ we see $\iota(X)v = \free(\iota)(Y)v = 0$. Also, $\free(L)(Y)
= L(X)$ and $\free(p)(Y) = p(X)$. Therefore $v^*\free(p)(Y)v \geq 0$ whenever
$(Y,v) \in V(J)$ and $\free(L)(Y) \succeq 0$. Theorem
\ref{thm:mainNMon} implies that
\[
 \free(p) = \sum_i^{\rm finite} a_i^*a_i + \sum_j^{\rm finite} b_i^*\free(L)b_i
+ \sum_{k}^{\rm finite} (c_k^*\theta_k + \theta_k^*c_k)
\]
where each $a_i, c_k \in \tilde{\rC}$, $b_i \in \RR^{\nu \times 1} \tilde{\rC}$
and each $\theta_k \in \Lradd{\free(L)}{\tilde{\rC}}{J}$.  Therefore
\begin{multline*}
  p = \sum_i^{\rm finite} \sym(a_i)^*\sym(a_i) + \sum_j^{\rm finite}
\sym(b_i)^*L\sym(b_i) \\
+ \sum_{k}^{\rm finite} \big(\sym(c_k)^*\sym(\theta_k) + \sym(\theta_k)^*\sym(c_k)\big)
\end{multline*}
Hence it  suffices to show that $\sym(\theta_k) \in \Lradd{L}{\rC}{I}$.

Let $K \subset \RR^{1 \times \ell}\ax$ be the left module generated by
$\sym^{-1}(\Lradd{L}{\rC}{I}) \cap \RR\ax_1 \tilde{\rC}$, i.e., the set of  
polynomials in $\RR\ax_1 \tilde{\rC}$ which map into $\Lradd{L}{\rC}{I}$ under 
$\sym$. 
Suppose
\[
 \sum_i^{\rm finite} f_i^*f_i + \sum_{j}^{\rm finite} g_j^*\free(L) g_j \in
\RR^{\ell \times 1} K + K^* \RR^{1 \times \ell},
\]
where each $f_i \in \tilde{\rC}$ and each $g_j \in \RR^{\nu \times
1}\tilde{\rC}$.
Then
\[
  \sum_i^{\rm finite} \sym(f_i)^*\sym(f_i) + \sum_{j}^{\rm finite}
\sym(g_j)^*L \sym(g_j) \in
\RR^{\ell \times 1} \Lradd{L}{\rC}{I} + \Lradd{L}{\rC}{I}^{\,*} \RR^{1 \times \ell},
\]
each
$\sym(f_i) \in \rC$, and each $\sym(g_j) \in \RR^{\nu \times
1}\rC$.  By definition, each $\sym(f_i) \in \Lradd{L}{\rC}{I}$ and each
$\sym(g_j) \in \RR^{\nu \times 1} \Lradd{L}{\rC}{I}$.  This
implies that each $f_i \in K$ and each $g_j \in \RR^{\nu \times 1}
K$. Further, it is clear by definition that $J \subset K$.  Therefore
\[\sym(\Lradd{\free(L)}{\tilde{\rC}}{J}) \subset \sym(K) = \Lradd{L}{\rC}{I}
. \qedhere
\]
\end{proof}

\newpage
\linespread{1.1}

\newpage

\centerline{NOT FOR PUBLICATION}

\appendix
\section{\\ Background from \cite{HV07} 
on Algebraic Interiors and their Degree}\label{app:HV}

This lemma and its proof are copied directly from \cite{HV07}.
To mesh perfectly with this the current paper
take $x^0 =0$ and ${\mathcal C}_p(x^0) = {\mathcal C}_p(0)= \cC$.

\begin{lemma}[\protect{\cite[Lemma 2.1]{HV07}}]
\label{lem:HV07}
A polynomial $p$ of the lowest degree for which ${\mathcal C} = {\mathcal C}_p(x^0)$
is unique $($up to a multiplication by a positive constant$)$, and any other polynomial $q$
such that ${\mathcal C} = {\mathcal C}_q(x^0)$
is given by $q=ph$ where $h$ is an arbitrary polynomial which
is strictly positive on a dense connected subset of ${\mathcal C}$.
\end{lemma}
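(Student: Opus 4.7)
The plan is to exploit the decomposition of the Zariski closure of $\partial\cC$ into real irreducible components. First I would show that any defining polynomial $q$ with $\cC = \cC_q(x^0)$ must vanish identically on $\partial\cC$: for $x\in\partial\cC$ one can find sequences in the interior $\cC^\circ$ (where $q>0$) converging to $x$, forcing $q(x)\geq0$ by continuity; simultaneously, points outside $\cC$ arbitrarily close to $x$ give $q(x)\leq0$, so $q(x)=0$. This step uses only that $\cC$ is the closure of a connected component of $\{q>0\}$ containing $x^0$.

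Next I would set $V:=\Zar(\partial\cC)\subseteq\R^m$ and decompose it into real irreducible components $V = V_1\cup\cdots\cup V_k$. The crucial geometric claim here is that each $V_i$ has dimension exactly $m-1$. Since $\cC$ has nonempty interior (it contains an open neighborhood of $x^0$), $\partial\cC$ is topologically an $(m{-}1)$-dimensional semialgebraic set, so every irreducible component of its Zariski closure whose intersection with $\partial\cC$ is $(m{-}1)$-dimensional must itself have dimension at least $m-1$; on the other hand, each $V_i$ is a proper subvariety of $\R^m$ since $\partial\cC$ is nowhere dense, giving the matching upper bound. With this in hand, the real Nullstellensatz for principal ideals supplies, for each $V_i$, an irreducible real polynomial $p_i\in\R[x]$ (unique up to a nonzero scalar) with $V_i = \Var(p_i)$.

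I would then set $\tilde p := p_1\cdots p_k$ and argue that (after fixing a sign) $\cC_{\tilde p}(x^0)=\cC$. For a general defining polynomial $q$, since $q$ vanishes on each $V_i$ and $p_i$ is irreducible, a descent argument (peel off the factor $p_1$; what remains vanishes on $V_2\cup\cdots\cup V_k$ because $p_1$ does not identically vanish on any $V_j$ with $j\neq 1$; iterate) yields $q = \tilde p\cdot h$ for some polynomial $h$. To control $h$, I would observe that $q$ and $\tilde p$ are both strictly positive on the open dense subset $\cC^\circ\setminus\Var(\tilde p)$, which is connected (removing the real-algebraic hypersurface $\Var(\tilde p)$ from the connected open set $\cC^\circ\subseteq\R^m$ leaves a dense connected subset); hence $h>0$ on this dense connected subset of $\cC$. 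Uniqueness of the minimum-degree defining polynomial is then immediate: if $\deg q = \deg\tilde p$ then $h$ is a positive constant.

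The main obstacle in this plan is the dimension claim, namely that every irreducible component of $\Zar(\partial\cC)$ has dimension exactly $m-1$. Topologically $\partial\cC$ is $(m{-}1)$-dimensional because $\cC$ is full-dimensional, but one must rule out spurious lower-dimensional components in the Zariski closure; this requires a careful semialgebraic argument (for instance via local real dimension at a generic smooth point of $\partial\cC$, combined with Tarski--Seidenberg to relate real and complex dimensions of the Zariski closure). Once this geometric input is in place, the algebraic factoring step and the analysis of $h$ are routine.
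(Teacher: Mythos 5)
Your outline coincides with the paper's: decompose $\Zar(\partial\cC)=V_1\cup\cdots\cup V_k$ into irreducible components, show each $V_i$ has dimension $m-1$, use the real Nullstellensatz for principal ideals to produce irreducibles $p_i$ with $\Var(p_i)=V_i$, and peel off factors to get $q=ph$. However, the fix you sketch for the dimension claim --- ``local real dimension at a generic smooth point of $\partial\cC$'' --- does not close the gap you yourself flag as the main obstacle. Knowing the local dimension is $m-1$ at generic points only shows that \emph{some} top-dimensional components exist; a spurious lower-dimensional $V_j$ could still be supported on the non-generic locus of $\partial\cC$. The paper proves the stronger, and genuinely needed, statement that the local dimension of $\partial\cC$ equals $m-1$ at \emph{every} boundary point $x$: because $\cC$ equals the closure of its interior, each $x\in\partial\cC$ is simultaneously an accumulation point of $\operatorname{int}\cC$ and of $\R^m\setminus\cC$, so for any small ball $B$ around $x$ the sets $B\cap\operatorname{int}\cC$ and $B\cap(\R^m\setminus\cC)$ are disjoint nonempty open semialgebraic subsets of $B$, whence \cite[Lemma 4.5.2]{BCR98} forces $\dim(B\cap\partial\cC)\geq m-1$. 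With the everywhere-local statement in hand one picks $x\in\partial\cC$ lying on a putative low-dimensional $V_j$ but on none of the $(m-1)$-dimensional components; then in a small ball $B\cap\partial\cC$ is covered by intersections with low-dimensional varieties, contradicting the local lower bound. Without this, the decomposition step is incomplete.

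Two smaller slips. Your reason that a defining $q$ vanishes on $\partial\cC$ --- ``points outside $\cC$ arbitrarily close to $x$ give $q(x)\leq 0$'' --- is not valid, since $q$ can be strictly positive on points outside $\cC$ belonging to other connected components of $\{q>0\}$. The correct argument: if $q(x)>0$ for some $x\in\partial\cC$, a small ball around $x$ lies in $\{q>0\}$ and (by density of the distinguished component $U$ in $\cC$) meets $U$; being connected and contained in $\{q>0\}$, the ball lies in $U\subseteq\operatorname{int}\cC$, contradicting $x\in\partial\cC$. Finally, ``removing the real-algebraic hypersurface $\Var(\tilde p)$ from the connected open set $\cC^\circ$ leaves a dense connected subset'' is false in general (a real hypersurface can disconnect a connected open set), so the positivity of $h$ on a dense \emph{connected} subset still needs an argument; the paper states it as obvious but does not supply one either.
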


\begin{proof}
We shall be using some properties of algebraic and
semi-algebraic sets in ${\mathbb R}^m$, so many readers may
want to skip over it and go to our main results which are much
more widely understandable; our reference is
\cite{BCR98}.  We notice first that ${\mathcal C}$ is a semi-algebraic
set (since it is the closure of a connected component of a
semi-algebraic set, see \cite[Proposition 2.2.2 and Theorem
2.4.5]{BCR98}). Therefore the interior $\operatorname{int} {\mathcal
C}$ of ${\mathcal C}$ is also semi-algebraic, and so is the boundary
$\partial {\mathcal C} = {\mathcal C} \setminus \operatorname{int} {\mathcal C}$.
Notice also that ${\mathcal C}$ equals the closure of its interior.

We claim next that {\it for each $x \in \partial {\mathcal C}$, the
local dimension
$\dim \partial {\mathcal C}_x$ equals} $m-1$. On the one hand, we
have
$$
\dim \partial {\mathcal C}_x \leq \dim \partial {\mathcal C} <
\dim \operatorname{int} {\mathcal C} = m;
$$
here we have used \cite[Proposition 2.8.13 and Proposition 2.8.4]{BCR98},
and the fact that $\partial {\mathcal C} = \operatorname{clos} \operatorname{int} {\mathcal C}
\setminus \operatorname{int} {\mathcal C}$,
since ${\mathcal C}$ equals the closure of its interior.
On the other hand, let $B$ be an open ball in ${\mathbb R}^m$ around $x$;
then
$$
B \cap \partial {\mathcal C} = B \setminus \big[
\bigl(B \cap ({\mathbb R}^m \setminus {\mathcal C})\bigr) \cup
\bigl(B \cap \operatorname{int} {\mathcal C}\bigr)\big].
$$
Since ${\mathcal C}$ equals the closure of its interior,
every point of $\partial {\mathcal C}$ is an accumulation point of both
${\mathbb R}^m \setminus {\mathcal C}$ and $\operatorname{int} {\mathcal C}$;
therefore $B \cap ({\mathbb R}^m \setminus {\mathcal C})$
and $B \cap \operatorname{int} {\mathcal C}$ are disjoint
open nonempty semi-algebraic subsets of $B$. Using \cite[Lemma
4.5.2]{BCR98} we conclude that
$\dim B \cap \partial {\mathcal C} \geq m-1$,
hence $\dim \partial {\mathcal C}_x \geq m-1$.

Let now $V$ be the Zariski closure of $\partial {\mathcal C}$,
and let $V = V_1 \cup \cdots \cup V_k$ be the decomposition of $V$ into irreducible components.
We claim that {\it $\dim V_i = m-1$ for each $i$}.
Assume by contradiction that $V_1,\ldots,V_l$ have dimension $m-1$ while
$V_{l+1},\ldots,V_k$ have smaller dimension.
Then there exists $x \in \partial {\mathcal C}$
such that $x \not\in V_1,\ldots,V_l$, and consequently there exists an open ball $B$
in ${\mathbb R}^m$ around $x$ such that
$$
B \cap \partial {\mathcal C} = (B \cap \partial {\mathcal C} \cap V_{l+1}) \cup \cdots \cup
(B \cap \partial {\mathcal C} \cap V_k).
$$
By assumption each set in the union on the right hand side has dimension smaller than $m-1$,
hence it follows (by \cite[Proposition 2.8.5, I]{BCR98}) that
$\dim B \cap \partial {\mathcal C} < m-1$, a contradiction
with $\dim \partial {\mathcal C}_x = m-1$.

Suppose now that $p$ is a polynomial
of the lowest degree with ${\mathcal C} = {\mathcal C}_p(x^0)$.
Lowest degree implies that $p$ can have no
multiple irreducible factors, i.e., $p=p_1 \cdots p_s$, where $p_1$, \dots, $p_s$ are distinct
irreducible polynomials; we may assume without loss of generality that every $p_i$
is non-negative on ${\mathcal C}$. Since $p$ vanishes on $\partial {\mathcal C}$
it also vanishes on $V = V_1 \cup \cdots \cup V_k$.
We claim that for every $V_i$ there exists a $p_j$ so that $p_j$ vanishes on $V_i$:
otherwise ${\mathcal Z}(p_j) \cap V_i$ is a proper algebraic subset
of $V_i$ for every $j = 1,\ldots,s$, therefore (since $V_i$ is irreducible)
$\dim {\mathcal Z}(p_j) \cap V_i < \dim V_i$ for every $j$
and thus also $\dim {\mathcal Z}(p) \cap V_i < \dim V_i$,
a contradiction since $p$ vanishes on $V_i$.
If $p_j$ vanishes on $V_i$, it follows ( since $p_j$ is irreducible
and $\dim V_i = m-1$) that ${\mathcal Z}(p_j) = V_i$.
The fact that $p$ is a polynomial
of the lowest degree with ${\mathcal C} = {\mathcal C}_p(x^0)$
implies now (after possibly renumbering the irreducible factors of $p$)
that $p = p_1 \cdots p_k$ where ${\mathcal Z}(p_i) = V_i$ for every $i$.
Since $\dim V_i = m-1$ it follows from the real Nullstellensatz for principal ideals
\cite[Theorem 4.5.1]{BCR98} that the irreducible polynomials $p_i$ are uniquely determined
(up to a multiplication by a positive constant),
hence so is their product $p$.  This proves the uniqueness of $p$.

The rest of the lemma now follows easily. If ${\mathcal C} = {\mathcal C}_q(x^0)$,
then the polynomial $q$ vanishes on $\partial {\mathcal C}$ hence also on
$V = V_1 \cup \cdots \cup V_k$. Since $q$ vanishes on ${\mathcal Z}(p_i) = V_i$
and $\dim V_i = m-1$, the real Nullstellensatz for principal ideals
implies that $q$ is divisible by $p_i$; this holds for every $i$ hence
$q$ is divisible by $p = p_1 \cdots p_k$, i.e., $q = p h$.
It is obvious that $h$ must be strictly positive on a dense connected subset of ${\mathcal C}$.
\end{proof}
 
\printindex

\newpage

\tableofcontents

\end{document}